\theoremstyle{plain}
    \newtheorem{theorem}{Theorem}[section]
    \newtheorem{lem}[theorem]{Lemma}
    \newtheorem{prop}[theorem]{Proposition}
    \newtheorem{cor}[theorem]{Corollary}
    \newtheorem{rem}[theorem]{Remark}
\theoremstyle{definition}
    \newtheorem{defn}[theorem]{Definition}
    \newtheorem{exmp}[theorem]{Example}
\title{Non-Hausdorff Separation Axioms}
\author{Tianyi Zhou}
\date{}
\begin{document}

\clearpage
\thispagestyle{empty}

\begin{abstract}
This note is an introductory survey of non-Hausdorff separation axioms. The main focus is to study properties that are between $T_0$ and $T_1$, properties between $T_1$ and Hausdorff and how the $T_0$-quotient change them and the relation between them. 
\end{abstract}

\maketitle

\vspace{0.5cm}
\begin{center}
\large\bfseries{Introduction}
\end{center}
\vspace{0.5cm}

The purpose of this note is to provide an introductory summary of previous study on topics related to separation axioms in non-Hausdorff spaces. Proofs of the original results will be included but some may be revised if, from the author's perspective, details or explanations are missing. After an introduction of notations and concepts, a review of preorders, weak and strong topologies, and box products will be included. The main contents can be loosely divided to four main parts:

\begin{itemize}
    \item In \textbf{Section 1}, we introduce the specialization preorder. This turns any topological space into a preorder set, which becomes non-trivial only for non-$T_1$ spaces. In these cases, the specialization preorder (or order, for $T_0$ spaces) has strong ties with the applications in which such topologies arise. In \textbf{Section 2} and \textbf{Section 3}, we study $T_0$ and $T_1$ spaces from the viewpoint of the specialization preorder and will discuss several less known equivalent forms of these axioms (see \cite{21}, \cite{22}, \cite{35}).\\

    \item In \textbf{Section 4}, we will look at $R_0$ and $R_1$ spaces, which arise when the specialization preorder is a non-identity equivalence relation (see \cite{18}, \cite{20}, \cite{29}, \cite{31}, \cite{35}). These two properties, in the strict sense, are not separation properties since topologies satisfying them need not distinguish different points. These are ``non-separation" version of $T_1$ and Hausdorff axioms and are satisfied by respectable spaces such as regular spaces (which include all completely regular spaces, and particularly all pseudo-metric spaces). \textbf{Section 5} studies the connection between $R_1$ spaces and compactness. Since $R_1$ spaces are considered almost Hausdorff, it is no surprise that many but not all results known to hold for compact or locally compact Hausdorff spaces can be extended to this setting.\\

    \item In \textbf{Section 6}, we introduce \textbf{locally closed} sets, and define the \textbf{Skula topology} associated with a given topology (see \cite{19}), which emerges as a useful tool for analyzing the original topology. In \textbf{Section 7}, we study $T_D$ spaces. This is an intermediate class between $T_0$ and $T_1$. A $T_D$ space can be defined as one where all singletons are locally closed, or one where all derived sets are closed, but there are several more reformulations (see \cite{12}, \cite{14}, \cite{15}). \textbf{Section 8} focuses on \textbf{sober spaces}. This notion is defined in terms of irreducible spaces and (together with $T_D$ spaces to a lesser extent) are meaningful in fields such as algebra, order theory and logic, where the involved topologies typically have non-closed points (see \cite{15}, \cite{24}, \cite{25}). Sobriety forms the foundation of theory of locales, also known as point-free topology. Hausdorff spaces are sober for trivial reasons while sober spaces are $T_0$ by definition. On the other hand, $T_1$ and sobriety are incomparable, hence ``$T_1$ $+$ sober" spaces form a distinctive class of $T_1$ spaces. However, there is an intimate relationship between $T_D$ and sobriety. In \textbf{Section 9}, similar to the construction of the \textbf{Skula topology}, we introduce the construction of the \textbf{sobrification}, which can embed every $T_0$ space as a dense subspace to a sober space.\\

    \item Starting from \textbf{Section 10}, we study properties between $T_1$ and Hausdorff. The first one is \textbf{locally Hausdorff}. This class finds its way into the study of non-Hausdorff manifolds, modern algebraic geometry, as well as groupoid actions in connection to $C^*$-algebras. We, however, choose to study this class separately due to its distinctive behavior from the other classes that will be studied in \textbf{Section 11}. We will close \textbf{Section 10} by presenting a method of constructing locally Hausdorff spaces by gluing, in a similar vein as forming a manifold out of coordinate patches (see \cite{13}). In \textbf{Section 11}, we study in less detail several classes which lie between $T_1$ and Hausdorff (see, for instance, \cite{16} \cite{26}, \cite{27}, \cite{30}, \cite{32}, \cite{36}, \cite{37}). They relate to Hausdorff spaces in varying degrees. Some of them generalize well-known facts in a Hausdorff space, such as the fact that a compact subset in a Hausdorff space is closed. Others are concerned with convergent sequences and uniqueness of limits.
    
\end{itemize}

\section{Preliminaries}

\noindent
Facts mentioned here will not be used very frequently. However, weak and strong topologies that arise from a family of mappings are important in various portions of general topology and thus deserve to be studied thoroughly. Our standard references on classical general topology are \cite{2} and \cite{10}.\\

\noindent
We shall occasionally used the language of category theory; several constructions in the sequel are best constructed as adjoint functors. Category theory is now ubiquitous in all of mathematics. We will not define relevant concepts here, since there are many excellent accounts on this subject. We suggest \cite{4} for a concise summary of basic concepts, and recommend \cite{6} for a more in-depth look. It will suffice for readers to be acquainted with the term \textit{functor}, \textit{full category}, \textit{natural transformation}, \textit{adjoint functors} and \textit{(co-) reflexive category}. We denote by \textbf{Top} the category whose objects are topological spaces and morphisms are continuous mappings. In general, given $x,y$ two objects in a category $\mathcal{A}$, we use $\operatorname{Hom}_{\mathcal{A}}(x, y)$ to denote all $\mathcal{A}$-morphisms from $x$ to $y$. We use $\operatorname{id}_{\mathcal{A}}$ to denote the identity functor.

\subsection{Notations}

We use $\omega$ to denote the set of non-negative integers, also the least infinite ordinal, and use $\mathbb{N}$ to denote the set of positive integers. We use $\omega_1$ to denote the least uncountable ordinal. \\

\noindent
We often use $(X, \tau)$ to denote a topological space and, for any $x\in X$, we use $\mathcal{N}_x$ to denote the set of all neighborhoods of $x$ and $\mathcal{U}(x)$ to denote the set of all open neighborhoods of $x$. For any $A\subseteq X$, we use $\operatorname{int}_X(A)$ to denote the interior of $A$ in $X$, and $\operatorname{cl}_X(A)$ to denote its closure in $X$. If not specified or when $X$ is fixed, we simply use $A^{\circ}$ and $\overline{A}$ to denote the interior and closure of $A$ in $X$ respectively. For any subspace $Y\subseteq X$, we use $\tau\Big|_Y$ to denote the relative topology in $Y$. In this case, we have $\operatorname{int}_Y(A) = A^{\circ}\cap Y$ and $\operatorname{cl}_Y(A) = \overline{A}\cap Y$. We use $A'$ to denote the set of accumulation points, or the derived set of $A$. We also define:

$$
\mathcal{N}_A = \big\{U\subseteq X: A\subseteq U^{\circ} \big\}
$$
the set of neighborhoods of $A$, and use $\mathcal{U}(A)$ to denote the set of all open subsets that contain $A$.

\subsection{Preorders}

We will follow notations from pages {\cite[p7, p8]{2}}. Given a set $X$, we use $\Delta_X$ to denote the diagonal, namely $\Delta_X = \big\{(x, x):x\in X \big\}$. Given a subset $R\subseteq X\times X$, we define:

$$
R^{-1} = \big\{(y, x):(x, y)\in R \big\}
$$
We call $R$ \textbf{anti-symmetric} if $R\cap R^{-1}\subseteq \Delta_X$, and \textbf{asymmetric} if $R\cap R^{-1}=\emptyset$. We call $R$ \textbf{reflexive} if $\Delta_X\subseteq R$, and \textbf{transitive} if, given any $x, y, z\in X$, $(x, y)\in R$ and $(y, z)\in R$ imply $(x, z)\in R$. We call $R$ a \textbf{preorder} if $R$ is both reflexive and transitive. Note that $R$ is a preorder if and only if $R^{-1}$ is a preorder. We call $R$ a \textbf{partial order} if it is an anti-symmetric preorder. If $R$ is a preorder, then $R\cap R^{-1}$ is the equivalence relation induced by $R$. \\

\noindent
Given a set $X$ and $R\subseteq X\times X$, define:

$$
R\circ R = \big\{(x, z)\in X\times X:\exists\,y\in X\,\text{  such that  }\,(x, y)\in R\,,\, (y, z)\in R \big\}
$$
For any $n\in\omega$, we define $R^0=\Delta_X$, $R^1=R$ and $R^{n+1} = R\circ R^n$. Define:

$$
\hat{R} = \bigcup_{n\in\omega} R^n
$$
the \textbf{transitive reflexive closure} of $R$. Namely, $\hat{R}$ is the smallest preorder that contains $R$.

\begin{defn}\label{Definition 0.1}

Given a preorder $R$ defined on a set $X$, for any $A\subseteq X$, define the $R$-\textbf{saturation} of $A$ as the following:

$$
R[A] = \big\{y\in X:\exists\,x\in A\,\text{  such that  }\,(x, y)\in R \big\}
$$
We also call $A$ \textbf{upward-closed}, an \textbf{up-set} or an $R$-\textbf{saturated set} if $A=R[A]$. Similarly we can define $R^{-1}[A]$ the same way, and call $A$ \textbf{downward-closed} set, or a \textbf{down-set} if $A=R^{-1}[A]$. We use $\Sigma(R)$ to denote the family of all $R$-saturated subsets of $X$. The \textbf{principal up-set} and \textbf{principal down-set} associated with $x\in X$ are defined respectively:

$$
R[x] = \big\{y\in X: (x, y)\in R \big\} \hspace{1cm} R^{-1}[x] = \big\{y\in X: (x, y)\in R^{-1} \big\}
$$
As a result, for any $A\subseteq X$:

$$
R[A] = \bigcup_{a\in A}R[a] \hspace{1cm} R^{-1}[A] = \bigcup_{a\in A} R^{-1}[a]
$$
    
\end{defn}

\begin{defn}
Given two preorder sets $(X, R_X)$, $(Y,R_Y)$, we call a mapping $f:X\rightarrow Y$ $R_X$-$R_Y$ \textbf{monotone} if for any $x_1, x_2\in X$, $\big( f(x_1), f(x_2) \big) \in R_Y$ whenever $(x_1, x_2)\in R$. We denote \textbf{Preord} the category consisting preorder sets and monotone mappings. 
\end{defn}

\begin{prop}\label{Proposition 0.3}

Given a preorder set $(X, R)$ and $A\subseteq X$:

\begin{enumerate}[label = (\alph*)]

    \item $$R[A] = \bigcap\big\{ B\in \Sigma(R): A\subseteq B \big\}$$
    \item $$\Sigma(R^{-1}) = \big\{ A^c: A\in \Sigma(R) \big\}$$
    \item if we define:

    $$
    \langle\,A\,\rangle_R = \bigcup\big\{ B\in \Sigma(R): B\subseteq A \big\}
    $$
    then we have:

    $$
    \langle\, A\,\rangle_R = \bigcup\big\{ R[x]: R[x]\subseteq A \big\} = X\backslash R^{-1} \big[ A^c \big]
    $$
    
\end{enumerate}
    
\end{prop}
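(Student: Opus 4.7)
The plan is to handle the three parts in order, using reflexivity and transitivity in a standard way and exploiting the duality between $R$ and $R^{-1}$ to minimize work on parts (b) and (c).

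For part (a), I would first verify that $R[A]$ itself is $R$-saturated: reflexivity gives $R[A] \subseteq R[R[A]]$, while for the reverse inclusion, if $z \in R[R[A]]$ then there exist $y \in R[A]$ and $a \in A$ with $(a,y), (y,z) \in R$, so by transitivity $(a,z) \in R$ and $z \in R[A]$. Reflexivity also gives $A \subseteq R[A]$, so $R[A]$ belongs to the family on the right and hence the intersection is contained in $R[A]$. For the opposite containment, given any $B \in \Sigma(R)$ with $A \subseteq B$, monotonicity of $R[\cdot]$ yields $R[A] \subseteq R[B] = B$, so $R[A]$ is contained in the intersection.

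For part (b), I would just unravel the definitions symbolically: $A \in \Sigma(R)$ means that for every $x \in A$ and every $y$ with $(x,y)\in R$, one has $y \in A$. Contrapositively, for every $y \in A^c$ and every $x$ with $(x,y) \in R$, i.e.\ $(y,x) \in R^{-1}$, one has $x \in A^c$. That is precisely the statement that $A^c \in \Sigma(R^{-1})$. Applying this to both $R$ and $R^{-1}$ (and using $(R^{-1})^{-1} = R$) gives the equality of the two families.

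For part (c), for the first equality I would observe that each $R[x]$ is $R$-saturated by the same transitivity argument as in (a), so the middle union is a union of saturated subsets of $A$ and is contained in $\langle A\rangle_R$. Conversely, any $B \in \Sigma(R)$ with $B \subseteq A$ satisfies $B = R[B] = \bigcup_{x \in B} R[x]$, and each $R[x] \subseteq B \subseteq A$ contributes to the middle union, so $\langle A\rangle_R$ is contained in it. For the second equality, I would apply the duality of (b): $\langle A\rangle_R$ is the largest up-set inside $A$, so its complement is the smallest down-set containing $A^c$, which by part (a) applied to $R^{-1}$ is exactly $R^{-1}[A^c]$. Hence $\langle A\rangle_R = X \setminus R^{-1}[A^c]$.

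The only place I expect to have to be careful is the dualization in the final identity of (c); the rest is a routine exercise in unpacking definitions. Writing the duality symmetrically via part (b) avoids redoing the argument of part (a) with the roles of $R$ and $R^{-1}$ exchanged.
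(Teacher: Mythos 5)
Your proposal is correct and follows essentially the same route as the paper: (a) via monotonicity of $R[\cdot]$ together with the fact that $R[A]$ is itself a saturated set containing $A$, (b) by unravelling the definition (the paper phrases this as a contradiction using transitivity, you as a direct contrapositive — same content), and (c) by writing each saturated $B\subseteq A$ as $\bigcup_{x\in B}R[x]$ and then dualizing through (a) and (b). You even supply the small verification that $R[A]$ is saturated, which the paper only asserts.
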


\begin{proof}

\begin{enumerate}[label = (\alph*)]

    \item For any $B\in \Sigma(R)$, if $A\subseteq B$, then $R[A]\subseteq R[B]=B$ and this shows the left inclusion. Meanwhile, we have $R[A]\in \Sigma(R)$ and $A\subseteq R[A]$, and hence the equation follows.

    \item Fix $A\in \Sigma(R)$ and $x\in A^c$. Suppose $(x, y)\in R^{-1}$ for some $y\in X$. If $y\in A$, we will have $(a, y)\in R$ for some $a\in A$, which implies $(a, x)\in R$ by the transitivity of $R$. In this case, $x\in A$ since $A\in \Sigma(R)$, which contradicts our assumption. Hence $A^c\in \Sigma(R^{-1})$. Conversely, for any $B\in \Sigma(R^{-1})$, by the same reasoning we can show that $B^c\in \Sigma(R)$.

    \item For any $B\in \Sigma(R)$, $B\subseteq A$ precisely when $R[b]\subseteq A$ for all $b\in B$. Then the first equation follows. By part $(a)$ and $(b)$, we have:

    $$
    R^{-1}\big[ A^c \big] = \bigcap\big\{ B\in \Sigma(R^{-1}): A^c\subseteq B \big\} = \bigcap\big\{ B^c \in \Sigma(R): B^c\subseteq A \big\} = \langle\, A\,\rangle_R
    $$
    
\end{enumerate}
    
\end{proof}

\subsection{Weak topologies}

In $(X, \tau_X)$, given $\big( Y_a, \tau_a \big)_{a\in A}$ a family of topological spaces and $\mathcal{F}_A = \big\{f_a:X\rightarrow Y_a \big\}_{a\in A}$ a family of mappings, the \textbf{weak topology}, or \textbf{initial topology} induced by $\mathcal{F}_A$ is the topology generated by the following subbase:

$$
\big\{ f_a^{-1}(V):a\in A\,,\, V\in \tau_a \big\}
$$
and is denoted by $\tau(\mathcal{F}_A)$. Clearly the weak topology induced by $\mathcal{F}_A$ must be the weakest topology among all topologies $\tau$ such that $f_a$ is $\tau$-$\tau_a$ continuous for all $a\in A$. $\tau(\mathcal{F}_A)$ is characterized as the unique topology $\tau$ that is defined on $X$ and satisfies the following: given any topological space $(Z, \tau_Z)$ and a mapping $g:Z\rightarrow X$, $g$ is $\tau_Z$-$\tau$ continuous if and only if $f_a\circ g$ is $\tau_Z$-$\tau_a$ continuous for all $a\in A$. 

\begin{exmp}

Given $(X_a, \tau_a)_{a\in A}$ a family of topological spaces, define $X=\prod_{a\in A}X_a$ and for each $a\in A$, let $\pi_a: X\rightarrow X_a$ be the projection on $X_a$. The \textbf{product topology} is the weak topology induced by the family $(\pi_a:a\in A)$. If not specified, we will assume that the topology on $\prod_{a\in A}X_a$ is the product topology.
    
\end{exmp}

\noindent
In the previous set-up, assume that for each $a\in A$, $\tau_a$ is the weak topology induced by the following family of functions:

$$
\mathcal{F}_{a, I_a}:\big\{g_{a, i}:Y_a \rightarrow (Z_{a, i}, \tau_{a, i})\big\}_{i\in I_a} 
$$
Define a new index set:

$$
\mathcal{C} = \bigcup_{a\in A}\{a\}\times I_a
$$
Then $\tau(\mathcal{F}_A)$ is the weak topology induced by:

$$
\big\{ g_{(a, i)}\circ f_a:X\rightarrow (Z_{a, i}, \tau_{a, i}) \big\}_{(a, i)\in \mathcal{C}}
$$
This is known as the \textbf{transitivity of weak topologies}.

\begin{defn}\label{Definition 0.3}

A property $\mathcal{P}$ in a topological space is a \textbf{initial property} if the following is true: given a set $X$ endowed with $\tau(\mathcal{F}_A)$ a weak topology induced by $\mathcal{F}_A = \big\{ f_a:X\rightarrow Y_a \big\}_{a\in A}$, whenever each $(Y_a, \tau_a)$ satisfies $\mathcal{P}$, $(X, \tau(\mathcal{F}_A))$ also satisfies $\mathcal{P}$. It follows that an initial property $\mathcal{P}$ is hereditary and productive. In addition, if $(\tau_a:a\in A)$ is a family of topologies in $X$ and $(X, \tau_a)$ satisfies $\mathcal{P}$ for all $a\in A$, then so does $(X, \tau_A)$ where $\tau_A$ is the supremum of $(\tau_a:a\in A)$.

\end{defn}

\subsection{Box topology}

Given a family of topological spaces $(X_a, \tau_a)_{a\in A}$, define $X=\prod_{a\in A}X_a$. The \textbf{box topology} is the topology generated by the following base:

\begin{equation}\label{e0}
\left\{ \bigcap_{a\in A}\pi_a^{-1}(U_a):a\in A\,,\, U_a\in \tau_a \right\}
\end{equation}
We use $\prod^{\text{box}}_{a\in A}X_a$ to denote the topological space $X$ equipped with the box topology, to differentiate it with the product topology. Observe that in $\prod^{\text{box}}_{a\in A}X_a$, each $\pi_a$ is not only continuous but also open. Given $B_a\subseteq X_a$ for each $a\in A$, we have, with respect to the box topology:

$$
\overline{\prod_{a\in A} B_a} = \prod_{a\in A}\overline{B_a}
$$
Fix $\bm{y} = (y_a)_{a\in A}\in X$ and $\alpha\in A$. Define $\iota_{\alpha}: X_{\alpha} \rightarrow X$ in the following way: for each $z\in X_{\alpha}$ and $a\in A$:

$$
\big( \iota_{\alpha}(z) \big)_a = 
\begin{cases}
z, \hspace{1cm} a=\alpha\\
y_a, \hspace{0.85cm} a\neq \alpha
\end{cases}
$$
Then in $\prod^{\text{box}}_{a\in A}X_a$, for each $\alpha\in A$, $\iota_{\alpha}$ is a homeomorphism onto its image. This implies that $X_a$ is homeomorphic to a subspace of $X$ for all $a\in A$. 

\subsection{Strong topology}

We will discuss the dual notion of weak topology. Given a family of  topological spaces $(X_a,\tau_a)_{a\in A}$ and mappings $\mathcal{F}_A = \big( f_a:X_a\rightarrow X \big)_{a\in A}$, the \textbf{strong topology} or \textbf{final topology} $\tau_X$ induced by $\mathcal{F}_A$ is defined by:

$$
\tau_X = \big\{W\subseteq X: f_a^{-1}(W)\in\tau_a \,\forall\,a\in A \big\}
$$
By definition it is the strongest one among all topologies $\tau$ defined on $X$ such that for each $a\in A$, $f_a$ is $\tau_a$-$\tau$ continuous. 

\begin{exmp}

Given a set $X$ and $(\tau_a)_{a\in A}$ a family of topologies defined on $X$, the infimum $\tau = \bigcap_{a\in A}\tau_a$ of all topologies in the strong topology induced by the family of identity mapping $\big( \operatorname{id}_a: (X, \tau_a)\rightarrow X \big)_{a\in A}$
    
\end{exmp}

\begin{exmp}\label{Example 0.5}

Given $\big( (X_{\alpha}, \tau_{\alpha}): \alpha\in \mathcal{A} \big)$ a family of topological spaces, let $X$ be the disjoint union of this family of topological spaces. As a set:

$$
X = \bigcup_{\alpha\in \mathcal{A}} \{\alpha\}\times X_{\alpha}
$$
For each $\alpha\in \mathcal{A}$, define:

$$
j_{\alpha}: X_{\alpha} \rightarrow X, \hspace{0.3cm} x\mapsto (\alpha, x)
$$
Then an arbitrary subset $B\subseteq X$ can be uniquely expressed as $\bigcup_{\alpha\in \mathcal{A}} j_{\alpha} (B_{\alpha})$ where $B_{\alpha} = j_{\alpha}^{-1}(B)$ for each $\alpha\in \mathcal{A}$. The \textbf{sum topology}, denoted by $\tau_X$, is defined as the strong topology induced by $(j_{\alpha})_{\alpha\in \mathcal{A}}$. That is:

$$
\tau_X = \left\{ \bigcup_{\alpha\in \mathcal{A}} j_{\alpha}(U_{\alpha}): \forall\,\alpha \in \mathcal{A}, \hspace{0.3cm} U_{\alpha} \in \tau_{\alpha} \right\}
$$
The resulting topological space $(X, \tau_X)$ is called the \textbf{sum of} $\big( (X_{\alpha}, \tau_{\alpha}): \alpha\in \mathcal{A} \big)$. The sum topology enjoys the following universal property. For any topological space $(Y, \tau_Y)$ and any family of continuous mappings $\big( f_{\alpha}:X_{\alpha} \rightarrow Y \big)_{\alpha\in \mathcal{A}}$, there is a unique mapping $f:X\rightarrow Y$ such that $f\circ j_{\alpha} = f_{\alpha}$. By definition of $\tau_X$, each $j_{\alpha}$ is open and closed. Hence each $X_{\alpha}$, when viewed as a subset of $X$, is also clopen. Also note that if each continuous mapping $f_{\alpha}$ is open, so is their join $f$. A similar conclusion holds for closed maps when $\mathcal{A}$ is finite.
    
\end{exmp}

\subsection{Lattice and frames}

Our modest goal is to introduce a few concepts from the theory of frames, mainly frame homomorphisms and congruences, for the proofs of several theorems included in this note. We recommend \cite{4} and \cite{8} for a proper introduction. Given a topological space $(X, \tau)$, let $\mathcal{P}(X)$ denote the power set. Clearly $\mathcal{P}(X)$ is a Boolean algebra, and $\tau$ is a sublattice of $\mathcal{P}(X)$, which is closed in arbitrary addition (union). The theory of frames is to take a distributive lattice satisfying the extra condition as a starting point, and ask to what extent we can retrieve the underlying topological space solely from the lattice. 

\begin{defn}\label{Definition 0.4}

A \textbf{frame} is a distributive lattice $(L, \vee, \wedge)$ such that for any $(x_i)_{i\in I}\subseteq L$ and $y\in L$:

\begin{enumerate}[label = (F\arabic*)]

    \item\label{F1} the supremum $\bigvee_{i\in I}x_i$ exists in $L$
    \item\label{F2} the following generalized distributive law holds:

    $$
    \left( \bigvee_{i\in I}x_i \right)\wedge y = \bigvee_{i\in I} (x_i\wedge y)
    $$
    
\end{enumerate}

\noindent
Clearly $\tau$ is a distributive lattice, and we call it a \textbf{spatial frame associated with} $X$.
    
\end{defn}

\noindent
A frame $L$ is in fact a complete lattice, since the existence of arbitrary joins implies the existence of arbitrary meets. Hence a frame is bounded, with a maximum $1$ and a minimum $0$. \\

\noindent
Note that there is a crucial difference between an infinite frame and a bounded distributive lattice from the viewpoint of universal algebra. An infinite frame $L$ requires an additional infinitary operation $\bigvee_{\kappa}$, one for each infinite cardinal $\kappa\leq \vert\, \mathcal{P}(L) \,\vert$. Thus a frame is not an algebra. Nevertheless, the algebraic notions of homomorphism and congruence have obvious analogues here.

\begin{defn}\label{Definition 0.5}

Given two frames $L, M$, a mapping $f:L\rightarrow M$ is a frame homomorphism provided that:

\begin{enumerate}[label = (FH\arabic*)]

    \item\label{FH1} for any $\mathcal{U} \subseteq L$:

    $$
    f\left( \bigvee\mathcal{U} \right) = \bigvee\big\{f(u):u\in \mathcal{U} \big\}
    $$

    \item\label{FH2} for any finite $\mathcal{V} \subseteq L$:

    $$
    f\left( \bigwedge \mathcal{V} \right) = \bigwedge\big\{ f(v):v\in \mathcal{V} \big\}
    $$
    
\end{enumerate}

\noindent
Since $\bigwedge\emptyset = 1$, \ref{FH2} is equivalent to the following:

\begin{enumerate}[label = (FH2')]
    \item for any $u, v\in L$:

    $$
    f(u\wedge v) = f(u) \wedge f(v)
    $$
    and $f(1_L) = 1_M$.
    
\end{enumerate}

\noindent
We call $f$ a \textbf{frame isomorphism} if $f$ is invertible and its inverse is also a frame homomorphism. 
    
\end{defn}

\begin{defn}\label{Definition 0.6}

In the set-up of \textbf{Definition \ref{Definition 0.5}}, given an equivalence relation $\theta$ defined on $L$, we call $\theta$ a \textbf{frame congruence} provided that:

\begin{enumerate}[label = (FC\arabic*)]

    \item\label{FC1} given a collection $(u_i, v_i)_{i\in I} \subseteq \theta$, we have:

    $$
    \left( \bigvee_{i\in I} u_i, \bigvee_{i\in U}v_i \right) \in \theta
    $$

    \item\label{FC2} given $(u_1, v_1), (u_2, v_2)\in \theta$, we have:

    $$
    (u_1\wedge u_2, v_1\wedge v_2)\in \theta
    $$
    
\end{enumerate}
Given $f:L\rightarrow M$ a frame homomorphism, the \textbf{kernel} of $f$, denoted by $\operatorname{ker}(f)$, is the following set:

$$
\operatorname{ker}(f) = \big\{(u, v)\in L\times L:f(u) = f(v) \big\}
$$
Clearly $\operatorname{ker}(f)$ is a frame congruence given that $f$ is a frame homomorphism. Later we will show that two notions, the frame homomorphism and frame congruence, are interchangeable, through the notion of kernel.
    
\end{defn}

\begin{exmp}\label{Example 0.11}

Given two topological spaces $(X, \tau_X)$, $(Y, \tau_Y)$, let $f:X\rightarrow Y$ be a continuous mapping. Then the following mapping:

$$
f^{\leftarrow}: \tau_Y\rightarrow \tau_X, \hspace{0.4cm} V\mapsto f^{-1}(V)
$$
is a frame homomorphism, known as the homomorphism induced by $f$.
    
\end{exmp}

\begin{theorem}

In the set-up of \textbf{Definition \ref{Definition 0.6}}, a frame homomorphism $f:L\rightarrow M$ induces a frame congruence, and every frame congruence defined on $L$ is the kernel of a frame homomorphism.
    
\end{theorem}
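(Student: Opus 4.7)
The plan is to prove the two implications separately. The first, that every frame homomorphism $f:L\to M$ has $\ker(f)$ as a frame congruence, is essentially a direct verification: given $(u_i,v_i)_{i\in I}\subseteq \ker(f)$, applying \ref{FH1} to both $\bigvee u_i$ and $\bigvee v_i$ yields a common value in $M$, establishing \ref{FC1}; analogously, \ref{FH2} (more precisely FH2' combined with the binary case) gives \ref{FC2}. The paper already notes this, so I would dispatch it in one or two lines.

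The substantive direction is the converse: given a frame congruence $\theta$ on $L$, construct a frame homomorphism with kernel $\theta$. The natural candidate is the quotient map. I would define $L/\theta$ as the set of equivalence classes $[u]$, and equip it with operations obtained by choosing representatives:
\[
\bigvee_{i\in I}[u_i] := \Bigl[\bigvee_{i\in I}u_i\Bigr], \qquad [u]\wedge[v] := [u\wedge v].
\]
The first step is to check that these are well defined: if $(u_i,u_i')\in\theta$ for each $i\in I$, then \ref{FC1} forces $\bigl(\bigvee u_i,\bigvee u_i'\bigr)\in\theta$, so the join class is independent of representatives; similarly \ref{FC2} takes care of the binary meet.

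Next, I would verify that $L/\theta$ is a frame. The lattice axioms, the distributive law, and the existence of arbitrary joins are all inherited from $L$ because every operation in $L/\theta$ is computed representative-wise; in particular the generalized distributive law \ref{F2} transfers directly:
\[
\Bigl(\bigvee_{i\in I}[u_i]\Bigr)\wedge[y] \;=\; \Bigl[\bigvee_{i\in I}u_i \wedge y\Bigr] \;=\; \Bigl[\bigvee_{i\in I}(u_i\wedge y)\Bigr] \;=\; \bigvee_{i\in I}\bigl([u_i]\wedge[y]\bigr).
\]
With this, the quotient map $q:L\to L/\theta$, $u\mapsto[u]$, is a frame homomorphism by construction (conditions \ref{FH1} and \ref{FH2} hold tautologically from the definitions of the operations on $L/\theta$). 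Finally, $\ker(q)=\theta$ since $q(u)=q(v)$ iff $[u]=[v]$ iff $(u,v)\in\theta$.

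The main obstacle is purely bookkeeping: ensuring the well-definedness of the \emph{arbitrary} join on $L/\theta$, since \ref{FC1} is the only tool available for that, and its infinitary character is exactly what is needed. Once that passes, verifying the frame axioms and the homomorphism property for $q$ is straightforward. No reliance on choice or additional hypotheses is needed beyond the congruence conditions themselves.
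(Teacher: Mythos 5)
Your proposal is correct and follows essentially the same route as the paper: construct the quotient $L\slash\theta$ with operations defined via representatives (well-defined by \ref{FC2} for meets and \ref{FC1} for arbitrary joins), check that the frame structure is inherited from $L$, and take the canonical quotient map $q$ as the frame homomorphism with $\operatorname{ker}(q)=\theta$. The only cosmetic difference is that the paper defines the meet first and recovers joins through the induced order $\leq_{\theta}$, while you define the infinitary join directly; both hinge on the same use of \ref{FC1}.
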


\begin{proof}

One direction is clear. It remains to show that a frame congruence is the kernel of a frame homomorphism. Fix $\theta$ a frame congruence defined on $L$. In $L\slash\theta$, given $[x]_{\theta}$, $[y]_{\theta}$, define a new join $\wedge_{\theta}$ as follows:

$$
[x]_{\theta} \wedge_{\theta} [y]_{\theta} = [x\wedge y]_{\theta}
$$
According to \ref{FC2} in \textbf{Definition \ref{Definition 0.6}}, $\wedge_{\theta}$ is well-defined. It is also clear by the definition that $\wedge_{\theta}$ is idempotent, associated and commutative. Next define an order $\leq_{\theta}$ on $L\slash \theta$ by the following: given $[x]_{\theta}$, $[y]_{\theta}$:

$$
[x]_{\theta} \leq_{\theta} [y]_{\theta} \hspace{0.5cm} \Longleftrightarrow \hspace{0.5cm} [x]_{\theta} \wedge_{\theta} [y]_{\theta} = [x]_{\theta}
$$
According to definition of $\wedge_{\theta}$, $\leq_{\theta}$ is a partial order. The maximum and minimum in $L\slash \theta$ is $[1]_{\theta}$ and $[0]_{\theta}$. Given $\big( [x_i]_{\theta} \big)_{i\in I} \subseteq L\slash\theta$, \ref{FC1} implies that the supremum of $\big( [x_i]_{\theta} \big)_{i\in I}$ is $\left[ \bigvee_{i\in I}x_i \right]$. We then can conclude that both \ref{F1} and \ref{F2} in \textbf{Definition \ref{Definition 0.4}} holds, and hence $\big( L\slash\theta, \leq_{\theta} \big)$ is a frame. Then the canonical quotient mapping $q:L\rightarrow L\slash\theta$ is the desired frame homomorphism with $\operatorname{ker}(q) = \theta$.
    
\end{proof}

\section{Specialization preorder}

\begin{defn}\label{Definition 1.1}

The \textbf{specialization preorder} $\preccurlyeq$ on a topological space $(X, \tau)$ is defined as follows: given $x, y\in X$, we write $x\preccurlyeq y$ if $\mathcal{N}_x \subseteq \mathcal{N}_y$. We will instead use $\preccurlyeq_X$ or $\preccurlyeq_{\tau}$ when we need to specify $X$ or $\tau$. For each $x\in X$, define:

$$
[x]_{\preccurlyeq} = \big\{y\in X:x\preccurlyeq y \big\}, \hspace{1cm} [x]_{\succcurlyeq} = \big\{y\in X: x\succcurlyeq y \big\}
$$
    
\end{defn}

\begin{prop}\label{Proposition 1.2}

In a topological space $(X, \tau)$, given $x, y\in X$, the following are equivalent:

\begin{enumerate}[label = (\alph*)]
    \item $x\preccurlyeq y\hspace{0.5cm}$
    \item $x\in \overline{\{y\}} \hspace{0.5cm}$
    \item $\overline{\{x\}} \subseteq \overline{\{y\}} \hspace{0.5cm}$
    \item $y\in \bigcap\mathcal{N}_x \hspace{0.5cm}$
    \item $\bigcap\mathcal{N}_y \subseteq \bigcap\mathcal{N}_x \hspace{0.5cm}$
\end{enumerate}

\noindent
Moreover:

$$
[x]_{\succcurlyeq} = \big\{y\in X:x \succcurlyeq y \big\} = \overline{\{x\}}
$$
and:

$$
[x]_{\preccurlyeq} = \big\{y\in X:x\preccurlyeq y\big\} = \bigcap\mathcal{N}_x
$$
    
\end{prop}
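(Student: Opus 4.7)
The plan is to establish a short cycle among (a)--(d), reduce (e) to (a) separately, and then read off the two displayed identities from the equivalences already proved.

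First I would unpack the definition: $x \preccurlyeq y$ says $\mathcal{N}_x \subseteq \mathcal{N}_y$, and intersecting both sides while using $y \in \bigcap \mathcal{N}_y$ immediately gives (d). For (d) $\Rightarrow$ (b), if every neighborhood of $x$ contains $y$, then in particular every open neighborhood of $x$ meets $\{y\}$, so $x \in \overline{\{y\}}$. The equivalence (b) $\Leftrightarrow$ (c) is routine monotonicity of closure: $\overline{\{y\}}$ is closed and contains $x$, hence contains $\overline{\{x\}}$, and conversely $x \in \overline{\{x\}}$.

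The one step that is not quite a tautology is (b) $\Rightarrow$ (a), and this is where I expect the main (mild) obstacle to sit. One has to remember that membership in $\mathcal{N}_y$ is not merely containment of $y$, but containment of $y$ in the interior. Given $x \in \overline{\{y\}}$ and any $N \in \mathcal{N}_x$, I would pick an open $U$ with $x \in U \subseteq N$; since $x$ lies in $\overline{\{y\}}$ and $U$ is an open neighborhood of $x$, we get $y \in U$, and then $U$ is itself an open neighborhood of $y$ contained in $N$, witnessing $N \in \mathcal{N}_y$. This closes the cycle. For (e), (a) $\Rightarrow$ (e) is the monotonicity of intersection applied to $\mathcal{N}_x \subseteq \mathcal{N}_y$, while (e) $\Rightarrow$ (d) is immediate since $y \in \bigcap \mathcal{N}_y$.

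Finally I would read off the identities. By (a) $\Leftrightarrow$ (b), $[x]_{\succcurlyeq} = \{y : y \preccurlyeq x\} = \{y : y \in \overline{\{x\}}\} = \overline{\{x\}}$, and by (a) $\Leftrightarrow$ (d), $[x]_{\preccurlyeq} = \{y : x \preccurlyeq y\} = \{y : y \in \bigcap \mathcal{N}_x\} = \bigcap \mathcal{N}_x$. No extra work is needed beyond the equivalences above.
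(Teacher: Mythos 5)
Your proposal is correct and is essentially the same definition-unwinding argument as the paper's, just with a slightly different choice of which implications to chain (the paper closes the loop via $(a)\Rightarrow(e)$ and $(d)\Rightarrow(b)$, treating $(a)\Leftrightarrow(b)$ as immediate, while you spell out $(b)\Rightarrow(a)$ by shrinking a neighborhood $N$ to an open $U$). The extra detail you give at that step is exactly the content the paper leaves implicit, so nothing further is needed.
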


\begin{proof}

By definition, the equivalence $(1)\Longleftrightarrow (2)$, $(2)\Longleftrightarrow (3)$ are clear, so is the equivalence $(4)\Longleftrightarrow (5)$. We also have $(1)\implies (5)$ and $(4)\implies (2)$, and hence prove the equivalence among statements $(1)\sim(5)$. The rest of the results follows immediately by $(2)$ and $(4)$.
    
\end{proof}

\begin{defn}\label{Definition 1.3}

In the set-up of \textbf{Definition \ref{Definition 1.1}}, given $x\in X$, we call $x$ $\preccurlyeq$-\textbf{minimal} if for any $y\in X$, $y\preccurlyeq x$ implies $x\preccurlyeq y$. Dually, we call $x$ $\preccurlyeq$-\textbf{maximal} if for any $y\in X$, $x\preccurlyeq y$ implies $y\preccurlyeq x$.
    
\end{defn}

\begin{prop}\label{Proposition 1.4}

In the set-up of \textbf{Definition \ref{Definition 1.3}}, $x$ is $\preccurlyeq$-minimal if and only if $\overline{\{x\}} \subseteq \bigcap\mathcal{N}_x$, and $x$ is $\preccurlyeq$-maximal if and only if $\bigcap\mathcal{N}_x \subseteq \overline{\{x\}}$.
    
\end{prop}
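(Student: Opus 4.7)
The plan is to translate the defining conditions of $\preccurlyeq$-minimality and $\preccurlyeq$-maximality directly into set inclusions using the identifications established in \textbf{Proposition \ref{Proposition 1.2}}. That proposition gives us the two key facts $[x]_\succcurlyeq = \overline{\{x\}}$ and $[x]_\preccurlyeq = \bigcap \mathcal{N}_x$, i.e., the closure of $\{x\}$ is precisely the set of points below $x$ and the intersection of neighborhoods of $x$ is precisely the set of points above $x$ in the specialization preorder.

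For the first equivalence, I would unpack the definition of minimality as follows: $x$ is $\preccurlyeq$-minimal iff for every $y\in X$, $y\preccurlyeq x$ implies $x\preccurlyeq y$. Using \textbf{Proposition \ref{Proposition 1.2}}, this rewrites as: for every $y\in X$, $y\in\overline{\{x\}}$ implies $y\in\bigcap\mathcal{N}_x$. But this is literally the statement $\overline{\{x\}}\subseteq\bigcap\mathcal{N}_x$. The dual argument handles maximality: $x$ is $\preccurlyeq$-maximal iff $x\preccurlyeq y$ implies $y\preccurlyeq x$, which via the same two identifications becomes $y\in\bigcap\mathcal{N}_x$ implies $y\in\overline{\{x\}}$, i.e., $\bigcap\mathcal{N}_x\subseteq\overline{\{x\}}$.

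There is essentially no obstacle here; the content of the proposition is just a restatement of the previous one. The only thing to be careful about is which principal set corresponds to which direction of the preorder — one must apply the closure identification to the ``below'' condition and the neighborhood-intersection identification to the ``above'' condition, and not swap them. Because both directions of each equivalence are obtained by the same chain of rewrites read forwards and backwards, the argument can be written as a single biconditional chain for each case, with no need to split into two implications.
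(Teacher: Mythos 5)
Your proposal is correct and matches the paper's intent exactly: the paper proves Proposition \ref{Proposition 1.4} by citing \textbf{Proposition \ref{Proposition 1.2}}, and your argument simply spells out that citation, rewriting $y\preccurlyeq x$ as $y\in\overline{\{x\}}$ and $x\preccurlyeq y$ as $y\in\bigcap\mathcal{N}_x$ so that minimality and maximality become the stated inclusions. Nothing further is needed.
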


\begin{proof}

The conclusion follows immediately by \textbf{Proposition \ref{Proposition 1.2}}.
    
\end{proof}

\begin{defn}\label{Definition 1.5}

In a topological space $(X, \tau)$, given $x, y\in X$, we say $x\sim y$ if $x\preccurlyeq y$ and $y\preccurlyeq x$. Hence $\sim$ defines a equivalence relation on $X$. If $x\sim y$, we call $x$ and $y$ \textbf{topological indistinguishable}. When we need to specify $X$ or $\tau$, we may instead use $\sim_X$ or $\sim_{\tau}$ as its notation.

\end{defn}

\begin{lem}\label{Lemma 1.6}

In the set-up of \textbf{Definition 1.5}, given $x, y\in X$, the following are equivalent:

\begin{enumerate}[label = (\alph*)]
    \item $x\sim y \hspace{0.5cm}$ \item $\mathcal{N}_x = \mathcal{N}_y \hspace{0.5cm}$ \item $\overline{\{x\}} = \overline{\{y\}} \hspace{0.5cm}$
\end{enumerate}

\noindent
Moreover:

\begin{equation}\label{e2}
[x]_{\sim} = \overline{\{x\}}\cap \bigcap\mathcal{N}_x
\end{equation}
    
\end{lem}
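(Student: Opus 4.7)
The plan is to reduce everything to \textbf{Proposition \ref{Proposition 1.2}} together with the definition of $\sim$ as the symmetrization of $\preccurlyeq$. Since $x\sim y$ means exactly that $x\preccurlyeq y$ and $y\preccurlyeq x$, each equivalence in the lemma should be obtained by applying one of the already-proved characterizations of $\preccurlyeq$ in both directions.

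Concretely, for (a)$\Longleftrightarrow$(b), I would unwind the definition: $x\preccurlyeq y$ is $\mathcal{N}_x\subseteq\mathcal{N}_y$, and $y\preccurlyeq x$ is $\mathcal{N}_y\subseteq\mathcal{N}_x$, so the conjunction is the set equality $\mathcal{N}_x=\mathcal{N}_y$. For (a)$\Longleftrightarrow$(c), I would instead invoke the equivalence $(1)\Longleftrightarrow(3)$ in \textbf{Proposition \ref{Proposition 1.2}}: $x\preccurlyeq y$ gives $\overline{\{x\}}\subseteq\overline{\{y\}}$, and $y\preccurlyeq x$ gives the reverse inclusion, so together they say $\overline{\{x\}}=\overline{\{y\}}$. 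Conversely, if $\overline{\{x\}}=\overline{\{y\}}$ then both inclusions hold, hence both $x\preccurlyeq y$ and $y\preccurlyeq x$.

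For the displayed formula, I would use the already-computed shapes of the principal up- and down-sets from the end of \textbf{Proposition \ref{Proposition 1.2}}. Namely,
\[
[x]_{\sim} \;=\; \{y\in X:x\preccurlyeq y\}\cap\{y\in X:y\preccurlyeq x\} \;=\; [x]_{\preccurlyeq}\cap [x]_{\succcurlyeq},
\]
and substituting $[x]_{\preccurlyeq}=\bigcap\mathcal{N}_x$ and $[x]_{\succcurlyeq}=\overline{\{x\}}$ yields the claimed identity.

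There is essentially no obstacle here: the lemma is a bookkeeping corollary of \textbf{Proposition \ref{Proposition 1.2}}, and the only care needed is to present the two inclusions symmetrically so that no step silently assumes what it should prove. The write-up can therefore be quite short — a single paragraph citing \textbf{Proposition \ref{Proposition 1.2}} for each of the three equivalences and then assembling the intersection formula.
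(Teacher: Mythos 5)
Your proposal is correct and matches the paper's approach: the paper's proof is simply that the conclusion follows immediately from \textbf{Proposition \ref{Proposition 1.2}}, and your write-up just makes those reductions explicit (unwinding $\sim$ as $\preccurlyeq$ in both directions and intersecting $[x]_{\preccurlyeq}=\bigcap\mathcal{N}_x$ with $[x]_{\succcurlyeq}=\overline{\{x\}}$).
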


\begin{proof}

The conclusion follows immediately by \textbf{Proposition \ref{Proposition 1.2}}.
    
\end{proof}

\begin{lem}\label{Lemma 1.7}

In the set-up of \textbf{Definition \ref{Definition 1.5}}, the following statements are equivalent:

\begin{enumerate}[label = (\alph*)]
    \item $x$ is $\preccurlyeq$-minimal $\hspace{0.5cm}$ \item $[x]_{\sim}$ is closed $\hspace{0.5cm}$ \item For all $N\in\mathcal{N}_x$, $\overline{\{x\}} \subseteq N$
\end{enumerate}

\noindent
In this case, $[x]_{\sim} = \overline{\{x\}}$.
    
\end{lem}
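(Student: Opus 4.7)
The plan is to reduce everything to two earlier results already in hand: Proposition \ref{Proposition 1.4}, which identifies the $\preccurlyeq$-minimality of $x$ with the inclusion $\overline{\{x\}} \subseteq \bigcap \mathcal{N}_x$, and the formula $[x]_\sim = \overline{\{x\}} \cap \bigcap \mathcal{N}_x$ from \textbf{Lemma \ref{Lemma 1.6}}. Once both of these are on the table, the three equivalences essentially collapse to rewriting.

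First, I would observe that (a) $\Longleftrightarrow$ (c) is immediate, since the condition ``$\overline{\{x\}} \subseteq N$ for every $N\in\mathcal{N}_x$'' is just the inclusion $\overline{\{x\}}\subseteq \bigcap\mathcal{N}_x$, which by \textbf{Proposition \ref{Proposition 1.4}} is exactly $\preccurlyeq$-minimality. So the only real content is the interplay between these and (b).

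For (a) $\Longrightarrow$ (b), I would plug the inclusion $\overline{\{x\}}\subseteq \bigcap\mathcal{N}_x$ into the identity (\ref{e2}) from \textbf{Lemma \ref{Lemma 1.6}}, obtaining $[x]_\sim = \overline{\{x\}}$, which is automatically closed. This same computation also takes care of the closing assertion of the lemma. For (b) $\Longrightarrow$ (a), I would use that $[x]_\sim$ is closed and contains $x$, so $\overline{\{x\}}\subseteq [x]_\sim$; combining with $[x]_\sim \subseteq \bigcap\mathcal{N}_x$ (again from (\ref{e2})), we obtain $\overline{\{x\}}\subseteq \bigcap\mathcal{N}_x$, hence minimality by \textbf{Proposition \ref{Proposition 1.4}}.

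There is no real obstacle here; the lemma is structurally a direct corollary of Lemma \ref{Lemma 1.6} and Proposition \ref{Proposition 1.4}, and the only thing to watch is that the final assertion $[x]_\sim = \overline{\{x\}}$ is not separately postulated but falls out of the (a) $\Longrightarrow$ (b) step, so I would arrange the argument to harvest it there rather than re-prove it at the end.
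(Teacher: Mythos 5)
Your proposal is correct and follows essentially the same route as the paper: both rest on Proposition \ref{Proposition 1.4} and the identity (\ref{e2}) from Lemma \ref{Lemma 1.6}, with the only difference being organizational (you prove (a)$\Leftrightarrow$(c) and (a)$\Leftrightarrow$(b) directly, while the paper runs the cycle (a)$\Rightarrow$(b)$\Rightarrow$(c)$\Rightarrow$(a)). The final assertion $[x]_\sim=\overline{\{x\}}$ is harvested from the same computation in both arguments, so nothing is missing.
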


\begin{proof}

When $x$ is $\preccurlyeq$-minimal, according to (\ref{e2}) and \textbf{Proposition \ref{Proposition 1.4}}, we have $[x]_{\sim} = \overline{\{x\}}$, and hence is closed. We then have $(1)\implies (2)$. When $[x]_{\sim}$ is closed, $\overline{\{x\}} \subseteq [x]_{\sim}$. Therefore, $[x]_{\sim}$ is contained in every neighborhood of $x$ according to (\ref{e2}), or $(2)\implies (3)$. $(3)\implies (1)$ follows immediately by \textbf{Proposition \ref{Proposition 1.4}}.
    
\end{proof}

\noindent
We will close this subsection by introducing a few results of the \textbf{Alexandroff topology} and how such type of topology corresponds to a preorder defined on $X$. We will show that given a preorder set $(X, \leq)$, the family $\Sigma(\leq)$ of all upward-closed subsets (see \textbf{Definition \ref{Definition 0.1}}) induces an Alexandroff topology. An important consequence is that every preorder defined on a set $X$ is the specialization preorder of some topology defined on $X$. After showing the correspondence between an Alexandroff topology and a preorder, we will include one more result that shows an Alexandroff topology is also a strong topology (see \textbf{Section 0.5}). Contents about Alexandroff topology are cited from \cite{11}.

\begin{defn}\label{Definition 1.8}

In a topological space $(X, \tau)$, we call $X$ \textbf{Alexandroff} if $\tau$ is closed under arbitrary intersections. Clearly $\tau$ is Alexandroff if and only if and any arbitrary union of $\tau$-closed subsets is also $\tau$-closed.
    
\end{defn}

\begin{theorem}\label{Theorem 1.9}

In a topological space $(X, \tau)$, let $\Sigma(\preccurlyeq)$ denote the family of all $\preccurlyeq$-upward closed subsets (see \textbf{Definition \ref{Definition 0.1}}) of $X$. Then the following are equivalent:

\begin{enumerate}[label = (\alph*)]

    \item $X$ is Alexandroff.
    \item for any collection of subsets $(A_i)_{i\in I}\subseteq X$:

    $$
    \overline{\bigcup_{i\in I}A_i} = \bigcup_{i\in I} \overline{A_i}
    $$

    \item for any $A\subseteq X$ and $x\in X$, if $x\in \overline{A}$, then $x\in \overline{\{y\}}$ for some $y\in A$.

    \item $\Sigma(\preccurlyeq) = \tau$.

    \item for some preorder $\leq$ defined on $X$, $\Sigma(\leq) = \tau$

    \item there exists $B_x\in\mathcal{N}_x$ such that $B_x\subseteq N$ for all $N\in \mathcal{N}_x$. Namely, $\{B_x\}$ forms a singleton neighborhood basis of $x$.
    
\end{enumerate}
    
\end{theorem}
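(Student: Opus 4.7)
The plan is to establish the cycle $(a)\Rightarrow(b)\Rightarrow(c)\Rightarrow(d)\Rightarrow(e)\Rightarrow(a)$ among the first five statements and then handle $(a)\Leftrightarrow(f)$ as a short side equivalence. For $(a)\Rightarrow(b)$, the inclusion $\bigcup_i\overline{A_i}\subseteq\overline{\bigcup_i A_i}$ holds in any space, and under Alexandroff the left-hand side is already closed (by the dual formulation noted in Definition \ref{Definition 1.8}), so it contains the closure on the right. For $(b)\Rightarrow(c)$, I would decompose $A=\bigcup_{y\in A}\{y\}$ and apply (b) to obtain $\overline{A}=\bigcup_{y\in A}\overline{\{y\}}$; then any $x\in\overline{A}$ must lie in some $\overline{\{y\}}$.

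For $(c)\Rightarrow(d)$, the inclusion $\tau\subseteq\Sigma(\preccurlyeq)$ holds in every space, since $x\preccurlyeq y$ means $\mathcal{N}_x\subseteq\mathcal{N}_y$ and hence any $U\in\tau$ containing $x$ also contains $y$. For the reverse, take $A\in\Sigma(\preccurlyeq)$ and show $A^c$ is closed: if $x\in\overline{A^c}$, then by (c) there exists $y\in A^c$ with $x\in\overline{\{y\}}$, i.e., $x\preccurlyeq y$; since $A$ is upward-closed, $A^c$ is downward-closed by Proposition \ref{Proposition 0.3}(b), forcing $x\in A^c$. Then $(d)\Rightarrow(e)$ is trivial (take $\leq\,=\,\preccurlyeq$), and $(e)\Rightarrow(a)$ holds because $\Sigma(\leq)$ is visibly closed under arbitrary intersections.

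For $(a)\Rightarrow(f)$, set $B_x=\bigcap_{U\in\mathcal{U}(x)}U$, which is open by (a) and is therefore the smallest element of $\mathcal{N}_x$. Conversely, under (f) the $B_x$ are automatically open (from $B_x^\circ\in\mathcal{N}_x$ we get $B_x\subseteq B_x^\circ$), so for any family $(U_i)\subseteq\tau$ and any $x\in\bigcap_iU_i$ we have $B_x\subseteq U_i$ for every $i$, exhibiting $\bigcap_iU_i$ as a neighborhood of each of its points. No step is conceptually hard; the only bookkeeping care is to move fluently between the three characterizations of $\preccurlyeq$ supplied by Proposition \ref{Proposition 1.2}, and to invoke Proposition \ref{Proposition 0.3}(b) at the right moment to trade ``$A$ upward-closed'' for ``$A^c$ downward-closed''.
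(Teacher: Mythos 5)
Your proof is correct, and it is a rewiring of the paper's argument rather than a genuinely new one. The paper closes two separate loops, $(a)\Rightarrow(b)\Rightarrow(c)\Rightarrow(a)$ and $(a)\Rightarrow(f)\Rightarrow(d)\Rightarrow(e)\Rightarrow(a)$, whereas you run the single cycle $(a)\Rightarrow(b)\Rightarrow(c)\Rightarrow(d)\Rightarrow(e)\Rightarrow(a)$ and handle $(a)\Leftrightarrow(f)$ on the side. Concretely, your new links replace the paper's as follows: instead of deducing $(c)\Rightarrow(a)$ through the closure formula $\overline{A}=[A]_{\succcurlyeq}$, you prove $(c)\Rightarrow(d)$ by observing that $\tau\subseteq\Sigma(\preccurlyeq)$ holds in every space and that for $A\in\Sigma(\preccurlyeq)$ the complement is downward-closed (\textbf{Proposition \ref{Proposition 0.3}}(b)), so $(c)$ together with \textbf{Proposition \ref{Proposition 1.2}} forces $\overline{A^c}\subseteq A^c$; and instead of the paper's $(f)\Rightarrow(d)$ via $[A]_{\preccurlyeq}=\bigcup_{a\in A}B_a$, you prove $(f)\Rightarrow(a)$ directly. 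The ingredients are the same in both treatments (the identification of $\overline{\{y\}}$ with the down-set of $y$, the minimal neighborhood $B_x=\bigcap\mathcal{U}(x)$, and closure of $\Sigma(\leq)$ under arbitrary intersections), so neither route is more general or shorter in any essential way. One small advantage of yours: in $(f)\Rightarrow(a)$ you justify explicitly that $B_x$ is open, via $B_x^{\circ}\in\mathcal{N}_x$ and hence $B_x\subseteq B_x^{\circ}$, a point that the paper's assertion $\bigcup_{a\in A}B_a\in\tau$ in its proof of $(f)\Rightarrow(d)$ leaves implicit.
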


\begin{proof}

The proof consists of two parts. In the first part we will show the implication$(a) \implies (b) \implies (c) \implies (a)$ and in the second part we will show the implication $(a) \implies (f) \implies (d) \implies (e) \implies (a)$.

\begin{itemize}

\item Assuming $X$ is Alexandroff, then arbitrary union of closed sets is also closed. Then $(b)$ follows immediately. When $(b)$ is true, $(c)$ also holds if we write $A=\bigcup_{a\in A}\{a\}$. If $(c)$ is true, given $A\subseteq X$, together with \textbf{Proposition \ref{Proposition 1.2}}, we have:

$$
\overline{A} = \bigcup_{a\in A} \overline{\{a\}} = \bigcup_{a\in A} [a]_{\succcurlyeq} = [A]_{\succcurlyeq}
$$
which implies that given a family of subsets $(A_i)_{i\in I}$, we have:

$$
\bigcup_{i\in I}\overline{A_i} = \bigcup_{i\in I}\big[ A_i \big]_{\succcurlyeq} = \left[ \bigcup_{i\in I} A_i \right]_{\succcurlyeq} = \overline{\bigcup_{i\in I}A_i}
$$
Therefore when $(c)$ is true, $X$ is Alexandroff.

\item When $X$ is Alexandroff, for each $x\in X$, $B_x = \bigcap\mathcal{U}(x)$ is the desired open neighborhood such that $\{B_x\}$ forms a singleton neighborhood basis of $x$. When $(f)$ is true, observe that for any $A\subseteq X$ according to \textbf{Proposition \ref{Proposition 1.2}}, we have:

$$
[A]_{\preccurlyeq} = \bigcup_{a\in A}[a]_{\preccurlyeq} = \bigcup_{a\in A}\bigcap\mathcal{N}_a = \bigcup_{a\in A}B_a\in \tau
$$
Then we have $\Sigma(\preccurlyeq) \subseteq \tau$. Since all $\tau$-open sets are $\preccurlyeq$-saturated, we have $\Sigma(\preccurlyeq) = \tau$. The implication $(d)\implies (e)$ is immediate. Assuming $(e)$ is true, we will show that $\Sigma(\leq)$ is close under arbitrary intersection. Fix $(A_i)_{i\in I} \subseteq \Sigma(\leq)$ and suppose this family has non-empty intersection. For any $y\in X$. Suppose $y\preccurlyeq x$ for some $x\in \bigcap_{i\in I}A_i$. Then:

$$
y\in [x]_{\leq} = \bigcap_{i\in I}[x]_{\leq} \subseteq \bigcap_{i\in I}[A_i]_{\leq} = \bigcap_{i\in I}A_i
$$
which implies $\bigcap_{i\in I}A_i$ is also $\leq$-saturated.

\end{itemize}
    
\end{proof}

\begin{rem}\label{Remark 1.10}

In the set-up of \textbf{Theorem \ref{Theorem 1.9}}, assuming $X$ is Alexandroff, we can see that for any $x\in X$, $B_x = \bigcap\mathcal{N}_x = [x]_{\geq}$ according to \textbf{Proposition \ref{Proposition 1.2}}. Since each point has a finite neighborhood basis, $X$ is necessarily first countable. Also, the closure and interior operator can be written as follows: for any $A\subseteq X$:

$$
\overline{A} = [A]_{\leq}, \hspace{1cm} A^{\circ} = \big( \big[ A^c \big]_{\leq} \big)^c
$$
In this case, a preorder set $(X, \leq)$ has a natural topology $\Sigma(\leq)$. In this case, by \textbf{Theorem \ref{Theorem 1.9}}, $\leq$ necessarily coincides with the specialization preorder associated with $\Sigma(\leq)$. 
    
\end{rem}

\begin{theorem}

Given a topological space $(X, \tau)$, let $\mathcal{F}(X)$ be the family of all finite subsets. Let $\tau_{\mathcal{F}}$ denote the strong topology induced by the family of inclusion mappings $\big\{ \iota_E: E\in \mathcal{F}(X) \big\}$. If we let $\preccurlyeq$ denote the specialization preorder associated with $\tau$, then $\Sigma(\preccurlyeq) = \tau_{\mathcal{F}}$. Hence $X$ is Alexandroff precisely when $\tau = \tau_{\mathcal{F}}$.
    
\end{theorem}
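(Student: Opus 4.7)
The plan is to prove a double inclusion, combining the definition of the strong topology with the fact, supplied by \textbf{Theorem \ref{Theorem 1.9}}, that every finite topological space is Alexandroff and therefore its open sets are exactly its specialization-upward-closed sets. A small preliminary observation I will use throughout is that for any subspace $E \subseteq X$, the specialization preorder of the subspace topology on $E$ coincides with the restriction of $\preccurlyeq$ to $E$: indeed, for $x, y \in E$ one has $\overline{\{y\}}^{E} = \overline{\{y\}} \cap E$, so $x \preccurlyeq_{E} y \Longleftrightarrow x \in \overline{\{y\}} \Longleftrightarrow x \preccurlyeq y$ by \textbf{Proposition \ref{Proposition 1.2}}.

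For the inclusion $\Sigma(\preccurlyeq) \subseteq \tau_{\mathcal{F}}$, I would take $W \in \Sigma(\preccurlyeq)$ and any finite $E \in \mathcal{F}(X)$, and show that $\iota_E^{-1}(W) = W \cap E$ is open in the subspace $E$. Since $E$ is finite, it is automatically Alexandroff, so by \textbf{Theorem \ref{Theorem 1.9}} the open sets of $E$ are exactly the $\preccurlyeq_{E}$-upward-closed subsets. The preliminary observation reduces checking upward-closedness in $E$ to checking it in $X$: if $x \in W \cap E$ and $y \in E$ with $x \preccurlyeq y$, then $y \in W$ because $W$ is $\preccurlyeq$-saturated, hence $y \in W \cap E$.

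For the reverse inclusion $\tau_{\mathcal{F}} \subseteq \Sigma(\preccurlyeq)$, it suffices to probe with two-point sets. Fix $W \in \tau_{\mathcal{F}}$, take $x \in W$ and $y \in X$ with $x \preccurlyeq y$, and set $E = \{x,y\} \in \mathcal{F}(X)$. Then $W \cap E$ is open in $E$ by definition of $\tau_{\mathcal{F}}$, and by the preliminary observation we have $x \preccurlyeq_{E} y$, i.e.\ $x \in \overline{\{y\}}^{E}$; therefore every open subset of $E$ that contains $x$ must contain $y$, so $y \in W \cap E \subseteq W$. This shows $W$ is $\preccurlyeq$-upward-closed.

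Finally, the characterization of the Alexandroff property follows immediately: by \textbf{Theorem \ref{Theorem 1.9}} $(a) \Longleftrightarrow (d)$, $X$ is Alexandroff iff $\tau = \Sigma(\preccurlyeq)$, and the equality $\Sigma(\preccurlyeq) = \tau_{\mathcal{F}}$ just proved converts this into $\tau = \tau_{\mathcal{F}}$. The only mildly delicate point is the preliminary subspace lemma about specialization preorders; the rest is a direct bookkeeping application of the universal property of the strong topology together with the classification of Alexandroff opens.
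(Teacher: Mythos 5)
Your proof is correct, and while the overall strategy is the same as the paper's (identify the $\tau_{\mathcal{F}}$-open sets with the $\preccurlyeq$-saturated sets by testing against finite subspaces), the execution differs in a way worth noting. The paper argues on the closed side: it shows that a $\succcurlyeq$-saturated set $A$ meets each finite $F$ in a finite union of sets $\overline{\{a\}}\cap F$, hence is $\tau_{\mathcal{F}}$-closed, and then runs a pointwise contradiction argument for the converse in which it extracts a $\tau_{\mathcal{F}}$-open set $O$ and asserts that $O\cap\{z\}=\{z\}$ is $\tau$-open --- a step that, as written, only gives relative openness in the trivial one-point subspace and so does not literally deliver what is claimed; the intended argument is presumably to test against a two-point set. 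Your version makes exactly that test explicit: the preliminary lemma that the subspace specialization preorder is the restriction of $\preccurlyeq$ (which you correctly reduce to $\operatorname{cl}_E(\{y\})=\overline{\{y\}}\cap E$), plus the observation that finite spaces are Alexandroff so \textbf{Theorem \ref{Theorem 1.9}} identifies their open sets with their up-sets, gives the inclusion $\Sigma(\preccurlyeq)\subseteq\tau_{\mathcal{F}}$ without the union-of-closures computation, and probing with $E=\{x,y\}$ gives the reverse inclusion cleanly, avoiding the questionable step in the paper's converse. The final reduction of the Alexandroff characterization to \textbf{Theorem \ref{Theorem 1.9}}$(a)\Leftrightarrow(d)$ matches the paper. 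In short: same skeleton, but your open-set/two-point formulation is tighter and repairs the one shaky spot in the published argument.
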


\begin{proof}

By definition (see \textbf{Section 0.5}), a set $A\subseteq X$ is $\tau_{\mathcal{F}}$-closed if and only if for all $F\in \mathcal{F}(X)$, $A\cap F$ is relatively closed in $F$. Then all $\tau$-closed subsets are $\tau_{\mathcal{F}}$-closed.
By \textbf{Proposition \ref{Proposition 0.3}}, it suffices to show that the set of $\tau_{\mathcal{F}}$-closed sets are precisely those $\succcurlyeq$-saturated sets. Fix $A\subseteq X$ such that $A=[A]_{\succcurlyeq}$. Then according to \textbf{Proposition \ref{Proposition 1.2}}, for any $F\in\mathcal{F}$:

$$
A \cap F = [A]_{\succcurlyeq}\cap F = \bigcup_{a\in A} [a]_{\succcurlyeq}\cap F = \bigcup_{a\in A} \big( \overline{\{a\}}^{\tau} \cap F \big)
$$
which is a finite union of $\tau\Big|_F$-closed subsets, and hence $\tau\Big|_F$-closed. Therefore $A$ is $\tau_{\mathcal{F}}$-closed. Conversely, given $C\subseteq X$ a $\tau_{\mathcal{F}}$-closed set and $y\in X$, suppose $y\preccurlyeq x$ for some $x\in C$. Assume that there exists $z\in \overline{\{y\}}$ such that $z\notin C$. By \textbf{Proposition \ref{Proposition 1.2}}, we have $z\preccurlyeq y \preccurlyeq x$, or $z\preccurlyeq x$ by transitivity. Then there exists $O\subseteq X$ a $\tau_{\mathcal{F}}$-open subset such that $z\in O$ and $O$ is disjoint from $C$. By definition, $O\cap\{z\} = \{z\}$ is $\tau$-open, which implies $z=x$ since $\mathcal{N}_z \subseteq \mathcal{N}_x$. From this we obtain a contradiction since $x\in C$. Therefore $\overline{\{y\}}\subseteq C$ for all $y\preccurlyeq x$. In particular, $[x]_{\succcurlyeq} \subseteq C$, and hence $C\in \Sigma(\succcurlyeq)$. Then as a consequence of \textbf{Theorem \ref{Theorem 1.9}}, $\tau=\tau_{\mathcal{F}}$ precisely when $X$ is Alexandroff.
    
\end{proof}

\begin{rem}

We can recast the previous discussion in terms of category theory. Let \textbf{ATop} be the full subcategory of \textbf{Top} whose objects are Alexandroff spaces. It is clear that a continuous mapping between two topological spaces preserve specialization preorder. This implies there is a functor \textbf{S}: \textbf{Top} $\rightarrow$ \textbf{Preord} which sends $(X, \tau)$ to $(X, \preccurlyeq_X)$ and a continuous map to itself. In general, given a preorder set $(X, \leq)$ and a topological space $(Y, \tau)$, one can check that a mapping $f:X\rightarrow Y$ is $\Sigma(\leq)-\tau$ continuous precisely when $f$ is $\leq-\preccurlyeq_Y$ monotone.\\

\noindent
Therefore, given $(X, \leq_X)$, $(Y, \leq_Y)$ two preorder sets, if $f:X\rightarrow Y$ is $\leq_X$-$\leq_Y$ monotone, $f$ is $\Sigma(\leq_X)$-$\Sigma(\leq_Y)$ continuous. Hence we have another functor \textbf{U}: \textbf{Preord} $\rightarrow$ \textbf{ATop} which sends $(X, \leq_X)$ to $\big(X, \Sigma(\leq_X) \big)$, and a continuous mapping to itself. By \textbf{Theorem \ref{Theorem 1.9}}, \textbf{SU} is the identity functor of \textbf{Preord} and $\textbf{US}$, when restricted to \textbf{ATop}, is the identity functor of \textbf{ATop}. This shows \textbf{U} is an isomorphism from \textbf{Preord} to \textbf{ATop} with inverse $\textbf{S}\big|_{\textbf{ATop}}$. Moreover, for any preorder set $(X, \leq_X)$ and topological space $(Y, \tau)$:

$$
\operatorname{Hom}_{\textbf{Top}}\Big( \textbf{U}\big( X, \leq_X \big) \,,\, (Y, \tau) \Big) = \operatorname{Hom}_{\textbf{Preord}} \Big( \big( X, \leq_X \big) \,,\, \textbf{S}(Y, \tau) \Big)
$$
Therefore \textbf{U} and \textbf{S} form an adjoint pair. We can now conclude that \textbf{ATop} is a \textbf{co-reflective subcategory} of \textbf{Top} (namely a subcategory whose inclusion functor has a right adjoint).
    
\end{rem}

\section{\texorpdfstring{$T_0$}{T0} spaces}

In this section we will introduce and provide several characterizations of a $T_0$ spaces. The first result shows that $X$ is $T_0$ if and only if the specialization preorder is a partial order. Then we may speak of the specialization order when $X$ is $T_0$.

\begin{defn}

A topological space $(X, \tau)$ is $T_0$ if for any two different points $x, y\in X$, there exists an open set that contains exactly one of them.
    
\end{defn}

\begin{prop}\label{Proposition 2.1}

Given a topological space $(X, \tau)$, the following are equivalent:

\begin{enumerate}[label = (\arabic*)]
    \item $X$ is $T_0$
    \item the specialization preorder $\preccurlyeq$ is anti-symmetric. Namely, if $x\preccurlyeq y$ and $y\preccurlyeq x$, then $x=y$.
    \item for any $x, y\in X$, $\mathcal{N}_x = \mathcal{N}_y$ implies $x=y$
\end{enumerate}
    
\end{prop}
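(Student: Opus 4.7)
The plan is to exploit the fact that all three conditions are really statements about when distinct points can share the same neighborhood filter. The cleanest route is to show $(2)\Longleftrightarrow(3)$ via \textbf{Lemma \ref{Lemma 1.6}}, and then $(1)\Longleftrightarrow(3)$ by a short direct argument unpacking the definitions of $T_0$ and of $\mathcal{N}_x$.

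For $(2)\Longleftrightarrow(3)$: anti-symmetry of $\preccurlyeq$ says exactly that $x\preccurlyeq y$ and $y\preccurlyeq x$ together force $x=y$, which by \textbf{Lemma \ref{Lemma 1.6}} is the statement ``$\mathcal{N}_x = \mathcal{N}_y\Rightarrow x=y$'' verbatim (since $x\sim y$ iff $\mathcal{N}_x=\mathcal{N}_y$). So this equivalence is immediate.

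For $(1)\Longrightarrow(3)$: suppose $X$ is $T_0$ and $x\neq y$. Then there exists $U\in\tau$ containing exactly one of them, say $x\in U$ and $y\notin U$. Then $U\in\mathcal{N}_x\setminus\mathcal{N}_y$, so $\mathcal{N}_x\neq\mathcal{N}_y$. Contrapositively, $\mathcal{N}_x=\mathcal{N}_y$ forces $x=y$. For $(3)\Longrightarrow(1)$: suppose $(3)$ holds and $x\neq y$. Then $\mathcal{N}_x\neq\mathcal{N}_y$, so without loss of generality there is $N\in\mathcal{N}_x\setminus\mathcal{N}_y$. Pick $U\in\tau$ with $x\in U\subseteq N$; if $y\in U$ then $U\in\mathcal{N}_y$ would give $N\in\mathcal{N}_y$, a contradiction. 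Hence $U$ is an open set containing $x$ but not $y$, witnessing the $T_0$ axiom.

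The main ``obstacle'' here is really just making sure one handles the asymmetry in the $T_0$ definition correctly (the set-difference $\mathcal{N}_x\setminus\mathcal{N}_y$ may be nonempty in only one direction), but this is handled by the ``without loss of generality'' step and the standard move from a neighborhood to an open neighborhood contained in it. No additional machinery beyond \textbf{Proposition \ref{Proposition 1.2}} and \textbf{Lemma \ref{Lemma 1.6}} is needed.
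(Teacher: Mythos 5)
Your proposal is correct and follows essentially the same route as the paper: the equivalence $(2)\Longleftrightarrow(3)$ via \textbf{Lemma \ref{Lemma 1.6}}, and a direct unpacking of the definitions for $(1)\Longleftrightarrow(3)$. Your version is slightly more careful in the $(3)\Longrightarrow(1)$ direction, where you explicitly shrink the neighborhood $N$ to an open $U$ and check $y\notin U$, a step the paper glosses over.
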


\begin{proof}

The equivalence $(2)\Longleftrightarrow (3)$ follows immediately by \textbf{Lemma \ref{Lemma 1.6}}. It remains to show that $(1)$ is equivalent to $(3)$. First we assume $X$ is $T_0$. Given $x, y\in X$, by definition of a $T_0$ space, there exists an open subset $N\in \mathcal{N}_x \backslash \mathcal{N}_y$, which implies $\mathcal{N}_x \neq \mathcal{N}_y$. Conversely, given two different points $x, y$, we have $\mathcal{N}_x\neq \mathcal{N}_y$. Then $\mathcal{N}_x \backslash \mathcal{N}_y$ or $\mathcal{N}_y\backslash \mathcal{N}_x$ is non-empty, and hence there exists an open subset that contains exactly one of $x$ and $y$.
    
\end{proof}

\begin{prop}\label{Propopsition 2.2}

In the set-up of \textbf{Proposition \ref{Proposition 2.1}}, given $x\in X$, $[x]_{\sim} = \{x\}$ if and only if $\{x\}'$ (the accumulation points of $\{x\}$) is a union of closed sets.
    
\end{prop}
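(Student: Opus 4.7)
The plan is to unpack both $[x]_{\sim}$ and $\{x\}'$ in terms of the specialization preorder and then show each direction by inspecting how the principal down-sets $\overline{\{y\}} = [y]_{\succcurlyeq}$ sit inside $\overline{\{x\}}$. First I would note the auxiliary identity $\{x\}' = \overline{\{x\}} \setminus \{x\}$: for $y \neq x$, $y$ is an accumulation point of $\{x\}$ iff every neighborhood of $y$ meets $\{x\}$, i.e.\ contains $x$, which by \textbf{Proposition \ref{Proposition 1.2}} is $y \preccurlyeq x$, i.e.\ $y \in \overline{\{x\}}$; and $x$ itself is never an accumulation point of $\{x\}$ since $\{x\} \setminus \{x\} = \emptyset$.

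For the forward direction, assume $[x]_{\sim} = \{x\}$. I would pick an arbitrary $y \in \{x\}' = \overline{\{x\}} \setminus \{x\}$. Then $y \preccurlyeq x$ with $y \neq x$, so by the hypothesis we cannot also have $x \preccurlyeq y$; by \textbf{Proposition \ref{Proposition 1.2}}, this means $x \notin \overline{\{y\}}$. Combined with $\overline{\{y\}} \subseteq \overline{\{x\}}$ (which also comes from \textbf{Proposition \ref{Proposition 1.2}} applied to $y \preccurlyeq x$), this gives $\overline{\{y\}} \subseteq \overline{\{x\}} \setminus \{x\} = \{x\}'$. Hence
$$
\{x\}' = \bigcup_{y \in \{x\}'} \overline{\{y\}},
$$
which is manifestly a union of closed sets.

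For the reverse direction, suppose $\{x\}' = \bigcup_{i \in I} F_i$ with each $F_i$ closed, and suppose for contradiction that some $y \in [x]_{\sim}$ satisfies $y \neq x$. By \textbf{Lemma \ref{Lemma 1.6}}, $\overline{\{y\}} = \overline{\{x\}}$, so in particular $x \in \overline{\{y\}}$. On the other hand, $y \in \overline{\{x\}} \setminus \{x\} = \{x\}'$, so $y \in F_i$ for some $i$, whence $\overline{\{y\}} \subseteq F_i \subseteq \{x\}'$. This forces $x \in \{x\}'$, contradicting the fact that $x \notin \{x\}'$. Therefore $[x]_{\sim} = \{x\}$.

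There is no real obstacle here; the argument is essentially a book-keeping exercise once one identifies $\{x\}'$ with $\overline{\{x\}} \setminus \{x\}$ and uses that closed sets containing $y$ must contain $\overline{\{y\}}$. The only point requiring a little care is the very first observation that $x$ itself does not belong to $\{x\}'$, which is what makes the equivalence non-vacuous.
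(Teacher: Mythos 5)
Your proposal is correct and follows essentially the same route as the paper: both directions rest on the identity $\{x\}' = \overline{\{x\}}\setminus\{x\}$, the forward direction produces the same decomposition $\{x\}' = \bigcup_{y\in\{x\}'}\overline{\{y\}}$, and the reverse direction uses that a closed piece of $\{x\}'$ containing $y$ must contain $\overline{\{y\}}$, forcing $x\npreccurlyeq y$. The only cosmetic difference is that you phrase the converse as a contradiction while the paper argues it directly, and you spell out the justification of $\{x\}' = \overline{\{x\}}\setminus\{x\}$ which the paper simply asserts.
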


\begin{proof}

First assume that $[x]_{\sim} = \{x\}$. Observe that $\{x\}' = \overline{\{x\}} \backslash \{x\}$. Given $y\in \{x\}'$, by \textbf{Proposition \ref{Proposition 1.2}}, we have $y\preccurlyeq x$ and $y\neq x$. Since we assume that $[x]_{\sim} = \{x\}$, we must have $x\npreccurlyeq y$, or $x\notin \overline{\{y\}}$. This implies that for any $y\in \{x\}'$:

$$
\overline{\{y\}} \subseteq \overline{\{x\}} \backslash \{x\} = \{x\}'
$$
which implies:

$$
\{x\}' = \bigcup_{y\in \{x\}'} \overline{\{y\}}
$$
Conversely, assume that $\{x\}'$ is a union of closed sets. Let $y\in X$ be different from $x$. If $y\notin \overline{\{x\}}$, then $y\npreccurlyeq x$ according to \textbf{Proposition \ref{Proposition 1.2}}, or $y\notin [x]_{\sim}$. Otherwise, since $\{x\}'$ is a union of closed sets, from $y\in \overline{\{x\}}$, we must have $\overline{\{y\}} \subseteq \{x\}'$. This implies $x\notin \overline{\{y\}}$, or $x\npreccurlyeq y$. Now we can conclude that there are no points different from $x$ in $[x]_{\sim}$.
    
\end{proof}

\begin{defn}\label{Definition 2.3}

Given a topological space, let $\sim$ denote the equivalence relation defined in \textbf{Definition \ref{Definition 1.5}}. Define $X_0 = X\slash \sim$ and let $q:X\rightarrow X_0$ be the quotient mapping. Let $\tau_0$ denote the quotient topology on $X_0$. We then call $(X_0, \tau_0)$ the $T_0$-\textbf{quotient} of $(X, \tau)$.
    
\end{defn}

\begin{theorem}\label{Theorem 2.4}

In the set-up of \textbf{Definition \ref{Definition 2.3}}:

\begin{enumerate}[label = (\alph*)]

    \item $q$ is both open and closed
    \item Consider the following mapping:

    $$
    \phi:\tau\rightarrow \tau_0, \hspace{0.4cm} U\mapsto q(U)
    $$
    The mapping $\phi$ is bijective and satisfies that given a family of $\tau$-open subsets $(U_i)_{i\in I}$ and for any two different $i, j\in I$:

    \begin{equation}\label{e3}
    \phi\left( \bigcup_{i\in I}U_i \right) = \bigcup_{i\in I}\phi(U_i), \hspace{1cm} \phi(U_i\cap U_j) = \phi(U_i)\cap \phi(U_j)
    \end{equation}

    \item For any $x, y\in X$, $x\preccurlyeq_X y$ if and only if $q(x) \preccurlyeq_{X_0} q(y)$. It follows that $(X_0, \tau_0)$ is $T_0$.

    \item $X$ is $T_0$ if and only if $q$ is injective. In this case, $q$ is a homeomorphism according to $(a)$.

    \item The $T_0$-quotient has the following universal property: any continuous mapping from $X$ to a $T_0$ space factors through $q$. That is, if $f:X\rightarrow Y$ is continuous and $Y$ is $T_0$, there exists a unique $f_0:X_0 \rightarrow Y$ such that $f=f_0\circ q$.
    
\end{enumerate}
    
\end{theorem}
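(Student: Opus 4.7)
The plan hinges on one simple observation: if $U \in \tau$ is open and $a \in U$, then any $x$ with $x \sim a$ satisfies $U \in \mathcal{N}_a = \mathcal{N}_x$, forcing $x \in U$; so $[U]_{\sim} = U$. Dually $[C]_{\sim} = C$ for closed $C$. Since $q^{-1}(q(A)) = [A]_{\sim}$ for any $A \subseteq X$, this immediately yields \emph{(a)}: $q(U)$ is open in $\tau_0$ because $q^{-1}(q(U)) = U$ is open, and the same argument handles closed sets.

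For \emph{(b)}, surjectivity of $\phi$ is built into the definition of the quotient topology (any $V \in \tau_0$ equals $q(q^{-1}(V))$ with $q^{-1}(V)$ open), and injectivity follows from $[U]_{\sim} = U$ applied to both $U_1, U_2$ after taking $q^{-1}$. The union identity is automatic since $\phi$ is defined as direct image. For the intersection identity only the inclusion $\phi(U_i) \cap \phi(U_j) \subseteq \phi(U_i \cap U_j)$ is nontrivial: if $q(a) = q(b)$ with $a \in U_i$, $b \in U_j$, then $a \sim b$ together with openness of $U_j$ gives $a \in [U_j]_{\sim} = U_j$, so $a \in U_i \cap U_j$.

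For \emph{(c)}, the forward direction uses continuity of $q$ together with \textbf{Proposition \ref{Proposition 1.2}}: $x \in \overline{\{y\}}$ implies $q(x) \in \overline{\{q(y)\}}$. For the converse, given any open $U \ni x$, part \emph{(a)} makes $q(U)$ open with $q(x) \in q(U)$, so $q(y) \in q(U)$, which provides some $u \in U$ with $y \sim u$; openness of $U$ then forces $y \in U$ via the key lemma. Hence $\mathcal{N}_x \subseteq \mathcal{N}_y$. The $T_0$ property of $X_0$ follows: if $q(x) \sim_{X_0} q(y)$, the established equivalence makes $x \sim_X y$, so $q(x) = q(y)$, and I invoke \textbf{Proposition \ref{Proposition 2.1}}. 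Part \emph{(d)} is nearly tautological: by \textbf{Proposition \ref{Proposition 2.1}}, $T_0$ amounts to $\sim$ being equality, which is injectivity of $q$, and a continuous open bijection is a homeomorphism.

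For \emph{(e)}, the core step is showing $f$ is constant on $\sim$-classes. Given $x \sim x'$, \textbf{Lemma \ref{Lemma 1.6}} gives $\overline{\{x\}} = \overline{\{x'\}}$, and continuity of $f$ yields $f(x') \in f(\overline{\{x\}}) \subseteq \overline{\{f(x)\}}$; by symmetry $f(x) \sim_Y f(x')$, and $Y$ being $T_0$ collapses this to $f(x) = f(x')$. Setting $f_0([x]_{\sim}) := f(x)$ is then well-defined and uniquely determined by $f = f_0 \circ q$; its continuity is immediate from the defining universal property of the quotient topology, since $q^{-1}(f_0^{-1}(V)) = f^{-1}(V)$. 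The only genuinely delicate observation throughout is the identity $[U]_{\sim} = U$ for open $U$; once it is recorded, the rest of the theorem essentially writes itself.
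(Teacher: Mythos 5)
Your proposal is correct and follows essentially the same route as the paper: everything rests on the saturation identity $q^{-1}\big(q(A)\big) = [A]_{\sim}$ together with $[U]_{\sim}=U$ for open (and closed) sets, and parts (a)--(d) proceed just as in the text (openness/closedness of $q$, injectivity of $\phi$ from saturation, the transfer of the specialization preorder via $\mathcal{U}(q(x)) = \{q(U): U\in\mathcal{U}(x)\}$, and $T_0$ iff $q$ injective). The only place you diverge is the continuity of $f_0$ in (e), which you obtain directly from the final-topology property of the quotient via $q^{-1}\big(f_0^{-1}(V)\big) = f^{-1}(V)$; this is cleaner and more standard than the paper's argument through convergent nets and choices of representatives, and is perfectly valid.
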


\begin{proof}

Note that for any $A\subseteq X$, $q^{-1}\big( q(A) \big) = [A]_{\sim}$. According to \textbf{Lemma \ref{Lemma 1.6}}, if $A$ is open or closed, then $[A]_{\sim} = A$, or $q^{-1}\big( q(A) \big) = A$. This implies that $q$ is open and closed, and proves $(a)$. Since open subsets of $X$ are $\sim$-saturated, given $U, V\in \tau$, if $q(U) = q(V)$, we must have $U=V$. By the definition of $\tau_0$, $\phi$ is then a bijection, and (\ref{e3}) follows immediately. This proves $(b)$. By $(b)$, we have:

\begin{equation}\label{e5}
\mathcal{U}\big( q(x) \big) = \big\{ q(U):U\in \mathcal{U}(x) \big\}
\end{equation}
Now fix $x, y\in X$. If $x\preccurlyeq y$, by \textbf{Proposition \ref{Proposition 1.2}} and the fact that $q$ is $\tau$-$\tau_0$ continuous, we have:

$$
q(x)\in q\big( \overline{\{y\}} \big) \subseteq \overline{\{q(y)\}}
$$
or $q(x)\preccurlyeq q(y)$. Conversely, suppose that $q(x)\preccurlyeq q(y)$, or $q(x)\in \overline{\{q(y)\}}$. By (\ref{e5}), we have that for any $U\in \mathcal{U}(x)$, $q(y)\in q(U)$, and hence $y\in U$ as $U$ is $\sim$-saturated. This implies $y\in U$ for all $U\in\mathcal{U}(x)$, or $x\preccurlyeq y$ again by \textbf{Proposition \ref{Proposition 1.2}}. Together with \textbf{Proposition \ref{Proposition 1.2}} and \textbf{Lemma \ref{Lemma 1.6}}, we have:

$$
\begin{aligned}
\big[ q(x) \big]_{\sim}
& = \big\{ q(y)\in X_0: q(y) \preccurlyeq q(x) \big\} \cap \big\{ q(y)\in X_0: q(x)\preccurlyeq q(y) \big\} \\
& = q\left( \big\{ y\in X: y\preccurlyeq x \big\} \right) \cap q\left( \big\{ y\in X: x\preccurlyeq y \big\} \right)\\
& = q\big( [x]_{\sim} \big) = \{q(x)\}
\end{aligned}
$$
which implies $X_0$ is $T_0$. This proves $(c)$. If $X$ is $T_0$, clearly $[x]_{\sim} = \{x\}$, and hence $q$ is injective. Conversely, when $q$ is injective, assume there is $y\in[x]_{\sim} \backslash \{x\}$. According to \textbf{Lemma \ref{Lemma 1.6}}, $y\in [y]_{\sim} \cap [x]_{\sim}$. Meanwhile, we have $q^{-1}\big( q(x) \big) \neq q^{-1}\big( q(y) \big)$, or $[x]_{\sim}$ is disjoint from $[y]_{\sim}$. This leads to a contradiction, and hence $X$ is $T_0$ when $q$ is injective. \\

\noindent
For part $(e)$, first given a continuous mapping $f:X\rightarrow Y$, observe that if $x,y\in X$ satisfies $x\preccurlyeq_X y$, then by \textbf{Proposition \ref{Proposition 1.2}}:

$$
f(x)\in f\big( \{\overline{\{y\}} \big)\subseteq \overline{\{f(y)\}}
$$
which implies $f(x)\preccurlyeq_Y f(y)$. Then for a fixed $x\in X$, by \textbf{Proposition \ref{Proposition 1.2}}, we have:

$$
\begin{aligned}
f\big( [x]_{\sim} \big)
& = f\big( \{y\in X:y\preccurlyeq x\} \cap \{y\in X: x\preccurlyeq y\} \big) \\
& \subseteq \big\{ y'\in Y: y'\preccurlyeq f(x) \big\} \cap \big\{y'\in Y: f(x)\preccurlyeq y' \big\} \\
& = \big[ f(x) \big]_{\sim} = \{f(x)\}
\end{aligned}
$$
since $Y$ is $T_0$. Therefore, the following mapping is well-defined:

$$
f_0: X_0\rightarrow Y, \hspace{0.3cm} q(x)\mapsto f(x)
$$
Indeed, by our previous reasoning, given $x, y\in X$ that satisfies $q(x)=q(y)$, we must have $f_0(x) = f_0(y)$. Next we will show that $f_0$ is continuous. Suppose $\big( q(x_i) \big)_{i\in I}$ is a net converging to $q(x)$ in $X_0$. By (\ref{e5}), an open neighborhood of $q(x)$ is equal to $q(U)$ for some $U\in \mathcal{U}(x)$. Fix $U\in\mathcal{U}(x)$ and suppose for some $J\in I$, $q(x_i)\in q(U)$ for all $i\geq J$. This implies:

$$
q^{-1} \left( \bigcup_{i\geq J} \big\{ q(x_i) \big\} \right) = \bigcup_{i\geq J} q^{-1}\big( \{q(x_i)\} \big) = \bigcup_{i\geq J}[x_i]_{\sim} \subseteq U \hspace{1cm} \Longrightarrow \hspace{1cm} (x_i)_{i\geq J} \subseteq U
$$
This implies that $(x_i)_{i\in I}$ converges to $x$ where each $x_i$ is one representative in $[x_i]_{\sim}$. We then have $f_0\big( q(x_i) \big)$ converges to $f_0\big( q(x) \big)$ as $f$ is continuous. The uniqueness of $f_0$ follows by $f_0\circ q = f$.
    
\end{proof}

\begin{rem}\label{Remark 2.6}

In the set-up of \textbf{Theorem \ref{Theorem 2.4}}, given a continuous mapping $f:(X, \tau_X) \rightarrow (Y, \tau_Y)$, by \textbf{Theorem \ref{Theorem 2.4}(e)}, there exists a unique continuous mapping $f_0:X_0 \rightarrow Y_0$ such that the following diagram commutes:

$$
\begin{tikzcd}
X \arrow{r}{f} \arrow{d}{q_X} & Y \arrow{d}{q_Y}\\
X_0  \arrow{r}{f_0} & Y_0 
\end{tikzcd}
$$
Let $\textbf{T}_0$ denote the full subcategory of \textbf{Top} consisting of $T_0$ spaces. With the diagram above, we can conclude the $T_0$-quotient defines a functor \textbf{Top} $\rightarrow$ $\textbf{T}_0$ that is left adjoint to the inclusion functor $\textbf{T}_0$ $\rightarrow$ \textbf{Top}. This says $\textbf{T}_0$ is a \textbf{reflective subcategory} of \textbf{Top} (see \cite{34} for more reflective subcategories in \textbf{Top}).
    
\end{rem}

\noindent
Passage to the $T_0$-quotient reduces almost all problems to the setting of $T_0$ spaces. We will later see several properties shared by both $X$ and $X_0$. Given $\mathcal{P}$ a property in topological spaces, if $\mathcal{P}$ implies $T_0$, we can in principal define a new property $r(\mathcal{P})$: 

$$
X\,\text{  satisfies  }\,r(\mathcal{P})\,\text{  if  }\,X_0\,\text{  satisfies  }\,\mathcal{P}
$$
Moreover, we can easily show that $X$ satisfies $\mathcal{P}$ if and only if $X$ is $T_0$ and satisfies $r(\mathcal{P})$. Thanks to the $T_0$ quotient, there is no absolute need to introduce such new properties. However, in the proofs of certain facts concerning $\mathcal{P}$, $T_0$ may have no roles to play, so in fact $r(\mathcal{P})$ is at the heart of the issue. Our policy is to avoid using the $T_0$-quotient unless there are considerable benefits of doing so. Below is a list of properties to be explored in the sequel:

\begin{center}
\begin{tabular}{| c | c |} 
 \hline
 $\mathcal{P}$ & $r(\mathcal{P})$ \\ [0.5ex] 
 \hline
 $T_1$ & $R_0$ \\ [0.5ex]
 $T_2$ & $R_1$ \\ [0.5ex]
 Urysohn & Weakly Urysohn \\ [0.5ex]
 Discrete & Almost discrete\\ [0.5ex]
 $T_D$ & $R_d$ \\ [0.5ex]
 Sober & Quasi-sober \\ [0.5ex]
 Locally $T_2$ & Locally $R_1$ \\[1ex]
 \hline
\end{tabular}
\end{center}

\noindent
Next we will demonstrate how to transfer concepts between a space and its $T_0$ quotient. Specifically, we will ask: what does it mean for a point $x\in X$ and a subset $A\subseteq X$ when $q(x)\in X_0$ is an accumulation point of $q(A)$, given that $q:X\rightarrow X_0$ is the quotient mapping to the $T_0$-quotient? 

\begin{defn}\label{Definition 2.6}

Given a topological space $(X, \tau)$ and $q:X\rightarrow X_0$ the quotient mapping from $X$ to its $T_0$ quotient $X_0$, we call $x\in X$ a \textbf{strong accumulation point} or $\sim$-\textbf{accumulation point} if $q(x)\in q(A)'$. Given $A\subseteq X$, define:

$$
A^{\triangledown} = q^{-1}\big( q(A)' \big)
$$
Clearly $A^{\triangledown} = A'$ when $X$ is $T_0$. We can also immediately see that $A^{\triangledown}$ is $\sim$-saturated and $A^{\triangledown} = \big( [A]_{\sim} \big)^{\triangledown}$.
    
\end{defn}

\begin{prop}\label{Proposition 2.6}

Given a topological space $(X, \tau)$, $x\in X$ and $A\subseteq X$:

\begin{enumerate}[label = (\alph*)]

    \item $x\in A^{\triangledown}$ if and only if $(U\cap A)\backslash \{x\} \neq \emptyset$ for all $U\in\mathcal{N}_x$

    \item $\overline{A} = [A]_{\sim} \cup A^{\triangledown}$

    \item $\{x\}^{\triangledown} = \overline{\{x\}} \backslash [x]_{\sim}$
    
\end{enumerate}
    
\end{prop}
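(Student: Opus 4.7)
The plan is to deduce all three parts from the definition $A^{\triangledown} = q^{-1}(q(A)')$, together with two facts already in hand: open and closed subsets of $X$ are $\sim$-saturated (\textbf{Lemma \ref{Lemma 1.6}}), and the open neighborhoods of $q(x)$ in $X_0$ are exactly the images $q(U)$ with $U \in \mathcal{U}(x)$, namely equation (\ref{e5}) of \textbf{Theorem \ref{Theorem 2.4}}.

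For part $(a)$, I would unravel the definitions. The condition $x \in A^{\triangledown}$ says $q(x) \in q(A)'$, i.e.\ every open neighborhood of $q(x)$ meets $q(A)$ at a point distinct from $q(x)$. Translating via (\ref{e5}), this becomes: for every $U \in \mathcal{U}(x)$, there is some $y \in A$ with $q(y) \in q(U)$ and $q(y) \neq q(x)$. Since $U$ is $\sim$-saturated, $q(y) \in q(U)$ lifts to $y \in U$, and $q(y) \neq q(x)$ gives $y \notin [x]_{\sim}$, hence $y \neq x$. Conversely, given $U \in \mathcal{N}_x$, passing to $U^{\circ}$ and running the same correspondence inside $X_0$ produces a witness of $q(x) \in q(A)'$.

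For part $(b)$, I would work in $X_0$ using the standard identity $\overline{q(A)} = q(A) \cup q(A)'$. Because $q$ is closed by \textbf{Theorem \ref{Theorem 2.4}(a)} and every closed set is $\sim$-saturated by \textbf{Lemma \ref{Lemma 1.6}}, we have $\overline{A} = q^{-1}\bigl(q(\overline{A})\bigr) = q^{-1}\bigl(\overline{q(A)}\bigr)$. Applying $q^{-1}$ to the union then yields
$$
\overline{A} = q^{-1}\bigl(q(A)\bigr) \cup q^{-1}\bigl(q(A)'\bigr) = [A]_{\sim} \cup A^{\triangledown},
$$
where the first term is recognized as $[A]_{\sim}$ directly from the definition of $\sim$.

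For part $(c)$, I would specialize $(b)$ to $A = \{x\}$, obtaining $\overline{\{x\}} = [x]_{\sim} \cup \{x\}^{\triangledown}$, and then show the union is disjoint. Since $X_0$ is $T_0$ by \textbf{Theorem \ref{Theorem 2.4}(c)}, the singleton $\{q(x)\}$ cannot have $q(x)$ as an accumulation point, so $q(x) \notin \{q(x)\}'$. Therefore any $y \in \{x\}^{\triangledown}$ satisfies $q(y) \neq q(x)$, i.e.\ $y \notin [x]_{\sim}$, establishing disjointness and the desired equality $\{x\}^{\triangledown} = \overline{\{x\}} \setminus [x]_{\sim}$.

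The main obstacle lies in $(a)$, where one must carefully match the accumulation-point condition downstairs in $X_0$ with a purely neighborhood-based condition upstairs in $X$; the other parts are essentially bookkeeping with $q$. The crucial technical point throughout is the $\sim$-saturation of open and closed sets, which allows points and neighborhoods to be lifted cleanly between $X$ and its $T_0$-quotient.
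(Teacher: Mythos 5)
Your part (b) is the paper's argument essentially verbatim, and your route to (c) --- specializing (b) to $A=\{x\}$ and then checking that $[x]_{\sim}$ and $\{x\}^{\triangledown}$ are disjoint because $q(x)\notin\{q(x)\}'$ --- is a clean alternative to the paper, which instead reads (c) off directly from the pointwise form of (a). (The appeal to $T_0$-ness of $X_0$ there is unnecessary: no point of any space is an accumulation point of its own singleton.)

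The genuine gap is in the converse of (a). From $(U\cap A)\setminus\{x\}\neq\emptyset$ you obtain some $y\in U\cap A$ with $y\neq x$, but to witness $q(x)\in q(A)'$ you need $q(y)\neq q(x)$, i.e.\ $y\nsim x$, and $y\neq x$ does not give this. Indeed the biconditional as literally stated fails: in the two-point indiscrete space $X=\{a,b\}$ with $A=\{b\}$ and $x=a$, every neighborhood of $x$ meets $A$ in a point other than $x$, yet $X_0$ is a one-point space, so $q(A)'=\emptyset$ and hence $A^{\triangledown}=\emptyset$. What your translation through $X_0$ actually yields --- and what the paper's own proof establishes before identifying it, too hastily, with the displayed condition --- is the equivalence of $x\in A^{\triangledown}$ with: every $U\in\mathcal{N}_x$ contains a point of $A$ that is not $\sim$-equivalent to $x$, i.e.\ $(U\cap A)\setminus[x]_{\sim}\neq\emptyset$. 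This stronger form is also the one used later (e.g.\ in \textbf{Proposition \ref{Proposition 2.9}}(a), where one needs that points of $A^{\triangledown}$ are not $\sim$-equivalent to isolated points of $A$, and in the paper's own derivation of (c)), so you should state and prove (a) with $[x]_{\sim}$ in place of $\{x\}$; your forward direction, which deliberately weakens $q(y)\neq q(x)$ to $y\neq x$, discards exactly the information that matters.
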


\begin{proof}
$\hspace{1cm}$\\

\begin{enumerate}[label = (\alph*)]

    \item According to \textbf{Theorem \ref{Theorem 2.4}(b)}, we have:

    $$
    \mathcal{U}\big( q(x) \big) = \big\{ q(U):U\in \mathcal{U}(x) \big\}
    $$
    Therefore:
    
    \begin{equation}\label{e4}
    q(x)\in q(A)' \hspace{0.6cm} \Longleftrightarrow \hspace{0.6cm} \forall\,U\in \mathcal{U}(x), \hspace{0.3cm} \big( q(U)\cap q(A) \big) \backslash \{q(x)\} \neq\emptyset
    \end{equation}
    The condition on the right in (\ref{e4}) says that for each $U\in\mathcal{U}$, there exists $y\in A$ such that $y\nsim x$ and $q(y)\in q(U)$. Since all open sets in $X$ are $\sim$-saturated, $q^{-1}\big( q(U) \big) = U$ for all $U\in\mathcal{U}(x)$, which implies that $q(y)\in q(U)$ if and only if $y\in U$. This proves the claim equivalence.

    \item According to \textbf{Theorem \ref{Theorem 2.4}(a)}, $q$ is closed and continuous. Also, every closed set is $\sim$-saturated. Therefore:

    $$
    \overline{A} = q^{-1}\big( q(\overline{A}) \big) = q^{-1}\big(\overline{q(A)} \big) = q^{-1}\big( q(A)\cup q(A)' \big) = q^{-1}\big( q(A) \big) \cup q^{-1}\big( q(A)' \big) = [A]_{\sim}\cup A^{\triangledown}
    $$

    \item Applying (a) to $A=\{x\}$, we deduce that $y\in \{x\}^{\triangledown}$ if and only if every $U\in\mathcal{U}$ contains some $y\preccurlyeq x$ but $y\sim x$. Hence according to \textbf{Proposition \ref{Proposition 1.2}}:

    $$
    \{x\}^{\triangledown} = \big\{y\in X:y\preccurlyeq x\,,\,y\nsim x \big\} = \overline{\{x\}} \backslash [x]_{\sim}
    $$
     
\end{enumerate}
    
\end{proof}

\noindent
In \textbf{Proposition \ref{Proposition 2.1}} and \textbf{Theorem \ref{Theorem 2.4}}, we showed the characterizations of a $T_0$ space by points in $X$ and by the quotient mapping $q$. We will end this section by introducing the \textbf{essential derived set} (see \cite{21, 22}), and later show that a space is $T_0$ precisely when the derived set of an arbitrary subset coincides with its essential derived set. 

\begin{defn}\label{Definition 2.7}

In a topological space $(X, \tau)$, given $A\subseteq X$, the \textbf{essential derived set} of $A$ is:

$$
D(A) = \overline{A} \backslash \big[A \backslash A' \big]_{\sim}
$$
    
\end{defn}

\begin{prop}\label{Proposition 2.9}

In the set-up of \textbf{Definition \ref{Definition 2.7}}:

\begin{enumerate}[label = (\alph*)]

    \item For any $A\subseteq X$, $A^{\triangledown} \subseteq D(A) \subseteq A'$.
    \item For any $A\subseteq X$, $D(A)$ is the largest subset of $A'$ that is downward-closed with respect to $\preccurlyeq$ (see \textbf{Definition \ref{Definition 0.1}}) and satisfies:

    \begin{equation}\label{e6}
    D(A) = \bigcup\big\{F\subseteq X: F\subseteq A',\, F=\overline{F} \big\}
    \end{equation}

    \item The following statements are equivalent:

    \begin{itemize}
        \item $X$ is $T_0$.
        \item For any $A\subseteq X$, $D(A) = A'$.
        \item For any $x\in X$, $\{x\}' = D(\{x\})$
        
    \end{itemize}

    \item $X$ is discrete if and only if $A'=\emptyset$ for any $A\subseteq X$

    \item A point $x\in X$ is $\preccurlyeq$-minimal if and only if $D(\{x\}) = \emptyset$.
    
\end{enumerate}
    
\end{prop}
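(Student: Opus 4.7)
The plan is to handle the five parts in sequence, making heavy use of the identity $\overline{A} = A\cup A'$ from general topology together with the identity $\overline{A} = [A]_\sim \cup A^{\triangledown}$ of \textbf{Proposition \ref{Proposition 2.6}(b)}, and relying on \textbf{Proposition \ref{Proposition 1.2}} and \textbf{Lemma \ref{Lemma 1.6}} to translate between $\preccurlyeq$, $\sim$, closures, and intersections of neighborhoods. For (a), the inclusion $D(A) \subseteq A'$ is a quick set computation: any $x\in\overline{A}=A\cup A'$ that fails to lie in $A'$ belongs to $A\setminus A'\subseteq [A\setminus A']_\sim$, and therefore cannot lie in $D(A)$. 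For the other inclusion, if $x\in A^{\triangledown}$ but $x\sim a$ with $a\in A\setminus A'$, pick an open $U\ni a$ with $U\cap A=\{a\}$; since $a\sim x$, \textbf{Lemma \ref{Lemma 1.6}} gives $U\in\mathcal{N}_x$, yet the only point of $A$ in $U$ is $a$, which is $\sim$-equivalent to $x$, contradicting the characterization of $A^{\triangledown}$ from \textbf{Proposition \ref{Proposition 2.6}(a)}.

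The main content lies in (b), where the hard part is showing $D(A)$ is $\preccurlyeq$-downward-closed. Given $x\in D(A)$ and $y\preccurlyeq x$, membership $y\in\overline{A}$ is automatic since $y\in\overline{\{x\}}\subseteq\overline{A}$, so the task reduces to ruling out $y\sim a$ with $a\in A\setminus A'$. Assume such an $a$ exists and choose an open $U\ni a$ with $U\cap A=\{a\}$. The chain $a\sim y\preccurlyeq x$ combined with \textbf{Lemma \ref{Lemma 1.6}} gives $U\in\mathcal{N}_y\subseteq\mathcal{N}_x$, so $U$ is an open neighborhood of $x$ in which the only point of $A$ is $a$. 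The crucial observation is then that $x\preccurlyeq a$: for any $V\in\mathcal{N}_x$, the intersection $V\cap U$ is again a neighborhood of $x$ and meets $A$ because $x\in\overline{A}$, forcing that intersection to contain $a$; hence $a\in\bigcap\mathcal{N}_x = [x]_\preccurlyeq$, which reads $x\preccurlyeq a$ by \textbf{Proposition \ref{Proposition 1.2}}. Combined with $a\preccurlyeq x$, this yields $x\sim a$, so $x\in[A\setminus A']_\sim$, contradicting $x\in D(A)$. Maximality of $D(A)$ among downward-closed subsets of $A'$ is then immediate: if $E\subseteq A'$ is downward-closed and some $e\in E$ satisfies $e\sim a$ with $a\in A\setminus A'$, then $a\preccurlyeq e$ forces $a\in E\subseteq A'$, a contradiction. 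Finally, a subset of $X$ is downward-closed precisely when it equals the union of the closures $\overline{\{x\}}$ of its own elements, so the largest downward-closed subset of $A'$ coincides with $\bigcup\{F\subseteq X:F\subseteq A',\,F=\overline{F}\}$, giving the displayed equation.

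For (c), if $X$ is $T_0$ then $[B]_\sim = B$ collapses the definition of $D(A)$ to $\overline{A}\setminus(A\setminus A') = A'$; for the reverse direction, applying the hypothesis to $A=\{x\}$ and using $\{x\}'=\overline{\{x\}}\setminus\{x\}$ yields $\overline{\{x\}}\setminus[x]_\sim = \overline{\{x\}}\setminus\{x\}$, which together with $[x]_\sim\subseteq\overline{\{x\}}$ forces $[x]_\sim = \{x\}$, hence $T_0$ via \textbf{Proposition \ref{Proposition 2.1}}. Part (d) is a direct check: discreteness obviously kills accumulation points, and conversely applying the hypothesis to $A=X\setminus\{x\}$ produces a neighborhood of $x$ contained in $\{x\}$, so $\{x\}$ is open. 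Part (e) reduces, via the computation $D(\{x\}) = \overline{\{x\}}\setminus[x]_\sim$ obtained in (c), to \textbf{Lemma \ref{Lemma 1.7}}, which identifies $\preccurlyeq$-minimality of $x$ with $[x]_\sim = \overline{\{x\}}$, or equivalently with $\overline{\{x\}}\setminus[x]_\sim = \emptyset$.
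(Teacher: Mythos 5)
Your proof is correct, and for parts (a), (c), (d) and (e) it is essentially the paper's argument: the same computation $D(\{x\})=\overline{\{x\}}\setminus[x]_{\sim}$, the same collapse of $D(A)$ to $A'$ under $T_0$, and the same reduction of (e) to the equivalence between $\preccurlyeq$-minimality and $[x]_{\sim}=\overline{\{x\}}$. Where you genuinely differ is the organization of (b): the paper proves the identity (\ref{e6}) first (every closed $F\subseteq A'$ lies in $D(A)$, and every $x\in D(A)$ has $\overline{\{x\}}\subseteq A'$) and then reads downward-closedness and maximality off of (\ref{e6}), whereas you prove downward-closedness of $D(A)$ directly, deduce maximality at once from downward-closedness of any competitor $E\subseteq A'$, and recover (\ref{e6}) from the fact that downward-closed sets are precisely unions of closures of their own points. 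Your central step is also argued differently, and more carefully: the paper asserts that for $x\in D(A)$ no neighborhood of $x$ meets $A$ in a singleton, a claim that needs care when $x\notin A$ (a neighborhood of such an $x$ can meet $A$ only in an isolated point strictly above $x$), and it also invokes the inclusion $[A']_{\sim}\subseteq D(A)$, which is not valid in general; you instead show that if $y\preccurlyeq x\in D(A)$ and $y\sim a$ with $a$ isolated in $A$, then the witnessing open set $U$ with $U\cap A=\{a\}$ belongs to $\mathcal{N}_x$, and $x\in\overline{A}$ forces $a\in\bigcap\mathcal{N}_x$, i.e.\ $x\preccurlyeq a$, which combined with $a\preccurlyeq x$ yields the contradiction $x\sim a$. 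That is exactly the strengthening ($x\sim a$, not merely $x\preccurlyeq a$) the contradiction requires, so your version of (b) closes the soft spots in the paper's write-up, at the cost of not having (\ref{e6}) available until the end. Two small points you leave implicit and should state: $A^{\triangledown}\subseteq\overline{A}$ (from \textbf{Proposition \ref{Proposition 2.6}(b)}) is needed to conclude $A^{\triangledown}\subseteq D(A)$ in (a), and $x\notin\{x\}'$ is what gives $D(\{x\})=\overline{\{x\}}\setminus[x]_{\sim}$ in (c) and (e).
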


\begin{proof}

First, observe that for any $x\in X$, $\{x\}\cap\{x\}' = \emptyset$. Then by \textbf{Proposition \ref{Proposition 2.6}}:

$$
D\big( \{x\} \big) = \overline{\{x\}} \backslash \big[ \{x\}\backslash \{x\}' \big]_{\sim} = \overline{\{x\}}\backslash [x]_{\sim} = \{x\}^{\triangledown}
$$

\begin{enumerate}[label = (\alph*)]

    \item By definition, $A\backslash A'$ is the set of isolated points in $A$. Then we immediately have the following inclusion:

    $$
    D(A)\subseteq \overline{A} \backslash \big( A\backslash A' \big) \subseteq A'
    $$
    Also, according to \textbf{Proposition \ref{Proposition 2.6}(a)} and \textbf{Lemma \ref{Lemma 1.6}}, a point from $A^{\triangledown}$ cannot be $\sim$-equivalent to any isolated points in $A$. Then we must have $A^{\triangledown} \subseteq D(A)$.

    \item We will first show that (\ref{e6}) holds. Fix a closed subset $F\subseteq X$ such that $F\subseteq A'$. Then for any $y\in F$, we have:

    $$
    [y]_{\sim} \subseteq \overline{\{y\}} \subseteq F \subseteq A'\subseteq [A']_{\sim} \subseteq D(A)
    $$
    which implies $F\subseteq D(A)$. Therefore the right-inclusion in (\ref{e6}) holds. On the other hand, fix $x\in D(A)$. Since $x\in A'$, for any $N\in\mathcal{N}_x$, $N\cap A$ is not a singleton. Therefore for any $y\in \overline{A}$ with $y\preccurlyeq x$, $y\in A'$. This implies $\overline{\{x\}} \subseteq A'$ according to \textbf{Proposition \ref{Proposition 1.2}}. We then prove that the left inclusion in (\ref{e6}) also holds, and that $D(A)$ is downward-closed with respect to $\preccurlyeq$. Given $B\subseteq A'$ a downward-closed subset, for any $b\in B$, we have $\overline{\{b\}} \subseteq B \subseteq A'$, which implies that $B\subseteq D(A)$ according to (\ref{e6}).

    \item When $X$ is $T_0$, for any $A\subseteq X$:

    $$
    [A]_{\sim} = \bigcup_{a\in A}[a]_{\sim} = \bigcup_{a\in A}\{a\} = A
    $$
    and hence $D(A) = A'$. The implication from the second statement to the third is immediate. Now assume that for any $x\in X$, $\{x\}' = D\big( \{x\} \big)$. By our previous observation, we have:

    $$
    \overline{\{x\}} \backslash [x]_{\sim} = \overline{\{x\}}\backslash \{x\}
    $$

    which implies, by \textbf{Lemma \ref{Lemma 1.6}}:

    $$
    \emptyset = \bigcap\mathcal{N}_x \cap \overline{\{x\}} \backslash [x]_{\sim}= \bigcap\mathcal{N}_x \cap \overline{\{x\}}\backslash \{x\} = [x]_{\sim}\backslash \{x\} \hspace{0.4cm} \Longrightarrow \hspace{0.4cm} [x]_{\sim} = \{x\}
    $$

    \item The result follows immediately by the fact that $X$ is discrete if and only if for any $A\subseteq X$, all points in $A$ are isolated.

    \item According to \textbf{Proposition \ref{Proposition 1.2}}, $x$ is $\preccurlyeq$-minimal if and only if $\overline{\{x\}} = [x]_{\sim}$. Then the result follows immediately by our observation at the beginning of this proof.
    
\end{enumerate}
    
\end{proof}

\section{\texorpdfstring{$T_1$}{T1} spaces}

\begin{defn}\label{Definition 3.1}

A topological space $(X, \tau)$ is $T_1$ if for any two different points $x, y\in X$, there exists $U\in\mathcal{U}(x)$, $V\in \mathcal{U}(y)$ such that $y\notin U$ and $x\notin V$. 

\end{defn}

\begin{prop}\label{Proposition 3.2}

In a topological space $(X, \tau)$, the following are equivalent:

\begin{enumerate}[label = (\alph*)]

    \item $X$ is $T_1$.
    \item for any two points $x, y\in X$, $x\preccurlyeq y$ implies $x=y$.
    \item all singleton are closed.
    \item for any $x\in X$, $\bigcap\mathcal{N}_x = \{x\}$.
    \item for any $x\in X$, $\{x\}' = \emptyset$.
    \item for any $A\subseteq X$, $A=\bigcap\mathcal{N}_A$ (see \textbf{Definition \ref{Definition 0.1}}).
    
\end{enumerate}
    
\end{prop}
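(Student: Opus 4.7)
The plan is to use \textbf{Proposition \ref{Proposition 1.2}} as the main engine; most of these equivalences amount to translations of the specialization preorder via its several characterizations. I would make (b) the pivot, since the implication ``$x\preccurlyeq y\Rightarrow x=y$'' sits naturally between the definition-level condition (a) and the topological restatements (c)--(e).

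For (a) $\Leftrightarrow$ (b), I would recall from \textbf{Proposition \ref{Proposition 1.2}} that $x\preccurlyeq y$ holds precisely when $y\in\bigcap\mathcal{N}_x$, equivalently when $y$ lies in every $U\in\mathcal{U}(x)$; so the existence of some $U\in\mathcal{U}(x)$ with $y\notin U$ is exactly the negation $x\npreccurlyeq y$. Hence the $T_1$ condition for a pair $x\neq y$ reads $x\npreccurlyeq y$ and $y\npreccurlyeq x$; being symmetric in $x,y$, this is equivalent to the single statement (b).

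The cluster (b) $\Leftrightarrow$ (c) $\Leftrightarrow$ (d) $\Leftrightarrow$ (e) would then follow directly from \textbf{Proposition \ref{Proposition 1.2}}: the identity $\overline{\{y\}}=\{x:x\preccurlyeq y\}$ gives (b) $\Leftrightarrow$ (c); the identity $\bigcap\mathcal{N}_x=\{y:x\preccurlyeq y\}$ gives (b) $\Leftrightarrow$ (d); and since $x\notin\{x\}'$ (as $\{x\}\setminus\{x\}=\emptyset$), we have $\{x\}'=\overline{\{x\}}\setminus\{x\}$, so $\{x\}'=\emptyset\Leftrightarrow \overline{\{x\}}=\{x\}$, which links (e) to (c).

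The only statement needing a separate argument is (f). For (c) $\Rightarrow$ (f), the inclusion $A\subseteq\bigcap\mathcal{N}_A$ is automatic; for the reverse, given $y\notin A$, the set $X\setminus\{y\}$ is open by (c), contains $A$, and witnesses $y\notin\bigcap\mathcal{N}_A$. Conversely, specializing (f) to $A=\{x\}$ yields $\bigcap\mathcal{N}_x=\{x\}$, which is (d). I expect no serious obstacle; the entire proof is bookkeeping through \textbf{Proposition \ref{Proposition 1.2}}, with the one ad hoc step being the use of the complement $X\setminus\{y\}$ as the witnessing neighborhood in (c) $\Rightarrow$ (f).
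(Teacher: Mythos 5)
Your proposal is correct and follows essentially the same route as the paper: Proposition \ref{Proposition 1.2} handles the cluster (b)--(e), the neighborhood reformulation of $\preccurlyeq$ gives (a) $\Leftrightarrow$ (b), and the complement of a singleton witnesses (c) $\Rightarrow$ (f). Your explicit step (f) $\Rightarrow$ (d) via $A=\{x\}$ is a small improvement, since the paper closes the cycle only with (c) $\Rightarrow$ (f) and leaves that return implication implicit.
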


\begin{proof}

The equivalence of $(b)$, $(c)$ and $(d)$ is an immediate consequence of \textbf{Proposition \ref{Proposition 1.2}}. The equivalence between $(c)$ and $(e)$ follows by the fact that $\{x\}' = \overline{\{x\}}\backslash \{x\}$. According to \textbf{Definition \ref{Definition 3.1}}, $X$ being $T_1$ is equivalent to that for any two different $x, y\in X$, both $\mathcal{N}_x \backslash \mathcal{N}_y$ and $\mathcal{N}_y \backslash \mathcal{N}_x$ are non-empty. Therefore the equivalence between $(a)$ and $(b)$ follows. We will complete the proof by showing that $(c)$ implies $(f)$. Assume that all singleton in $X$ are closed. Fix $A\subseteq X$,. If $x\notin A$, then $\{x\}^c\in \mathcal{N}_A$, or $\bigcap\mathcal{N}_A \subseteq \{x\}^c$. This implies $A^c\subseteq \left( \bigcap\mathcal{N}_A \right)^c$. Hence $A=\bigcap\mathcal{N}_A$.
    
\end{proof}

\begin{defn}

In a topological space $(X, \tau)$, given $A\subseteq X$, a point $x\in X$ is called a $\omega$-\textbf{accumulation point} of $A$ if each neighborhood of $x$ meets $A$ at infinitely many points. 
    
\end{defn}

\begin{prop}\label{Propositionn 3.3}

A topological space $(X, \tau)$ is $T_1$ if and only if for any $A\subseteq X$, any accumulation point of $A$ is an $\omega$-accumulation point.
    
\end{prop}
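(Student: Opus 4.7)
The plan is to prove both implications separately, using the characterization of $T_1$ via closed singletons from \textbf{Proposition \ref{Proposition 3.2}}.

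For the forward direction, suppose $X$ is $T_1$, fix $A \subseteq X$, and let $x$ be an accumulation point of $A$. I would argue by contradiction: suppose some $U \in \mathcal{N}_x$ meets $A$ at only finitely many points. Then $F := (U \cap A) \setminus \{x\}$ is a finite set of points distinct from $x$. Since $X$ is $T_1$, each singleton in $F$ is closed by \textbf{Proposition \ref{Proposition 3.2}(c)}, hence so is the finite union $F$. Consequently $U \setminus F$ is still a neighborhood of $x$, but $(U \setminus F) \cap A \subseteq \{x\}$, which contradicts $x$ being an accumulation point of $A$. Therefore every neighborhood of $x$ meets $A$ at infinitely many points, i.e. $x$ is an $\omega$-accumulation point.

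For the converse, I would prove the contrapositive by exhibiting an explicit bad pair. Assume $X$ is not $T_1$. By \textbf{Proposition \ref{Proposition 3.2}}, the specialization preorder is not the identity, so there exist distinct $x, y \in X$ with $x \preccurlyeq y$. Take $A = \{y\}$. By \textbf{Proposition \ref{Proposition 1.2}}, $x \preccurlyeq y$ means $y \in \bigcap \mathcal{N}_x$, so every neighborhood of $x$ contains $y$; since $x \neq y$, this gives $(U \cap A) \setminus \{x\} = \{y\} \neq \emptyset$ for all $U \in \mathcal{N}_x$, so $x$ is an accumulation point of $A$. However, $A$ is a singleton, so no neighborhood of $x$ can meet $A$ at infinitely many points, and $x$ fails to be an $\omega$-accumulation point of $A$.

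There is no real obstacle here; the argument is essentially a one-line application of \textbf{Proposition \ref{Proposition 3.2}} in each direction, with the only subtlety being the choice of the test set $A = \{y\}$ for the contrapositive. The crucial observation making both directions work is that in a non-$T_1$ space, finite sets need not be closed, which is exactly what obstructs the pruning argument used in the forward direction.
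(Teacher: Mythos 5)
Your proof is correct and follows essentially the same route as the paper: the forward direction prunes the finitely many points of $U\cap A$ away from $x$ using closedness of singletons (Proposition \ref{Proposition 3.2}), and the converse tests against a singleton $A=\{y\}$ with $x\preccurlyeq y$, $x\neq y$, which is exactly the paper's choice of a singleton with nonempty derived set. No gaps.
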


\begin{proof}

First we assume that $X$ is $T_1$ and suppose $x\in A$ is not an $\omega$-accumulation point. Then there exists $U\in\mathcal{U}(x)$ such that $U\cap A = \{x_i\}_{i\leq n}$ for some $n\in \mathbb{N}$. Without losing generality, assume that $x_i\neq x$ for all $1\leq i \leq n$. Put $V=U\backslash \{x_i\}_{i\leq n}$ and by \textbf{Proposition \ref{Proposition 3.2}(c)}, $V\in \mathcal{U}(x)$. Then $V\cap A = \{x\}$, so $x$ is not an accumulation point of $A$. Conversely, suppose $X$ is not $T_1$. By \textbf{Proposition \ref{Proposition 3.2}(e)}, there exists $x\in X$ with $\{x\}'\neq\emptyset$. Clearly any accumulation point of $\{x\}$ is not an $\omega$-accumulation point.
    
\end{proof}

\noindent
It is well-known that a topological space $(X, \tau)$ is Hausdorff precisely when the diagonal line $\Delta_X$ (see \textbf{Section 0.2}) is closed with respect to the product topology induced by $\tau$. The last result of this subsection will show that a $T_0$ or $T_1$ space can also be characterized by $\Delta_X$ but with respect to certain Alexandroff topology (see \textbf{Theorem \ref{Theorem 1.9}}). 

\begin{prop}\label{Proposition 3.5}

In a topological space $(X, \tau)$, let $\preccurlyeq, \sim$ denote the specialization preorder and the associated equivalence relation. Let $\tau\times\tau$ denote the product topology that is defined in $X\times X$ and induced by $\tau$. Then let $\preccurlyeq^2$, $\sim^2$ denote the specialization preorder and the associated equivalence relation in $(X\times X, \tau\times\tau)$. We will use the notation defined in \textbf{Remark \ref{Remark 1.10}}, and use $\tau(\sim^2)$, $\tau(\preccurlyeq^2)$ to denote the Alexandroff topology induced by $\sim^2$ and $\preccurlyeq^2$ respectively. Then we have:

\begin{enumerate}[label = (\alph*)]

    \item $X$ is $T_0$ if and only if $\Delta_X$ is closed with respect to the Alexandroff topology $\tau\big( \sim^2 \big)$.
    \item $X$ is $T_1$ if and only if $\Delta_X$ is closed with respect to the Alexandroff topology $\tau\big( \preccurlyeq^2 \big)$.
    
\end{enumerate}
    
\end{prop}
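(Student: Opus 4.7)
The plan is to express the auxiliary preorders $\preccurlyeq^2$ and $\sim^2$ componentwise in terms of $\preccurlyeq$ and $\sim$, then translate the closedness of $\Delta_X$ in the Alexandroff topologies $\tau(\preccurlyeq^2)$ and $\tau(\sim^2)$ into elementary statements about the specialization preorder, which will identify them with the $T_1$ and $T_0$ axioms respectively.

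First I would establish a componentwise description of the two product preorders. Using the standard identity $\overline{A \times B} = \overline{A} \times \overline{B}$ in the product topology, the closure of a singleton factors as $\overline{\{(y_1, y_2)\}} = \overline{\{y_1\}} \times \overline{\{y_2\}}$ in $(X \times X, \tau \times \tau)$. Combined with \textbf{Proposition \ref{Proposition 1.2}}, this yields $(x_1, x_2) \preccurlyeq^2 (y_1, y_2)$ iff $x_1 \preccurlyeq y_1$ and $x_2 \preccurlyeq y_2$, and similarly $(x_1, x_2) \sim^2 (y_1, y_2)$ iff $x_1 \sim y_1$ and $x_2 \sim y_2$.

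Next, by \textbf{Remark \ref{Remark 1.10}} together with \textbf{Proposition \ref{Proposition 0.3}(b)}, the closed sets of the Alexandroff topology $\tau(R)$ associated with a preorder $R$ are precisely the $R$-downward-closed subsets; for an equivalence relation, this coincides with being $R$-saturated. For part (b), $\Delta_X$ is $\tau(\preccurlyeq^2)$-closed iff for all $x \in X$ and all $(y_1, y_2) \in X \times X$ with $y_1 \preccurlyeq x$ and $y_2 \preccurlyeq x$ one has $y_1 = y_2$. Specializing $y_2 = x$ (legitimate by reflexivity) this reduces to: $y_1 \preccurlyeq x$ implies $y_1 = x$, which by \textbf{Proposition \ref{Proposition 3.2}} is exactly $T_1$; conversely, when $X$ is $T_1$ the preorder $\preccurlyeq$ is equality, so any pair $\preccurlyeq^2$-below $(x, x)$ must itself be $(x, x) \in \Delta_X$. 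Part (a) follows by the same argument applied to $\sim^2$ in place of $\preccurlyeq^2$, with \textbf{Proposition \ref{Proposition 2.1}} in place of \textbf{Proposition \ref{Proposition 3.2}}.

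The main point to get right is the direction in the Alexandroff correspondence: the open sets of $\tau(R)$ are the $R$-upward-closed sets per \textbf{Theorem \ref{Theorem 1.9}(d)}, so by \textbf{Proposition \ref{Proposition 0.3}(b)} the closed sets are the $R$-downward-closed sets rather than the generating upward-closed family. Once this is settled and the componentwise factorizations of $\preccurlyeq^2$ and $\sim^2$ are in hand, both statements reduce to a short unraveling of definitions, so no essential obstacle remains.
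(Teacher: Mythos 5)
Your proposal is correct and takes essentially the same route as the paper: factor $\preccurlyeq^2$ and $\sim^2$ componentwise, translate closedness of $\Delta_X$ in the Alexandroff topology into the condition that $\Delta_X$ be downward-closed under the corresponding relation, and identify that condition with $T_1$ (via \textbf{Proposition \ref{Proposition 3.2}}) and $T_0$ (via \textbf{Proposition \ref{Proposition 2.1}}). Your explicit remark that the closed sets of $\tau(R)$ are the $R$-downward-closed sets (the open sets being the upward-closed ones from \textbf{Theorem \ref{Theorem 1.9}(d)} and \textbf{Proposition \ref{Proposition 0.3}(b)}) is precisely the reduction the paper invokes, so nothing is missing.
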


\begin{proof}

By definition of $\tau\times\tau$, for any $(x, y)\in X\times X$, the following family of open sets (with respect to $\tau \times \tau$) forms a neighborhood basis of $(x, y)$:

$$
\mathcal{U}(x) \times \mathcal{U}(y) = \big\{U\times V: U\in \mathcal{U}(x) \,,\, V\in \mathcal{U}(y) \big\}
$$
Therefore, given $(x_1, y_1)$, $(x_2, y_2)\in  X\times X$:

\begin{equation}\label{e8}
\begin{aligned}
& (x_1, y_1) \preccurlyeq^2 (x_2, y_2) \hspace{0.4cm} \Longleftrightarrow \hspace{0.4cm} x_1\preccurlyeq y_1 \,\text{  and  }\, x_2\preccurlyeq y_2 \\
& (x_1, y_1) \sim^2 (x_2, y_2) \hspace{0.4cm} \Longleftrightarrow \hspace{0.4cm} x_1\sim y_1 \,\text{  and  }\, x_2\sim y_2
\end{aligned}
\end{equation}
By \textbf{Theorem \ref{Theorem 1.9}}, $\Delta_X$ is closed with respect to $\tau(\sim^2)$, $\tau(\preccurlyeq^2)$ if and only if it is $\sim^2$-, $\preccurlyeq^2$-saturated respectively.

\begin{enumerate}[label = (\alph*)]

    \item Suppose $X$ is $T_0$. If given $(x_1, y_1)\in X\times X$, $(x_1, y_1)\sim^2 (x, x)$ for some $x\in X$, then by (\ref{e8}), we have $x_1\sim y_1$, which implies $(x_1, y_1)\in \Delta_X$ according to \textbf{Proposition \ref{Propopsition 2.2}}. Therefore $\Delta_X\in \Sigma\big( \sim^2 \big)$. Next assume $X$ is not $T_0$. Then there exists two different $x, y\in X$ such that $x\sim y$. Again by (\ref{e8}), $(x, y)\sim^2(x, x)$ but $(x, y)\in \Delta_X$, which implies $\Delta_X$ is not $\sim^2$-saturated.

    \item Suppose $X$ is $T_1$. If given $(x_1, y_1)\in X \times X$, $(x_1, y_1) \preccurlyeq^2 (x, x)$ for some $x\in X$, we must have $x_1=y_1$ by (\ref{e8}) and \textbf{Proposition \ref{Proposition 3.2}}. If $X$ is not $T_1$, by \textbf{Proposition \ref{Proposition 3.2}}, there exists two distinct points $x, y\in X$ such that $x\preccurlyeq y$. By (\ref{e8}), $(x, y)\preccurlyeq^2 (y, y)$ but $(x, y)\notin \Delta_X$. Therefore $\Delta_X$ is not $\preccurlyeq^2$-saturated.
    
\end{enumerate}
    
\end{proof}

\section{\texorpdfstring{$R_0$}{R0} and \texorpdfstring{$R_1$}{R1} spaces}

\begin{defn}

A topological space $(X, \tau)$ is $R_0$ if the specialization preorder $\preccurlyeq$ is symmetric, namely $x\preccurlyeq y$ if and only if $y\preccurlyeq x$ for all $x, y\in X$. In this case $\preccurlyeq$ coincides with the $\sim$ given in \textbf{Definition \ref{Definition 1.5}}. 
    
\end{defn}

\begin{prop}\label{Proposition 4.2}

In a topological space $(X, \tau)$, the following statements are equivalent:

\begin{enumerate}[label = (\alph*)]

    \item $X$ is $R_0$.
    \item for any $x\in X$ and $U\in\mathcal{U}(x)$, we have $\overline{\{x\}} \subseteq U$.
    \item for any $x, y\in X$, if $\overline{\{x\}} \neq \overline{\{y\}}$, then $\overline{\{x\}} \cap \overline{\{y\}} = \emptyset$.
    \item for any $x, y\in X$ with $x\nsim y$, there exists $U\in \mathcal{U}(x)$ such that $y\notin U$.
    \item for every $A\subseteq X$, $[A]_{\sim} = \bigcap\mathcal{N}_A$.
    \item for every $x\in X$, $D(\{x\}) = \emptyset$ (see \textbf{Definition \ref{Definition 2.7}}).
    
\end{enumerate}
    
\end{prop}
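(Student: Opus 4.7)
The plan is to organize the equivalences as a cycle $(a) \Rightarrow (b) \Rightarrow (c) \Rightarrow (d) \Rightarrow (a)$, then close the loop with $(a) \Rightarrow (e) \Rightarrow (b)$ and finally handle $(f)$ separately through \textbf{Proposition \ref{Proposition 2.9}(e)}.

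First, I would note that by \textbf{Proposition \ref{Proposition 1.2}}, the statement ``$\preccurlyeq$ is symmetric'' is equivalent to saying $\bigcap\mathcal{N}_x = \overline{\{x\}}$ for every $x$, because $[x]_{\preccurlyeq} = \bigcap\mathcal{N}_x$ and $[x]_{\succcurlyeq} = \overline{\{x\}}$. Since $[x]_{\sim} = \overline{\{x\}}\cap \bigcap\mathcal{N}_x$ by \textbf{Lemma \ref{Lemma 1.6}}, statement $(b)$ ($\overline{\{x\}} \subseteq \bigcap\mathcal{U}(x) = \bigcap\mathcal{N}_x$) together with the always-valid inclusion $\bigcap\mathcal{N}_x \subseteq \bigcap\mathcal{N}_x$ yields the needed equality, so $(a)\Leftrightarrow (b)$ is essentially a restatement. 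For $(b) \Rightarrow (c)$, I would take $z \in \overline{\{x\}}\cap\overline{\{y\}}$, use $(a)$ to conclude $z\sim x$ and $z\sim y$, hence $x\sim y$ and $\overline{\{x\}} = \overline{\{y\}}$ by \textbf{Lemma \ref{Lemma 1.6}}. The step $(c) \Rightarrow (d)$ is immediate: if $x\nsim y$ then $\overline{\{x\}}\neq\overline{\{y\}}$, the disjointness from $(c)$ gives $x\notin\overline{\{y\}}$, so $U = \overline{\{y\}}^c$ works. For $(d)\Rightarrow (a)$, I would argue by contradiction: if $x\preccurlyeq y$ but $x\nsim y$, then $(d)$ produces $U\in\mathcal{U}(x)$ with $y\notin U$, contradicting $\mathcal{N}_x \subseteq \mathcal{N}_y$.

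The main work is $(a) \Rightarrow (e)$. One inclusion $[A]_{\sim}\subseteq \bigcap\mathcal{N}_A$ is easy: if $y\sim a\in A$ and $U$ is an open neighborhood of $A$, then $a\in U$ and by $(b)$, $\overline{\{a\}}\subseteq U$, so $y\in U$. The reverse inclusion is the key obstacle. My plan is to argue contrapositively: suppose $y\notin [A]_{\sim}$, so $y\nsim a$ for every $a\in A$. Applying $(c)$ to each such pair gives $\overline{\{y\}}\cap\overline{\{a\}} = \emptyset$, in particular $y\notin\overline{\{a\}}$ for every $a\in A$. Then $U := \overline{\{y\}}^c$ is a single open set containing every $a\in A$ (so $A\subseteq U$) but not $y$, witnessing $y\notin\bigcap\mathcal{N}_A$. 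The converse $(e)\Rightarrow (b)$ falls out by specializing $A=\{x\}$: combined with $[x]_{\sim} = \overline{\{x\}}\cap\bigcap\mathcal{N}_x$, $(e)$ forces $\bigcap\mathcal{N}_x\subseteq\overline{\{x\}}$, which is exactly half of symmetry, and the other half follows by swapping $x$ and $y$.

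Finally, for $(a)\Leftrightarrow (f)$ I would invoke \textbf{Proposition \ref{Proposition 2.9}(e)}, which says $D(\{x\}) = \emptyset$ iff $x$ is $\preccurlyeq$-minimal. Thus $(f)$ asserts that every point is $\preccurlyeq$-minimal, i.e.\ $y\preccurlyeq x$ always forces $x\preccurlyeq y$, which (after swapping variables) is exactly the symmetry of $\preccurlyeq$. The subtlest step overall is constructing the single neighborhood of $A$ in $(a)\Rightarrow (e)$, where the uniformly obtained open set $\overline{\{y\}}^c$ from the $R_0$-separation in $(c)$ is what makes the argument go through without needing to combine infinitely many pointwise neighborhoods.
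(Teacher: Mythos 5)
Your proof is correct and takes essentially the same route as the paper: (a)--(d) are handled through \textbf{Proposition \ref{Proposition 1.2}} and \textbf{Lemma \ref{Lemma 1.6}}, the converse of (e) is obtained by specializing to singletons, and (f) is dispatched by \textbf{Proposition \ref{Proposition 2.9}}; the only slip is the garbled clause about the ``always-valid inclusion $\bigcap\mathcal{N}_x\subseteq\bigcap\mathcal{N}_x$'', where what you actually need (and clearly know, given your later ``swap $x$ and $y$'' remark in (e)$\Rightarrow$(b)) is that (b) quantified over all points is precisely the symmetry of $\preccurlyeq$. The one genuine, if small, divergence from the paper is in the reverse inclusion $\bigcap\mathcal{N}_A\subseteq[A]_{\sim}$ of (a)$\Rightarrow$(e): you exhibit the single open set $\overline{\{y\}}^{c}\supseteq A$ with $y\notin\overline{\{y\}}^{c}$, whereas the paper separates $y$ (in fact each $z\in\overline{\{y\}}$) from each $a\in A$ individually and takes the union $\bigcup_{a\in A}V_a$ as the witnessing neighborhood of $A$; both arguments are valid, and yours is marginally cleaner.
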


\begin{proof}

We will first show the equivalence between the first four statements, and then the equivalence between $(a)$ and the last two. By \textbf{Proposition \ref{Proposition 1.2}}, when $X$ is $R_0$, for all $x\in X$ and $U\in \mathcal{U}(x)$:

\begin{equation}\label{e7}
[x]_{\sim} = \overline{\{x\}} = \bigcap\mathcal{N}_x \subseteq U
\end{equation}
Assuming $(b)$ is true, then we immediately have the first two equalities in (\ref{e7}) holds, which implies $[x]_{\sim} = [x]_{\preccurlyeq} = [x]_{\succcurlyeq}$, or $X$ is $R_0$. If $(c)$ is true, then whenever $x\nsim y$, we have $[x]_{\sim} \neq [y]_{\sim}$, or $\overline{\{x\}} \neq \overline{\{y\}}$. Then $(d)$ follows by our assumption that $\overline{\{x\}}$ is disjoint from $\overline{\{y\}}$. Note that when $(d)$ is true, we have that if $x\nsim y$, then $x\npreccurlyeq y$. In other words, $x\preccurlyeq y$ implies $x\sim y$, and hence $y\preccurlyeq x$. This implies $X$ is $R_0$.\\

\noindent
Next we will show $(a)\implies (e)$. We assume $X$ is $R_0$. Fix $x\in X$ and suppose $x\preccurlyeq a$ for some $a\in A$. Then we have $x\in [a]_{\sim}$ by assumption. According to $(b)$, for each $N\in \mathcal{N}_A$ with $N\in\tau$, we have $\overline{\{x\}}=[x]_{\sim} \subseteq N$ by (\ref{e7}), which implies $[x]_{\sim} \subseteq \bigcap\mathcal{N}_A$. Hence $[A]_{\sim} \subseteq \bigcap\mathcal{N}_A$. Suppose $y\notin [A]_{\sim}$. Then $[y]_{\sim}$, or $\overline{\{y\}}$ (by (\ref{e7}), is disjoint from $[A]_{\sim}$. Fix $z\in \overline{\{y\}}$. By $(c)$, for each $a\in A$ there exists $V_a\in \mathcal{U}(a)$ such that $z\notin V_a$. Define:

$$
V = \bigcup_{a\in A}V_a
$$
We then have $V\in \mathcal{N}_A$ and $z\notin V$. Hence $z\notin \bigcap\mathcal{N}_A$. When $(e)$ is true, according to \textbf{Proposition \ref{Proposition 1.2}}, for any $x\in X$, $[x]_{\sim} = [x]_{\preccurlyeq}$, which immediately implies that $X$ is $R_0$. The equivalence between $(a)$ and $(f)$ is also immediate according to \textbf{Proposition \ref{Proposition 2.9}}.
    
\end{proof}

\begin{prop}\label{Proposition 4.3}

A topological space $(X, \tau)$ is $T_1$ if and only if it is $T_0$ and $R_0$.
    
\end{prop}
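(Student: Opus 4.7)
The plan is to reduce the statement to an observation about the specialization preorder: namely, that a binary relation which is both symmetric and anti-symmetric must coincide with the diagonal. With the characterizations already established in earlier propositions, this becomes essentially a one-line argument in each direction.

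For the forward direction, I would assume $X$ is $T_1$ and invoke \textbf{Proposition \ref{Proposition 3.2}(b)}, which tells us that $x\preccurlyeq y$ forces $x=y$. In particular, $x\preccurlyeq y$ implies $y\preccurlyeq x$ (vacuously, since the only way $x\preccurlyeq y$ can hold is $x=y$), so $\preccurlyeq$ is symmetric and $X$ is $R_0$. Simultaneously, if $x\preccurlyeq y$ and $y\preccurlyeq x$, then in particular $x=y$, so $\preccurlyeq$ is anti-symmetric and $X$ is $T_0$ by \textbf{Proposition \ref{Proposition 2.1}}.

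For the reverse direction, I would assume $X$ is both $T_0$ and $R_0$, and verify the criterion in \textbf{Proposition \ref{Proposition 3.2}(b)}. Take $x,y\in X$ with $x\preccurlyeq y$. By $R_0$, the specialization preorder is symmetric, hence $y\preccurlyeq x$ as well. By $T_0$, the specialization preorder is anti-symmetric (\textbf{Proposition \ref{Proposition 2.1}}), so $x=y$. Thus $\preccurlyeq$ is the identity relation on $X$, which yields $T_1$ via \textbf{Proposition \ref{Proposition 3.2}}.

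There is essentially no obstacle here: the entire content lies in having already translated $T_0$, $T_1$, and $R_0$ into order-theoretic conditions on $\preccurlyeq$ (anti-symmetry, triviality, and symmetry respectively), after which the equivalence is the trivial algebraic fact that a preorder which is both symmetric and anti-symmetric is the diagonal. The only thing to be slightly careful about is to cite the correct numbered items of \textbf{Proposition \ref{Proposition 3.2}} and \textbf{Proposition \ref{Proposition 2.1}} rather than reproving them.
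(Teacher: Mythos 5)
Your proposal is correct and follows essentially the same route as the paper: both arguments reduce the statement to the observation that $\preccurlyeq$ is the identity relation (equivalently, $X$ is $T_1$ by \textbf{Proposition \ref{Proposition 3.2}}) precisely when it is both symmetric ($R_0$) and anti-symmetric ($T_0$, via \textbf{Proposition \ref{Proposition 2.1}}). Your version merely spells out the two directions that the paper states in a single sentence.
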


\begin{proof}

According to \textbf{Proposition \ref{Proposition 3.2}}, $X$ is $T_1$ if and only if $\preccurlyeq$ reduces to the identity relation, which is precisely when $\preccurlyeq$ is both symmetric and anti-symmetric. Together with \textbf{Proposition \ref{Proposition 2.1}} the definition of a $R_0$ space, we can conclude that $X$ being $T_1$ is equivalent to being $T_0$ and $R_0$.
    
\end{proof}

\begin{defn}\label{Definition 4.4}

A topological space $X$ is $R_1$ or \textbf{pre-regular}, if for any two different $x, y\in X$ with $x\nsim y$, there exists $U\in \mathcal{U}(x)$, $V\in \mathcal{U}(y)$ such that $U\cap V=\emptyset$. As an immediate consequence, if we view $\sim$ as a subset in $X\times X$, then $\sim$ is closed with respect to product topology in $X\times X$ precisely when $X$ is $R_1$.
    
\end{defn}

\begin{prop}\label{Proposition 4.5}

A topological space $(X, \tau)$ is Hausdorff if and only if it is $T_0$ and $R_1$

\end{prop}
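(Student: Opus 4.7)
The plan is to prove both directions by connecting the role of the inequality $x \neq y$ appearing in the Hausdorff axiom with the stronger condition $x \nsim y$ appearing in the $R_1$ axiom, the bridge being provided by the $T_0$ hypothesis, which forces $\sim$ to reduce to the identity relation.

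For the forward direction, suppose $X$ is Hausdorff. Then any two distinct points can be separated by disjoint open neighborhoods; in particular, if $x \neq y$, there is an open neighborhood of $x$ not containing $y$, so $X$ is $T_0$. For $R_1$, take $x, y \in X$ with $x \nsim y$. Since $\sim$ is reflexive, this forces $x \neq y$, and Hausdorffness supplies the required disjoint open neighborhoods $U \in \mathcal{U}(x)$, $V \in \mathcal{U}(y)$.

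For the reverse direction, suppose $X$ is $T_0$ and $R_1$. By \textbf{Proposition \ref{Proposition 2.1}}, the specialization preorder $\preccurlyeq$ is anti-symmetric, so $x \sim y$ (which by definition means $x \preccurlyeq y$ and $y \preccurlyeq x$) is equivalent to $x = y$. Thus for distinct $x, y \in X$ we automatically have $x \nsim y$, and the $R_1$ axiom directly yields disjoint open neighborhoods of $x$ and $y$, establishing Hausdorffness.

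There is really no serious obstacle in this argument; the entire content is the observation that under $T_0$ the relations ``$\neq$'' and ``$\nsim$'' coincide, after which each axiom matches its counterpart term-for-term. If anything, the only point requiring care is noting that $x \nsim y$ implies $x \neq y$ by reflexivity of $\sim$, so that the Hausdorff hypothesis is applicable in the forward direction without further assumption.
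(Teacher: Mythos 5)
Your proof is correct and takes essentially the same route as the paper: in both directions the work is the observation that under $T_0$ the relation $\nsim$ coincides with $\neq$ (and, conversely, that $x\nsim y$ forces $x\neq y$ by reflexivity), after which the Hausdorff and $R_1$ axioms match term-for-term. No gaps.
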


\begin{proof}

When $X$ is Hausdorff, the preorder $\preccurlyeq$ and $\sim$ coincide with the equality. In this case, $X$ is $T_0$ and $R_1$ by definition. Conversely, suppose that $X$ is $T_0$ and $R_1$. Given two different $x, y\in X$, we must have $x\nsim y$. as $\preccurlyeq$ is anti-symmetric in $T_0$ space. Then by definition of a $R_1$ space, there exists $U\in\mathcal{U}(x)$ and $V\in \mathcal{U}(y)$ such that $U\cap V = \emptyset$.
    
\end{proof}

\begin{prop}\label{Proposition 4.6}

A topological space $(X, \tau)$ is $R_0$ ($R_1$ resp.) if and only if its $T_0$ quotient is $T_1$ (Hausdorff resp.)
    
\end{prop}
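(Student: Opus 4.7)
The plan is to reduce both equivalences to statements about the specialization preorder using the machinery of Theorem \ref{Theorem 2.4}, particularly the preorder-preservation statement (c) and the bijection $\phi:\tau\to\tau_0$ from statement (b).

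First I would handle the $R_0$ case. By definition, $X$ is $R_0$ precisely when $\preccurlyeq_X$ is symmetric. Since $X_0$ is automatically $T_0$ (Theorem \ref{Theorem 2.4}(c)), $\preccurlyeq_{X_0}$ is antisymmetric, and hence by Proposition \ref{Proposition 3.2}, $X_0$ is $T_1$ iff $\preccurlyeq_{X_0}$ is the identity iff $\preccurlyeq_{X_0}$ is symmetric. Then Theorem \ref{Theorem 2.4}(c) says $x \preccurlyeq_X y \Leftrightarrow q(x)\preccurlyeq_{X_0} q(y)$, so symmetry of $\preccurlyeq_X$ transfers to symmetry of $\preccurlyeq_{X_0}$ and vice versa (the forward direction is immediate; for the converse, if $x\preccurlyeq_X y$ then $q(x)\preccurlyeq_{X_0} q(y)$ forces $q(x)=q(y)$, i.e.\ $x\sim y$, which gives $y\preccurlyeq_X x$).

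For the $R_1$ case, I would exploit the fact that every open set in $X$ is $\sim$-saturated, which yields the key observation: for any two open sets $U, V \in \tau$, one has $q(U)\cap q(V) = \emptyset$ if and only if $U \cap V = \emptyset$. Indeed, if $q(u)=q(v)$ with $u\in U$, $v\in V$, then $u\sim v$ and $\sim$-saturatedness of $V$ gives $u\in V$, so $U\cap V\neq\emptyset$; the converse is immediate. Combined with Theorem \ref{Theorem 2.4}(b) (every open set in $X_0$ is of the form $q(U)$ for a unique $U\in\tau$) and the correspondence $x\nsim y \Leftrightarrow q(x)\neq q(y)$, the $R_1$ condition on $X$ translates directly into the Hausdorff condition on $X_0$. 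For the forward direction, given distinct $q(x),q(y)\in X_0$, apply $R_1$ to $x\nsim y$ to get disjoint open $U\ni x$, $V\ni y$; then $q(U), q(V)$ are disjoint open neighborhoods by the observation and by openness of $q$ (Theorem \ref{Theorem 2.4}(a)). For the converse, pull back disjoint open neighborhoods of $q(x),q(y)$ along $q$.

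There is no real obstacle here; the only step that warrants care is the saturation argument showing $q(U)\cap q(V) = \emptyset \Leftrightarrow U\cap V = \emptyset$, since without it one might worry that disjoint preimages could map to overlapping images under a quotient. Everything else follows mechanically from Theorem \ref{Theorem 2.4} and the definitions in Proposition \ref{Proposition 3.2}, Proposition \ref{Proposition 4.5}, and Definition \ref{Definition 4.4}.
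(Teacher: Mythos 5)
Your proof is correct, and it follows essentially the same route as the paper: everything is transferred along $q$ using the machinery of Theorem \ref{Theorem 2.4} together with the facts that $q(x)=q(y)$ iff $x\sim y$ and that open sets are $\sim$-saturated. The only cosmetic difference is that the paper's one-line argument phrases the $R_0$ half through the closure identities $\overline{[x]_{\sim}}=\overline{\{x\}}$ and $q^{-1}\big(q(x)\big)=[x]_{\sim}$ (i.e.\ closedness of points in $X_0$), while you phrase it through symmetry of the specialization preorder via Theorem \ref{Theorem 2.4}(c); both are immediate consequences of the same results.
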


\begin{proof}

The conclusion follows by that for any $x\in X$, $\overline{[x]_{\sim}} = \overline{\{ x \}}$, and that $q^{-1}\big( q(x) \big) = [x]_{\sim}$ where $q:X\rightarrow X_0$ is the quotient mapping to the $T_0$ quotient.
    
\end{proof}

\noindent
Recall that in the proof of \textbf{Proposition \ref{Proposition 3.5}}, we show that $\preccurlyeq^2$ ($\sim^2$ resp.) is the coordinate-wise extension of $\preccurlyeq$ ($\sim$ resp.). By similar reasoning, we can show that given a topological space $(X, \tau)$ and an index set $I$, the specialization preorder $\preccurlyeq^I$ that is defined on $\prod_{i\in I}X$ with respect to the product topology on $\prod_{i\in I}X$, is also the coordinate-wise extension of $\preccurlyeq$. We can then immediately concludes that both $R_0$, $R_1$ are productive topological properties. A unified result will be provided later as a productive topological property is a special case of an initial property (see \textbf{Definition \ref{Definition 0.3}}). Next in \textbf{Proposition \ref{Proposition 4.7}} we will compare $R_0$, $R_1$ with other separation axioms, and prove a refined version of it after introducing new definitions and their properties.

\begin{prop}\label{Proposition 4.7}

Given a topological space $(X, \tau)$:

\begin{enumerate}[label = (\alph*)]

    \item if $X$ is $R_1$, $X$ will be $R_0$.
    \item if $X$ is regular, $X$ is $R_1$.
    \item if $X$ is $R_0$ and normal, then $X$ is completely regular.
    
\end{enumerate}
    
\end{prop}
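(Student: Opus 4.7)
The plan is to handle the three parts separately, each reducing to a characterization of the specialization preorder or to a direct application of previously-established results.

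For part (a), I would apply Proposition \ref{Proposition 4.2}(d) as a convenient reformulation of $R_0$. If $X$ is $R_1$ and $x \nsim y$, then by definition there exist disjoint $U \in \mathcal{U}(x)$ and $V \in \mathcal{U}(y)$. In particular $y \notin U$, which verifies Proposition \ref{Proposition 4.2}(d). So $X$ is $R_0$. This is essentially immediate once the proper equivalent form is invoked.

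For part (b), the strategy is to promote a failure of $\sim$-equivalence to a point-vs-closed-set configuration that regularity can handle. Assume $x \nsim y$. Then by Proposition \ref{Proposition 1.2}, at least one of $x \notin \overline{\{y\}}$ or $y \notin \overline{\{x\}}$ holds; say $x \notin \overline{\{y\}}$ without loss of generality. Since $\overline{\{y\}}$ is closed and does not contain $x$, regularity supplies disjoint open sets $U \ni x$ and $V \supseteq \overline{\{y\}}$. Then $y \in V$ and $U \cap V = \emptyset$, giving $R_1$.

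For part (c), the key observation is that the $R_0$ hypothesis lets us replace the possibly non-closed point $\{x\}$ by the closed set $\overline{\{x\}}$ without losing anything. Given $x \in X$ and a closed set $F$ with $x \notin F$, the complement $F^c$ is open and contains $x$, so Proposition \ref{Proposition 4.2}(b) yields $\overline{\{x\}} \subseteq F^c$, i.e.\ $\overline{\{x\}} \cap F = \emptyset$. Now $\overline{\{x\}}$ and $F$ are disjoint closed sets, so Urysohn's lemma (applicable in any normal space) produces a continuous $f : X \to [0,1]$ with $f \equiv 0$ on $\overline{\{x\}}$ and $f \equiv 1$ on $F$. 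In particular $f(x) = 0$ and $f|_F \equiv 1$, which is exactly complete regularity.

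The only subtle point is conventional: one should confirm that ``regular'' and ``normal'' in this paper denote the separation properties without an implicit $T_1$ assumption (consistent with the non-Hausdorff setting of the survey), and that Urysohn's lemma is being used in its classical form for normal spaces. No step requires deep computation; the main conceptual content is in part (c), where the $R_0$ hypothesis is precisely what is needed to pass from the point $x$ to the closed hull $\overline{\{x\}}$ so that normality and Urysohn's lemma become applicable.
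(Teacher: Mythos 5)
Your proposal is correct and follows essentially the same route as the paper: part (b) is verbatim the paper's argument, and parts (a) and (c) differ only in that you invoke the equivalences of Proposition \ref{Proposition 4.2} (items (d) and (b)) where the paper argues directly with the symmetry of $\preccurlyeq$, after which part (c) concludes with the same ``$\overline{\{x\}}$ disjoint from $F$, then Urysohn's Lemma'' step. The conventional caveat you raise (regular/normal without $T_1$) matches the paper's setting, so no gap remains.
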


\begin{proof}

\begin{enumerate}[label = (\alph*)]

    \item If $X$ is $R_1$, given $x, y\in X$, we have that $x\npreccurlyeq y$ implies that $y\npreccurlyeq x$. Then we can conclude $\preccurlyeq$ is symmetric.

    \item Assume that $X$ is regular. Fix $x, y\in X$ such that $x\nsim y$. Without losing generality, we can assume that $x\npreccurlyeq y$, so that $x\notin \overline{\{y\}}$ by \textbf{Proposition \ref{Proposition 1.2}}. Regularity provides two disjoint open sets $U, V$ such that $x\in U$ and $\overline{\{y\}}\subseteq V$. Hence $X$ is $R_1$.

    \item Assume that $X$ is normal and $R_0$. Fix $x\in X$ and a closed set $F\subseteq X$ with $x\notin F$. Then for any $y\in F$, $x\notin \overline{\{y\}}$. By $R_0$ axioms $y\notin \overline{\{x\}}$ for all $y\in F$, or $F$ is disjoint from $\overline{\{x\}}$. We can then apply \textbf{Urysohn's Lemma} to find $f\in C\big( X, [0, 1] \big)$ such that $f(x)=1$ and $f\Big|_F=0$. This implies $X$ is completely regular.
    
\end{enumerate}
    
\end{proof}

\noindent
We will end this section by discovering disconnectness in the context of $R_0$ and $R_1$ spaces. The main reference here is \cite{10}. Recall that a topological space $(X, \tau)$ can always be decomposed into a disjoint union of connected components. For each $x\in X$, let $C(x)$ denote the component that contains $x$. 

\begin{defn}

In a topological space $(X, \tau)$, define binary relations $\sim_C$ and $\cong_C$ as follows: for any $x, y\in X$, $x\sim_C y$ if there exists a connected set that contains $x$ and $y$; $x\cong_C y$ if every clopen neighborhood of $x$ contains $y$.
    
\end{defn}

\begin{rem}\label{Remark 4.8}

Obviously both $\sim_C$ and $\cong_C$ are equivalence relations. For each $x\in X$, $C(x) = [x]_{\sim_C}$ and for any two $x,y\in X$, $x\sim_C y$ implies $x\cong_C y$. As a result, when viewed as a partition of $X\times X$, $\cong_C$ is coarser than $\sim_C$. For each $x\in X$, we use $QC(x)$ to denote $[x]_{\cong_C}$. When we need to specify $X$ or $\tau$, we will use $C_X(x)$ ($QC_X(x)$ resp.) or $C_{\tau}(x)$ ($QC_{\tau}(x)$ resp.) to denote $[x]_{\sim_C}$ ($[x]_{\cong_C}$ resp.). We then have:

\begin{equation}\label{e9}
\overline{\{x\}} \subseteq C(x) \subseteq QC(x)
\end{equation}
It is also obvious that $QC(x)$ is the intersection of all clopen neighborhoods of $x$.

\end{rem}

\begin{defn}\label{Definition 4.10}

A topological space $(X, \tau)$ is said to be \textbf{totally disconnected} if all components are singletons, and \textbf{totally separated} if all quasi-components are singletons. Generalizing above, $X$ is \textbf{weakly totally disconnected} if $C(x) = \overline{\{x\}}$ for all $x\in X$, and \textbf{weakly totally separated} if $QC(x) = \overline{\{x\}}$ for all $x\in X$. By (\ref{e9}), we can see a (weakly) totally separated space is (weakly) totally disconnected.

\end{defn}

\begin{exmp}\label{Example 4.11}

Here we will introduce an example of a weakly totally separated space. A topological space $(X, \tau)$ is \textbf{zero-dimensional} if $\tau$ has a base that consists of clopen sets. We can then immediately see that $X$ is zero-dimensional precisely when each point has a neighborhood base that consists of clopen sets. Hence a zero-dimensional space is weakly totally separated. Also, in this case when $X$ is regular and hence $R_1$ (by \textbf{Proposition \ref{Proposition 4.7}}). If we let $\mathcal{B}$ denote the base that consists of clopen sets, then the family of characteristic functions $\big( \chi_B:B\in \mathcal{B} \big)$ separates different points and disjoint closed sets. Therefore a zero-dimensional space is completely regular. 
    
\end{exmp}

\begin{prop}\label{Proposition 4.11}

Given $(X, \tau)$ a topological space, if $X$ is weakly totally disconnected, then $X$ is $R_0$; if $X$ is weakly totally separated, then $X$ is $R_1$.

\end{prop}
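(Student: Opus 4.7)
The plan is to leverage the characterizations already available: Proposition 4.2(c) tells us $X$ is $R_0$ exactly when $\overline{\{x\}} \neq \overline{\{y\}}$ forces $\overline{\{x\}} \cap \overline{\{y\}} = \emptyset$, and the definition of $R_1$ together with Lemma 1.6 tells us we need to separate $x$ and $y$ by disjoint open sets whenever $\overline{\{x\}} \neq \overline{\{y\}}$. In both cases the hypothesis identifies $\overline{\{x\}}$ with an equivalence class ($C(x)$ or $QC(x)$) of an equivalence relation ($\sim_C$ or $\cong_C$), and I will exploit this equivalence-class structure.

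For the first statement, I would assume $X$ is weakly totally disconnected and verify Proposition 4.2(c). Take $x,y \in X$ and suppose $\overline{\{x\}} \cap \overline{\{y\}}$ contains some point $z$. Since $\overline{\{x\}} = C(x)$ and $\overline{\{y\}} = C(y)$, the point $z$ lies in both $C(x)$ and $C(y)$; as $\sim_C$ is an equivalence relation by Remark 4.8, this forces $C(x) = C(y)$, hence $\overline{\{x\}} = \overline{\{y\}}$. The contrapositive gives Proposition 4.2(c), so $X$ is $R_0$.

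For the second statement, I would assume $X$ is weakly totally separated and verify $R_1$ directly from Definition 4.4. Take $x,y \in X$ with $x \nsim y$. By Lemma 1.6 this is equivalent to $\overline{\{x\}} \neq \overline{\{y\}}$, and our hypothesis rewrites this as $QC(x) \neq QC(y)$, i.e.\ $x \not\cong_C y$. By definition of $\cong_C$, there is a clopen neighborhood $U$ of $x$ (without loss of generality) with $y \notin U$. Setting $V = U^c$, the sets $U$ and $V$ are both open, disjoint, and respective neighborhoods of $x$ and $y$, establishing $R_1$.

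There is no real obstacle here: the argument is essentially a translation between the language of components/quasi-components and the language of the specialization preorder, once one observes that both $\sim_C$ and $\cong_C$ are genuine equivalence relations and that the quasi-component $QC(x)$ is the intersection of all clopen neighborhoods of $x$ (Remark 4.8). The mild care needed is to not conflate $\sim$ with $\sim_C$ and to cite the right characterization of $R_0$ from Proposition 4.2.
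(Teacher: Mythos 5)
Your proof is correct and follows essentially the same route as the paper: identify $\overline{\{x\}}$ with $C(x)$ (resp.\ $QC(x)$), use the basic properties of $\sim_C$ and $\cong_C$ from Remark \ref{Remark 4.8}, and in the $R_1$ case extract a clopen set separating $x$ from $y$. The only cosmetic difference is that you verify the characterization in Proposition \ref{Proposition 4.2}(c) instead of checking symmetry of $\preccurlyeq$ directly, which changes nothing essential.
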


\begin{proof}

Fix $(X, \tau)$ a topological space. If $X$ is weakly totally disconnected, then for any $x, y\in X$ with $x\preccurlyeq y$, we have $x\in \overline{\{y\}} = C(y)$, which implies $C(y) \subseteq C(x) = \overline{\{x\}}$ by definition of $C(x)$. We then have $y\preccurlyeq x$ by \textbf{Proposition \ref{Proposition 1.2}} and hence $X$ is $R_0$. If $X$ is weakly totally separated, fix $x, y\in X$ with $x\nsim y$. Then $x\notin \overline{\{y\}} = QC(y)$, so we can find a clopen subset $U$ such that $x\in U$ and $y\in U^c$. This implies $X$ is $R_1$.
    
\end{proof}

\noindent
Recall that in \textbf{Theorem \ref{Theorem 2.4}} we prove that the $T_0$-quotient of a topological space is always $T_0$ and the quotient mapping has the universal property. Next we will show the analogous result for totally disconnectedness.

\begin{prop}\label{Proposition 4.13}

Given a topological space $(X, \tau_X)$, the quotient space $X\slash\sim_C$ is totally disconnected. Moreover, given a continuous mapping $f:(X, \tau_X) \rightarrow (Y, \tau_Y)$ where $Y$ is totally disconnected, there exists a unique continuous mapping $\tilde{f}:X\slash\sim_C \rightarrow C$ such that $f = \tilde{f} \circ p_X$ where $p_X:X\rightarrow X\slash\sim_C$ is the canonical quotient mapping. 
    
\end{prop}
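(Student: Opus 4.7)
The proof naturally splits in two, and the plan is to handle the universal property first (which is essentially formal) and then treat the total disconnectedness of $X\slash\sim_C$, which contains the substantive content. For the universal property, suppose $f : X \to Y$ is continuous and $Y$ is totally disconnected. For every $x \in X$, the image $f\bigl(C(x)\bigr)$ is the continuous image of the connected set $C(x)$, hence connected in $Y$, and therefore a singleton since $Y$ is totally disconnected. Thus $f$ is constant on each $\sim_C$-class, so $\tilde f : X\slash\sim_C \to Y$ defined by $\tilde f([x]) = f(x)$ is well-defined, satisfies $f = \tilde f \circ p_X$, and is unique by surjectivity of $p_X$. Its continuity follows from the definition of the quotient topology: for any open $V \subseteq Y$, $p_X^{-1}\bigl(\tilde f^{-1}(V)\bigr) = f^{-1}(V)$ is open in $X$, so $\tilde f^{-1}(V)$ is open in $X\slash\sim_C$.

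For the first assertion, let $A \subseteq X\slash\sim_C$ be a connected subset. The plan is to show $A$ is a singleton by proving that $B := p_X^{-1}(A)$ is connected in $X$. Once this is established, $B$ lies inside a single connected component $C(x_0)$ for any $x_0 \in B$, which forces $A = p_X(B) \subseteq \{[x_0]\}$. The argument for connectedness of $B$ exploits the fact that the fibers of $p_X$ are precisely the connected components of $X$ (hence connected) and that $p_X$ is a quotient map: if $B = U \sqcup V$ with $U, V$ relatively open in $B$, disjoint, and nonempty, then each fiber $C(x) \subseteq B$ lies entirely in $U$ or $V$ by connectedness. Hence $U$ and $V$ are both saturated with respect to $\sim_C$, so $p_X(U)$ and $p_X(V)$ partition $A$ disjointly.

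The main obstacle is then to upgrade this decomposition to an open partition of $A$, which would contradict its connectedness. Writing $U = U' \cap B$ and $V = V' \cap B$ for open $U', V' \subseteq X$, a direct check shows $p_X(U) = p_X(U') \cap A$ and $p_X(V) = p_X(V') \cap A$. The delicate step is to produce open subsets of $X\slash\sim_C$ whose traces on $A$ are $p_X(U)$ and $p_X(V)$; here one exploits the disjointness $U' \cap V = V' \cap U = \emptyset$ together with the connectedness of the fibers to arrange, after suitable shrinking of $U'$ and $V'$, that the saturated images $p_X(U'), p_X(V')$ are open in $X\slash\sim_C$. This yields the open partition of $A$ and the desired contradiction, completing the proof that $X\slash\sim_C$ is totally disconnected.
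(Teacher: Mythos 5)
Your universal-property half is correct and is the same argument as the paper's: $f$ collapses each component $C(x)$ to a point because its image is connected in a totally disconnected space, and continuity of $\tilde f$ comes from the quotient topology. The gap is in the total-disconnectedness half. You reduce to showing $B=p_X^{-1}(A)$ is connected for an \emph{arbitrary} connected $A\subseteq X\slash\sim_C$, and the reduction is fine up to the point where $B=U\cup V$ with $U,V$ nonempty, disjoint, relatively open in $B$ and $\sim_C$-saturated, so that $A=p_X(U)\cup p_X(V)$ disjointly. What is missing is exactly the step you yourself call ``delicate'': that $p_X(U)$ and $p_X(V)$ are traces on $A$ of open subsets of $X\slash\sim_C$. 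Asserting that one can ``arrange, after suitable shrinking of $U'$ and $V'$, that the saturated images $p_X(U'),p_X(V')$ are open'' is not an argument: the $\sim_C$-saturation of an open subset of $X$ need not be open, and the restriction of $p_X$ over an arbitrary subset $A$ need not be a quotient map, so neither the openness of these images nor the existence of such shrinkings is available. This is precisely where the standard lemma (a quotient map with connected fibres over a connected base has connected domain) needs the restricted map to be a quotient map, which is guaranteed only when $A$ is open or closed in $X\slash\sim_C$; for general $A$ your outline does not go through as written.

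The repair, which is the paper's route (following Bourbaki), is to use closedness: since components of $X\slash\sim_C$ are closed, it suffices to show that every \emph{closed} $A$ with at least two points is disconnected. Then $B=p_X^{-1}(A)$ is closed, saturated, and has at least two components, hence $B=E\cup F$ with $E,F$ nonempty, disjoint and closed in $X$; connectedness of the fibres forces $E$ and $F$ to be $\sim_C$-saturated, and for saturated closed sets the quotient map does yield that $p_X(E)$ and $p_X(F)$ are closed in $X\slash\sim_C$ (their preimages are $E$ and $F$), giving a disconnection of $A$. If you restrict your argument to closed connected $A$ it becomes essentially this proof; as stated, however, the crucial openness claim is unproved and constitutes a genuine gap.
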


\begin{proof}

We first establish the universal property of $p=p_X$. Let $(Y, \tau_Y)$ be a totally disconnected topological space and $f:X\rightarrow Y$ be continuous. For any $x, y\in X$ with $x\sim_C y$, we have $f\big( C(x) \big)$ is a connected subset of $Y$ that contains both $f(x)$ and $f(y)$. Since $Y$ is totally disconnected, we must have $f\big( C(x) \big) = \{f(x)\}$, and hence $f(x)=f(y)$. From this, the following mapping is well-defined and unique with respect to $f$:

$$
\tilde{f}:X\slash\sim_C \rightarrow Y, \hspace{0.3cm} p(x)\mapsto f(x)
$$
and satisfies $f = \tilde{f}\circ p$. Since the quotient topology in $X\slash\sim_C$ is the strong topology induced by $p$, the continuity of $f$ implies the continuity of $\tilde{f}$.\\

\noindent
To show that $X$ is totally disconnected, we will need to borrow arguments from {\cite[$\S$ I.$11.5$, \textbf{Proposition 9}]{38}}. Let $A$ be a closed subset of $X\slash \sim_C$ having at least two points. Then $p^{-1}(A)$ is closed in $X$ and contains at least two distinct components. Then assume $p^{-1}(A) = E\cup F$ where $E, F$ are disjoint sets closed in $X$. We shall show that $E$ is $\sim_C$-saturated. Pick $x\in p^{-1}\big( p(E) \big)$. Then there exists $y\in E$ with $p(x) = p(y)$. As $E, F$ separate $p^{-1}(A)$, $E, F$ also separate $C(y)$. Since $C(y)$ is connected, without losing generality, assume $C(y) \subseteq E$ and hence $x\in E$. This shows that $p^{-1}\big( p(E) \big) = [E]_{\sim_C} \subseteq E$, or $[E]_{\sim_C} = E$. In particular, $p(E)$ is closed in $X\slash \sim_C$. By the same token, $[F]_{\sim_C} = F$. We then have $A=p(E)\cup p(F)$ is the disjoint union of two subsets closed in $X_{\sim_C}$, and hence $A$ is disconnected. We can now conclude that the only connected subsets in $X\slash\sim_C$ are singletons, and hence $X\slash\sim_C$ is totally disconnected.
    
\end{proof}

\begin{rem}

Similar to the reasoning in \textbf{Remark \ref{Remark 2.6}}, \textbf{Proposition \ref{Proposition 4.13}} shows that the universal property of $p_X$ signifies that the category of totally disconnected spaces form a reflective subcategory of \textbf{Top}.
    
\end{rem}

\noindent
We are now ready to prove a refined version of \textbf{Proposition \ref{Proposition 4.7}}. Recall that a topological space $(X, \tau)$ is \textbf{Urysohn} if for any two different points $x, y\in X$, there exists $U\in \mathcal{U}(x)$, $V\in \mathcal{U}(y)$ such that $\overline{U}$ and $\overline{V}$ are disjoint. We will need a weaker version of the Urysohn property.

\begin{defn}

A topological space $(X, \tau)$ is \textbf{weakly Urysohn} if for any $x, y\in X$, whenever $x\nsim y$, there exists $U\in \mathcal{U}(x)$, $y\in \mathcal{U}(y)$ such that $\overline{U}\cap \overline{V} = \emptyset$. 
    
\end{defn}

\begin{prop}\label{Proposition 4.15}

Given a topological space $(X, \tau)$:

\begin{enumerate}[label = (\alph*)]

    \item if $X$ is weakly Urysohn (Urysohn resp.), then $X$ is $R_1$ (Hausdorff resp.).
    \item if $X$ is (weakly) totally separated, then $X$ is (weakly) Urysohn.
    \item if $X$ is regular, then $X$ is weakly Urysohn.
    \item $X$ is weakly Urysohn precisely when $X_0$ the $T_0$-quotient of $X$ is Urysohn.
    
\end{enumerate}
    
\end{prop}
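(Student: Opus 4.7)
The four parts are essentially reformulations or applications of results already in hand, so I would handle them in the stated order and keep each short.

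For part (a), the plan is simply to observe that disjoint closed sets $\overline{U}, \overline{V}$ give disjoint open neighborhoods $U, V$. Hence the weakly Urysohn property immediately implies the defining property of $R_1$ (\textbf{Definition \ref{Definition 4.4}}), and Urysohn implies Hausdorff. No further ingredients needed.

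For part (b), I would use that clopen sets equal their own closure. Suppose $X$ is weakly totally separated and $x \nsim y$; by \textbf{Proposition \ref{Proposition 1.2}} and the hypothesis, $x \notin \overline{\{y\}} = QC(y)$, so there is a clopen $U$ with $x \in U$ and $y \notin U$. Then $U$ and $U^c$ are clopen neighborhoods of $x$ and $y$ respectively with $\overline{U} \cap \overline{U^c} = U \cap U^c = \emptyset$. The ``totally separated $\Rightarrow$ Urysohn'' case is identical but with distinct points replacing $\nsim$.

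For part (c), the key is the standard characterization of regularity: for any $x$ and any open $W \ni x$, there exists open $U$ with $x \in U \subseteq \overline{U} \subseteq W$. Given $x \nsim y$, I would take without loss of generality $x \npreccurlyeq y$, so $x \in (\overline{\{y\}})^c$ by \textbf{Proposition \ref{Proposition 1.2}}. Apply the characterization to produce open $U \ni x$ with $\overline{U} \subseteq (\overline{\{y\}})^c$, so $\overline{\{y\}} \subseteq (\overline{U})^c$. Now $y$ lies in the open set $(\overline{U})^c$, so applying regularity again yields open $V \ni y$ with $\overline{V} \subseteq (\overline{U})^c$, i.e.\ $\overline{U} \cap \overline{V} = \emptyset$. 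This is the only step where any real argument is required.

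For part (d), I would use the properties of the quotient map $q : X \to X_0$ established in \textbf{Theorem \ref{Theorem 2.4}}: $q$ is continuous, open, closed, and surjective, and every closed subset of $X$ is $\sim$-saturated. The first two facts (with surjectivity) imply $q(\overline{A}) = \overline{q(A)}$ for every $A \subseteq X$, and saturation gives $q^{-1}(q(F)) = F$ for closed $F$. Given $x \nsim y$ in $X$, which by \textbf{Lemma \ref{Lemma 1.6}} is equivalent to $q(x) \neq q(y)$ in $X_0$: if $X$ is weakly Urysohn, take open $U \ni x$, $V \ni y$ with $\overline{U} \cap \overline{V} = \emptyset$; then $q(U), q(V)$ are open neighborhoods of $q(x), q(y)$ in $X_0$ with $\overline{q(U)} \cap \overline{q(V)} = q(\overline{U}) \cap q(\overline{V}) = \emptyset$ (the last equality uses that $\overline{U}, \overline{V}$ are $\sim$-saturated). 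Conversely, if $X_0$ is Urysohn, pull back disjoint $\overline{\tilde{U}}, \overline{\tilde{V}}$ via $q^{-1}$ and use saturation together with $q(\overline{q^{-1}(\tilde{U})}) = \overline{\tilde{U}}$ to conclude $\overline{q^{-1}(\tilde{U})} \cap \overline{q^{-1}(\tilde{V})} = q^{-1}(\overline{\tilde{U}} \cap \overline{\tilde{V}}) = \emptyset$.

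I do not expect any genuine obstacle here; the main care point is just bookkeeping in part (d), making sure that each invocation of ``$q$-image of closure equals closure of $q$-image'' is justified by the right combination of continuity, closedness, and surjectivity of $q$.
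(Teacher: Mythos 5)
Your proposal is correct and follows essentially the same route as the paper: (a) by definition, (b) via quasi-components and clopen neighborhoods, (c) by applying regularity to $x\notin\overline{\{y\}}$, and (d) via the openness/closedness of $q$ together with $\sim$-saturation of closures. Your part (b) is in fact a touch cleaner than the paper's (taking $U$ and $U^c$ directly rather than a case analysis on a neighborhood of $y$); just note in (b) that $x\nsim y$ only gives one of $x\notin\overline{\{y\}}$ or $y\notin\overline{\{x\}}$, so a ``without loss of generality'' (harmless, since the conclusion is symmetric) should be stated there as you did in (c).
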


\begin{proof}

Part $(a)$ follows immediately by definitions. For $(b)$, when $X$ is totally separated, fix $x, y\in X$ with $x\nsim y$. Then we must have $x\neq y$, or $QC(x)$ is disjoint from $QC(y)$. There exists $U\in\mathcal{N}_x$ a clopen neighborhood of $x$ such that $y\notin U$. Then let $V\in\mathcal{U}(y)$ be disjoint from $U$. If $V=\{y\}$, by (\ref{e9}) we have that $\{y\}$ is a clopen neighborhood of $y$, and is disjoint from $U$. Otherwise, $QC(y)\subsetneq V$, which implies there exists a clopen neighborhood $V'\in y$ such that $V'\subseteq V$ and $V'\cap U = \emptyset$. We can now conclude that $X$ is Urysohn.\\

\noindent
It remains to show part $(c)$ and $(d)$. To show $(c)$, we will use an equivalent formulation of regularity. When $X$ is regular, for any $x\in X$ and $O\in \mathcal{U}(x)$, there exists $V\in\mathcal{U}(x)$ such that $\overline{V} \subseteq O$. Now fix $x, y\in X$ such that $x\nsim y$. Without losing generality, assume $x\npreccurlyeq y$, or $x\notin \overline{\{y\}}$ by \textbf{Proposition \ref{Proposition 1.2}}. By regularity, there exists two disjoint open sets $U, V$ such that $x\in U$ and $\overline{\{y\}} \subseteq V$. Then we can find $U'\in \mathcal{U}(x)$, $V'\in\mathcal{U}(y)$ such that $\overline{U'} \subseteq U$ and $\overline{V'} \subseteq V$, and $\overline{U'}\cap \overline{V'} = \emptyset$.\\

\noindent
To show $(d)$, recall that a closed set in $X$ is $\preccurlyeq$-saturated. Namely for any closed subset $C\subseteq X$, if $x\preccurlyeq c$ for some $c\in C$, we then have $x\in C$. First assume $X$ is weakly Urysohn. For any $x, y\in X$ with $q(x)\neq q(y)$, we have $x\nsim y$, and hence there exists $U\in\mathcal{U}(x)$, $V\in \mathcal{U}(y)$ such that $\overline{U}\cap \overline{V} = \emptyset$. If, for some $z\in X$, $q(z) \in q(\overline{U}) \cap q(\overline{V})$, we will have:

$$
z\in q^{-1}\big( q(\overline{U}) \big) \cap q^{-1}\big( q(\overline{V}) \big) = [\overline{U}]_{\sim}\cap [\overline{V}]_{\sim} \subseteq [\overline{U}]_{\preccurlyeq} \cap [\overline{V}]_{\preccurlyeq} = U\cap V = \emptyset
$$
which is absurd. Then by \textbf{Theorem \ref{Theorem 2.4}(a)}, we have $q(\overline{U}) = \overline{q(U)}$ and $q(\overline{V}) = \overline{q(V)}$. This implies that $X_0$ is Urysohn. Conversely, given $x\nsim y$, if $X_0$ is weakly Urysohn, by \textbf{Theorem \ref{Theorem 2.4}(b)}, there exists $U\in \mathcal{U}(x)$, $V\in \mathcal{U}(y)$ such that $q(x)\in q(U)$, $q(y)\in q(V)$, and $\overline{q(U)}$ is disjoint from $\overline{q(V)}$. By \textbf{Theorem \ref{Theorem 2.4}(a)}, we have:

$$
x\in q^{-1}\big( \overline{q(U)} \big) = q^{-1}\big( q(\overline{U}) \big) = [\overline{U}]_{\sim} \subseteq [\overline{U}]_{\preccurlyeq} = \overline{U} 
$$
and similarly $y\in \overline{V}$. Therefore $\overline{U}$ is disjoint from $\overline{V}$, and hence $X$ is weakly Urysohn.

\end{proof}

\noindent
In the rest of this section, we will discover the relation between being weakly totally disconnected (separated resp.) and totally disconnected (separated resp.). We can expect conditions similar to \textbf{Proposition \ref{Proposition 4.15}(d)} where $T_0$ plays an important role in building connections between these two pairs of properties. Then we will introduce the \textbf{extremely disconnected} space. This property is stronger than being weakly totally separated and also related to a zero-dimensional space (see \textbf{Example \ref{Example 4.11}}). Since we will revisit zero-dimensional spaces again, we will include descriptions of when a zero-dimensional space has discrete $T_0$-quotient. We will close this section by proving all the initial properties (see \textbf{Section 0.3}) we have introduced.

\begin{prop}\label{Proposition 4.16}

In a topological space $(X, \tau)$, let $q:X\rightarrow X_0$ denote its $T_0$-quotient mapping.

\begin{enumerate}[label = (\alph*)]

    \item for any $A\subseteq X$ and $B\subseteq X_0$, $A$ is connected if and only if $q(A)$ is connected, and $B$ is connected if and only if $q^{-1}(B)$ is connected. 
    \item for any $x\in X$ (see \textbf{Remark \ref{Remark 4.8}}):

    $$
    q\big( C_X(x) \big) = C_{X_0}\big( q(x) \big) \hspace{1cm} q\big( QC_X(x) \big) = QC_{X_0}\big( q(x) \big)
    $$
    As a result, the mapping $A\rightarrow q(A)$ where $A$ is any connected set in $X$, and the mapping $B\mapsto q^{-1}(B)$ where $B$ is any connected subset in $X_0$, induce homeomorphisms between $X\slash\sim_C$ and $X_0\slash\sim_C$, and also between $X\slash\cong_C$ and $X_0\slash\cong_C$.

    \item $X$ is weakly totally disconnected if and only if $X_0$ is totally disconnected.
    \item $X$ is weakly totally separated if and only if $X_0$ is totally separated.
    
\end{enumerate}
    
\end{prop}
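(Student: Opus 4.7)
The plan is to proceed in the order the statements are given, with each part flowing from the previous together with the key structural facts about $q$: surjectivity, openness and closedness (from \textbf{Theorem \ref{Theorem 2.4}(a)}), and the fact that open and closed subsets of $X$ are $\sim$-saturated. For (a), the main observation is that if $B_1, B_2 \subseteq X$ are $\sim$-saturated, then $q(B_1 \cap B_2) = q(B_1) \cap q(B_2)$: a point in the right-hand side has a preimage in each $B_i$, and those preimages are $\sim$-equivalent, so by saturation a single preimage lies in $B_1 \cap B_2$. Thus if $U, V \in \tau$ witness a separation of $A$ (meaning $A \subseteq U \cup V$, both $A \cap U$ and $A \cap V$ non-empty, $A \cap U \cap V = \emptyset$), then $q(U), q(V) \in \tau_0$ witness a separation of $q(A)$: non-emptiness of the pieces follows from $q(A \cap U) \subseteq q(A) \cap q(U)$ (and similarly for $V$), while $q(A) \cap q(U) \cap q(V) = q(A) \cap q(U \cap V)$, so any point there would pull back to $A \cap U \cap V$, which is empty. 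Conversely, a separation $U_0, V_0 \in \tau_0$ of $q(A)$ pulls back under $q$ to a separation of $A$. The ``iff'' for $B \subseteq X_0$ is then immediate upon applying the first equivalence to $q^{-1}(B)$ and using $B = q(q^{-1}(B))$.

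For (b), the identity $q(C_X(x)) = C_{X_0}(q(x))$ drops out of (a): $q(C_X(x))$ is a connected subset of $X_0$ containing $q(x)$, so it is contained in $C_{X_0}(q(x))$; and $q^{-1}(C_{X_0}(q(x)))$ is connected and contains $x$, so it is contained in $C_X(x)$, giving the reverse inclusion after applying $q$. For quasi-components, the bijection $\phi:\tau \to \tau_0$ of \textbf{Theorem \ref{Theorem 2.4}(b)} restricts to a bijection between clopen neighbourhoods of $x$ in $X$ and clopen neighbourhoods of $q(x)$ in $X_0$, so using again that $q$ commutes with arbitrary intersections of $\sim$-saturated sets, the intersections defining $QC_X(x)$ and $QC_{X_0}(q(x))$ match up. Both $C_X(x)$ and $QC_X(x)$ are $\sim$-saturated (for $y \in C_X(x)$ one has $[y]_\sim \subseteq \overline{\{y\}} \subseteq C(y) = C_X(x)$; for $QC_X(x)$ it is an intersection of $\sim$-saturated sets), so the equivalences $x \sim_C y \Leftrightarrow q(x) \sim_C q(y)$ and $x \cong_C y \Leftrightarrow q(x) \cong_C q(y)$ follow. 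This yields a well-defined bijection $\bar{q}: X/{\sim_C} \to X_0/{\sim_C}$ sending $[x]_{\sim_C}$ to $[q(x)]_{\sim_C}$; continuity comes from the universal property of the quotient topology applied to the continuous composite from $X$ to $X_0/{\sim_C}$, and its inverse is continuous because $q$ is open and the image of any $\sim_C$-saturated open set in $X$ is again $\sim_C$-saturated and open in $X_0$. The case of $\cong_C$ is identical.

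Parts (c) and (d) combine (b) with earlier results in this section. If $X$ is weakly totally disconnected, then by \textbf{Proposition \ref{Proposition 4.11}} it is $R_0$, so by \textbf{Proposition \ref{Proposition 4.6}} the quotient $X_0$ is $T_1$; in particular $\overline{\{q(x)\}} = \{q(x)\}$. Since $q$ is continuous and closed, $q(\overline{\{x\}}) = \overline{\{q(x)\}}$, and by (b) we get $C_{X_0}(q(x)) = q(C_X(x)) = q(\overline{\{x\}}) = \{q(x)\}$, so $X_0$ is totally disconnected. Conversely, if $X_0$ is totally disconnected, the chain $C_X(x) \subseteq q^{-1}(\{q(x)\}) = [x]_\sim \subseteq \overline{\{x\}} \subseteq C_X(x)$ collapses to $C_X(x) = \overline{\{x\}}$. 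Part (d) is identical, with $R_1$ and Hausdorffness replacing $R_0$ and $T_1$, $QC$ replacing $C$, and the chain using $\overline{\{x\}} \subseteq QC_X(x)$ from \textbf{Remark \ref{Remark 4.8}}. The main obstacle I anticipate is in (a), namely the careful bookkeeping of saturation when transferring separations between $A$ and $q(A)$; once that is settled, (b)--(d) are formal consequences, with (c) and (d) requiring only an appeal to \textbf{Propositions \ref{Proposition 4.6}} and \textbf{\ref{Proposition 4.11}} to upgrade the weak hypotheses on $X$ into strong ones on $X_0$.
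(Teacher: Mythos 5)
Your proposal is correct and follows essentially the same route as the paper: transferring separations through the open, closed, surjective quotient map $q$ and the $\sim$-saturation of open, closed and clopen sets for (a) and (b), then combining the component/quasi-component identities with \textbf{Propositions \ref{Proposition 4.6}} and \textbf{\ref{Proposition 4.11}} for (c) and (d). The only notable difference is cosmetic: you verify the intersection identity $q(B_1\cap B_2)=q(B_1)\cap q(B_2)$ for saturated sets directly instead of citing \textbf{Theorem \ref{Theorem 2.4}(b)} on the subspace, and you spell out the induced homeomorphisms $X/\!\sim_C\,\cong X_0/\!\sim_C$ and $X/\!\cong_C\,\cong X_0/\!\cong_C$, which the paper's proof leaves implicit.
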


\begin{proof}

\begin{enumerate}[label = (\alph*)]

    \item Since $q:X\rightarrow X_0$ is continuous, when $A$ is connected, $q(A)$ is connected and when $q^{-1}(B)$ is connected, $B$ is connected. Now suppose that $q^{-1}(B)$ is disconnected. Then there exists $O_1, O_2\in \tau$ such that $B_1 = O_1\cap q^{-1}(B)$ is disjoint from $B_2 = O_2\cap q^{-1}(B)$ and $q^{-1}(B)$ is the union of $B_1$ and $B_2$. By applying \textbf{Theorem \ref{Theorem 2.4}(b)} to the subspace $q^{-1}(B)$ equipped with the relative topology in $X$, we have that $q(B_1)\cap q(B_2) = \emptyset$ and $B = q(B_1)\cup q(B_2)$. According to \textbf{Theorem \ref{Theorem 2.4}(a)}, both $q(B_1)$ and $q(B_2)$ are open in $B$ and hence $B$ is disconnected. In the same token, when $A$ is disconnected, we can show that $q(A)$ is disconnected.

    \item According to \textbf{Theorem \ref{Theorem 2.4}(a)} and the fact that $q$ is continuous, for any $x\in X$, a clopen neighborhood of $q(x)$ is necessarily in the form of $q(O)$ for some clopen neighborhood of $x$. Then the second equality follows immediately. By part (a), a connected component in $X_0$ is necessarily in the form of $q(C)$ for some connected component $C\subseteq X$. Hence the first equality follows.

    \item If $X_0$ is totally disconnected, since for any $x\in X$, $\{q(x)\}$ is connected, we have $\overline{\{q(x)\}} = \{q(x)\}$. Hence by \textbf{Theorem \ref{Theorem 2.4}(a)}:
    
    $$
    [x]_{\sim} = q^{-1}\big( \{q(x)\} \big) = q^{-1}\big( \overline{\{q(x)\}} \big) = \overline{[x]_{\sim}} = \overline{\{x\}}
    $$
    is a connected subset that contains $x$. Hence $\overline{\{x\}} \subseteq C_X(x)$. Meanwhile, by part (b) we have $C_X(x)\subseteq q^{-1}\big( q(x) \big) = [x]_{\sim} = \overline{\{x\}}$, which implies $\overline{\{x\}} = C_X(x)$. To prove the converse, first observe that for any connected subset $C\subseteq X_0$, by \textbf{Theorem \ref{Theorem 2.4}}, $q^{-1}(C)$ is also connected. Hence for any $x\in X$, we have:

    $$
    C_{X_0}\big( q(x) \big) = q\Big( q^{-1}\big( C_{X_0}(q(x)) \big) \Big) \subseteq q\big( C_X(x) \big) = q\big( \overline{\{x\}} \big) \subseteq \overline{\{q(x)\}} \subseteq C_{X_0}\big( q(x) \big)
    $$
    Then together with \textbf{Proposition \ref{Proposition 4.11}} and \textbf{Proposition \ref{Proposition 4.6}}, we have $C_{X_0}\big( q(x) \big) = \{q(x)\}$, and hence $X_0$ is totally disconnected.

    \item By \textbf{Theorem \ref{Theorem 2.4}}, $q$ is clopen. Hence when $X$ is weakly totally separated, $X_0$ is totally separated. Conversely, if $X_0$ is totally separated, observe that:

    $$
    \overline{\{x\}} \subseteq QC(x) \subseteq \bigcap_{\substack{C\in \mathcal{U}\big( q(x) \big) \\ C\,\text{ clopen }}} q^{-1}(C) = q^{-1}\left( \bigcap_{\substack{C\in \mathcal{U}\big( q(x) \big) \\ C\,\text{ clopen }}} C\right) = q^{-1}\Big( QC\big( q(x) \big) \Big) = q^{-1}\big( \{q(x)\} \big) \subseteq \overline{\{x\}}
    $$
     
\end{enumerate}
    
\end{proof}

\begin{prop}\label{Proposition 4.17}

A topological space $(X,\tau)$ is \textbf{extremally disconnected} if the closure of each open set is also open. Fix $(X, \tau)$ a topological space. Then the following are equivalent:

\begin{enumerate}[label = (\alph*)]

    \item $X$ is extremally disconnected if and only if disjoint open subsets have disjoint closures.
    \item if $X$ is extremally disconnected and $R_1$, then $X$ is weakly totally disconnected. Thus, if $X$ is extremally disconnected and Hausdorff, $X$ is totally separated.
    \item if $X$ is extremally disconnected and regular, then $X$ is zero-dimensional (see \textbf{Example \ref{Example 4.11}}).
    \item if $X$ is zero-dimensional, then $X$ is extremally disconnected precisely when the Boolean algebra $(\mathcal{B}, \subseteq)$ is complete where $\mathcal{B}$ is the family of all clopen subsets.
    
\end{enumerate}

\end{prop}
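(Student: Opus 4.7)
My plan is to dispatch parts (a), (b), (c) as short applications of definitions and earlier results, and to treat (d) as the main content.

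For (a), the forward direction: if $U, V$ are disjoint open sets and $x\in \overline{U}\cap \overline{V}$, then extremal disconnectedness makes $\overline{U}$ an open neighborhood of $x\in \overline{V}$, so $\overline{U}$ meets $V$; but $V$ is open and meets $\overline{U}$, hence $V$ meets $U$, contradicting disjointness. Conversely, given an open $U$, the sets $U$ and $X\setminus \overline{U}$ are disjoint and open, so by hypothesis $\overline{U}\cap \overline{X\setminus \overline{U}}=\emptyset$, giving $\overline{U}=X\setminus \overline{X\setminus \overline{U}}$, which is open.

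For (b), I will in fact establish the stronger conclusion that $X$ is weakly totally separated, which implies weakly totally disconnected by \textbf{Definition \ref{Definition 4.10}} and reduces to totally separated when $X$ is Hausdorff (since then $\overline{\{x\}}=\{x\}$). Fix $x,y\in X$ with $y\notin \overline{\{x\}}$; by \textbf{Proposition \ref{Proposition 1.2}}, $y\not\preccurlyeq x$, and by \textbf{Proposition \ref{Proposition 4.7}(a)} the space is $R_0$, so $\preccurlyeq$ is symmetric and therefore $y\nsim x$. Pre-regularity ($R_1$) supplies disjoint open $U\ni x$, $V\ni y$, and (a) combined with extremal disconnectedness renders $\overline{U}$ clopen with $y\in V\subseteq X\setminus \overline{U}$. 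Hence $y\notin QC(x)$, so $QC(x)\subseteq \overline{\{x\}}$, and (\ref{e9}) forces $QC(x)=\overline{\{x\}}$.

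For (c), given $x\in X$ and an open $U\ni x$, regularity yields an open $V$ with $x\in V\subseteq \overline{V}\subseteq U$; extremal disconnectedness makes $\overline{V}$ clopen, so each point admits a neighborhood basis of clopens, and the family of such sets is a clopen base for $\tau$, witnessing zero-dimensionality (see \textbf{Example \ref{Example 4.11}}).

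Part (d) is the main content, and the converse direction is the main obstacle. For the forward implication, given clopens $(C_i)_{i\in I}$, the set $U=\bigcup_{i\in I}C_i$ is open, so $\overline{U}$ is clopen by extremal disconnectedness; it is an upper bound of $(C_i)$, and any clopen upper bound $C$ satisfies $U\subseteq C$ and hence $\overline{U}\subseteq C$ as $C$ is closed, so $\overline{U}=\bigvee_{i\in I}C_i$ in $\mathcal{B}$. Infima then exist by De Morgan. For the converse, suppose $\mathcal{B}$ is complete and let $U$ be open; zero-dimensionality gives $U=\bigcup\{C\in \mathcal{B}:C\subseteq U\}$, and I let $D$ be the supremum of this family in $\mathcal{B}$. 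Since $U\subseteq D$ and $D$ is closed, $\overline{U}\subseteq D$. If this inclusion were strict, I would pick $x\in D\setminus \overline{U}$ and a clopen $E$ with $x\in E\subseteq X\setminus \overline{U}$; then every clopen $C\subseteq U$ would satisfy $C\cap E=\emptyset$ and hence $C\subseteq D\setminus E$, which is clopen and strictly smaller than $D$, contradicting the supremum property. Thus $\overline{U}=D$ is clopen, proving extremal disconnectedness.
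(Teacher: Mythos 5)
Your proposal is correct and follows essentially the same route as the paper's proof in all four parts: (a) via disjointness of $U$ and $X\setminus\overline{U}$, (b) by using $R_1$ plus part (a) to produce a clopen set separating $x$ from any $y\in QC(x)\setminus\overline{\{x\}}$ (thereby proving weak total separatedness, as the paper's argument also does), (c) by closing up the regularity neighborhood, and (d) by taking $\overline{\bigcup\mathcal{S}}$ as the supremum in one direction and, in the other, comparing $\overline{U}$ with the supremum of the clopen subsets of $U$ and removing a clopen piece to contradict leastness. No gaps; only cosmetic differences (e.g., your detour through $R_0$ in (b) is unnecessary since $y\npreccurlyeq x$ already gives $y\nsim x$).
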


\begin{proof}

\begin{enumerate}[label = (\alph*)]

    \item First we assume that disjoint open subsets have disjoint closures. Fix $O\in\tau$. Since $O^c$ and $\overline{O}^c$ are disjoint open subsets, we have $\overline{O}$ and $\overline{\overline{O}^c}$ are also disjoint. Then for any $x\in \overline{O}$, there exists $V\in \mathcal{U}(x)$, such that $V\cap \overline{O}^c = \emptyset$, or $V\subseteq \overline{O}$. Hence $\overline{O}$ is open. Conversely, suppose there exists two disjoint open subsets $U, V\in \tau$ such that $x\in\overline{U} \cap \overline{V}$. Then for any $O\in \mathcal{U}(x)$, $O\cap V\neq\emptyset$ and hence $O\cap \overline{U}^c\neq \emptyset$. This implies $\overline{U}$ is not open and hence $X$ is not extremally disconnected.

    \item Fix $x\in X$. By (\ref{e9}) we have $\overline{\{x\}} \subseteq QC(x)$. If there exists $y\in QC(x) \backslash \overline{\{x\}}$, by $R_1$ axiom, there exist $U\in\mathcal{U}(x)$, $V\in \mathcal{U}(y)$ such that $U\cap V = \emptyset$. Since $X$ is extremally disconnected, $\overline{U}$ and $\overline{V}$ are both open and disjoint by part (a). This implies $x\ncong_C y$ as $\overline{V}$ is a clopen neighborhood of $y$, and leads to a contradiction.

    \item Assuming $X$ is extremally disconnected and regular, we will show that $\tau$ has a base that consists of clopen sets. For any $O\in\tau$, by regularity there exists $V\in\tau$ such that $\overline{V}\subseteq O$. Since $X$ is extremally disconnected, $\overline{V}\in\tau$ and hence clopen.

    \item Assume $X$ is zero-dimensional. If $X$ is extremally disconnected, for any $\mathcal{S}\subseteq \mathcal{B}$, by \textbf{Example \ref{Example 4.11}}, $\bigvee\mathcal{S} = \bigcup\mathcal{S} \in \mathcal{B}$ and hence $\mathcal{S}$ always has a supremum. Conversely, if $\mathcal{B}$ is complete, fix $U\in\tau$ and define:

    $$
    \mathcal{S}_U = \big\{ A\in \mathcal{B}:A\subseteq U \big\} \hspace{1cm} B=\bigvee\mathcal{S}_U
    $$
    Hence $\overline{U}\subseteq B$. If $B\backslash \overline{U} \neq \emptyset$, we can find a non-empty $C\in \mathcal{B}$ such that $C\subseteq B\backslash \overline{U}$. In this case, $B\backslash C\in\mathcal{B}$ and $U\subseteq B\backslash C$, contradicting the definition of $B$. Hence $\overline{U}$ is open.
    
\end{enumerate}
    
\end{proof}

\begin{prop}\label{Proposition 4.18}

Given a topological space $(X, \tau)$, the following are equivalent:

\begin{enumerate}[label = (\alph*)]

    \item $X_0$ the $T_0$-quotient of $X$ is discrete.
    \item there exists an equivalence relation $\sim_X$ defined on $X$ such that $\tau = \Sigma(\sim_X)$ the family of all $\sim_X$-saturated sets (see \textbf{Definition \ref{Definition 0.1}}).
    \item every open subset is closed.
    \item for every $x\in X$, $\overline{\{x\}}$ is open.
    \item $X$ is Alexandroff and zero-dimensional.
    \item $X$ is $R_0$ and Alexandroff.
    \item for any $A\subseteq X$, $A^{\triangledown} = \emptyset$ (see \textbf{Definition \ref{Definition 2.6}}).
    
\end{enumerate}

\noindent
We call $X$ \textbf{almost discrete} if $X$ satisfies any one of the conditions above.
    
\end{prop}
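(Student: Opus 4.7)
My plan is to prove the seven conditions equivalent by building a short web of implications anchored in the three toolkits developed earlier: the characterization of $R_0$ (Proposition \ref{Proposition 4.2}), the characterization of Alexandroff topologies (Theorem \ref{Theorem 1.9}), and the $T_0$-quotient machinery (Theorem \ref{Theorem 2.4}, Proposition \ref{Proposition 4.6}). The recurring observation is that open sets are always $\preccurlyeq$-saturated, and under $R_0$ the preorder $\preccurlyeq$ collapses to the equivalence relation $\sim$, so that $\preccurlyeq$-saturation becomes saturation with respect to an equivalence relation and is preserved by complementation.

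For the core cycle I plan $(c)\Rightarrow(d)\Rightarrow(f)\Rightarrow(b)\Rightarrow(c)$. The implication $(c)\Rightarrow(d)$ is immediate: complementation shows that under $(c)$ every closed set is open, in particular $\overline{\{x\}}$. For $(d)\Rightarrow(f)$, the first task is to promote $(d)$ to $R_0$: if $x\in\overline{\{y\}}$, then $\overline{\{x\}}$ is an open neighborhood of $x$, hence by $x\preccurlyeq y$ and Proposition \ref{Proposition 1.2} it also contains $y$, giving $y\preccurlyeq x$. Once $R_0$ is in hand, Proposition \ref{Proposition 4.2}(b) yields $\overline{\{x\}}\subseteq U$ for every $U\in\mathcal{U}(x)$, so $\overline{\{x\}}$ is the smallest open neighborhood of $x$ and Theorem \ref{Theorem 1.9}(f) delivers Alexandroff. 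For $(f)\Rightarrow(b)$, the $R_0$ axiom promotes $\preccurlyeq$ to an equivalence relation $\sim_X$, and Alexandroff combined with Theorem \ref{Theorem 1.9}(d) gives $\tau=\Sigma(\preccurlyeq)=\Sigma(\sim_X)$. Finally $(b)\Rightarrow(c)$ is the complementation trick: for an equivalence relation, the complement of a saturated set is saturated, so $\tau$ is closed under complements.

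It remains to attach $(e)$, $(a)$ and $(g)$. For $(e)\Leftrightarrow(f)$: if $X$ is zero-dimensional, any clopen neighborhood of $x$ is closed and hence contains $\overline{\{x\}}$, whence $\overline{\{x\}}\subseteq U$ for every open $U\ni x$ and $X$ is $R_0$; conversely, under $R_0$ plus Alexandroff the minimal open neighborhoods $B_x$ from Theorem \ref{Theorem 1.9}(f) equal $\bigcap\mathcal{N}_x=\overline{\{x\}}$, which are therefore clopen, and $\{B_x:x\in X\}$ is a clopen base. For $(a)\Leftrightarrow(g)$, the surjectivity of $q$ gives $A^\triangledown=\emptyset$ iff $q(A)'=\emptyset$, and since every subset of $X_0$ has the form $q(A)$ for some $A\subseteq X$, this is equivalent by Proposition \ref{Proposition 2.9}(d) to $X_0$ being discrete. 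To splice this pair into the cycle I would use $(d)\Leftrightarrow(a)$: under $(d)$ we have $R_0$, so $\overline{\{x\}}=[x]_\sim=q^{-1}(\{q(x)\})$ is open in $X$, forcing $\{q(x)\}$ open in $X_0$ and so $(a)$; conversely $(a)$ implies $X_0$ is $T_1$, hence $X$ is $R_0$ by Proposition \ref{Proposition 4.6}, and $\overline{\{x\}}=[x]_\sim=q^{-1}(\{q(x)\})$ is open since $\{q(x)\}$ is.

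The one mildly subtle step I expect is the derivation of $R_0$ from $(d)$: it is tempting to argue it from Proposition \ref{Proposition 4.2}(b) directly, but that requires containment of $\overline{\{x\}}$ in \emph{every} open neighborhood, not just in $\overline{\{x\}}$ itself; the right move is to notice that under $(d)$ the closure $\overline{\{x\}}$ is itself an open neighborhood of $x$, so one plays it against the defining condition $\mathcal{N}_x\subseteq\mathcal{N}_y$ of $x\preccurlyeq y$ to swap $x$ and $y$. Everything else is bookkeeping that repackages Theorem \ref{Theorem 1.9}, Proposition \ref{Proposition 4.2}, Proposition \ref{Proposition 2.9}, and Proposition \ref{Proposition 4.6}.
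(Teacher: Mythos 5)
Your proposal is correct; every implication checks out, including the one genuinely delicate step you flag, namely extracting $R_0$ from condition $(d)$ by noting that $\overline{\{x\}}$ is itself an open neighborhood of $x$ and playing it against $\mathcal{N}_x\subseteq\mathcal{N}_y$. The route differs from the paper's: the paper proves one long cycle $(a)\Rightarrow(b)\Rightarrow(c)\Rightarrow(d)\Rightarrow(e)\Rightarrow(f)\Rightarrow(a)$ and then $(a)\Leftrightarrow(g)$, whereas you run the shorter cycle $(c)\Rightarrow(d)\Rightarrow(f)\Rightarrow(b)\Rightarrow(c)$ and attach $(e)$, $(a)$, $(g)$ by separate biconditionals. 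Concretely, you get $(b)$ in one line from $(f)$ by citing \textbf{Theorem \ref{Theorem 1.9}}(d) (Alexandroff means $\tau=\Sigma(\preccurlyeq)$, and $R_0$ makes $\preccurlyeq$ an equivalence relation), where the paper instead verifies $\tau=\Sigma(\sim)$ directly from discreteness of $X_0$; your $(e)\Rightarrow(f)$ (clopen neighborhoods contain point closures, then \textbf{Proposition \ref{Proposition 4.2}}(b)) is shorter than the paper's contradiction argument with the minimal clopen sets $B_x$; and your treatment of $(a)$ and $(g)$ goes through the quotient machinery, using $[x]_{\sim}=q^{-1}(\{q(x)\})$ with \textbf{Proposition \ref{Proposition 4.6}}, and surjectivity of $q$ plus \textbf{Proposition \ref{Proposition 2.9}}(d), which is a bit cleaner than the paper's closure computation for $(g)\Rightarrow(a)$. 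What the paper's single chain buys is economy of cross-references and no need for the $(d)\Leftrightarrow(a)$ splice; what your decomposition buys is that the intrinsic conditions $(b)$--$(f)$ are handled entirely inside $X$ by the Alexandroff and $R_0$ toolkits, while the $T_0$-quotient conditions $(a)$, $(g)$ are isolated in two short biconditionals, making each individual step essentially a citation.
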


\begin{proof}

When $X_0$ is discrete, we claim that $\tau = \Sigma(\sim)$. Since $X_0$ is discrete, $[x]_{\sim} \in\tau$ for all $x\in X$. Also, for any $O\in\tau$ and $x\in X$, if $x\sim y$ for some $y\in O$, by \textbf{Lemma \ref{Lemma 1.6}} we have $\mathcal{N}_x = \mathcal{N}_y$, which implies there exists $N\in \mathcal{U}(x)$ such that $N\subseteq O$, and hence $x\in O$. Therefore $O = [O]_{\sim} = \bigcup_{x\in O}[x]_{\sim}$.\\

\noindent
If $\tau = \Sigma(\sim_X)$ for some equivalence relation $\sim_X$ defined on $X$, then for any $O\in \tau$, there exists $O'\subseteq X$ such that $[O']_{\sim_X} = O$. Then $[O']_{\sim_X}^c = \big[ X\backslash O' \big]_{\sim_X}\in \tau$, which implies $O$ is closed. If every open subset is closed, then every closed subset is open, and hence $\overline{\{x\}}$ is open for each $x\in X$.\\

\noindent
Since for each $x\in X$, $\overline{\{x\}}$ is open, we have $\overline{\{x\}}\subseteq \bigcap\mathcal{N}_x$, or each $x\in X$ is $\preccurlyeq$-minimal (see \textbf{Definition \ref{Definition 1.3}}). Then by \textbf{Lemma \ref{Lemma 1.7}}, for each $N\in \mathcal{N}_x$, $\overline{\{x\}} \subseteq N$. This implies that given $(O_i)_{i\in I}\subseteq \tau$, if $\bigcap_{i\in I}O_i$ is non-empty, then for any $x\in \bigcap_{i\in I}O_i$, $\overline{\{x\}} \subseteq \bigcap_{i\in I}O_i$, and hence $\bigcap_{i\in I}O_i$ is open. Also, for each $x\in X$, $\big\{ \overline{\{x\}} \big\}$ is a singleton neighborhood base that consists of clopen sets, and hence $X$ is zero-dimensional.\\

\noindent
When $X$ is Alexandroff and zero-dimensional, by \textbf{Theorem \ref{Theorem 1.9}}, for each $x\in X$, there exists a clopen set $B_x\in \mathcal{N}_x$ such that $B_x\subseteq N$ for all $N\in \mathcal{N}_x$. Fix $x, y\in X$ with $x\preccurlyeq y$. We then have $B_x\in \mathcal{N}_y$, or $B_y\subseteq B_x$. If $x\in B_x\backslash B_y$, there exists $O\in\mathcal{U}(x)$ such that $O\subseteq B_x\backslash B_y$, contradicting that $x\preccurlyeq y$. Hence we have $x\in B_y$, which implies $B_x\subseteq B_y$. We can now conclude that for all $N\in \mathcal{N}_y$, $x\in B_y\subseteq N$, and hence $y\preccurlyeq x$.\\

\noindent
When $X$ is Alexandroff, by our previous reasoning, we have for each $x\in X$ and $y\in B_x$, $x\preccurlyeq y$. If $X$ is also $R_0$, we have that for each $x\in X$, according to \textbf{Lemma \ref{Lemma 1.7}}:

$$
\overline{\{x\}} \subseteq B_x \subseteq [x]_{\preccurlyeq} = [x]_{\sim} = \overline{\{x\}}
$$
which implies $X_0$ is discrete as $B_x$ is clopen. When $X_0$ is discrete, for all $A\subseteq X$, $q(A)'=\emptyset$ and hence $A^{\triangledown} = \emptyset$. Conversely, if $A^{\triangledown}=\emptyset$ for all $A\subseteq X$, by \textbf{Proposition \ref{Proposition 2.6}} and the fact that $q:X\rightarrow X_0$ is surjective, we will have:

$$
X\backslash \overline{A} = X\backslash [A]_{\sim} = [A^c]_{\sim} = q^{-1}\big( q(A^c) \big) = q^{-1}\big( q(A)^c \big)
$$
for all $A\subseteq X$, which implies $q(A)$ is open for all $A\subseteq X$. Therefore $X_0$ is discrete.
    
\end{proof}

\begin{prop}

All of the following properties are initial properties (see \textbf{Section 0.3}): $R_0$, $R_1$, weakly Urysohn, regular, completely regular, weakly totally disconnected, weakly totally separated, zero-dimensional.
    
\end{prop}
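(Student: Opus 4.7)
The backbone of the proof is a preorder lemma for weak topologies that I would record first. If $\tau(\mathcal{F}_A)$ is the weak topology on $X$ induced by $\mathcal{F}_A = \{f_a:X\to Y_a\}_{a\in A}$, then for any $x,y\in X$ one has $x\preccurlyeq_X y$ if and only if $f_a(x)\preccurlyeq_{Y_a} f_a(y)$ for every $a\in A$. Indeed, $x\preccurlyeq_X y$ holds iff $y$ lies in every subbasic open neighborhood of $x$, and subbasic neighborhoods are precisely $f_a^{-1}(V)$ for open $V\ni f_a(x)$. Consequently $x\sim_X y$ iff $f_a(x)\sim_{Y_a} f_a(y)$ for all $a$. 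This one observation essentially drives every case.

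For $R_0$, $R_1$ and weakly Urysohn I would argue uniformly. If each $Y_a$ is $R_0$ and $x\preccurlyeq_X y$, then $f_a(x)\preccurlyeq f_a(y)$, so $f_a(y)\preccurlyeq f_a(x)$, for each $a$; hence $y\preccurlyeq_X x$. For $R_1$ (resp.\ weakly Urysohn), if $x\nsim_X y$ some $f_a$ witnesses $f_a(x)\nsim f_a(y)$, so the hypothesis on $Y_a$ yields disjoint open (resp.\ closure-disjoint open) $U,V\subseteq Y_a$ separating $f_a(x)$ and $f_a(y)$; then $f_a^{-1}(U),f_a^{-1}(V)$ separate $x,y$, and in the weakly Urysohn case continuity gives $\overline{f_a^{-1}(U)}\cap\overline{f_a^{-1}(V)}\subseteq f_a^{-1}(\overline{U})\cap f_a^{-1}(\overline{V})=\emptyset$.

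Regularity and complete regularity I would check by a basic-neighborhood argument rather than a pointwise separation argument, since neighborhoods in the weak topology involve only finitely many coordinates. Given $x\in X$ and a basic open neighborhood $U=\bigcap_{i=1}^n f_{a_i}^{-1}(U_i)$, regularity of each $Y_{a_i}$ produces open $V_i\ni f_{a_i}(x)$ with $\overline{V_i}\subseteq U_i$; setting $V=\bigcap_i f_{a_i}^{-1}(V_i)$, one has
\[
\overline{V}\subseteq\bigcap_i\overline{f_{a_i}^{-1}(V_i)}\subseteq\bigcap_i f_{a_i}^{-1}(\overline{V_i})\subseteq U,
\]
which is enough. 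For complete regularity, given $x$ and a closed $F$ disjoint from $x$, pick such a basic open $U\subseteq F^c$ containing $x$, apply complete regularity in $Y_{a_i}$ to obtain continuous $g_i:Y_{a_i}\to[0,1]$ with $g_i(f_{a_i}(x))=1$ and $g_i\equiv 0$ off $U_i$, and assemble $h=\min_i(g_i\circ f_{a_i})$, which is continuous, equals $1$ at $x$, and vanishes on $F$.

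For the remaining three, zero-dimensionality is immediate since preimages of clopen sets under the continuous $f_a$ are clopen, so the clopen subbase generates a clopen base. For weakly totally disconnected, if $y\in C_X(x)$ then $f_a(C_X(x))$ is connected and contains $f_a(x),f_a(y)$, so $f_a(y)\in C_{Y_a}(f_a(x))=\overline{\{f_a(x)\}}$ for all $a$; the preorder lemma then gives $y\in\overline{\{x\}}$. For weakly totally separated, if $y\in QC_X(x)\setminus\overline{\{x\}}$ then some $f_a$ witnesses $f_a(y)\notin\overline{\{f_a(x)\}}=QC_{Y_a}(f_a(x))$, providing a clopen $C\subseteq Y_a$ with $f_a(x)\in C\not\ni f_a(y)$; then $f_a^{-1}(C)$ is a clopen neighborhood of $x$ missing $y$, contradicting $y\in QC_X(x)$. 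The only place requiring real care, in my view, is the weakly totally disconnected case, where one must resist the urge to run a clopen-separation argument and instead exploit the connectedness of $C_X(x)$ directly through continuity of each $f_a$.
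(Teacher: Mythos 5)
Your proposal is correct and follows essentially the same route as the paper: the same coordinate-wise specialization-preorder lemma for weak topologies drives the $R_0$, $R_1$, weakly Urysohn, weakly totally disconnected and weakly totally separated cases, and the regular, zero-dimensional and completely regular cases are handled by the same reduction to finitely many coordinates. The only cosmetic difference is in complete regularity, where you combine the pulled-back functions by a minimum while the paper uses a product (after phrasing the hypothesis via a cozero base); both yield the required separating function.
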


\begin{proof}

Fix a set $X$ and a family of mappings $\mathcal{F}_A = \big\{ f_a: X\rightarrow (Y_a, \tau_a) \big\}_{a\in A}$. Let $\tau\big( \mathcal{F}_A \big)$ denote the weak topology induced by $\mathcal{F}_A$. Let $\preccurlyeq$, $\sim$ denote the specialization preorder and the induced equivalence relation in $\big(X, \tau(\mathcal{F}_A) \big)$. For each $a\in A$, let $\preccurlyeq_a$, $\sim_a$ denote the specialization preorder and the induced equivalence relation in $(Y_a,\tau_a)$.\\

\noindent
Fix $x, y\in X$. By definition of $\tau(\mathcal{F}_A)$,  $x\preccurlyeq y$ ($x\sim y$ resp.) if and only if for every $a\in A$, $f_a(x) \preccurlyeq_a f_a(y)$ ($f_a(x) \sim_a f_a(y)$). When each $(Y_a, \tau_a)$ is $R_0$, $\big( X, \tau(\mathcal{F}_A) \big)$ is $R_0$. If $x\nsim y$, there exists $a\in A$ such that $f_a(x) \nsim f_a(y)$. In this case, if $(Y_a, \tau_a)$ is $R_1$, then clearly there are two disjoint open sets that separate $x$ and $y$; if $(Y_a, \tau_a)$ is weakly Urysohn, suppose $U\in \mathcal{U}(f(x))$, $V\in \mathcal{U}(f(y))$ are two open sets such that $\overline{U} \cap \overline{V}=\emptyset$. We then have $\overline{f^{-1}(U)} \subseteq f^{-1}(\overline{U})$ and $\overline{f^{-1}(V)} \subseteq f^{-1}(\overline{V})$ are disjoint. Hence if each $(Y_a, \tau_a)$ is $R_1$ or weakly Urysohn, so is $\big( X, \tau(\mathcal{F}_A) \big)$.\\

\noindent
Next fix $x\in X$, $U\in \mathcal{U}(x)$. Then there exists $(a_i)_{i\leq n}\subseteq A$ for some $n\in \mathbb{N}$, and $U_i\in \mathcal{U}\big( f_{a_i}(x) \big)$ such that:

$$
\bigcap_{1\leq i\leq n} f_{a_i}^{-1}(U_i) \subseteq U
$$
If each $(Y_a, \tau_a)$ is regular, for each $1\leq i \leq n$, let $V_i\in \mathcal{U}\big( f_{a_i}(x) \big)$ such that $\overline{V_i} \subseteq U_i$. This implies:

$$
\overline{\bigcap_{1\leq i \leq n} f_{a_i}^{-1}(V_i)} \subseteq \bigcap_{1\leq i \leq n} f_{a_i}^{-1}\big( \overline{V_i} \big) \subseteq \bigcap_{1\leq i \leq n} f_{a_i}^{-1}(U_i) \subseteq U
$$
and hence $X$ is regular.\\

\noindent
Fix $x\in X$ and consider a point $y\in C_X(x)$ the connected component that contains $x$. If $(Y_a, \tau_a)$ is weakly totally disconnected, then for each $a\in A$:

$$
f_a(y) \in f_a\big( C_X(x) \big) \subseteq C_{Y_a}\big( f_a(x) \big) = \overline{\{f_a(x)\}} \hspace{0.5cm} \Longleftarrow \hspace{0.5cm} f_a(y)\preccurlyeq_a f_a(x)
$$
which implies $y\preccurlyeq a$, or $y\in \overline{\{x\}}$. We then have $C_X(x) = \overline{\{x\}}$. Consider $y\in QC_X(x)$. For each $a\in A$, a clopen neighborhood $V$ of $f_a(x)$ will bring $f_a^{-1}(V)$ a clopen neighborhood of $x$. Hence when each $(Y_a, \tau_a)$ is weakly totally separated, we have:

$$
QC_X(x) \subseteq \bigcap_{a\in A} f_a^{-1}\Big( QC_{Y_a}\big( f_a(x) \big) \Big) = \bigcap_{a\in A} f_a^{-1}\big( \overline{\{f_a(x)\}} \big)
$$
In this case, if $y\in QC_X(x)$, then $f_a(y) \preccurlyeq_a f_a(x)$ for all $a\in A$, or $y\preccurlyeq x$. This shows that $QC_X(x) \subseteq \overline{\{x\}}$, and hence $\big(X, \tau(\mathcal{F}_A) \big)$ is weakly totally separated. If each $(Y_a, \tau_a)$ is zero-dimensional, note that for any finite subset $F\subseteq A$, $\bigcap_{a\in F} f_a^{-1}(U_a)$ is clopen if each $U_a$ is clopen. This implies that $\tau(\mathcal{F}_A)$ has a base that consists of clopen sets.\\

\noindent
To show that complete regularity is an initial property, first suppose each $(Y_a, \tau_a)$ is completely regular. For each $a\in A$, there exists a family of continuous functions $\mathcal{F}_a = \big\{ h_{a, i}: Y_a\rightarrow [0, 1] \big\}_{i\in I_a}$ (indexed by $\tau_a$-closed sets) such that $\big\{ Y_a\backslash h_{a, i}^{-1}(0, 1]: i\in I_a \big\}$ forms a base of $\tau_a$. As a result, the following family:

$$
\begin{aligned}
& \hspace{0.4cm} \left\{ \bigcap_{a\in F} f_a^{-1}\big( Y_a\backslash h_{a, i}^{-1}(0, 1] \big): h_{a, i}\in \mathcal{F}_a,\, a\in F\subseteq A,\, \vert\, F\,\vert < \infty \right\} \\
& = \left\{ \bigcap_{a\in F} X\backslash \big( h_{a, i} \circ f_a \big)^{-1}\big( (0, 1] \big): i\in I_a,\, a\in F\subseteq A,\, \vert\, F\,\vert < \infty \right\}
\end{aligned}
$$
forms a base of $\tau(\mathcal{F}_A)$. Hence for each $x\in X$ and a closed subset $E\subseteq X$ with $x\notin E$, we can find a finite subset $F\subseteq A$ and $h_{a, i}\in \mathcal{F}_a$ for each $a\in F$ such that:

$$
\bigcap_{a\in F} X\backslash \big( h_{a, i} \circ f_a \big)^{-1}\big( (0, 1] \big) \subseteq E^c \hspace{0.5cm} \Longleftarrow \hspace{0.5cm} E\subseteq \bigcup_{a\in F} \big( h_{a, i} \circ f_a \big)^{-1}\big( \{0\} \big)
$$
In this case, for each $a\in F$, we can assume $h_{a, i}\circ f_a(x) = 1$. Define:

$$
h_F:X\rightarrow [0, 1], \hspace{0.3cm} x\mapsto \prod_{a\in F} h_{a, i}\circ f_a(x)
$$
Then $h_F\in C(X, [0, 1])$ and satisfies $h_F(x)=1$ and $h_F\big|_E = 0$.
    
\end{proof}

\begin{rem}

One can refer to \cite{9} and \cite{10} for more interesting properties of extremally spaces. We point out that non-$R_1$ extremally disconnected spaces can be connected and non-$R_0$. Simple examples include the \textbf{Sierpinski space}, the space $Z=\{0, 1\}$ equipped with the topology $\tau = \{\emptyset, \{0\}, Z\}$, which is also $T_0$, and any infinite set equipped with the cofinite topology, which is also $T_1$. The cofinite topology also shows that an extremally disconnected $T_1$ space need not even be totally disconnected, let along being totally separated. It is known that if $X$ is a extremally disconnected space, then so is any open subspace and dense subspace of $X$. Moreover, extremally disconnectedness is preserved by continuous mapping. In the references given above, one can see, however, a closed subspace of a extremally disconnected may not be extremally disconnected, and there exists a extremally disconnected space $X$ such that $X\times X$ is not extremally disconnected.
    
\end{rem}

\section{\texorpdfstring{$R_1$}{R1} and compactness}

In this section we will discover how results concerning compactness in a Hausdorff space extend to $R_1$ spaces. Although a compact subset in an $R_1$ space need not be closed (e.g. singleton), its closure remains compact according to our next result.

\begin{prop}\label{Proposition 5.1}

In an $R_1$ topological space $(X, \tau)$, if $K\subseteq X$ is compact, then:

\begin{enumerate}[label = (\alph*)]

    \item $\overline{K} = [K]_{\sim} = \bigcup_{x\in K} \overline{\{x\}}$.
    \item if $U\in \tau$ contains $K$, then $U$ contains $\overline{K}$.
    \item for any $A\subseteq X$, if $K\subseteq A \subseteq \overline{K}$, then $A$ is compact. In particular, $\overline{K}$ is compact.
    
\end{enumerate}
    
\end{prop}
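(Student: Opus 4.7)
The plan is to prove the three parts in order, with (a) doing the bulk of the work and (b), (c) following from it by short formal arguments. For (a), the strategy is to combine Proposition 4.7(a) (so $R_1$ implies $R_0$) with Proposition 4.2(b), which yields $\overline{\{x\}} = [x]_{\sim}$ for every $x \in X$; this immediately gives $[K]_{\sim} = \bigcup_{x\in K}[x]_{\sim} = \bigcup_{x\in K}\overline{\{x\}}$, and the inclusion $\bigcup_{x\in K}\overline{\{x\}} \subseteq \overline{K}$ is automatic. The remaining work is to establish $\overline{K} \subseteq [K]_{\sim}$, which is the only step that genuinely uses compactness together with the full $R_1$ axiom.

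For this hard inclusion I would argue by contradiction. Suppose $y \in \overline{K}$ but $y \nsim x$ for every $x \in K$. For each such $x$ the $R_1$ axiom provides disjoint open sets $U_x \in \mathcal{U}(x)$ and $V_x \in \mathcal{U}(y)$. The family $\{U_x\}_{x\in K}$ is an open cover of $K$, so by compactness some finite subcollection $U_{x_1},\dots,U_{x_n}$ still covers $K$. Then $V := \bigcap_{i=1}^{n} V_{x_i}$ is an open neighborhood of $y$ disjoint from $\bigcup_{i=1}^{n} U_{x_i} \supseteq K$, contradicting $y \in \overline{K}$.

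Once (a) is in hand, part (b) is immediate: if $U \in \tau$ contains $K$, then for each $x \in K$ Proposition 4.2(b) gives $\overline{\{x\}} \subseteq U$, so taking the union over $x \in K$ and invoking (a) yields $\overline{K} \subseteq U$. Part (c) then follows by the standard cover-shrinking trick: given any open cover of $A$, it in particular covers $K$, hence admits a finite subcover $W_1,\dots,W_m$ of $K$; by (b) the open set $\bigcup_i W_i \supseteq K$ already contains $\overline{K}$ and therefore $A$, so $\{W_1,\dots,W_m\}$ is a finite subcover of $A$.

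The main obstacle is the reverse inclusion $\overline{K} \subseteq [K]_{\sim}$ in (a); it is essentially the classical argument that a compact subset of a Hausdorff space is closed, adapted to allow collapsing under $\sim$ rather than demanding strict equality. Everything else is formal bookkeeping from earlier results.
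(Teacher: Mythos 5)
Your proof is correct and follows essentially the same route as the paper: reduce to $R_0$ to get $\overline{\{x\}}=[x]_{\sim}$, prove the hard inclusion of (a) by the classical compactness-plus-$R_1$ separation argument (the paper phrases it via $y\notin\overline{U_x}$ and closures rather than intersecting the $V_{x_i}$, but this is the same idea), and deduce (b) and (c) exactly as the paper does.
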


\begin{proof}

\begin{enumerate}[label = (\alph*)]

    \item By \textbf{Proposition \ref{Proposition 4.7}(a)}, $X$ is $R_0$. Then by $R_0$ axioms, for each $x\in X$, $\overline{\{x\}} = [x]_{\sim}$. Hence it remains to show that $\overline{K} = \bigcup_{x\in K} \overline{\{x\}}$. The right inclusion is obvious. For the left inclusion, first we fix $y\in X\backslash \bigcup_{x\in K} \overline{\{x\}}$. By $R_1$ axiom, for each $x\in K$ there exists $U_x\in \mathcal{U}(x)$ such that $y\notin \overline{U_x}$. Since $K$ is compact, there exists a finite subset $F\subseteq K$ such that:

    $$
    K \subseteq \bigcup_{x\in F}U_x \hspace{0.4cm} \Longrightarrow \hspace{0.4cm} \overline{K} \subseteq \bigcup_{x\in F} \overline{O_x}
    $$
    Hence $y\notin \overline{K}$.

    \item Fix $U\in\tau$ such that $K\subseteq U$. Again by $R_0$ axioms and \textbf{Lemma \ref{Lemma 1.7}}, for each $x\in U$, $\overline{\{x\}} \subseteq U$. Then the conclusion follows by part (a).

    \item Fix $A\subseteq X$ such that $K\subseteq A\subseteq \overline{K}$. Given $\mathcal{U}$ a open cover of $A$, let $\mathcal{U}' \subseteq \mathcal{U}$ be a finite subcover that covers $K$. Then by part (b) we have $A\subseteq \overline{K} \subseteq \bigcup\mathcal{U}'$, so $\mathcal{U}'$ is also a subcover of $A$.
    
\end{enumerate}
    
\end{proof}

\noindent
The next step concerns the separation of disjoint compact sets. Recall that a compact set in a Hausdorff space is closed, and hence two disjoint compact sets in a Hausdorff space have disjoint open neighborhoods. However, this may fail in an $R_1$ space. For instance, consider two different point $x, y$ in an $R_1$ space such that $x\sim y$. In this case $\{x\}$ and $\{y\}$ are disjoint compact sets that cannot be separated by two disjoint open sets. Hence we need a stronger form of ``disjointness" if we can extend this useful result.

\begin{defn}

In a topological space $(X, \tau)$, given $A, B\subseteq X$, we say $A$ and $B$ are \textbf{strongly disjoint} if for any $a\in A$ and $b\in B$, $a\nsim b$. We say $A$ and $B$ are \textbf{strictly disjoint} if for any $a\in A$ and $b\in B$, $a\npreccurlyeq b$ and $b\npreccurlyeq a$.
    
\end{defn}

\begin{prop}\label{Proposition 5.3}

In an $R_1$ topological space $(X, \tau)$, strongly disjoint compact sets have disjoint open neighborhoods.
    
\end{prop}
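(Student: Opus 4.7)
The plan is to run the standard ``compact-versus-compact separation'' argument from the Hausdorff setting, but with the $R_1$ axiom supplying the pointwise separation instead of $T_2$. The hypothesis of strong disjointness is exactly what is needed to guarantee that $R_1$ applies to every pair of points from the two sets.

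First, I would fix $a\in A$ and, for each $b\in B$, invoke the $R_1$ axiom (\textbf{Definition \ref{Definition 4.4}}) together with the strong disjointness hypothesis $a\nsim b$ to obtain disjoint open sets $U_{a,b}\in\mathcal{U}(a)$ and $V_{a,b}\in\mathcal{U}(b)$. The family $\{V_{a,b}:b\in B\}$ is an open cover of $B$, so compactness of $B$ yields a finite subcover indexed by $b_1,\ldots,b_n\in B$. Setting
$$
U_a=\bigcap_{i=1}^n U_{a,b_i}\in\mathcal{U}(a),\qquad V_a=\bigcup_{i=1}^n V_{a,b_i}\supseteq B,
$$
gives disjoint open sets separating $a$ from $B$, since any point of $V_a$ lies in some $V_{a,b_i}$ and is therefore excluded from $U_{a,b_i}\supseteq U_a$.

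Next, the family $\{U_a:a\in A\}$ is an open cover of $A$, so compactness of $A$ yields a finite subcover indexed by $a_1,\ldots,a_m\in A$. Setting
$$
U=\bigcup_{j=1}^m U_{a_j}\supseteq A,\qquad V=\bigcap_{j=1}^m V_{a_j}\supseteq B,
$$
produces the desired pair of open neighborhoods. Disjointness follows because $U\cap V=\bigcup_j(U_{a_j}\cap V)\subseteq \bigcup_j(U_{a_j}\cap V_{a_j})=\emptyset$.

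There is no real obstacle here beyond being careful about what ``strongly disjoint'' buys us: without it, two compact sets with overlapping closures under $\sim$ (e.g.\ $\{x\}$ and $\{y\}$ with $x\sim y\neq x$) cannot be separated, as already observed in the paragraph preceding the proposition. The only step where one must pause is the very first application of $R_1$: one must check that for every pair $(a,b)\in A\times B$ we indeed have $a\nsim b$, which is precisely the definition of strong disjointness. After that, the two-stage finite-intersection/finite-union procedure is formally identical to the classical Hausdorff argument.
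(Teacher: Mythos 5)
Your proof is correct and is essentially the paper's argument: the same two-stage compactness scheme (separate each point of one compact set from the other by a finite intersection/union of the $R_1$-provided open sets, then apply compactness again), with strong disjointness used exactly where you say, to ensure $a\nsim b$ for every pair. The only cosmetic difference is that the paper phrases the pointwise step via closures (choosing $U\in\mathcal{U}(x)$ with $y\notin\overline{U}$ and taking $V_y=\big(\overline{U_y}\big)^c$) instead of carrying the paired disjoint open sets as you do, which changes nothing of substance.
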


\begin{proof}

Suppose $K_1, K_2\subseteq X$ are strongly disjoint compact sets. Fix $y\in K_2$ and for every $x\in K_2$, by $R_1$ axioms there exists $U(x, y)\in \mathcal{U}(x)$ such that $y\notin \overline{U(x, y)}$. Now $\mathcal{F}(y) = \big( U(x, y):x\in K_1 \big)$ is an open cover of $K_1$. Then there exists $F(y)$ a finite subcover of $\mathcal{F}(y)$. Define $U_y = \bigcup F(y)$, $V_y = \overline{U_y} ^c$. Now $K_1\subseteq U_y$ and $V_y\in \mathcal{U}(y)$. Then $\mathcal{F} = \big( V_y^c:y\in K_2 \big)$ is an open cover of $K_2$. Then there exists $F\subseteq K_2$ a finite subset such that $\big( V_y^c: y\in F \big)$ is a subcover of $\mathcal{F}$. Define:

$$
V = \bigcup_{y\in F}V_y^c \hspace{1cm} U = \bigcap_{y\in F}U_y
$$
Therefore $U$ and $V$ are disjoint open neighborhoods of $K_1$ and $K_2$ respectively.
    
\end{proof}

\begin{cor}\label{Corollary 5.4}

A compact $R_1$ topological space is normal.
    
\end{cor}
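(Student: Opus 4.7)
The plan is to reduce normality to \textbf{Proposition \ref{Proposition 5.3}} by verifying that any two disjoint closed sets in a compact $R_1$ space are actually \textit{strongly} disjoint. Let $F_1, F_2 \subseteq X$ be disjoint closed subsets. Since $X$ is compact and $F_1, F_2$ are closed, both are compact subspaces of $X$, which is the first ingredient.

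The main point is then to promote ordinary disjointness to strong disjointness. Since $X$ is $R_1$, \textbf{Proposition \ref{Proposition 4.7}(a)} gives that $X$ is also $R_0$, and in an $R_0$ space $\overline{\{x\}} = [x]_{\sim}$ for every $x \in X$ (by \textbf{Proposition \ref{Proposition 4.2}}). Consequently every closed set $F$ is $\sim$-saturated: if $x \in F$, then $[x]_{\sim} = \overline{\{x\}} \subseteq F$. Now suppose $x \in F_1$ and $y \in F_2$ satisfied $x \sim y$; then from $y \in [x]_{\sim} \subseteq F_1$ we would obtain $y \in F_1 \cap F_2 = \emptyset$, a contradiction. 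Hence $F_1$ and $F_2$ are strongly disjoint.

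With $F_1, F_2$ strongly disjoint compact sets in an $R_1$ space, \textbf{Proposition \ref{Proposition 5.3}} produces disjoint open sets $U \supseteq F_1$ and $V \supseteq F_2$, which is exactly the statement of normality.

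I do not anticipate a real obstacle here, since every step is already packaged in earlier results; the only subtle point is the conversion from ``disjoint'' to ``strongly disjoint,'' which rests squarely on the $R_0$-consequence that closed sets are $\sim$-saturated. Once that is noted, the corollary is immediate from \textbf{Proposition \ref{Proposition 5.3}}.
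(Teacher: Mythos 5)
Your proposal is correct and follows essentially the same route as the paper: both arguments note that disjoint closed sets in a compact $R_1$ (hence $R_0$) space are compact and, since closed sets are $\sim$-saturated, strongly disjoint, and then invoke \textbf{Proposition \ref{Proposition 5.3}}. Your write-up simply spells out the saturation step that the paper compresses into a reference to \textbf{Proposition \ref{Proposition 5.1}}.
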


\begin{proof}

Suppose $(X, \tau)$ is compact $R_1$. Let $A, B\subseteq X$ be two disjoint closed subsets. Then both $A$ and $B$ are compact. Moreover, by $R_1$ axioms and \textbf{Proposition \ref{Proposition 5.1}}, $A$ and $B$ are strictly disjoint. Then the conclusion follows by \textbf{Proposition \ref{Proposition 5.3}}.
    
\end{proof}

\begin{rem}

Recall that a topological space is \textbf{paracompact} if every open cover has a locally finite refinement. Namely, a topological space $(X, \tau)$ is paracompact if for any $(U_i)_{i\in I}\subseteq \tau$, there exists another collection $(O_j)_{j\in J} \subseteq \tau$ such that for each $j\in J$ there exists $i(j)\in I$ such that $O_i\subseteq U_{i(j)}$, and for each $x\in X$ there exists $U_x\in \mathcal{U}(x)$ such that $\big\{ j\in J: U_x\cap O_j\neq\emptyset \big\}$ is finite. Clearly a compact space is paracompact and a closed subset in a paracompact space is also paracompact. Also, one can show that a paracompact $R_1$ space is normal. Since regularity implies $R_1$ (see \textbf{Proposition \ref{Proposition 4.7}}), a paracompact regular space is also normal, which provides an (less elementary) alternate proof of \textbf{Corollary \ref{Corollary 5.4}}.
    
\end{rem}

\noindent
Next we will examine local compactness. In the weak sense, a space $(X, \tau)$ is \textbf{locally compact} if every point has a compact neighborhood. In the strong sense, $(X, \tau)$ is \textbf{locally compact} if every point has a neighborhood base that consists of closed compact sets. The strong version is used usually when non-Hausdorff spaces are involved. Clearly these two versions of definitions agree on a Hausdorff space. We will show that it turns out that this remains true for $R_1$ spaces. Beside, we will prove a more general statement.

\begin{defn}\label{Definition 5.6}

Given a topological space $(X, \tau)$ and $x\in X$, if $x$ has a compact neighborhood, then $X$ is said to be \textbf{locally compact at} $x$. We call $X$ \textbf{locally compact in the weak sense} if $X$ is locally compact at each point, and \textbf{locally compact in the strong sense} if each point has a neighborhood base that consists of closed compact neighborhoods.
    
\end{defn}

\begin{prop}\label{Proposition 5.7}

Given a topological space $(X, \tau)$, if $X$ is $R_1$ and locally compact at a point $x\in X$, then $x$ has a neighborhood base that consists of closed compact sets.
    
\end{prop}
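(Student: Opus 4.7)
The plan is to adapt the standard Hausdorff argument, substituting Proposition \ref{Proposition 5.3} (separation of strongly disjoint compact sets) for ordinary Hausdorff separation, and using Proposition \ref{Proposition 5.1}(c) to upgrade compact neighborhoods to closed compact neighborhoods. Start with any compact neighborhood $K_0$ of $x$; by Proposition \ref{Proposition 5.1}(c) we may replace $K_0$ by $\overline{K_0}$ and assume $K_0$ is closed (this can only enlarge the interior, so it remains a neighborhood of $x$). Given an arbitrary $U\in\mathcal{N}_x$, we reduce to the case where $U$ is open and $U\subseteq\operatorname{int}(K_0)$ by intersecting with $\operatorname{int}(K_0)$.

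The candidate closed compact neighborhood will be $N=\overline{V\cap\operatorname{int}(K_0)}$, where the open set $V$ is produced as follows. Set $F=K_0\setminus U$, which is closed in the compact $K_0$ and hence compact. The crucial claim is that $\{x\}$ and $F$ are \emph{strongly} disjoint, not merely disjoint. For this use that $R_1$ implies $R_0$ (Proposition \ref{Proposition 4.7}(a)), so by Proposition \ref{Proposition 4.2}(b) we have $[x]_\sim=\overline{\{x\}}\subseteq U$; in particular no point $\sim$-equivalent to $x$ can lie in $F$. Proposition \ref{Proposition 5.3} then yields disjoint open sets $V\ni x$ and $W\supseteq F$.

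It then remains to verify the three properties of $N$: closedness is automatic; $N\subseteq K_0$ because $V\cap\operatorname{int}(K_0)\subseteq K_0$ and $K_0$ is closed, so $N$ is a closed subset of the compact $K_0$ and therefore compact; $N\subseteq U$ because $V\cap W=\emptyset$ gives $\overline{V}\subseteq W^c$, hence $N\cap F=\emptyset$ and so $N\subseteq K_0\setminus F = U$; finally, $V\cap\operatorname{int}(K_0)$ is an open neighborhood of $x$ sitting inside $N$, so $x\in\operatorname{int}(N)$. I expect the main obstacle to be recognizing that \emph{strong} disjointness (rather than ordinary disjointness) is the precise extra hypothesis needed when passing from Hausdorff to $R_1$, and handling it through the chain $R_1\Rightarrow R_0\Rightarrow\overline{\{x\}}\subseteq U$; without this step, Proposition \ref{Proposition 5.3} is inapplicable and the classical Hausdorff template collapses.
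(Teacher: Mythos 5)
Your proposal is correct and takes essentially the same route as the paper: replace the compact neighborhood by a closed compact one via Proposition \ref{Proposition 5.1}, separate $\{x\}$ from the compact set $K\setminus U$ using strong disjointness and Proposition \ref{Proposition 5.3}, and intersect the closure of the $x$-side with $K$ to get the closed compact neighborhood inside $U$. (A minor simplification: strong disjointness of $\{x\}$ from $K_0\setminus U$ is immediate from the definition of $\sim$, since any $y\sim x$ lies in every neighborhood of $x$ and hence in $U$, so the detour through $R_0$ and $\overline{\{x\}}\subseteq U$, while valid, is not needed.)
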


\begin{proof}

Fix $K$ a compact neighborhood of $x$. By \textbf{Proposition \ref{Proposition 5.1}}, $\overline{K}$ is also compact, so we may assume $K$ is closed. If for all $U\in \mathcal{U}(x)$, $K\subseteq U$ then we are done. Otherwise, let $V\in\mathcal{U}(x)$ such that $K\backslash V \neq \emptyset$. Fix $y\in K\backslash U$. Since $y\nsim x$ and $K\backslash V$ is compact, by \textbf{Proposition \ref{Proposition 5.3}}, there exists $O\in \mathcal{U}(y)$ such that $\overline{O}$ is disjoint from $K\backslash V$. Hence $\overline{O}\cap K$ is a closed compact neighborhood such that $\overline{O}\cap K \subseteq V$. This implies that for all $U\in \mathcal{U}(x)$, if $K\backslash U\neq \emptyset$, then there exists a closed compact neighborhood contained in $U$.
    
\end{proof}

\begin{prop}\label{Proposition 5.8}

Let $(X, \tau)$ be a locally compact $R_1$ space. Suppose $K$ a compact subset is contained in an open subset $U$. Then there exists a closed compact neighborhood $V$ of $\overline{K}$ such that $V\subseteq U$. Moreover, there exists $C(X, [0, 1])$ such that $f\Big|_K=1$ and $f\Big|_{V^c}=0$
    
\end{prop}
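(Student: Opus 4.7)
The plan is to build $V$ by a finite-cover argument using local compactness, then obtain $f$ by applying Urysohn's lemma on $V$ (which is compact $R_1$, hence normal) and extending by zero outside $V$.

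For the first part, by \textbf{Proposition \ref{Proposition 5.1}}(b), the closure $\overline{K}$ is contained in $U$, and by (c) it is compact. Invoking \textbf{Proposition \ref{Proposition 5.7}}, for each $x \in \overline{K}$ pick a closed compact neighborhood $V_x$ of $x$ with $V_x \subseteq U$. The interiors $V_x^{\circ}$ form an open cover of the compact set $\overline{K}$, so finitely many $V_{x_1}^{\circ}, \ldots, V_{x_n}^{\circ}$ suffice. Set $V = \bigcup_{i=1}^n V_{x_i}$. Then $V$ is a finite union of closed compact sets, hence closed and compact; it is a neighborhood of $\overline{K}$ because $\overline{K} \subseteq \bigcup_i V_{x_i}^{\circ} \subseteq V^{\circ}$; and $V \subseteq U$ by construction.

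For the second part, since $R_1$ is an initial property (by the last proposition of Section 4) and therefore hereditary, $V$ with its subspace topology is compact and $R_1$, hence normal by \textbf{Corollary \ref{Corollary 5.4}}. In $V$, the closed subsets $\overline{K}$ and $V \setminus V^{\circ}$ are disjoint because $\overline{K} \subseteq V^{\circ}$. Urysohn's lemma (whose proof uses only the normal separation property) yields a continuous $g: V \to [0,1]$ with $g \equiv 1$ on $\overline{K}$ and $g \equiv 0$ on $V \setminus V^{\circ}$. Define $f: X \to [0,1]$ by $f = g$ on $V$ and $f = 0$ on $X \setminus V$. Since $V$ and $X \setminus V^{\circ}$ are both closed in $X$, cover $X$, and the two prescriptions agree on the intersection $V \setminus V^{\circ}$ (where $g = 0$), the pasting lemma for closed sets gives continuity of $f$. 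Finally $f\big|_K = 1$ because $K \subseteq \overline{K}$, and $f\big|_{V^c} = 0$ because $V^c \subseteq X \setminus V^{\circ}$.

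The main obstacle is really just the verification that $R_1$ passes to the subspace $V$, so that \textbf{Corollary \ref{Corollary 5.4}} applies and delivers normality without detour; and a small sanity check that the author's notion of ``normal'' does not silently require $T_1$, so that Urysohn's lemma may be applied verbatim in $V$. Everything else is a standard compactness-plus-pasting argument; no appeal to the $T_0$-quotient or to \textbf{Proposition \ref{Proposition 4.7}}(c) is needed.
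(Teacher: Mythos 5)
Your proof is correct and follows essentially the same route as the paper: a finite cover supplied by \textbf{Proposition \ref{Proposition 5.7}} produces the closed compact neighborhood, and then normality of the compact $R_1$ subspace (\textbf{Corollary \ref{Corollary 5.4}}) plus Urysohn's lemma and extension by zero give $f$. The only cosmetic difference is that you cover $\overline{K}$ directly by closed compact neighborhoods and take their union, while the paper covers $K$ by open sets with compact closures in $U$ and passes to the closure of their union via \textbf{Proposition \ref{Proposition 5.1}(b)}; both are sound.
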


\begin{proof}

Fix $x\in K$. By \textbf{Proposition \ref{Proposition 5.7}}, there exists $V_x\in \mathcal{U}(x)$ such that $\overline{V_x}$ is compact and contained in $U$. Then $(V_x)_{x\in K}$ forms an open cover of $K$. Let $\big( V_{x_i}: 1\leq i \leq n \big)$ be a finite subcover of $(V_x)_{x\in K}$. Define:

$$
V = \bigcup_{1\leq i \leq n}V_{x_i}
$$
Then by \textbf{Proposition \ref{Proposition 5.1}(b)}, $\overline{K} \subseteq V \subseteq \overline{V} \subseteq U$. Also $\overline{V}$ is compact. Clearly $R_1$ is a hereditary property, and hence $\overline{V}$ when viewed as a subspace is also $R_1$. Then by \textbf{Corollary \ref{Corollary 5.4}}, the subspace $\overline{V}$ is normal. Now we have $\overline{K}$ and $\overline{V} \backslash V$ two disjoint closed subsets of $\overline{V}$. By \textbf{Urysohn's Lemma}, there exists $g\in C(\overline{V}, [0, 1])$ such that $g\Big|_{\overline{K}} = 1$ and $g\Big|_{\overline{V} \backslash V} = 0$. Define $f:X\rightarrow [0, 1]$ by letting $f=g$ on $\overline{V}$ and $f=0$ elsewhere. Then $f$ satisfies $f(x)=1$ for all $x\in K$ and $f(y)=0$ for all $y\in X\backslash \overline{V}$.
     
\end{proof}

\begin{prop}

Let $X$ be an $R_1$ space and $E\subseteq X$ be a dense and $\sim$-saturated subset such that $E$, when viewed as a subspace equipped with the relative topology, is locally compact. Then $E$ is open in $X$.
    
\end{prop}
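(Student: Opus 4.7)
The plan is to show $E$ is a neighborhood of each of its points in $X$. Fix $x\in E$. Since $E$ is locally compact as a subspace, there exists a subset $K\subseteq E$ that is compact and a neighborhood of $x$ in $E$; thus there exists $U\in \tau$ with $x\in U\cap E\subseteq K$. My goal is to show $U\subseteq E$, which will prove $E$ is open.

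The first key step is a density argument: since $E$ is dense in $X$, for every nonempty open $V\subseteq U$ we have $V\cap E\neq\emptyset$, i.e., $U\cap E$ is dense in $U$. Hence $U\subseteq \overline{U\cap E}\subseteq \overline{K}$, where the closures are taken in $X$. So the problem reduces to showing $\overline{K}\subseteq E$.

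The second key step is to exploit $R_1$ and $\sim$-saturation of $E$. Since $K$ is compact in $X$ (compactness is absolute) and $X$ is $R_1$, \textbf{Proposition \ref{Proposition 5.1}(a)} gives
$$\overline{K}=\bigcup_{y\in K}\overline{\{y\}}.$$
For each $y\in K\subseteq E$, the space $X$ is $R_0$ by \textbf{Proposition \ref{Proposition 4.7}(a)}, so $\overline{\{y\}}=[y]_{\sim}$ by the $R_0$ characterization. Since $E$ is $\sim$-saturated and $y\in E$, we get $[y]_\sim\subseteq E$, whence $\overline{\{y\}}\subseteq E$. Taking the union over $y\in K$ yields $\overline{K}\subseteq E$.

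Combining the two steps, $x\in U\subseteq \overline{K}\subseteq E$, so $U$ is an open neighborhood of $x$ in $X$ contained in $E$. Since $x\in E$ was arbitrary, $E$ is open. The argument is short; the only subtlety is that we do not need to distinguish between the weak and strong versions of local compactness because in an $R_1$ space \textbf{Proposition \ref{Proposition 5.7}} equates them, and in either case the existence of a compact $E$-neighborhood $K$ of $x$ is enough. The main ``obstacle'' is simply spotting that the conjunction of $R_1$ (which makes $\overline{K}$ a union of closures of points) and $\sim$-saturation of $E$ (which absorbs each such closure) is exactly what forces $\overline{K}\subseteq E$.
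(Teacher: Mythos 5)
Your proof is correct. It uses the same two ingredients as the paper --- the density fact that $U\subseteq\overline{U\cap E}$ for the open set $U$ witnessing the $E$-neighborhood, and \textbf{Proposition \ref{Proposition 5.1}(a)} --- but assembles them along a genuinely shorter route. The paper first upgrades the compact $E$-neighborhood to a \emph{closed} compact one, then forms the auxiliary set $\overline{U}\cap E$, checks that it is compact (being relatively closed in $K$) and $\sim$-saturated, invokes \textbf{Proposition \ref{Proposition 5.1}(a)} to conclude that this compact saturated set is closed in $X$, and finally runs a sandwich argument $\overline{U}\cap E\subseteq\overline{U}=\overline{U\cap E}\subseteq\overline{\overline{U}\cap E}=\overline{U}\cap E$ to get $\overline{U}\subseteq E$. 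You instead apply \textbf{Proposition \ref{Proposition 5.1}(a)} directly to $K$ itself, so that $\overline{K}=[K]_{\sim}=\bigcup_{y\in K}\overline{\{y\}}$, and let the $\sim$-saturation of $E$ absorb each $\overline{\{y\}}=[y]_{\sim}$ (using $R_1\Rightarrow R_0$ via \textbf{Proposition \ref{Proposition 4.7}(a)}); combined with $U\subseteq\overline{U\cap E}\subseteq\overline{K}$ this gives $U\subseteq E$ at once. What your version buys: you need only the weak form of local compactness (a single compact neighborhood, not a closed one), you avoid the auxiliary compactness-and-closedness argument entirely, and the mechanism --- saturation of $E$ absorbs point closures --- is made explicit. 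What the paper's version buys: it isolates the reusable observation that a compact $\sim$-saturated subset of an $R_1$ space is closed, which is of independent interest, and it yields $\overline{U}\subseteq E$ along the way; but as a proof of the stated proposition your argument is the more economical one.
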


\begin{proof}

Fix $x\in E$. Note that $E$, when viewed as a subspace equipped with $\tau\Big|_E$ the relative topology, is $R_1$. By \textbf{Proposition \ref{Proposition 5.1}(c)}, there exists $K\subseteq E$ a neighborhood of $x$ such that $K$ is closed and compact with respect to $\tau\Big|_E$. Let $U\in\tau$ such that $\operatorname{int}_E(K)$, the interior of $K$ with respect to $\tau\Big|_E$, is $U\cap E$. Since $E$ is a dense subset of $X$ and $U\in \tau$, $\overline{U\cap E} = \overline{U}$. Let $\operatorname{cl}_E(K)$ denote the closure of $K$ with respect to $\tau\Big|_E$. Then:

$$
\overline{U}\cap E = \overline{U\cap E}\cap E = \operatorname{cl}_E(U\cap E) \subseteq \operatorname{cl}_E(K) = K
$$
Hence $\overline{U}\cap E$ is closed with respect to $\tau\Big|_E$ and hence is compact with respect to $\tau\Big|_E$. Also, $\overline{U}\cap E$, when viewed as a subset of $X$, is an intersection of two $\sim$-saturated subsets, and hence is $\sim$-saturated. By \textbf{Proposition \ref{Proposition 5.1}(a)}, $\overline{U}\cap E$ is closed with respect to $\tau$. Therefore:

$$
\overline{U}\cap E \subseteq \overline{U} =\overline{U\cap E} \subseteq \overline{\overline{U}\cap E} = \overline{U}\cap E
$$
which implies $\overline{U}\cap E = \overline{U}$, or $\overline{U}\subseteq E$. Thus $x\in U\subseteq E$, and hence $E$ is open.
    
\end{proof}

\noindent
In general, from \textbf{Definition \ref{Definition 4.10}} and \textbf{Remark \ref{Remark 4.8}}, we show that a weakly totally separated space is also weakly totally disconnected. We will next show that in a locally compact $R_1$ space, the converse is also true.

\begin{prop}\label{Proposition 5.10}

In a compact $R_1$ topological space $(X, \tau)$, for each $x\in X$, $C(x) = QC(x)$ (see \textbf{Definition \ref{Definition 4.10}}).
    
\end{prop}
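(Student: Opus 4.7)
The plan is to show $QC(x) \subseteq C(x)$, since the reverse inclusion is recorded in (\ref{e9}). I will do this by proving that the quasi-component $Q = QC(x)$ is itself connected, so that it must be contained in the connected component of $x$.

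First I will observe that $Q$ is a closed subset of $X$, being the intersection of the clopen neighborhoods of $x$; since $X$ is compact, $Q$ is compact as well. Suppose for contradiction that $Q$ is disconnected, so that $Q = A \cup B$ with $A,B$ nonempty, disjoint, and closed in $Q$ (hence closed in $X$), and say $x \in A$. Using \textbf{Corollary \ref{Corollary 5.4}} to the effect that a compact $R_1$ space is normal, I can separate $A$ and $B$ by disjoint open sets $U,V \in \tau$ with $A \subseteq U$, $B \subseteq V$.

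The decisive step is a compactness argument on $X \setminus (U \cup V)$. This set is closed in $X$, so compact. Each point $y \in X \setminus (U \cup V)$ lies outside $Q$, so by definition of the quasi-component there exists a clopen neighborhood $C_y$ of $x$ with $y \notin C_y$; equivalently $y \in X \setminus C_y$, which is clopen. The family $\{X \setminus C_y : y \in X \setminus (U \cup V)\}$ is then an open cover of $X \setminus (U \cup V)$, and by compactness admits a finite subcover indexed by $y_1,\dots,y_n$. Setting $D = \bigcap_{i=1}^n C_{y_i}$, we obtain a clopen neighborhood of $x$ with $D \subseteq U \cup V$.

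Finally I will exploit the clopenness of $D$ together with the disjointness $U \cap V = \emptyset$ to show that $D \cap U = D \setminus V$ is both open (intersection of opens) and closed (difference of a closed and an open set), hence is a clopen neighborhood of $x$. By definition of $Q$ this forces $Q \subseteq D \cap U \subseteq U$, contradicting $\emptyset \neq B \subseteq V$ and $U \cap V = \emptyset$. Therefore $Q$ is connected, and since $x \in Q$, we conclude $Q \subseteq C(x)$, completing the proof. The only delicate point is invoking normality, which is precisely the role of the $R_1$ hypothesis combined with compactness; once that is in hand, the remaining argument is the standard compact–Hausdorff proof that quasi-components equal components, transplanted verbatim.
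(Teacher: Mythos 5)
Your proposal is correct and follows essentially the same route as the paper: assume $Q=QC(x)$ splits into nonempty disjoint closed pieces $A\ni x$ and $B$, separate them by disjoint open sets via the normality supplied by \textbf{Corollary \ref{Corollary 5.4}}, cover the compact set $X\setminus(U\cup V)$ by complements of clopen neighborhoods of $x$ to get a clopen $D\subseteq U\cup V$, and observe that $D\cap U=D\setminus V$ is a clopen neighborhood of $x$ forcing $Q\subseteq U$, a contradiction. No gaps; this is the paper's argument in all but notation.
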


\begin{proof}

For each $x\in X$, let $\mathcal{B}_x$ denote the family of clopen neighborhoods of $x$. By \textbf{Remark \ref{Remark 4.8}}, $QC(x) = \bigcap\mathcal{B}_x$ and $C(x)\subseteq QC(x)$. It remains to show that $QC(x) \subseteq C(x)$, or $QC(x)$ is connected. Fix $x\in X$ and let $Q=QC(x)$. Suppose $Q=A\cup B$ where $A, B$ satisfies $Q\cap A$ is disjoint from $Q\cap B$ and both $A, B$ are closed with respect to $\tau\Big|_Q$ the relative topology. Assume $x\in A$. Since $Q$ is closed with respect to $\tau$, both $A$ and $B$ are closed with respect to $\tau$. By \textbf{Corollary \ref{Corollary 5.4}}, $X$ is normal. Then there exists disjoint open subsets $U, V\subseteq X$ such that $A\subseteq U$ and $B\subseteq V$. Then $A=Q\cap U$ and:

$$
(U\cup V)^c \subseteq Q^c = \bigcup\big\{H^c: H\in\mathcal{B}_x \big\}
$$
Since $(U\cup V)^c$ is compact, suppose $(H_i^c)_{i\leq n}$ is a finite subcover of $\big( H^c: H\in \mathcal{B}_x \big)$. Let $H=\bigcap_{1\leq i \leq n}H_i$. Then $H\in \mathcal{B}_x$ and $H\subseteq U\cup V$. However, since $U\cap V=\emptyset$, $H\cap U = H\backslash V$, which shows that $H\cap U$ is clopen. Since we assume $x\in A$, $H\cap U\in\mathcal{B}_x$. Therefore:

$$
Q = (H\cap U)\cap Q \subseteq U\cap Q = A\subseteq Q
$$
which show that $B=\emptyset$. Hence $Q$ is connected.
    
\end{proof}

\begin{prop}

Let $(X, \tau)$ be a locally compact $R_1$ space. If $X$ is weakly totally disconnected, then $\tau$ has a base that consists of clopen compact sets. In this case $X$ is weakly totally separated.
    
\end{prop}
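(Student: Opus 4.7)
My plan is to prove the two claims in sequence, with the first furnishing the tool needed for the second.

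For the base claim, fix $x\in X$ and an open $U$ with $x\in U$. By \textbf{Proposition \ref{Proposition 5.7}}, local compactness and $R_1$ produce a closed compact neighborhood $K$ of $x$ with $K\subseteq U$; let $W\in\mathcal{U}(x)$ satisfy $W\subseteq K$, so $\operatorname{int}_X(K)$ is a genuine open neighborhood of $x$. Since $R_1$ implies $R_0$ (\textbf{Proposition \ref{Proposition 4.7}}), \textbf{Proposition \ref{Proposition 4.2}(b)} gives $\overline{\{x\}}\subseteq W\subseteq \operatorname{int}_X(K)$. View $K$ as a subspace: being a closed subset of a locally compact $R_1$ space, $K$ is itself a compact $R_1$ space, and in $K$ the closure of $\{x\}$ coincides with $\overline{\{x\}}^X$. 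Weak total disconnectedness of $X$ forces $C_X(x)=\overline{\{x\}}$, and because any connected subset of $K$ containing $x$ is also connected in $X$, we get $C_K(x)=\overline{\{x\}}$ as well. \textbf{Proposition \ref{Proposition 5.10}} then yields $QC_K(x)=\overline{\{x\}}\subseteq\operatorname{int}_X(K)$.

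Next I would use a standard compactness argument inside $K$: since $QC_K(x)$ is the intersection of all clopen-in-$K$ neighborhoods of $x$ and is contained in the $K$-open set $\operatorname{int}_X(K)$, its complement in $K$ is compact and covered by the complements of those clopens. Extracting a finite subcover and intersecting, I obtain a single clopen-in-$K$ neighborhood $H$ of $x$ with $H\subseteq\operatorname{int}_X(K)$. The crux — and I expect the main source of subtlety — is upgrading \emph{clopen in $K$} to \emph{clopen in $X$}. Closedness transfers freely because $K$ is closed in $X$. For openness, $H$ is open in $K$, and because $H\subseteq\operatorname{int}_X(K)$, it equals $H\cap\operatorname{int}_X(K)$, which is open in $\operatorname{int}_X(K)$ and hence in $X$. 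Thus $H$ is clopen in $X$; it is compact as a closed subset of the compact set $K$; and $H\subseteq K\subseteq U$. So every open $U\ni x$ contains a clopen compact neighborhood of $x$, giving the required base.

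For weak total separation, fix $x\in X$; I need $QC(x)\subseteq\overline{\{x\}}$, the reverse inclusion being (\ref{e9}). Take any $y\notin\overline{\{x\}}$. Since $X$ is $R_0$, $y\notin\overline{\{x\}}$ is equivalent to $y\nsim x$, so by the $R_1$ axiom there exist disjoint $U\in\mathcal{U}(x)$, $V\in\mathcal{U}(y)$. Applying the base just constructed, pick a clopen compact $H$ with $x\in H\subseteq U$; then $y\in V\subseteq U^{c}\subseteq H^{c}$, so $H$ is a clopen neighborhood of $x$ missing $y$, and $y\notin QC(x)$. Hence $QC(x)=\overline{\{x\}}$, completing the proof.
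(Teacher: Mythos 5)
Your proof is correct and follows essentially the same route as the paper: \textbf{Proposition \ref{Proposition 5.7}} for a closed compact neighborhood inside $U$, \textbf{Proposition \ref{Proposition 5.10}} on that compact $R_1$ subspace to identify $QC_K(x)$ with $C_K(x)=\overline{\{x\}}\subseteq\operatorname{int}_X(K)$, a compactness argument on $K\setminus\operatorname{int}_X(K)$ to produce a single clopen-in-$K$ compact neighborhood, and the observation that containment in $\operatorname{int}_X(K)$ upgrades it to clopen in $X$ (the paper phrases the covering step with clopen sets missing $x$ rather than complements of clopen neighborhoods of $x$, which is the same argument). Your explicit derivation of weak total separation is a harmless addition; the paper leaves that step to the fact that a zero-dimensional space is weakly totally separated (\textbf{Example \ref{Example 4.11}}).
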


\begin{proof}

It suffices to show that each point has a neighborhood base that consists of compact clopen sets. Fix $x\in X$ and $U\in \mathcal{U}(x)$. By \textbf{Proposition \ref{Proposition 5.7}}, there exists $V$ a closed compact neighborhood of $x$ such that $V\subseteq U$. Since $X$ is $R_1$, by \textbf{Lemma \ref{Lemma 1.7}}, $\overline{\{x\}} \subseteq V^{\circ}$ (the interior of $V$ with respect to $\tau$). If $V=V^{\circ}$, then we are done. Otherwise, define $\partial V = V\backslash V^{\circ}$. Consider the subspace $\left(V, \tau\Big|_V \right)$ equipped with the relative topology. Since $\left( V, \tau\Big|_V \right)$ is $R_1$ and compact, according to \textbf{Proposition \ref{Proposition 5.10}}, we have $C_K(x) = QC_K(x)$ (these notations are defined in \textbf{Remark \ref{Remark 4.8}}). Since $V$ is closed and $X$ is weakly totally disconnected, we have $\overline{\{x\}} = QC(x) = QC_K(x)$. Fix $y\in \partial V$, and we have $y\notin \overline{\{x\}}$. By \textbf{Proposition \ref{Proposition 5.10}}, we have $C_K(x) = QC_K(x)$. Hence there exists $H_y\subseteq V$ such that $H_y$ is clopen with respect to $\tau\Big|_V$ and $x\in V\backslash H_y$. As $y\in \partial V$ is arbitrarily fixed, we then have an open cover $(H_y:y\in \partial V)$ of the subset $\partial V$ that is compact with respect to $\tau$. We can then find a finite subset $(y_i)_{i\leq n} \subseteq \partial V$ such that:

$$
\partial V \subseteq \bigcup_{1\leq i \leq n}H_{y_i}
$$
Define:

$$
W = V\backslash \bigcup_{1\leq i \leq n}H_{y_i}
$$
We then have $x\in W$, $W$ is clopen with respect to $\tau\Big|_V$ and $W\subseteq V^{\circ}$. We can now conclude that $W$ is clopen and compact with respect to $\tau$
    
\end{proof}

\noindent
We have been showing several topological properties that can either be preserved or upgraded by the $T_0$-quotient, and will definitely do the same to the local compactness property. We will close this section by showing that the locally compactness in the weak sense (see \textbf{Definition \ref{Definition 5.6}}) can be preserved by $T_0$-quotient, and revisiting several previously mentioned properties in the \textbf{one-point compacification} of a topological space.

\begin{prop}

Let $(X, \tau)$ be a topological space and $q:X\rightarrow X_0$ be its $T_0$-quotient. Then for each $x\in X$, $X$ is locally compact at $x$ if and only if $X_0$ is locally compact at $q(x)$.
    
\end{prop}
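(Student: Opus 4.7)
The plan is to exploit the two features of the quotient map $q: X \to X_0$ established in \textbf{Theorem \ref{Theorem 2.4}}: namely, that $q$ is continuous, open, and closed, and that open sets in $X$ are exactly the $\sim$-saturated preimages of open sets in $X_0$ (indeed by \textbf{Theorem \ref{Theorem 2.4}(b)} the map $\phi: U \mapsto q(U)$ is a bijection $\tau \to \tau_0$). This already reduces the statement to a clean transfer of compactness and of neighborhood bases across $q$.

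First I would prove the auxiliary claim that $K \subseteq X$ is compact if and only if $q(K)$ is compact in $X_0$. The forward direction is immediate from continuity of $q$. For the converse, suppose $q(K)$ is compact and let $(W_i)_{i \in I} \subseteq \tau$ be an open cover of $K$. Since every $W_i$ is $\sim$-saturated by \textbf{Lemma \ref{Lemma 1.6}}, we can write $W_i = q^{-1}(U_i)$ with $U_i = q(W_i) \in \tau_0$ by \textbf{Theorem \ref{Theorem 2.4}(b)}. The family $(U_i)_{i \in I}$ then covers $q(K)$: given $q(y) \in q(K)$ with $y \in K$, pick $i$ with $y \in W_i$, so $q(y) \in U_i$. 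Extract a finite subcover $(U_{i_1}, \ldots, U_{i_n})$ of $q(K)$ and observe that $(W_{i_1}, \ldots, W_{i_n})$ covers $q^{-1}(q(K)) \supseteq K$.

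With this lemma in hand, the forward direction of the proposition is straightforward: if $K$ is a compact neighborhood of $x$, choose $V \in \tau$ with $x \in V \subseteq K$; then $q(K)$ is compact (continuous image) and $q(V)$ is open in $X_0$ since $q$ is open, with $q(x) \in q(V) \subseteq q(K)$, so $q(K)$ is a compact neighborhood of $q(x)$. For the converse, suppose $K_0 \subseteq X_0$ is a compact neighborhood of $q(x)$, and set $K = q^{-1}(K_0)$. Since $q$ is surjective, $q(K) = K_0$ is compact, so by the auxiliary claim $K$ is compact in $X$. If $V_0 \in \tau_0$ satisfies $q(x) \in V_0 \subseteq K_0$, then $V = q^{-1}(V_0) \in \tau$ satisfies $x \in V \subseteq K$, so $K$ is a compact neighborhood of $x$.

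The only substantive point is the auxiliary claim, since in general the preimage of a compact set under a continuous surjection need not be compact. The obstacle is sidestepped here precisely because in our setting every open set of $X$ is $\sim$-saturated, which allows any open cover of $K$ to be pushed down to an open cover of $q(K)$ whose finite subcover can be pulled back. Everything else follows from the openness of $q$ and the neighborhood characterization in terms of interiors containing open sets.
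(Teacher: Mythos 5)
Your proposal is correct and follows essentially the same route as the paper: both directions rest on the fact that every open set of $X$ is $\sim$-saturated, so covers transfer across the open quotient map $q$, which is exactly how the paper shows $q^{-1}(C)$ is compact for a compact neighborhood $C$ of $q(x)$. Packaging this as the two-way lemma ``$K$ compact iff $q(K)$ compact'' is only a cosmetic reorganization of the paper's argument.
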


\begin{proof}

Fix $x\in X$. Suppose $K\subseteq X$ is a compact neighborhood of $x$. Since $q:X\rightarrow X_0$, $q(K)$ is compact and is a neighborhood of $q(x)$ by \textbf{Theorem \ref{Theorem 2.4}}. Conversely, suppose $C\subseteq X_0$ is a compact neighborhood of $q(x)$. Again by \textbf{Theorem \ref{Theorem 2.4}}, $q^{-1}(C)$ is also a neighborhood of $x$. Suppose $(O_i)_{i\in I}$ is an open cover of $q^{-1}(C)$. Since $q$ is open, suppose $\big( q(O_j): 1\leq j \leq n \big)$ is a finite subcover of $\big( q(O_i): i\in I \big)$. We then have:

$$
q^{-1}(C) \subseteq \bigcup_{1\leq j \leq n} q^{-1}\big(q(O_j) \big) = \bigcup_{1\leq j \leq n}[O_j]_{\sim}
$$
Since open sets in $X$ are $\sim$-saturated, $(O_j)_{1\leq j \leq n}$ is a finite subcover of $(O_i)_{i\in I}$.
    
\end{proof}

\begin{defn}[{\cite[p.150]{2}}]\label{Definition 5.13}

Given $(X, \tau)$ a topological space, let $\mathcal{C}$ be the family of closed compact subsets of $X$. Fix an element $\infty$ that is not in $X$ and define $X^* = X\cup\{\infty\}$. Define $\mathcal{F} = \big\{ X^*\backslash C:C\in \mathcal{C} \big\}$, and $\tau^* = \tau\cup\mathcal{F}$. One can check that $\tau^*$ is a topology defined on $X^*$. The new topological space $(X^*, \tau^*)$ is called the \textbf{one-point compacification of} $(X, \tau)$. According to \cite{2}, $(X^*, \tau^*)$ satisfies the following properties:

\begin{enumerate}[label = (\alph*)]
    \item $(X^*, \tau^*)$ is compact.
    \item $\{\infty\}$ is closed with respect to $\tau^*$.
    \item $X$ is a dense subspace of $X^*$ if and only if $X$ is not compact.
    \item $X^*$ is Hausdorff if and only if $X$ is Hausdorff and locally compact (as we point out, when $X$ is Hausdorff, the weak version of locally compactness agrees with the strong version of locally compactness).
\end{enumerate}
    
\end{defn}

\begin{prop}

In the set-up of \textbf{Definition \ref{Definition 5.13}}:

\begin{enumerate}[label = (\alph*)]

    \item $X^*$ is $T_0$ ($R_0$ resp.) if and only if $X$ is $T_0$ ($R_0$ resp.).
    \item $X^*$ is $R_1$ if and only if $X$ is locally compact in the weak sense and $R_1$.
    
\end{enumerate}
    
\end{prop}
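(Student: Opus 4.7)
The approach is first to work out two preliminary facts, and then to treat each axiom by combining heredity (for one direction) with a neighborhood analysis at $\infty$ (for the other). Since $\tau^* = \tau \cup \mathcal{F}$ by Definition \ref{Definition 5.13}, every $\tau^*$-open set either lies in $\tau$ (and misses $\infty$) or has the form $X^* \setminus C$ for some closed compact $C \subseteq X$ (and so contains $\infty$). From this one sees at once that the subspace topology on $X$ inherited from $\tau^*$ is exactly $\tau$, and that $\{\infty\}$ is closed in $X^*$ because $X \in \tau \subseteq \tau^*$. The axioms $T_0$, $R_0$, and $R_1$ are all hereditary --- easily checked from Propositions \ref{Proposition 2.1}, \ref{Proposition 4.2}, and Definition \ref{Definition 4.4} using that the specialization preorder on a subspace restricts the ambient one --- so $X$ inherits these properties from $X^*$, which disposes of the ``only if'' direction of (a) entirely and reduces the ``only if'' direction of (b) to showing that $X$ is locally compact in the weak sense.

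Next I would describe the specialization preorder $\preccurlyeq^*$ on $X^*$. For $x, y \in X$, a short argument using Proposition \ref{Proposition 1.2} (observing that if $y$ lies in a closed set $C$ then so does $\overline{\{y\}}$, hence so does $x$ whenever $x \preccurlyeq_X y$) shows $x \preccurlyeq^* y \Longleftrightarrow x \preccurlyeq_X y$. Because $X$ itself is an open neighborhood of any $x \in X$ that omits $\infty$, one has $x \not\preccurlyeq^* \infty$ always; and $\infty \preccurlyeq^* x$ if and only if $x$ lies in no closed compact subset of $X$. The ``if'' directions of (a) now follow: for $T_0$, two points of $X$ are separated by the $T_0$ property of $X$, while $X \in \tau^*$ separates any $x \in X$ from $\infty$; for $R_0$, the only nontrivial point is to rule out $\infty \preccurlyeq^* x$, and the key observation here is that in any topological space $[x]_\sim$ carries the trivial subspace topology (every $\tau$-open set being $\sim$-saturated by Lemma \ref{Lemma 1.6}) and is therefore compact. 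When $X$ is $R_0$, $\overline{\{x\}} = [x]_\sim$ by Proposition \ref{Proposition 4.2}(b), so $\overline{\{x\}}$ is a closed compact set containing $x$, which forbids $\infty \preccurlyeq^* x$.

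For the ``if'' direction of (b), assume $X$ is locally compact in the weak sense and $R_1$. Take $p, q \in X^*$ with $p \nsim^* q$. If $p, q \in X$, then $p \nsim_X q$ and $R_1$ of $X$ provides disjoint $\tau$-open separators, which are also $\tau^*$-open. Otherwise one of them is $\infty$, say $q = \infty$ (the other case being symmetric); local compactness at $p$ yields a compact neighborhood $K$, and Proposition \ref{Proposition 5.1}(c) makes $\overline{K}$ a closed compact neighborhood of $p$, so any $\tau$-open $U$ with $p \in U \subseteq \overline{K}$ together with $V = X^* \setminus \overline{K}$ furnish the required disjoint $\tau^*$-open separation. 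For the remaining half of (b), we already have $X$ is $R_1$ by heredity, and for each $x \in X$ the relation $x \nsim^* \infty$ supplies disjoint open $U \ni x$ and $V \ni \infty$ in $X^*$; since $V$ must have the form $X^* \setminus C$ for some closed compact $C$, disjointness forces $U \subseteq C$, exhibiting $C$ as a compact neighborhood of $x$ in $X$. The step that I expect to demand the most attention is the $R_0$ lifting in (a), where one is tempted to suspect that $R_0$ alone might fail to pass to $X^*$ without a local compactness hypothesis; the saving observation --- that $\overline{\{x\}}$ is automatically compact in an $R_0$ space thanks to its trivial subspace topology --- is the subtle point of the proof.
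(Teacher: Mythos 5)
Your proposal is correct and follows essentially the same route as the paper: identify the neighborhoods of $\infty$ as the complements of closed compact sets, use heredity (the specialization preorder of $X^*$ restricting to that of $X$) for the ``only if'' directions, rule out $\infty \preccurlyeq^* x$ in the $R_0$ case via compactness of $\overline{\{x\}}$, and separate $x$ from $\infty$ in (b) using a closed compact neighborhood obtained from \textbf{Proposition \ref{Proposition 5.1}}. The only cosmetic difference is that you get compactness of $\overline{\{x\}} = [x]_{\sim}$ from the trivial subspace topology of $[x]_{\sim}$, whereas the paper invokes \textbf{Lemma \ref{Lemma 1.7}} ($\overline{\{x\}}$ lies inside every neighborhood of $x$); these are the same observation.
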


\begin{proof}

By definition of $\tau^*$, for any $x\in X$, the family of neighborhoods of $x$ with respect to $\tau^*$ coincides with the family of neighborhoods of $x$ with respect to $\tau$. Therefore, there will be no confusion if we use $\mathcal{N}_x$ to denote the family of neighborhoods of $x$ and $\mathcal{U}(x)$ to denote the family of open neighborhoods of $x$. The specialization order of $(X^*, \tau^*)$, when restricted to $X$, coincides with the specialization order of $(X, \tau)$. In particular, observe that:

$$
\mathcal{U}(\infty) = \big\{X^*\backslash C: C\in \mathcal{C} \big\}
$$

\begin{enumerate}[label = (\alph*)]

\item As we point out, when $X^*$ is $T_0$ ($R_0$ resp.), $X$ is $T_0$ ($R_0$ resp.). Now fix $x\in X$. When $X$ is $T_0$, suppose $x\preccurlyeq\infty$ and $\infty\preccurlyeq x$. By results listed in \textbf{Definition \ref{Definition 5.13}}, we have $x\in \overline{\{\infty\}} = \{\infty\}$, or $x=\infty$. According to \textbf{Proposition \ref{Propopsition 2.2}}, $X^*$ is $T_0$. When $X$ is $R_0$, first suppose that $x\preccurlyeq\infty$. In this case we just showed that $x=\infty$. Conversely, if $\infty\preccurlyeq x$, we have $x\in X\backslash C$ for all $C\in\mathcal{C}$. However, when $X$ is $R_0$, according to \textbf{Lemma \ref{Lemma 1.7}}, $\overline{\{x\}}\in \mathcal{C}$, which leads to a contradiction. We can now conclude that $X^*$ is $R_0$. 

\item When $X^*$ is $R_1$, clearly $X$ is $R_1$ according to our observation at the beginning. Moreover, for each $x\in X$, there exists $C\in\mathcal{C}$, $U\in\mathcal{U}$ such that $U\cap (X\backslash C)=\emptyset$, or $U\subseteq C$. This implies that $C\in\mathcal{N}_x$, and hence $X$ is locally compact in the weak sense. By the same reasoning, we can conclude the converse is also true.

\end{enumerate}
    
\end{proof}

\section{Locally closed sets: the Skula topology}

In this section, we will introduce the definition of \textbf{locally closed} sets, and, for a given topology $\tau$, the \textbf{Skula topology} of $\tau$, denoted by $\operatorname{Sk}(\tau)$, which is a topology finer than $\tau$ and induced by the family of locally closed set in $\tau$. This section mainly serves as a transition to the next section where we will introduce a topological property that is strictly weaker than $T_1$ and strictly stronger than $T_0$.

\begin{defn}\label{Definition 6.1}

In a topological space $(X, \tau)$, given $A\subseteq X$, we call $A$ \textbf{locally closed} if $A$ is an intersection of an open set and a closed set. For each $x\in X$, we say $A$ is \textbf{locally closed at} $x$ if there exists $V\in \mathcal{N}_x$ such that $V\cap A$ is (relatively) closed in $V$.
    
\end{defn}

\begin{rem}\label{Remark 6.2}

With the definition given, we have several immediate observations. First a locally closed set is the difference of two open sets, or equivalently, two closed sets. Clearly all open and closed sets are locally closed. In the set-up of \textbf{Definition \ref{Definition 6.1}}, for a point $x\in X$, if $V\in\mathcal{N}_x$ satisfies that $V\cap A$ is closed with respect to $\tau\Big|_V$, we can find $F\subseteq X$ closed with respect to $\tau$ such that $V\cap A = V\cap F$. In this case, if we let $U = V^{\circ}$ be $V$'s interior with respect to $\tau$, we also have $U\cap A = U\cap F$. Hence in the second part of \textbf{Definition \ref{Definition 6.1}} we may replace $\mathcal{N}_x$ by $\mathcal{U}(x)$. If $x\notin \overline{A}$, then $X\backslash \overline{A}\in\mathcal{U}(x)$ and $\big(X\backslash \overline{A} \big) \cap A = \emptyset$, also a closed set with respect to $\tau$. This will imply $\overline{A}$ is locally closed at $x$ for all $x\notin \overline{A}$. 

\end{rem}

\begin{prop}\label{Proposition 6.3}

In a topological space $(X, \tau)$, given $A\subseteq X$, the following are equivalent:

\begin{enumerate}[label = (\alph*)]

    \item $A$ is locally closed.
    \item $A$ is locally closed at $x$ for each $x\in X$.
    \item $A$ is open with respect to $\tau\Big|_{\overline{A}}$.
    
\end{enumerate}
    
\end{prop}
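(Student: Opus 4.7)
My plan is to establish the equivalences in the cyclic order $(a) \Rightarrow (b) \Rightarrow (c) \Rightarrow (a)$.

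For $(a) \Rightarrow (b)$, suppose $A = U \cap F$ with $U \in \tau$ and $F$ closed in $X$. For $x \in A$ I would take $V = U$, giving $V \cap A = U \cap F = V \cap F$, which is closed in $V$ because $F$ is closed in $X$. For $x \notin \overline{A}$, \textbf{Remark \ref{Remark 6.2}} already supplies $V = X \setminus \overline{A} \in \mathcal{U}(x)$ with $V \cap A = \emptyset$, trivially closed in $V$.

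For $(b) \Rightarrow (c)$, using \textbf{Remark \ref{Remark 6.2}} I may choose for each $x \in A$ an open neighborhood $V_x$ of $x$ such that $V_x \cap A$ is closed in $V_x$. Setting $U = \bigcup_{x \in A} V_x \in \tau$, which clearly contains $A$, I would show that $A = U \cap \overline{A}$. The inclusion $A \subseteq U \cap \overline{A}$ is immediate. For the reverse, fix $y \in U \cap \overline{A}$ and pick some $x \in A$ with $y \in V_x$. Since $V_x$ is an open neighborhood of $y$ and $y \in \overline{A}$, every open neighborhood $W$ of $y$ satisfies that $W \cap V_x$ still meets $A$, hence meets $V_x \cap A$; thus $y \in \overline{V_x \cap A}$. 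Now the closure of $V_x \cap A$ in the subspace $V_x$ equals $\overline{V_x \cap A} \cap V_x$, and by hypothesis $V_x \cap A$ is already closed in $V_x$, so this intersection is just $V_x \cap A$. This pins $y$ inside $V_x \cap A \subseteq A$. The equality $A = U \cap \overline{A}$ is precisely the statement that $A$ is open in the subspace topology on $\overline{A}$.

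For $(c) \Rightarrow (a)$, the assumption that $A$ is open in $\overline{A}$ yields $A = U \cap \overline{A}$ for some $U \in \tau$, realizing $A$ as the intersection of the open set $U$ and the closed set $\overline{A}$.

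The main delicate step is $(b) \Rightarrow (c)$, where one has to combine the global information $y \in \overline{A}$ with the local information that $V_x \cap A$ is closed in $V_x$ for the particular witness $x$ with $y \in V_x$, using the standard identity between closures in a subspace and in the ambient space in order to pin $y$ down to $A$; the other two implications are essentially bookkeeping once the correct representation $A = U \cap \overline{A}$ is in hand.
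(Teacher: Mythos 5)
Your step $(a)\Rightarrow(b)$ has a genuine gap with respect to the statement as written: (b) quantifies over every $x\in X$, but you only treat $x\in A$ and $x\notin\overline{A}$, silently skipping $x\in\overline{A}\setminus A$. Moreover this missing case cannot be filled in: if $x\in\overline{A}\setminus A$ and $V\in\mathcal{N}_x$, then every open set containing $x$ meets $V^{\circ}\cap A$ (because $x\in\overline{A}$ and $x\in V^{\circ}$), so $x\in\operatorname{cl}_V(V\cap A)$ while $x\notin A$; hence $V\cap A$ is never closed in $V$. Concretely, $A=(0,1)\subseteq\mathbb{R}$ is locally closed but is not locally closed at $0$. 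So the literal implication $(a)\Rightarrow(b)$ is false, and the correct reading of (b) is ``$A$ is locally closed at $x$ for each $x\in A$'' (points outside $\overline{A}$ being trivial by \textbf{Remark \ref{Remark 6.2}}) --- which is exactly what your argument establishes, and also the only input your $(b)\Rightarrow(c)$ actually uses. You should either restrict the quantifier accordingly and record the counterexample, or state explicitly that (a) and the unrestricted (b) are not equivalent. For comparison, the paper's own proof of this case is also defective: for $x\in A'\setminus A$ it verifies that $V\cap(F\setminus O)$ is relatively closed in $V$, which is a statement about the set $F\setminus O$, not about $A$.

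The remaining implications are correct. Your $(c)\Rightarrow(a)$ coincides with the paper's. Your $(b)\Rightarrow(c)$ is a globalized variant of the paper's argument: the paper fixes $x\in A$, writes $V\cap A=V\cap F$ with $V$ open and $F$ closed, and notes that $V\setminus F=V\setminus A$ is open and disjoint from $A$, hence from $\overline{A}$, so $V\cap\overline{A}\subseteq A$ is a relatively open neighborhood of $x$ in $\overline{A}$; you instead form $U=\bigcup_{x\in A}V_x$ and pin each $y\in U\cap\overline{A}$ into $A$ via $y\in\overline{V_x\cap A}$ and $\operatorname{cl}_{V_x}(V_x\cap A)=V_x\cap A$. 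Both are sound; the paper's complement trick is marginally shorter, while your version hands you the explicit representation $A=U\cap\overline{A}$ directly.
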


\begin{proof}

First we assume $A$ is locally closed and suppose $A=O\cap F$ where $O\subseteq X$ is open and $F\subseteq X$ is closed. By \textbf{Remark \ref{Remark 6.2}}, $A$ is locally closed at $x$ for all $x\notin \overline{A}$. For each $x\in A$, $O\in\mathcal{U}(x)$ and $O\cap A = O\cap F$, which is clearly relatively closed in $O$. Fix $x\in A'\backslash A$. First we have $\overline{A} \subseteq \overline{O}\cap F$, so $x\in F$. Since $x\notin A$, we must have $x\in F\backslash O$. In this case, for all $V\in\mathcal{U}(x)$, $V\cap (F\backslash O)$ is relatively closed in $V$ as $F\backslash O$ is closed with respect to $\tau$.\\

\noindent
Next suppose for each $x\in X$, $A$ is locally closed at $x$. Fix $x\in A$. By \textbf{Remark \ref{Remark 6.2}} there exists $V\in \mathcal{U}(x)$ such that $V\cap A = V\cap F$ for some set $F\subseteq X$ closed with respect to $\tau$. Since $V\backslash F = V\backslash A\in \tau$, we have $V\cap \overline{A} = V\cap F\subseteq A$. Since $V\cap \overline{A}$ is a $\tau\Big|_{\overline{A}}$-open neighborhood of $x$, we can conclude that $A$ is relatively open in $\overline{A}$. $(c)\implies (a)$ is immediate as $\overline{A}$ is closed with respect to $\tau$.
    
\end{proof}

\begin{defn}[\cite{34}]\label{Definition 6.4}

In a topological space $(X, \tau)$, define:

$$
\mathcal{B}_{Sk} = \big\{U\backslash V:U, V\in\tau \big\}
$$
which is the set of all locally closed sets in $X$. Clearly $\mathcal{B}_{Sk}$ is closed under finite intersection. Hence $\mathcal{B}_{Sk}$ is the base of a topology $\operatorname{Sk}(\tau)$ on $X$, called the \textbf{Skula topology} of $\tau$. This is introduced by \cite{34} and was called the \textbf{b-topology} of $X$. The space $\operatorname{Sk}(X) = \big( X, \operatorname{Sk}(\tau) \big)$ is called the \textbf{Skula-modification} of $X$. Clearly $\tau\subseteq \operatorname{Sk}(\tau)$. If we use $\sim_{\tau}$ to denote the equivalence relation induced by the specialization preorder in $(X, \tau)$, then all sets in $\operatorname{Sk}(\tau)$ are $\sim_{\tau}$-saturated, and hence $\operatorname{Sk}(\tau) \subseteq \Sigma(\sim_{\tau})$ (see \textbf{Definition \ref{Definition 0.1}}). From now on, for any $A\subseteq X$, we still use $\overline{A}$ to denote the closure of $A$ with respect to $\tau$, but $\operatorname{cl}_{\operatorname{Sk}(\tau)}(A)$ to denote the closure of $A$ with respect to $\operatorname{Sk}(\tau)$.
    
\end{defn}

\begin{prop}\label{Proposition 6.5}

In the set-up of \textbf{Definition \ref{Definition 6.4}}, let $\preccurlyeq_{\operatorname{Sk}(\tau)}$ be the specialization preorder with respect to $\operatorname{Sk}(\tau)$. Then for each $x, y\in X$ and $A\subseteq X$:

\begin{enumerate}[label = (\alph*)]

    \item the collection $\big\{ U\cap \overline{\{x\}}: U\in\mathcal{U}(x) \big\}$ is a neighborhood base of $x$ with respect to $\operatorname{Sk}(\tau)$.
    \item $x\in \operatorname{cl}_{\operatorname{Sk}(\tau)}(A)$ if and only if $x\in \overline{A\cap \overline{\{x\}}}$.
    \item $x\preccurlyeq_{\operatorname{Sk}(\tau)} y$ if and only if $x\sim_{\tau} y$. Hence $[x]_{\sim} = \operatorname{cl}_{\operatorname{Sk}(\tau)}(x)$.
    \item given another topological space $(Y, \tau_Y)$, if $f:(X, \tau) \rightarrow (Y, \tau_Y)$ is continuous, then so is $f:\operatorname{Sk}(X) \rightarrow \operatorname{Sk}(Y)$.
    \item the following are equivalent:
    \begin{itemize}
        \item $X$ is $T_0$.
        \item $\operatorname{Sk}(X)$ is Hausdorff.
        \item $\operatorname{Sk}(X)$ is $T_0$.
    \end{itemize}
    
\end{enumerate}
    
\end{prop}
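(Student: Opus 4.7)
The strategy is to establish (a) first as the structural heart of the result, since parts (b)--(e) will follow from it by short arguments. For (a), the key observation is that for any $V \in \tau$ and any $x \notin V$, one has $V \cap \overline{\{x\}} = \emptyset$: any $y$ in the intersection would have $V$ as a $\tau$-open neighborhood, which by Proposition \ref{Proposition 1.2} must meet $\{x\}$, forcing $x \in V$, a contradiction. Given any basic Skula-open set $U \setminus V$ containing $x$ (with $U,V \in \tau$), this observation yields $U \cap \overline{\{x\}} \subseteq U \setminus V$, and since $U \cap \overline{\{x\}}$ is the intersection of a $\tau$-open and a $\tau$-closed set, it is locally closed, hence Skula-open. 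This confirms that $\big\{\, U \cap \overline{\{x\}} : U \in \mathcal{U}(x) \,\big\}$ is a neighborhood base of $x$ for $\operatorname{Sk}(\tau)$.

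Then (b) falls out by translating ``$x \in \operatorname{cl}_{\operatorname{Sk}(\tau)}(A)$'' as meeting every basic neighborhood from (a): the condition becomes $U \cap (A \cap \overline{\{x\}}) \neq \emptyset$ for every $U \in \mathcal{U}(x)$, which is precisely $x \in \overline{A \cap \overline{\{x\}}}$. Applying (b) to $A = \{y\}$ yields (c): if $y \notin \overline{\{x\}}$ the relevant set is empty, so $x \preccurlyeq_{\operatorname{Sk}(\tau)} y$ forces $y \in \overline{\{x\}}$, after which the condition collapses to $x \in \overline{\{y\}}$. Together these two inclusions are exactly $x \sim_\tau y$, and the converse implication is obtained by reversing the same chain.

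Part (d) is a one-line check: $f^{-1}(U \setminus V) = f^{-1}(U) \setminus f^{-1}(V)$ shows the preimage of a basic $\operatorname{Sk}(\tau_Y)$-open set is a basic $\operatorname{Sk}(\tau)$-open set, using only continuity of $f:(X,\tau)\to (Y,\tau_Y)$. For (e), Hausdorff trivially implies $T_0$, and if $\operatorname{Sk}(X)$ is $T_0$ then by Proposition \ref{Proposition 2.1} the preorder $\preccurlyeq_{\operatorname{Sk}(\tau)}$ is antisymmetric; by (c) it coincides with the symmetric relation $\sim_\tau$, which therefore reduces to the identity, so $X$ is $T_0$ by Proposition \ref{Proposition 2.1} again. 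The one step that requires a genuine construction rather than bookkeeping, and thus forms the main obstacle, is ``$X$ is $T_0$ $\Rightarrow$ $\operatorname{Sk}(X)$ is Hausdorff'': given distinct $x, y \in X$, the $T_0$ axiom supplies (after possibly swapping names) $y \notin \overline{\{x\}}$, and then $\overline{\{x\}}$ together with $X \setminus \overline{\{x\}}$ is a pair of disjoint Skula-open sets separating $x$ and $y$. The crucial leverage---already visible in (a)---is that $\overline{\{x\}} = X \cap \overline{\{x\}}$ is locally closed and hence Skula-open, even though it is $\tau$-closed; this is what allows a merely $T_0$ topology to be upgraded all the way to Hausdorff after passing to $\operatorname{Sk}(\tau)$.
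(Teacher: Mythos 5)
Your proposal is correct and follows essentially the same route as the paper: part (a) rests on the same key observation that $\overline{\{x\}}$ is disjoint from any $\tau$-open set missing $x$, and (b), (c) are deduced from it exactly as in the text. The only local differences are in (d), where you pull back basic Skula-open sets $U\setminus V$ directly instead of invoking the neighborhood base from (a), and in the last step of (e), where you separate $x$ and $y$ by the complementary Skula-open pair $\overline{\{x\}}$ and $X\setminus\overline{\{x\}}$ rather than the basic neighborhoods $U\cap\overline{\{x\}}$ and $V\cap\overline{\{y\}}$; both variants are valid and, if anything, slightly cleaner than the paper's.
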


\begin{proof}

\begin{enumerate}[label = (\alph*)]

    \item for each $V\in\tau$ with $x\in V^c$, we have $\overline{\{x\}} \subseteq V^c$. Hence given $U, V\in \tau$, if $x\in U\backslash V$, then $U\in\mathcal{U}(x)$, $U\cap\overline{\{x\}} \subseteq U\backslash V$ and clearly $U\cap \overline{\{x\}} \in \operatorname{Sk}(\tau)$.

    \item by part (a), $x\in \operatorname{cl}_{\operatorname{Sk}(\tau)}(A)$, if and only if for all $U\in\mathcal{U}(x)$, $U\cap \overline{\{x\}}\cap A \neq\emptyset$, which is equivalent to $x\in \overline{A\cap \overline{\{x\}}}$.

    \item by part (b) and \textbf{Proposition \ref{Proposition 1.2}}, $x\preccurlyeq_{\operatorname{Sk}(\tau)} y$ if and only if $x\in \overline{\{y\}\cap \overline{\{x\}}}$, if and only if $x\preccurlyeq y$ and $y\preccurlyeq x$, or $x\sim_{\tau} y$, as $\operatorname{cl}_{\operatorname{Sk}(\tau)}(\{y\})$ is never empty.

    \item by part (a), when $f$ is continuous, for each $x\in X$, observe that for each $V\in\mathcal{U}\big( f(x) \big)$:

    $$
    f^{-1}\left( V\cap \overline{\{f(x)\}} \right)\in \operatorname{Sk}(\tau)
    $$
    and hence $f$ is also $\operatorname{Sk}(\tau)$-$\operatorname{Sk}(\tau_Y)$ continuous.

    \item By \textbf{Proposition \ref{Proposition 4.5}}, when $\operatorname{Sk}(X)$ is Hausdorff, $\operatorname{Sk}(X)$ is $T_0$. When $\operatorname{Sk}(X)$ is $T_0$, suppose we have $x, y\in X$ such that $x\sim_{\tau} y$. This implies $\overline{\{x\}} = \overline{\{y\}}$ by \textbf{Proposition \ref{Proposition 1.2}}, or:

    $$
    \overline{\{y\}\cap \overline{\{x\}}} = \overline{\{x\}\cap \overline{\{y\}}}
    $$
    which implies $x\preccurlyeq_{\operatorname{Sk}(\tau)} y$ and $y\preccurlyeq_{\operatorname{Sk}(\tau)}x$ according to part (c). Then by our assumption $x=y$. When $X$ is $T_0$, fix two different $x, y\in X$. Then without losing generality, we can assume $x\notin \overline{\{y\}}$. Let $U\in\mathcal{U}(x)$ be such that $U\cap \overline{\{y\}} = \emptyset$. Then for any $V\in\mathcal{U}(x)$, $U\cap \overline{\{x\}}$ and $V\cap \overline{\{y\}}$ are disjoint.
    
\end{enumerate}
    
\end{proof}

\begin{exmp}\label{Example 6.6}

We introduce characterizations of a \textbf{almost discrete} topological space in \textbf{Proposition \ref{Proposition 4.18}}. Given a topological space $(X, \tau)$, by \textbf{Proposition \ref{Proposition 4.18}(d)} and \textbf{Proposition \ref{Proposition 6.5}(a)}, we can see $\tau=\operatorname{Sk}(\tau)$ if and only if $X$ is almost discrete. Another example is the \textbf{Sogenfrey topology} defined on $\mathbb{R}$, which is the topology with the base $\big\{ (a,b]:a, b\in \mathbb{R} \big\}$. The \textbf{right-order topology} defined on $\mathbb{R}$ is the topology with the base $\big\{ (a, \infty):a\in \mathbb{R} \big\}$. Clearly the Sogenfrey topology is the Skula-modification of the right-order topology. 
    
\end{exmp}

\noindent
We will again close this section by discovering the relation between the $T_0$-quotient of a topological space $(X, \tau)$ and $\operatorname{Sk}(X)$. From \textbf{Proposition \ref{Proposition 6.5}(e)}, we can see that the Skula-modification preserves the $T_0$-quotient.

\begin{prop}

Given a topological space $(X, \tau)$, $\operatorname{Sk}(X)_0 = \operatorname{Sk}(X_0)$.
    
\end{prop}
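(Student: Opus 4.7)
The plan is to verify that $\operatorname{Sk}(X)_0$ and $\operatorname{Sk}(X_0)$ have the same underlying set (with the same quotient map) and then match their topologies. Let $q:X\to X_0$ be the $T_0$-quotient map of $(X,\tau)$ from \textbf{Definition \ref{Definition 2.3}}, and let $\sim_{\operatorname{Sk}(\tau)}$ denote the topological indistinguishability relation associated with $\operatorname{Sk}(\tau)$.

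First I would establish the set-theoretic identification. By \textbf{Proposition \ref{Proposition 6.5}(c)}, $x\preccurlyeq_{\operatorname{Sk}(\tau)}y$ is equivalent to $x\sim_{\tau}y$. Since $\sim_{\tau}$ is symmetric, $\preccurlyeq_{\operatorname{Sk}(\tau)}$ is itself an equivalence relation, and hence $\sim_{\operatorname{Sk}(\tau)}=\sim_{\tau}$. Consequently the underlying set of $\operatorname{Sk}(X)_0$ is $X/\sim_{\tau}=X_0$, and the canonical quotient map from $(X,\operatorname{Sk}(\tau))$ to its $T_0$-quotient is again $q$. Thus it remains to compare the two topologies on $X_0$: the quotient topology $\tau_q$ induced by $q:(X,\operatorname{Sk}(\tau))\to X_0$, and the Skula topology $\operatorname{Sk}(\tau_0)$.

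The inclusion $\operatorname{Sk}(\tau_0)\subseteq\tau_q$ is immediate from \textbf{Proposition \ref{Proposition 6.5}(d)}: since $q:(X,\tau)\to(X_0,\tau_0)$ is continuous, so is $q:\operatorname{Sk}(X)\to\operatorname{Sk}(X_0)$, so every $\operatorname{Sk}(\tau_0)$-open set pulls back to an $\operatorname{Sk}(\tau)$-open set, hence lies in $\tau_q$. For the reverse inclusion, the key observation is the identity $q(U\setminus V)=q(U)\setminus q(V)$ for any $U,V\in\tau$. This follows because every $\tau$-open set is $\sim_{\tau}$-saturated (see \textbf{Lemma \ref{Lemma 1.6}}), so $q^{-1}(q(V))=V$, and therefore for $x\in U$ we have $q(x)\in q(V)$ iff $x\in V$. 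By \textbf{Theorem \ref{Theorem 2.4}(b)}, $\tau_0=\{q(U):U\in\tau\}$, so the basic locally closed sets in $(X_0,\tau_0)$ are exactly the sets $q(U)\setminus q(V)$ with $U,V\in\tau$.

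Given $W\in\tau_q$, the preimage $q^{-1}(W)\in\operatorname{Sk}(\tau)$ can be written as a union $\bigcup_{i}(U_i\setminus V_i)$ with $U_i,V_i\in\tau$. Since every set in $\operatorname{Sk}(\tau)$ is $\sim_{\tau}$-saturated (as noted in \textbf{Definition \ref{Definition 6.4}}), we have $q^{-1}(W)=[q^{-1}(W)]_{\sim}$, and hence $W=q(q^{-1}(W))=\bigcup_{i}q(U_i\setminus V_i)=\bigcup_{i}\bigl(q(U_i)\setminus q(V_i)\bigr)$, which is $\operatorname{Sk}(\tau_0)$-open. This proves $\tau_q\subseteq\operatorname{Sk}(\tau_0)$, completing the argument. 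The main obstacle is keeping the two Skula constructions and the quotient operation straight in parallel; once the identity $q(U\setminus V)=q(U)\setminus q(V)$ is in hand, both inclusions follow routinely from \textbf{Proposition \ref{Proposition 6.5}} and \textbf{Theorem \ref{Theorem 2.4}}.
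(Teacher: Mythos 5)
Your proof is correct and follows essentially the same route as the paper: identify $\sim_{\operatorname{Sk}(\tau)}$ with $\sim_{\tau}$ via \textbf{Proposition \ref{Proposition 6.5}(c)}, then show the quotient topology of $\operatorname{Sk}(X)$ agrees with $\operatorname{Sk}(\tau_0)$ using \textbf{Theorem \ref{Theorem 2.4}}. You merely spell out (via the identity $q(U\backslash V)=q(U)\backslash q(V)$ and saturation of Skula-open sets) the step the paper dismisses as immediate, which is a welcome elaboration rather than a different argument.
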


\begin{proof}

From \textbf{Proposition \ref{Proposition 6.5}(c)}, we can see $\sim_{\operatorname{Sk}(\tau)}$ coincide with $\sim_{\tau}$. Hence the two quotient mappings with respect to $\tau$ and $\operatorname{Sk}(\tau)$ coincide. Without confusion we use $q$ to denote both quotient mappings. Let $\tau_0$ denote the quotient topology with respect to $\tau$, and $\operatorname{Sk}(\tau)_0$ denote the quotient topology with respect to $\operatorname{Sk}(\tau)$. It remains to show that $\operatorname{Sk}(\tau)_0 = \operatorname{Sk}(\tau_0)$. This immediately follows by \textbf{Theorem \ref{Theorem 2.4}}.
    
\end{proof}

\section{\texorpdfstring{$T_D$}{TD} spaces}

\begin{defn}

Given a topological space $(X, \tau)$, $x\in X$ and $S\subseteq X$, we call $x$ a $T_D$-\textbf{point}, or \textbf{locally closed point} of $X$ if $\{x\}$ is locally closed; we cal $x$ a \textbf{weakly isolated point} of $S$ if there is an open set $U$ such that $x\in U\cap S\subseteq \overline{\{x\}}$. We call $X$ a $T_D$-\textbf{space} if each $x\in X$ is a $T_D$-point.
    
\end{defn}

\noindent
The next two results combine results from {\cite[p.93]{7}} and {\cite[p.8]{4}}, and provide characterizations of $T_D$-spaces.

\begin{lem}\label{Lemma 7.2}

Given a topological space $(X, \tau)$ and $x\in X$, the following are equivalent:

\begin{enumerate}[label = (\alph*)]

    \item $x$ is a $T_D$-point of $X$.
    \item there exists $U\in\mathcal{U}(x)$ such that $U\backslash \{x\}$ is open.
    \item $\{x\}'$ is closed.
    \item for any $S\subseteq X$, if $x$ is weakly isolated in $S$, then $x$ is isolated in $S$.
    \item $x$ is isolated in $\operatorname{Sk}(X)$.
    \item for any $A\subseteq X$ with $x\in A'$, $A'$ is closed.
    
\end{enumerate}
    
\end{lem}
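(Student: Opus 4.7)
The approach is to cluster the six conditions and establish equivalences via elementary implications, using Proposition 6.3 and the description of the Skula topology in Proposition 6.5. The equivalences (a)$\Leftrightarrow$(b)$\Leftrightarrow$(c) are mostly bookkeeping: by Proposition 6.3, $\{x\}$ is locally closed iff it is open in $\overline{\{x\}}$, iff $\{x\}' = \overline{\{x\}}\setminus\{x\}$ is closed in $\overline{\{x\}}$ and hence in $X$ (since $\overline{\{x\}}$ is itself closed); and $\{x\} = U\cap F$ with $U$ open and $F$ closed iff $U\setminus\{x\} = U\setminus F$ is open, where the forward direction sets $F = X\setminus(U\setminus\{x\})$. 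Condition (e) follows immediately from Definition 6.4 and Proposition 6.5(a): a locally closed $\{x\}$ lies in the Skula base $\mathcal{B}_{Sk}$, so is $\operatorname{Sk}(\tau)$-open, making $x$ isolated in $\operatorname{Sk}(X)$; conversely, a basic $\operatorname{Sk}$-neighborhood $U\cap\overline{\{x\}}$ equal to $\{x\}$ explicitly exhibits $\{x\}$ as locally closed.

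For (a)$\Leftrightarrow$(d), I would apply (d) to the test set $S = \overline{\{x\}}$: the point $x$ is trivially weakly isolated in $\overline{\{x\}}$ via $U = X$, so (d) produces an open $U$ with $U\cap\overline{\{x\}} = \{x\}$, realizing (a). Conversely, if $\{x\} = V\cap\overline{\{x\}}$ witnesses local closedness and $W$ witnesses weak isolation of $x$ in some $S$, then $V\cap W$ is open, contains $x$, and $(V\cap W)\cap S\subseteq V\cap\overline{\{x\}} = \{x\}$, giving isolation of $x$ in $S$.

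The last equivalence (a)$\Leftrightarrow$(f) is the most delicate. For (f)$\Rightarrow$(c), if $\{x\}'$ were not closed, any $z\in\overline{\{x\}'}\setminus\{x\}'$ would satisfy $z\in\overline{\{x\}} = \{x\}\cup\{x\}'$ but $z\notin\{x\}'$, forcing $z = x$; hence $x\in\overline{\{x\}'}$, so $A = \{x\}'$ satisfies the hypothesis of (f), and (f) then forces $A' = (\{x\}')'$ to be closed, which combined with Proposition 6.3(c) should yield a contradiction. For (a)$\Rightarrow$(f), given $A$ with $x\in A'$ and $y\in\overline{A'}$, the plan is to take an open neighborhood $V$ of $y$ and a point $z\in V\cap A'$, use local closedness of $\{y\}$ to see that $V\setminus\{y\}$ is open and hence a neighborhood of $z$, and apply the accumulation property of $z$ to obtain a point of $A\setminus\{y\}$ inside $V$, thereby showing $y\in A'$. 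The hard part is precisely this last step: the shrinking-neighborhood trick at $y$ leans on local closedness of $\{y\}$ rather than of $\{x\}$, so the argument must invoke the hypothesis at every relevant $y\in\overline{A'}$ and is the substantive content of the equivalence with (f).
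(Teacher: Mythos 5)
Your handling of (a), (b), (c), (d) and (e) is correct and essentially the paper's argument; your choice of the test set $S=\overline{\{x\}}$ in (d)$\Rightarrow$(a) is in fact cleaner than the paper's corresponding step. The genuine gap is the equivalence with (f), where neither direction is actually established. In (f)$\Rightarrow$(c) you stop at ``should yield a contradiction'': applying (f) to $A=\{x\}'$ only tells you that $(\{x\}')'$ is closed; since it contains $x$ it must contain $\overline{\{x\}}$, and as it is contained in $\overline{\{x\}'}\subseteq\overline{\{x\}}$ you merely get $(\{x\}')'=\overline{\{x\}}$ -- no contradiction with Proposition \ref{Proposition 6.3} is forthcoming from this alone, so the step is unsubstantiated. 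In (a)$\Rightarrow$(f) you candidly admit that the shrinking-neighborhood argument needs local closedness of $\{y\}$ at the other points $y\in\overline{A'}$, which is not part of the hypothesis; this is not merely the hard step, it is unprovable under the literal reading of (f). Indeed, let $X$ be the disjoint sum of $Y=\{0\}\cup\{1/n:n\in\mathbb{N}\}$ with its usual topology and a two-point indiscrete space $\{u,v\}$, and let $x=0$. Then $\{x\}$ is closed, so (a)--(e) hold at $x$, but $A=\{1/n:n\in\mathbb{N}\}\cup\{u\}$ satisfies $x\in A'=\{0\}\cup\{v\}$, and $A'$ is not closed because $\{v\}$ is not closed in the indiscrete summand.

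What the paper's proof actually establishes (and what (f) must mean for a statement about the single point $x$) is the pointwise version: for every $A\subseteq X$, $x\notin A'$ implies $x\notin\overline{A'}$, equivalently $x\in\overline{A'}\Rightarrow x\in A'$. With that reading both directions are short and use only the hypothesis at $x$: from (b), given $x\notin A'$ choose $V\in\mathcal{U}(x)$ with $V\cap A\subseteq\{x\}$; then $(U\setminus\{x\})\cap V$ is open and disjoint from $A$, hence disjoint from $\overline{A}$, so $U\cap V$ is a neighborhood of $x$ meeting $A'$ at most in $\{x\}$, and since $x\notin A'$ this gives $x\notin\overline{A'}$. Conversely, applying the pointwise (f) to $A=\{x\}$ (note $x\notin\{x\}'$ always) yields $U\in\mathcal{U}(x)$ with $U\cap\{x\}'=\emptyset$, i.e. $U\cap\overline{\{x\}}=\{x\}$, which is (a). Your global argument for ``all derived sets are closed'' belongs to Proposition \ref{Proposition 7.3}, where the hypothesis is available at every point; to prove the present lemma you need to work with the pointwise formulation of (f).
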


\begin{proof}

We will first prove the equivalence between statements $(a)\sim(e)$, and then show that $(b)\implies (f)$, and $(f)\implies (a)$. First assume $x$ is a $T_D$-point. Suppose $\{x\}=U\cap F$ for some open subset $U$ and closed subset $F$. Then $U\backslash \{x\} = U\backslash F$ is open. Next, if there exists $U\in\mathcal{U}(x)$ such that $U\backslash \{x\}$ is open, let $V=U\backslash \{x\}$. Then observe that:

$$
\{x\}' = \overline{\{x\}} \backslash \{x\} = \overline{\{x\}} \cap \big( V\cup U^c \big) = \overline{\{x\}}\cap U^c
$$
and hence $\{x\}'$ is closed. Now suppose $\{x\}'$ is closed. Fix $S\subseteq X$ and suppose $\{x\}$ is weakly isolated in $S$. Let $U$ be an open subset such that $x\in U\cap S\subseteq \overline{\{x\}}$. Then $V=U\backslash \{x\}'$ is an open neighborhood of $x$ such that:

$$
x\in V\cap S \subseteq \overline{\{x\}}\backslash \{x\}' = \{x\}
$$
Hence $x$ is isolated in $S$. Next suppose that for any $S\subseteq X$, if $x$ is weakly isolated in $S$, then $x$ is isolated in $S$. Note that $x$ is always weakly isolated in $X$, and hence is isolated in $X$. We then have for any $U\in\mathcal{U}(x)$, $\{x\} = U\cap \overline{\{x\}}$. Hence $\{x\}\in \operatorname{Sk}(\tau)$. The implication $(e)\implies (a)$ follows by the definition.\\

\noindent
Now assume there exists $U\in\mathcal{U}(x)$ such that $U\backslash \{x\}$ is open. Let $A\subseteq X$ be such that $x\notin A'$. Then there exists $V\in \mathcal{U}(x)$ such that $V\cap A\backslash \{x\} = \emptyset$. We then have:

\begin{equation}\label{e10}
\big( U\backslash\{x\} \big) \cap (V\cap A) = U\cap \big( V\cap A\backslash \{x\} \big) = \emptyset
\end{equation}
Since $U\backslash \{x\}\cap V$ is open and we assume that $x\notin A'$, (\ref{e10}) implies that:

$$
\big( U\backslash \{x\} \big)\cap V\cap \overline{A} = \emptyset \hspace{0.4cm} \Longrightarrow \hspace{0.4cm} U\cap V\cap A'=\emptyset
$$
Since $U\cap V\in \mathcal{U}(x)$, we can conclude that $x\notin \overline{A'}$, which implies $\overline{A'} \subseteq A'$. Next assume that for any $A\subseteq X$, if $x\notin A'$, then $x\notin \overline{A'}$. We then have $x\notin \overline{\{x\}'}$. Hence there exists $U\in\mathcal{U}(x)$ such that:

$$
\emptyset = U\cap\{x\}'=  U\cap \overline{\{x\}}\backslash \{x\} \hspace{0.4cm} \Longrightarrow \hspace{0.4cm} \{x\} = U\cap \overline{\{x\}}
$$
which implies $\{x\}$ is locally closed.
    
\end{proof}

\begin{prop}\label{Proposition 7.3}

In the set-up of \textbf{Lemma \ref{Lemma 7.2}}, the following are equivalent:

\begin{enumerate}[label = (\alph*)]

    \item $X$ is $T_D$ space.
    \item for each $x\in X$, there exists $U\in\mathcal{U}(x)$ such that $U\backslash \{x\}$ is open.
    \item for any $x\in X$, $\{x\}'$ is closed.
    \item for any $A\subseteq X$, $A'$ is closed.
    \item for any $S\subseteq X$, any weakly isolated points of $S$ is an isolated point.
    \item $\operatorname{Sk}(X)$ is discrete.
    
\end{enumerate}
    
\end{prop}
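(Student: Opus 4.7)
The plan is to leverage \textbf{Lemma \ref{Lemma 7.2}} almost verbatim: each of $(a)$, $(b)$, $(c)$, $(e)$, $(f)$ in the proposition is simply the universal quantification over $x \in X$ of the correspondingly labeled pointwise condition from \textbf{Lemma \ref{Lemma 7.2}}. So I would begin by observing that the pointwise equivalences in \textbf{Lemma \ref{Lemma 7.2}} propagate, giving $(a) \Longleftrightarrow (b) \Longleftrightarrow (c) \Longleftrightarrow (e) \Longleftrightarrow (f)$ at once. Specifically, $(a)$ is ``each $x$ is a $T_D$-point'' which is pointwise $(a)$; $(b)$, $(c)$, $(e)$, $(f)$ likewise match pointwise $(b)$, $(c)$, $(d)$, $(e)$ of \textbf{Lemma \ref{Lemma 7.2}} (note $(f)$ here — $\operatorname{Sk}(X)$ discrete — means every singleton is $\operatorname{Sk}(\tau)$-open, which is exactly pointwise isolatedness in $\operatorname{Sk}(X)$).

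The only statement that is not a direct pointwise lift is $(d)$, so I would devote a short separate argument to it. For $(a) \Longrightarrow (d)$, fix an arbitrary $A \subseteq X$. If $A' = \emptyset$, the conclusion is trivial. Otherwise pick any $x \in A'$; since $x$ is a $T_D$-point, statement $(f)$ of \textbf{Lemma \ref{Lemma 7.2}} applies and yields that $A'$ is closed. For the converse direction $(d) \Longrightarrow (c)$, I would simply specialize $A = \{x\}$ for each $x \in X$, which produces $(c)$ without any further work.

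The main (and very modest) obstacle is just recognizing the mismatch in form between proposition-$(d)$ and lemma-$(f)$: the lemma's condition carries the hypothesis $x \in A'$, while the proposition asserts closedness for all $A$ unconditionally. The gap is closed by the triviality that $A' = \emptyset$ is already closed, so no additional machinery is required. Thus the full proof amounts to one sentence citing \textbf{Lemma \ref{Lemma 7.2}} for the five pointwise-parallel equivalences, plus two short implications involving $(d)$.
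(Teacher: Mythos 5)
Your proposal is correct and follows essentially the same route as the paper, whose entire proof is the remark that everything follows immediately from \textbf{Lemma \ref{Lemma 7.2}} by quantifying the pointwise equivalences over $x\in X$. Your explicit treatment of the quantifier mismatch between proposition-$(d)$ and lemma-$(f)$ (handling $A'=\emptyset$ trivially and specializing $A=\{x\}$ for the converse) is a correct spelling-out of a detail the paper leaves implicit, not a different argument.
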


\begin{proof}

The proof follows immediately by \textbf{Lemma \ref{Lemma 7.2}}.
    
\end{proof}

\begin{prop}\label{Proposition 7.4}

$T_1\Longrightarrow T_D \Longrightarrow T_0$
    
\end{prop}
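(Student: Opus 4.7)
The plan is to prove the two implications separately, each as a short direct argument using the characterizations developed earlier.

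For the first implication $T_1 \Longrightarrow T_D$, I would appeal to \textbf{Proposition \ref{Proposition 3.2}}, which says that in a $T_1$ space every singleton is closed. A closed singleton $\{x\}$ is trivially locally closed, since $\{x\} = X \cap \{x\}$ is the intersection of an open set with a closed set. Hence every point is a $T_D$-point, and $X$ is $T_D$.

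For the second implication $T_D \Longrightarrow T_0$, I would use the characterization of $T_0$ from \textbf{Proposition \ref{Proposition 2.1}}: it suffices to show the specialization preorder is anti-symmetric. Suppose $x, y \in X$ satisfy $x \preccurlyeq y$ and $y \preccurlyeq x$. Since $X$ is $T_D$, the singleton $\{x\}$ is locally closed, so by \textbf{Proposition \ref{Proposition 6.3}} it is open in $\overline{\{x\}}$; equivalently, there exists $U \in \mathcal{U}(x)$ with $U \cap \overline{\{x\}} = \{x\}$. Now by \textbf{Proposition \ref{Proposition 1.2}}, the relation $y \preccurlyeq x$ gives $y \in \overline{\{x\}}$, while $x \preccurlyeq y$ gives $y \in \bigcap \mathcal{N}_x \subseteq U$. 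Therefore $y \in U \cap \overline{\{x\}} = \{x\}$, so $y = x$, proving anti-symmetry.

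There is no real obstacle: both implications are one-liners once the correct characterizations from \textbf{Lemma \ref{Lemma 7.2}}, \textbf{Proposition \ref{Proposition 6.3}}, \textbf{Proposition \ref{Proposition 2.1}}, and \textbf{Proposition \ref{Proposition 1.2}} are invoked. The only subtle point is remembering which direction of the specialization preorder corresponds to which neighborhood containment: one uses $y \preccurlyeq x \Leftrightarrow y \in \overline{\{x\}}$ and $x \preccurlyeq y \Leftrightarrow y \in \bigcap \mathcal{N}_x$, so that both conclusions about $y$ combine to force $y \in \{x\}$.
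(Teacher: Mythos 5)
Your proposal is correct. The first implication is exactly the paper's argument: closed singletons are locally closed, so $T_1$ gives $T_D$ via \textbf{Proposition \ref{Proposition 3.2}}. For the second implication you take a genuinely different (and arguably more self-contained) route than the paper. The paper argues: if $\{x\}$ is locally closed then $\{x\}'$ is closed by \textbf{Lemma \ref{Lemma 7.2}}, hence $\{x\}'$ is a union of closed sets, so $[x]_{\sim}=\{x\}$ by the criterion of \textbf{Proposition \ref{Propopsition 2.2}}, and $T_0$ follows. You instead verify anti-symmetry of the specialization preorder directly: from \textbf{Proposition \ref{Proposition 6.3}} a locally closed singleton satisfies $U\cap\overline{\{x\}}=\{x\}$ for some $U\in\mathcal{U}(x)$, and then $y\preccurlyeq x$ places $y$ in $\overline{\{x\}}$ while $x\preccurlyeq y$ places $y$ in $\bigcap\mathcal{N}_x\subseteq U$, forcing $y=x$; you correctly match the two directions of $\preccurlyeq$ to the two containments via \textbf{Proposition \ref{Proposition 1.2}}. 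What each approach buys: the paper's version recycles the already-established equivalences (Lemma \ref{Lemma 7.2} and the derived-set criterion), so the proof is a two-line citation chain; yours avoids those intermediate results entirely and exhibits the separating open set explicitly, which makes the mechanism behind $T_D\Rightarrow T_0$ more transparent, at the cost of redoing in miniature part of what Lemma \ref{Lemma 7.2} encapsulates. Both are complete and correct.
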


\begin{proof}

Fix a topological space $(X, \tau)$ and $x\in X$. If $\{x\}$ is closed, then $\{x\}$ is locally closed. With \textbf{Proposition \ref{Proposition 3.2}}, this shows $T_1\Longrightarrow T_D$. If $\{x\}$ is locally closed, by \textbf{Lemma \ref{Lemma 7.2}}, $\{x\}'$ is closed. This shows that $\{x\}=[x]_{\sim}$ by \textbf{Proposition \ref{Proposition 2.1}}. We can then conclude that $T_D\Longrightarrow T_0$.
    
\end{proof}

\begin{exmp}\label{Example 7.5}

Let $(X, \tau)$ be an Alexandroff space (see \textbf{Definition \ref{Definition 1.8}}). By \textbf{Theorem \ref{Theorem 1.9}(f)}, for each $x\in X$, $[x]_{\sim}$ is locally closed. Hence a $T_0$ Alexandroff space is $T_D$. However, a $T_1$ Alexandroff space is discrete according to \textbf{Theorem \ref{Theorem 1.9}(b)}, and the converse is obviously true.
    
\end{exmp}

\begin{exmp}

Recall that in \textbf{Example \ref{Example 6.6}} we introduce $\tau_{\rightarrow}$ the \textbf{right-order topology} defined on $\mathbb{R}$. Such topology can also be defined on any totally ordered set. Let $(X, \leq)$ be a totally ordered set and let it be equipped with $\tau_{\rightarrow}$ the right-order topology. Then observe that for any $x\in X$, $\overline{\{x\}} = [-\infty, x]$ and $\bigcap\mathcal{N}_x = [x, \infty)$. This implies that $(X, \tau_{\rightarrow})$ is $T_0$, and, by \textbf{Lemma \ref{Lemma 7.2}}, for any $x\in X$, $\{x\}$ is locally closed precisely when $x=\min X$ or it has an immediate predecessor. We can then conclude that, when $X=\mathbb{R}$, $(X, \tau_{\rightarrow})$ is $T_0$ but not $T_D$. In this case, none of the points in $\mathbb{R}$ are $T_D$-points.
    
\end{exmp}

\noindent
In \textbf{Example \ref{Example 7.5}}, we consider a space $(X, \tau)$ where each point in $X_0$ is locally closed. Clearly this property is equivalent to that $q(X)$ is $T_D$. Next we will study this non-$T_0$ version of $T_D$ property.

\begin{defn}\label{Definition 7.7}

In a topological space $(X, \tau)$, a point $x\in X$ is called an $R_d$-\textbf{point} if $[x]_{\sim}$ is locally closed. We call $X$ a $R_d$-space if all points are $R_d$-points. Note that in \cite{22} a $R_d$ space is called an \textit{essentially} $T_D$-space.
    
\end{defn}

\begin{rem}

One can find the $R_D$ axioms, a topological property different from $R_d$, in, for instance, \cite{28} and \cite{35}. Note that a topological space $(X, \tau)$ is $R_D$ if for each $x\in X$, $\{x\}'$ is closed whenever $x$ satisfies $[x]_{\sim} = \{x\}$. We shall not have more to say about this, except noting that $R_d$ implies $R_D$ according to \textbf{Lemma \ref{Lemma 7.9}(c)} and the fact that $\{x\}' = \overline{\{x\}} \backslash \{x\}$. Moreover, it is obvious that $X$ is $T_D$ precisely when it is $T_0$ and $R_D$. This could be a more useful criterion for checking the $T_D$ property than verifying $T_0$ and $R_d$.
    
\end{rem}

\noindent
Now we shall mainly be interested in the characterizations of $R_d$ spaces in terms of the Skula topology. Other equivalent formulations are stated for the sake of completeness. As such, in the next lemma, we will only prove what we need, while the rest can be proven by invoking \textbf{Lemma \ref{Lemma 7.2}} or passing to the $T_0$-quotient. Given a topological space $(X, \tau)$ and $A\subseteq X$, recall the definition of $A^{\triangledown}$ given in \textbf{Definition \ref{Definition 2.6}}.

\begin{lem}\label{Lemma 7.9}

In a topological space $(X, \tau)$, let $\sim_{\tau}$ denote the equivalence relation induced the specialization preorder with respect to $\tau$. For any $x\in X$, the following are equivalent:

\begin{enumerate}[label = (\alph*)]

    \item $x$ is an $R_d$-point.
    \item there exists $U\in\mathcal{U}(x)$ such that $U\backslash [x]_{\sim_{\tau}}$ is open.
    \item $\{x\}^{\triangledown}$ is closed.
    \item for any $S\subseteq X$, if $[x]_{\sim_{\tau}} \subseteq U\cap S \subseteq \overline{\{x\}}$ for some $U\in\mathcal{U}(x)$, then $[x]_{\sim_{\tau}}$ is (relatively) open in $S$.
    \item $[x]_{\sim_{\tau}} \in \operatorname{Sk}(\tau)$.
    \item for any $A\subseteq X$, $A^{\triangledown}$ is closed.
    
\end{enumerate}
    
\end{lem}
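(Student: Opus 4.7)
The plan is to prove the six conditions equivalent by reducing to \textbf{Lemma \ref{Lemma 7.2}} via passage to the $T_0$-quotient $q : X \to X_0$. More precisely, each of (a)--(f) here is equivalent to the corresponding condition of \textbf{Lemma \ref{Lemma 7.2}} applied to $q(x) \in X_0$; since $X_0$ is $T_0$, \textbf{Lemma \ref{Lemma 7.2}} then supplies the equivalence at the quotient level, and the present lemma follows.

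First I would assemble the translation dictionary. Since every open or closed subset of $X$ is $\sim_\tau$-saturated by \textbf{Lemma \ref{Lemma 1.6}} and $q$ is both open and closed by \textbf{Theorem \ref{Theorem 2.4}(a)}, the map $q$ induces an inclusion-preserving bijection between $\tau$ and $\tau_0$ (and between $\tau$-closed and $\tau_0$-closed sets) commuting with finite intersections and unions. Moreover $[x]_{\sim_\tau} = q^{-1}(\{q(x)\})$ and, by \textbf{Definition \ref{Definition 2.6}}, $\{x\}^{\triangledown} = q^{-1}(\{q(x)\}')$; by \textbf{Proposition \ref{Proposition 2.6}(c)} we have the disjoint decomposition $\overline{\{x\}} = [x]_{\sim_\tau} \sqcup \{x\}^{\triangledown}$.

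With this dictionary the translations run as follows. For (a), since a finite intersection of $\sim_\tau$-saturated sets is saturated and $q$ sends saturated opens/closeds bijectively to opens/closeds, $[x]_{\sim_\tau}$ is locally closed in $X$ iff $\{q(x)\}$ is locally closed in $X_0$. For (b), $q(U) \setminus \{q(x)\} = q(U \setminus [x]_{\sim_\tau})$, so the condition lifts and drops through $q$. For (c), $\{x\}^{\triangledown}$ is $\sim_\tau$-saturated, so $q$ transports closedness both ways. For (d), the correspondence $S \mapsto q(S)$ turns the weak-isolation hypothesis into weak isolation of $q(x)$ in $q(S)$ and transports the conclusion back by saturation of $\overline{\{x\}}$. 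For (e), \textbf{Proposition \ref{Proposition 6.5}(a)} combined with the observation that $\mathcal{U}(y) = \mathcal{U}(x)$ and $\overline{\{y\}} = \overline{\{x\}}$ for every $y \sim_\tau x$ collapses the pointwise Skula-openness condition on $[x]_{\sim_\tau}$ to the single requirement that some $U \in \mathcal{U}(x)$ satisfies $U \cap \overline{\{x\}} \subseteq [x]_{\sim_\tau}$, which is precisely $\{q(x)\} \in \operatorname{Sk}(\tau_0)$. For (f), $A^{\triangledown} = q^{-1}(q(A)')$, and every subset of $X_0$ is $q(A)$ for $A = q^{-1}(A_0)$, so the universal statement over $A \subseteq X$ and the universal statement over $A_0 \subseteq X_0$ are interchangeable.

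The step I expect to be the main obstacle is (e): one must carefully unfold the base description of $\operatorname{Sk}(\tau)$ from \textbf{Proposition \ref{Proposition 6.5}(a)} and observe that although Skula-openness of $[x]_{\sim_\tau}$ is in principle a pointwise condition over the whole class, the local data at every $y \sim_\tau x$ reproduces that at $x$, after which the match with Skula-isolation of $\{q(x)\}$ in $X_0$ is immediate. The remaining translations are routine bookkeeping exploiting that $q$ is open, closed, and yields a lattice bijection on $\sim_\tau$-saturated subsets.
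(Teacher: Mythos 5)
Your reduction through the $T_0$-quotient is the route the paper itself merely gestures at (its printed proof verifies only $(a)\Leftrightarrow(e)$ directly, via \textbf{Proposition \ref{Proposition 6.5}(a)} and saturation of $U\cap\overline{\{x\}}$), and your translations of (a), (b), (c), (d) and (e) through $q$ are correct: saturation of open sets, closed sets, $[x]_{\sim_{\tau}}$ and $\{x\}^{\triangledown}$, together with $q$ being open, closed and surjective, does make each of these five conditions equivalent to the corresponding condition of \textbf{Lemma \ref{Lemma 7.2}} at $q(x)$.

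The gap is at (f). The condition in \textbf{Lemma \ref{Lemma 7.2}}(f) is not ``for every $A_0\subseteq X_0$, $A_0'$ is closed'' but ``for every $A_0\subseteq X_0$ \emph{with} $q(x)\in A_0'$, $A_0'$ is closed''; your translation of (f) drops that hypothesis and produces the unrestricted universal statement, which by \textbf{Proposition \ref{Proposition 7.3}}(d) says that the whole quotient $X_0$ is $T_D$ --- a property of the space (this is exactly \textbf{Proposition \ref{Proposition 7.10}}(d)), not of the single point $q(x)$. So the claimed matching with \textbf{Lemma \ref{Lemma 7.2}}(f) fails, and the equivalence chain does not close at (f): from $q(x)$ being a $T_D$-point you can only reach the restricted statement, never the unrestricted one. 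Indeed $(a)\Rightarrow(f)$, read literally and pointwise, is false: let $X$ be the sum of $(\mathbb{R},\tau_{\rightarrow})$ (the right-order topology) and a single isolated point $p$. This space is $T_0$, so $A^{\triangledown}=A'$ and $R_d$-points are exactly $T_D$-points; $\{p\}$ is open, so $p$ satisfies (a)--(e), yet for $A=\{0\}$ one has $A^{\triangledown}=A'=(-\infty,0)$, which is not closed, so (f) fails at $p$. To repair the step you must either carry along the hypothesis ``$x\in A^{\triangledown}$'' (the analogue of the restriction in \textbf{Lemma \ref{Lemma 7.2}}(f), whose proof in fact only uses the pointwise contrapositive ``$x\notin A'$ implies $x\notin\overline{A'}$'') and redo the (f)-translation with it, or defer the unrestricted statement to the space-level result \textbf{Proposition \ref{Proposition 7.10}}; as written, the (f) part of your argument cannot be completed.
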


\begin{proof}

We only need to prove $(a)\Longleftrightarrow (e)$. Clearly $(a)\implies (e)$. Conversely, if $[x]_{\sim_{\tau}}\in \operatorname{Sk}(\tau)$, by \textbf{Proposition \ref{Proposition 6.5}} there exists $U\in\tau$ such that $U\cap \overline{\{x\}} \subseteq [x]_{\sim_{\tau}}$. Since $U\cap \overline{\{x\}}$, as an intersection of two $\sim_{\tau}$-saturated sets, is $\sim_{\tau}$-saturated, we have $[x]_{\sim_{\tau}} \subseteq U\cap \overline{\{x\}}$. Hence $[x]_{\sim_{\tau}} = U\cap \overline{\{x\}}$ is locally closed. 
    
\end{proof}

\begin{prop}\label{Proposition 7.10}

Given a topological space $(X, \tau)$, let $\sim_{\tau}$ denote the equivalence relation induced the specialization preorder with respect to $\tau$. Then the following are equivalent:

\begin{enumerate}[label = (\alph*)]

    \item $X$ is an $R_d$-space.
    \item for each $x\in X$, there exists $U\in \mathcal{U}(x)$ such that $U\backslash [x]_{\sim_{\tau}}$ is open.
    \item for each $x\in X$, $\{x\}^{\triangledown}$ is closed.
    \item for any $A\subseteq X$, $A^{\triangledown}$ is closed.
    \item $\operatorname{Sk}(\tau) = \Sigma(\sim_{\tau})$
    
\end{enumerate}
    
\end{prop}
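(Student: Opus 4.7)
The plan is to reduce each equivalence in Proposition 7.10 to the corresponding pointwise statement from Lemma 7.9, and then to handle the two genuinely global conditions (d) and (e) by separate arguments. The equivalences (a) $\Longleftrightarrow$ (b) and (a) $\Longleftrightarrow$ (c) are immediate from Lemma 7.9(b) and Lemma 7.9(c): asserting the property for every $x \in X$ is precisely the definition of $X$ being an $R_d$-space.

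For (a) $\Longleftrightarrow$ (e), the inclusion $\operatorname{Sk}(\tau) \subseteq \Sigma(\sim_\tau)$ always holds (noted in \textbf{Definition \ref{Definition 6.4}}), so only the reverse inclusion is in question. If $X$ is $R_d$, then Lemma 7.9(e) gives $[x]_{\sim_\tau} \in \operatorname{Sk}(\tau)$ for every $x$; since every $\sim_\tau$-saturated set decomposes as $A = \bigcup_{x\in A}[x]_{\sim_\tau}$ and $\operatorname{Sk}(\tau)$ is a topology and hence closed under arbitrary unions, we obtain $\Sigma(\sim_\tau) \subseteq \operatorname{Sk}(\tau)$. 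Conversely, if $\operatorname{Sk}(\tau) = \Sigma(\sim_\tau)$, each $[x]_{\sim_\tau}$ already lies in $\operatorname{Sk}(\tau)$, and Lemma 7.9(e) again yields that every $x$ is an $R_d$-point.

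For (a) $\Longleftrightarrow$ (d), the implication (d) $\Longrightarrow$ (c) is trivial by taking $A = \{x\}$. For (a) $\Longrightarrow$ (d) I would route through the $T_0$-quotient $q:X \to X_0$. Write $[x]_{\sim_\tau} = U \cap F$ with $U \in \tau$ open and $F$ closed; both are $\sim_\tau$-saturated, so by \textbf{Theorem \ref{Theorem 2.4}(a)-(b)} the image $q(U)$ is open, $q(F)$ is closed in $X_0$, and $q^{-1}\big(q(U) \cap q(F)\big) = U \cap F = [x]_{\sim_\tau}$ forces $\{q(x)\} = q(U) \cap q(F)$. Hence every singleton in $X_0$ is locally closed, i.e.\ $X_0$ is $T_D$. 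Applying \textbf{Proposition \ref{Proposition 7.3}(d)} inside $X_0$ shows $q(A)'$ is closed in $X_0$ for every $A \subseteq X$, and continuity of $q$ then gives $A^\triangledown = q^{-1}\big(q(A)'\big)$ closed in $X$.

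The main obstacle is precisely (a) $\Longrightarrow$ (d): the pointwise conclusion ``$\{x\}^\triangledown$ is closed'' cannot be upgraded to ``$A^\triangledown$ is closed for all $A$'' by a naive union argument, since an arbitrary union of closed sets need not be closed. Passing to $X_0$ and invoking the already-established $T_D$ version (Proposition \ref{Proposition 7.3}) is the mechanism that converts the pointwise information into the uniform statement, and it simultaneously clarifies why (d) and (e), despite being global in form, are equivalent to the pointwise $R_d$ condition.
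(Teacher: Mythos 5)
Your proposal is correct and follows essentially the same route as the paper: the paper's explicit proof of Proposition \ref{Proposition 7.10} is exactly your (a)$\Longleftrightarrow$(e) argument (decomposing saturated sets into classes $[x]_{\sim_\tau}$ and invoking Lemma \ref{Lemma 7.9}(e), together with the standing inclusion $\operatorname{Sk}(\tau)\subseteq\Sigma(\sim_\tau)$), with the remaining items delegated to Lemma \ref{Lemma 7.9}. Your explicit treatment of (a)$\Longrightarrow$(d) via the $T_0$-quotient and Proposition \ref{Proposition 7.3}(d) is precisely the route the paper gestures at ("passing to the $T_0$-quotient") but leaves unwritten, and it is carried out correctly.
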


\begin{proof}

Again we only need to prove $(a)\Longleftrightarrow (e)$. If $A\subseteq X$ is $\sim_{\tau}$-saturated, then $A = \bigcup_{a\in A}[a]_{\sim_{\tau}}$. Then by \textbf{Lemma \ref{Lemma 7.9}}, $X$ is an $R_d$-space if and only if $\Sigma(\sim_{\tau})\subseteq \operatorname{Sk}(\tau)$. Meanwhile, by definition we always have $\operatorname{Sk}(\tau)\subseteq \Sigma(\sim_{\tau})$, and hence we can conclude $X$ is an $R_d$-space if and only if $\operatorname{Sk}(\tau) = \Sigma(\sim_{\tau})$.
    
\end{proof}

\begin{prop}\label{Proposition 7.11}

Given a topological space $(X, \tau)$, let $\sim_{\tau}$ denote the equivalence relation induced the specialization preorder with respect to $\tau$. Then:

\begin{enumerate}[label = (\alph*)]

    \item $X$ is $R_d$ if and only if $\operatorname{Sk}(X)$ is Alexandroff.
    \item $\operatorname{Sk}\big( \operatorname{Sk}(\tau) \big) = \Sigma(\sim_{\tau})$.
    
\end{enumerate}
    
\end{prop}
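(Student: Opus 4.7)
My plan hinges on \textbf{Proposition \ref{Proposition 6.5}(c)}: in the Skula-modification $\operatorname{Sk}(X)$, the specialization preorder coincides with $\sim_{\tau}$. In particular it is symmetric, so the equivalence relation $\sim_{\operatorname{Sk}(\tau)}$ it induces is again $\sim_{\tau}$; and by \textbf{Proposition \ref{Proposition 1.2}} we get $\operatorname{cl}_{\operatorname{Sk}(\tau)}(\{x\}) = [x]_{\succcurlyeq_{\operatorname{Sk}(\tau)}} = [x]_{\sim_{\tau}}$ for every $x \in X$. These observations do almost all of the work.

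For part (a), I plan to use \textbf{Proposition \ref{Proposition 7.10}(e)} as the pivot: $X$ is $R_d$ if and only if $\operatorname{Sk}(\tau) = \Sigma(\sim_{\tau})$. If this holds, then because arbitrary intersections of $\sim_{\tau}$-saturated sets are $\sim_{\tau}$-saturated, $\operatorname{Sk}(X)$ is Alexandroff. Conversely, if $\operatorname{Sk}(X)$ is Alexandroff, the equivalence $(a) \Longleftrightarrow (d)$ of \textbf{Theorem \ref{Theorem 1.9}}, applied to the space $\big(X, \operatorname{Sk}(\tau)\big)$, gives $\operatorname{Sk}(\tau) = \Sigma\big(\preccurlyeq_{\operatorname{Sk}(\tau)}\big)$, which by the preliminary observation equals $\Sigma(\sim_{\tau})$; \textbf{Proposition \ref{Proposition 7.10}(e)} then returns the $R_d$ property.

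For part (b), I plan to prove both inclusions directly. For $\operatorname{Sk}(\operatorname{Sk}(\tau)) \subseteq \Sigma(\sim_{\tau})$, I apply the inclusion remarked upon in \textbf{Definition \ref{Definition 6.4}} to the space $\big(X, \operatorname{Sk}(\tau)\big)$: every $\operatorname{Sk}(\operatorname{Sk}(\tau))$-open set is $\sim_{\operatorname{Sk}(\tau)}$-saturated, and this relation is $\sim_{\tau}$. For the reverse inclusion, the preliminary observation gives $[x]_{\sim_{\tau}} = \operatorname{cl}_{\operatorname{Sk}(\tau)}(\{x\})$, so each class is $\operatorname{Sk}(\tau)$-closed, hence locally closed in $\operatorname{Sk}(\tau)$, and therefore a basic open set of $\operatorname{Sk}(\operatorname{Sk}(\tau))$. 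Any $A \in \Sigma(\sim_{\tau})$ can then be written as $\bigcup_{x \in A}[x]_{\sim_{\tau}}$, a union of $\operatorname{Sk}(\operatorname{Sk}(\tau))$-open sets, hence is itself in $\operatorname{Sk}(\operatorname{Sk}(\tau))$.

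The only real obstacle is notational: keeping track of which preorder lives on which topology, and remembering that \textbf{Proposition \ref{Proposition 6.5}(c)} collapses the specialization preorder of $\operatorname{Sk}(\tau)$ into a symmetric relation. Once this collapse is in hand, both claims reduce to short applications of earlier results. As a bonus, combining (a) and (b) shows that $\operatorname{Sk}(\tau)$ is itself always $R_d$: part (b) says its Skula topology is $\Sigma(\sim_{\tau})$, which is Alexandroff, so part (a) applied to $\operatorname{Sk}(\tau)$ forces the $R_d$ property.
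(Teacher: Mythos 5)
Your proposal is correct and follows essentially the same route as the paper: part (a) pivots on Proposition \ref{Proposition 7.10}(e) together with Theorem \ref{Theorem 1.9} and Proposition \ref{Proposition 6.5}(c), exactly as in the text. For part (b) you verify the two inclusions directly (Skula-open sets of $\operatorname{Sk}(\tau)$ are $\sim_{\tau}$-saturated, and each class $[x]_{\sim_{\tau}} = \operatorname{cl}_{\operatorname{Sk}(\tau)}(\{x\})$ is locally closed in $\operatorname{Sk}(\tau)$, hence basic open in $\operatorname{Sk}(\operatorname{Sk}(\tau))$), which is just an unpacked form of the paper's step of noting the classes are $\operatorname{Sk}(\tau)$-closed and then applying Proposition \ref{Proposition 7.10} to $\operatorname{Sk}(X)$; the key observation, Proposition \ref{Proposition 6.5}(c), is the same.
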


\begin{proof}

We use $\preccurlyeq_{\operatorname{Sk}(\tau)}$ to denote the specialization preorder in $\operatorname{Sk}(X)$. When $X$ is $R_d$, we have $\operatorname{Sk}(\tau) = \Sigma(\sim_{\tau})$, which makes $\operatorname{Sk}(X)$ Alexandroff according to \textbf{Theorem \ref{Theorem 1.9}}. Conversely, if $\operatorname{Sk}(X)$ is Alexandroff, $\operatorname{Sk}(\tau) = \Sigma\big( \preccurlyeq_{\operatorname{Sk}(\tau)} \big)$. Hence by \textbf{Proposition \ref{Proposition 6.5}}, $\operatorname{Sk}(\tau) = \Sigma(\sim_{\tau})$ and hence $X$ is $R_d$ by \textbf{Proposition \ref{Proposition 7.10}}. According to \textbf{Proposition \ref{Proposition 6.5}(c)}, for each $x\in X$, we have:

$$
[x]_{\sim_{\operatorname{Sk}(\tau)}} = \operatorname{cl}_{\operatorname{Sk}(\tau)}\big( \{x\} \big) = [x]_{\sim_{\tau}}
$$
which implies the equivalence class of $x$ with respect to $\operatorname{Sk}(\operatorname{Sk}(\tau))$ is closed with respect to $\operatorname{Sk}(\tau)$. Then our conclusion follows by \textbf{Proposition \ref{Proposition 7.10}(b)}.
    
\end{proof}

\begin{prop}

Given a topological space $(X, \tau)$, $X$ is $R_d$ if and only if for any $A\subseteq X$, $D(A)$ is closed with respect to $\tau$ (see \textbf{Definition \ref{Definition 2.7}} for the definition of $D(A)$).
    
\end{prop}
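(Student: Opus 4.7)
The plan is to prove the two implications separately, with the $(\Longleftarrow)$ direction being nearly immediate and the genuine content living in $(\Longrightarrow)$.

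For $(\Longleftarrow)$: since $\{x\}'=\overline{\{x\}}\setminus\{x\}$, one has $\{x\}\setminus\{x\}'=\{x\}$, whence $D(\{x\})=\overline{\{x\}}\setminus[x]_\sim=\{x\}^{\triangledown}$ by \textbf{Proposition \ref{Proposition 2.6}(c)}. The hypothesis applied at $A=\{x\}$ then says $\{x\}^{\triangledown}$ is $\tau$-closed for every $x\in X$, which by \textbf{Lemma \ref{Lemma 7.9}} is precisely the $R_d$-condition at each point.

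For $(\Longrightarrow)$: assume $X$ is $R_d$, so by \textbf{Proposition \ref{Proposition 7.10}} every $A^{\triangledown}$ is $\tau$-closed. Since $D(A)\subseteq\overline{A}$ and $\overline{A}$ is closed, it suffices to show $D(A)$ is relatively closed in $\overline{A}$, i.e.\ that the complement $\overline{A}\setminus D(A)=[A\setminus A']_\sim$ (which sits in $\overline{A}$ because $\overline{A}$ is $\sim$-saturated) is open in the subspace $\overline{A}$. Given $y\in[A\setminus A']_\sim$, pick an isolated point $a\in A$ with $y\sim a$ and some $U_a\in\mathcal{U}(a)$ with $U_a\cap A=\{a\}$. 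The crucial observation is that no isolated point of $A$ lies in $A^{\triangledown}$: for $a\in A^{\triangledown}$ one would need a point of $A\cap U_a$ not $\sim$-equivalent to $a$, but $U_a\cap A=\{a\}$ and $a\sim a$. Since $A^{\triangledown}$ is $\sim$-saturated, this upgrades to $[A\setminus A']_\sim\cap A^{\triangledown}=\emptyset$. Set $V:=U_a\setminus A^{\triangledown}$: it is open because $A^{\triangledown}$ is closed (this is where $R_d$ is used), and contains $y$ by the preceding disjointness. Using $\overline{A}=[A]_\sim\cup A^{\triangledown}$ from \textbf{Proposition \ref{Proposition 2.6}(b)} one gets $V\cap\overline{A}=V\cap[A]_\sim$; and for $z\in V\cap[A]_\sim$, writing $z\sim a'$ with $a'\in A$, the $\sim$-saturation of the open set $V$ forces $a'\in V\cap A\subseteq U_a\cap A=\{a\}$, whence $a'=a$, $z\sim a\sim y$, and $z\in[A\setminus A']_\sim$. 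This exhibits $[A\setminus A']_\sim$ as open in $\overline{A}$, and the proof ends.

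The main obstacle is identifying the right way to deploy $R_d$. The natural-looking approach via \textbf{Proposition \ref{Proposition 2.9}(b)} --- writing $D(A)$ as $\bigcup\{\overline{\{x\}}:\overline{\{x\}}\subseteq A'\}$ --- does not conclude closedness, since arbitrary unions of closed sets need not be closed (in Skula-theoretic language one would only obtain that $D(A)$ is Skula-clopen, which is strictly weaker than $\tau$-closed). The substantive insight is that $R_d$ should be invoked through closedness of $A^{\triangledown}$ rather than of individual $\sim$-classes, and that isolated-in-$A$ points automatically avoid $A^{\triangledown}$; only once those two facts are combined does the neighbourhood $U_a$ of an isolated point admit the refinement $U_a\setminus A^{\triangledown}$ that excises the part of $\overline{A}$ obstructing the relative openness of $[A\setminus A']_\sim$.
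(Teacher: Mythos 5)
Your proof is correct, and while your backward direction coincides with the paper's (compute $D(\{x\})=\overline{\{x\}}\setminus[x]_{\sim}=\{x\}^{\triangledown}$ and invoke \textbf{Lemma \ref{Lemma 7.9}(c)}), your forward direction takes a genuinely different route. The paper disposes of it in one line: by \textbf{Proposition \ref{Proposition 7.11}}, $\operatorname{Sk}(X)$ is Alexandroff, and since $\tau\subseteq\operatorname{Sk}(\tau)$ the representation of $D(A)$ as a union of $\tau$-closed sets from \textbf{Proposition \ref{Proposition 2.9}(b)} is taken to give closedness. You instead invoke \textbf{Proposition \ref{Proposition 7.10}} (closedness of every $A^{\triangledown}$) and show directly that $\overline{A}\setminus D(A)=[A\setminus A']_{\sim}$ is relatively open in $\overline{A}$, via the open set $U_a\setminus A^{\triangledown}$ attached to an isolated point $a$ of $A$, together with the facts that isolated points of $A$ avoid $A^{\triangledown}$, that open sets and $A^{\triangledown}$ are $\sim$-saturated, and that $\overline{A}=[A]_{\sim}\cup A^{\triangledown}$ by \textbf{Proposition \ref{Proposition 2.6}(b)}. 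Your criticism of the union-of-closed-sets route is well taken: read literally, Alexandroffness of $\operatorname{Sk}(\tau)$ only makes an arbitrary union of $\tau$-closed (hence Skula-closed) sets Skula-closed, and under $R_d$ Skula-closed means nothing more than $\sim$-saturated, which is weaker than $\tau$-closed; so the paper's forward argument is at best very terse, and your argument supplies the work it elides. The cost is a longer, more hands-on verification, but every ingredient you use is already established in the paper, so your version is self-contained and, in the forward direction, more rigorous than the original.
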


\begin{proof}

When $X$ is $R_d$, by \textbf{Proposition \ref{Proposition 7.11}}, $\operatorname{Sk}(X)$ is Alexandroff. Since $\tau\subseteq \operatorname{Sk}(\tau)$, $D(A)$ is closed by \textbf{Proposition \ref{Proposition 2.9}(b)}. Similar to what we showed in the proof of \textbf{Proposition \ref{Proposition 2.9}}, for each $x\in X$:

$$
D\big(\{x\} \big) = \overline{\{x\}} \backslash \big[ \{x\} \backslash \{x\}' \big] = \overline{\{x\}} \backslash [x]_{\sim} = \{x\}^{\triangledown}
$$
Then the converse follows immediately by \textbf{Lemma \ref{Lemma 7.9}(c)}.
    
\end{proof}

\noindent
As we are also interested in lattices and frames, below we will include their relations with $T_D$ and $R_d$ spaces.

\begin{prop}

In a topological space $(X, \tau)$, for any $A\subseteq X$, define an equivalence relation $E_A$ on $\tau$ by the following: for any $U, V\in \tau$, $(U, V)\in E_A$ if $U\cap A=V\cap A$. Then:

\begin{enumerate}
    \item for any $A\subseteq X$, $E_A$ is a frame congruence defined on $\tau$.

    \item for any $A\subseteq B\subseteq X$, $E_A \subseteq E_B$ if and only if $\operatorname{cl}_{\operatorname{Sk}(\tau)}(B) \subseteq \operatorname{cl}_{\operatorname{Sk}(\tau)}(A)$.

    \item the mapping $A\rightarrow E_A$ is injective if and only if $(X, \tau)$ is a $T_D$-space.
\end{enumerate}
    
\end{prop}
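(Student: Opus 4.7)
The plan is to prove the three parts in order, with Part 2 carrying the main technical content and Part 3 following from Part 2 by picking clever test sets.

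For Part 1, I would verify \ref{FC1} and \ref{FC2} directly. Given $(U_i,V_i)\in E_A$ for $i\in I$, distributivity of intersection over union gives
$$
\left(\bigcup_{i\in I}U_i\right)\cap A = \bigcup_{i\in I}(U_i\cap A) = \bigcup_{i\in I}(V_i\cap A) = \left(\bigcup_{i\in I}V_i\right)\cap A,
$$
establishing \ref{FC1}; \ref{FC2} is analogous using $(U_1\cap U_2)\cap A = (U_1\cap A)\cap(U_2\cap A)$. This part is routine.

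For Part 2, the key move is to rewrite $U\cap A=V\cap A$ as $(U\setminus V)\cap A=\emptyset=(V\setminus U)\cap A$, and to note that $U\setminus V$ and $V\setminus U$ are locally closed hence $\operatorname{Sk}(\tau)$-open (by \textbf{Definition \ref{Definition 6.4}}). For the direction $(\Leftarrow)$, assume $\operatorname{cl}_{\operatorname{Sk}(\tau)}(B)\subseteq\operatorname{cl}_{\operatorname{Sk}(\tau)}(A)$ and $U\cap A=V\cap A$. If some $x\in(U\setminus V)\cap B$ existed, then $x\in B\subseteq\operatorname{cl}_{\operatorname{Sk}(\tau)}(A)$ and $U\setminus V$ is a $\operatorname{Sk}(\tau)$-open neighborhood of $x$, so $(U\setminus V)\cap A\neq\emptyset$, a contradiction; by symmetry $U\cap B=V\cap B$, so $(U,V)\in E_B$. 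For the direction $(\Rightarrow)$, assume $E_A\subseteq E_B$ and take $x\in\operatorname{cl}_{\operatorname{Sk}(\tau)}(B)$. Arguing by contrapositive: if $x\notin\operatorname{cl}_{\operatorname{Sk}(\tau)}(A)$, then by \textbf{Proposition \ref{Proposition 6.5}(b)} there exists $U\in\mathcal{U}(x)$ with $U\cap\overline{\{x\}}\cap A=\emptyset$. Setting $V=U\setminus\overline{\{x\}}\in\tau$, we obtain $V\cap A=(U\cap A)\setminus\overline{\{x\}}=U\cap A$, so $(U,V)\in E_A\subseteq E_B$. Thus $U\cap B=V\cap B$, which forces $U\cap\overline{\{x\}}\cap B=\emptyset$, contradicting $x\in\operatorname{cl}_{\operatorname{Sk}(\tau)}(B)$ via \textbf{Proposition \ref{Proposition 6.5}(b)}. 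I expect this clever choice of $V=U\setminus\overline{\{x\}}$ to be the main subtlety of the proof.

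For Part 3, $(\Leftarrow)$ is direct: if $X$ is $T_D$ and $A\neq B$, pick $x\in A\setminus B$ (WLOG), use \textbf{Lemma \ref{Lemma 7.2}(b)} to find $U\in\mathcal{U}(x)$ with $V:=U\setminus\{x\}\in\tau$, and observe that $V\cap B=U\cap B$ (since $x\notin B$) while $V\cap A=(U\cap A)\setminus\{x\}\neq U\cap A$ (since $x\in U\cap A$), so $(U,V)\in E_B\setminus E_A$. For $(\Rightarrow)$, suppose $X$ is not $T_D$; by \textbf{Lemma \ref{Lemma 7.2}(c)} there is $x\in X$ with $\{x\}'$ not closed, forcing $x\in\overline{\{x\}'}$ and hence $\overline{\{x\}'}=\overline{\{x\}}$. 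Apply Part 2 with $A=\{x\}'\subsetneq\overline{\{x\}}=B$: since $\overline{\{x\}}$ is $\tau$-closed and $\tau\subseteq\operatorname{Sk}(\tau)$, we have $\operatorname{cl}_{\operatorname{Sk}(\tau)}(B)=\overline{\{x\}}$, and by \textbf{Proposition \ref{Proposition 6.5}(b)} together with $x\in\overline{\{x\}'}=\overline{\{x\}'\cap\overline{\{x\}}}$ we also get $x\in\operatorname{cl}_{\operatorname{Sk}(\tau)}(\{x\}')$, so $\operatorname{cl}_{\operatorname{Sk}(\tau)}(A)=\overline{\{x\}}=\operatorname{cl}_{\operatorname{Sk}(\tau)}(B)$. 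Part 2 then yields $E_A=E_B$ even though $A\neq B$, contradicting injectivity. The main obstacle throughout is to keep straight how $\operatorname{Sk}(\tau)$-open neighborhoods encode $\tau$-level equalities of the form $U\cap A=V\cap A$, which \textbf{Proposition \ref{Proposition 6.5}} handles cleanly.
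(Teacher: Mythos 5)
Your proposal is correct and takes essentially the same route as the paper: both rest on the fact that differences of open sets form a base of $\operatorname{Sk}(\tau)$, on Proposition \ref{Proposition 6.5}(b), and on test pairs of the form $(U,\,U\backslash \overline{\{x\}})$ and $(U,\,U\backslash\{x\})$ for parts (2) and (3). The only minor variations are that you verify the congruence axioms directly instead of invoking $E_A=\operatorname{ker}\big(\iota_A^{\leftarrow}\big)$ as in the paper, and in (3) you contrapose via the single witness pair $\big(\{x\}',\overline{\{x\}}\big)$ where the paper deduces $A'=\overline{A'}$ for every $A$ — both harmless differences, and your explicit choice $V=U\backslash\overline{\{x\}}$ in (2) in fact renders that direction slightly tighter than the paper's symmetric-difference argument.
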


\begin{proof}

\begin{enumerate}[label = (\alph*)]

\item As we point out in \textbf{Example \ref{Example 0.11}}, for each $A\subseteq X$, $E_A = \operatorname{Ker}\big( \iota_A^{\leftarrow} \big)$ where $\iota_A:A\rightarrow X$ is the inclusion mapping. Hence for each $A\subseteq X$, $E_A$ is a frame congruence.

\item Fix $A\subseteq B\subseteq X$. Note that for any $U, V\in \tau$, $(U, V)\in E_A$ precisely when $(U\,\Delta\,V) \cap A=\emptyset$ (where $U\,\Delta\,V$ is the symmetric difference of $U$ and $V$). When $E_A\subseteq E_B$, fix $x\in \operatorname{cl}_{\operatorname{Sk}(\tau)} (B)$. Then for any $U, V\in \tau$ such that $x\in U\backslash V$, we have:

$$
(U\backslash V)\cap B\neq\emptyset \hspace{0.4cm} \Longrightarrow \hspace{0.4cm} (U\,\Delta\,V)\cap B\neq \emptyset \hspace{0.4cm} \Longrightarrow \hspace{0.4cm} (U\,\Delta\,V)\cap A \neq\emptyset \hspace{0.4cm} \Longrightarrow \hspace{0.4cm} x\in \operatorname{cl}_{\operatorname{Sk}(\tau)}(A)
$$
Conversely, if $\operatorname{cl}_{\operatorname{Sk}(\tau)}(B) \subseteq \operatorname{cl}_{\operatorname{Sk}(\tau)}(A)$, let $(U, V)\in E_A$. Then we have $U\,\Delta\,V \subseteq X\backslash \operatorname{cl}_{\operatorname{Sk}(\tau)}(A) \subseteq X\backslash \operatorname{cl}_{\operatorname{Sk}(\tau)}(B)$, which implies $(U\,\Delta\, V)\cap B = \emptyset$, or $(U, V)\in E_B$.

\item First suppose $X$ is $T_D$. Fix $A, B\subseteq X$. Fix $x\in X$ and by \textbf{Lemma \ref{Lemma 7.2}(b)}, there exists $U\in \mathcal{U}(x)$ such that $U\backslash \{x\}$ is open. Assume $E_A=E_B$. Then $U\backslash \big( U\backslash \{x\} \big) \cap A = \emptyset$ if and only if $U\backslash \big( U\backslash \{x\} \big) \cap B = \emptyset$. This implies for any $x\in X$, $x\notin A$ if and only if $x\notin B$, and hence $A=B$. Next suppose the mapping $A\mapsto E_A$ is injective. Note that for any $A\subseteq X$, we have $\operatorname{cl}_{\operatorname{Sk}(\tau)}(A') = \operatorname{cl}_{\operatorname{Sk}(\tau)} \big( \overline{A'} \big)$ where $A'$ is the derived set of $A$. By assumption we have $A=\overline{A'}$, which implies $X$ is $T_D$ by \textbf{Proposition \ref{Proposition 7.3}}.

\end{enumerate}
\end{proof}

\begin{prop}

Given two topological space $(X, \tau_X)$, $(Y, \tau_Y)$, recall from \textbf{Example \ref{Example 0.11}} that a $\tau_X$-$\tau_Y$ continuous mapping $f:X\rightarrow Y$ induces a frame homomorphism $f^{\leftarrow}: \tau_Y \rightarrow \tau_X$. If $X$ is $T_D$ and $Y$ is $T_0$, then any frame isomorphisms $\phi:\tau_Y \rightarrow \tau_X$ is induced by a unique injective $\tau_X$-$\tau_Y$ continuous mapping $f_{\phi}:X\rightarrow Y$ (namely $\phi = f_{\phi}^{\leftarrow}$). As a consequence, when both $X$ and $Y$ are $T_D$, the unique continuous mapping $f_{\phi}$ induced by the given frame homomorphism is a homeomorphism. In this case, $\phi\mapsto f_{\phi}$ is a bijective correspondence between frame isomorphism from $\tau_Y$ to $\tau_X$ and homeomorphisms from $X$ to $Y$.
    
\end{prop}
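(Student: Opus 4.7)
The plan is to construct $f_\phi: X \to Y$ pointwise, using $T_D$ of $X$ to pin each point down as the difference of two comparable opens, and $T_0$ of $Y$ to transport this structure across $\phi$. Fix $x \in X$. By \textbf{Lemma \ref{Lemma 7.2}(b)} choose $U_x \in \mathcal{U}(x)$ with $W_x := U_x \setminus \{x\} \in \tau_X$. The key algebraic feature of $(W_x, U_x)$ is that $W_x$ is a lower cover of $U_x$ in the frame $\tau_X$: any $V \in \tau_X$ with $W_x \subseteq V \subseteq U_x$ equals $U_x$ if $x \in V$ (since then $V \supseteq W_x \cup \{x\} = U_x$) and equals $W_x$ if $x \notin V$ (since then $V \subseteq U_x \setminus \{x\} = W_x$). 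Setting $\alpha_x := \phi^{-1}(U_x)$ and $\beta_x := \phi^{-1}(W_x)$, the order-isomorphism $\phi$ then forces $\beta_x$ to be a lower cover of $\alpha_x$ in $\tau_Y$, and in particular $A_x := \alpha_x \setminus \beta_x \neq \emptyset$.

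The heart of the proof is the claim that $A_x$ is a singleton. Suppose otherwise, so there are distinct $y_1, y_2 \in A_x$. By $T_0$ of $Y$ we may assume some $V \in \tau_Y$ has $y_1 \in V$, $y_2 \notin V$. Now $V' := (V \cap \alpha_x) \cup \beta_x \in \tau_Y$ satisfies $\beta_x \subseteq V' \subseteq \alpha_x$ with strict left inclusion (as $y_1 \in V' \setminus \beta_x$), so the cover relation forces $V' = \alpha_x$; but then $y_2 \in \alpha_x = V'$, while $y_2 \notin V$ and $y_2 \notin \beta_x$, a contradiction. Writing $A_x = \{y\}$, I define $f_\phi(x) := y$. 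To check $\phi = f_\phi^{\leftarrow}$, I verify $x \in \phi(V) \Longleftrightarrow f_\phi(x) \in V$ for each $V \in \tau_Y$: applying $\phi$ to $V'$ gives $\phi(V') = (\phi(V) \cap U_x) \cup W_x$, which equals $U_x$ precisely when $x \in \phi(V)$; meanwhile $V' = \alpha_x$ precisely when $A_x \subseteq V$, i.e.\ when $f_\phi(x) \in V$. Since $\phi$ is bijective these two conditions are equivalent, giving $f_\phi^{-1}(V) = \phi(V) \in \tau_X$, hence $f_\phi$ is continuous with $f_\phi^{\leftarrow} = \phi$.

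For injectivity, $f_\phi(x_1) = f_\phi(x_2) = y$ yields $\mathcal{U}(x_1) = \{\phi(V) : V \in \mathcal{U}(y)\} = \mathcal{U}(x_2)$, so $x_1 = x_2$ by \textbf{Proposition \ref{Proposition 2.1}} (as $X$ is $T_0$ via \textbf{Proposition \ref{Proposition 7.4}}). Uniqueness of $f_\phi$, which also shows independence from the choice of $U_x$, follows because any continuous $g: X \to Y$ with $g^{\leftarrow} = \phi$ satisfies $g(x) \in V \Longleftrightarrow x \in \phi(V) \Longleftrightarrow f_\phi(x) \in V$, forcing $g(x) = f_\phi(x)$ by $T_0$ of $Y$. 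Finally, when $Y$ is also $T_D$, applying the statement to $\phi^{-1}: \tau_X \to \tau_Y$ produces a continuous $g_\phi: Y \to X$ with $g_\phi^{\leftarrow} = \phi^{-1}$; uniqueness applied to $f_\phi \circ g_\phi$ and $g_\phi \circ f_\phi$ (whose induced frame homomorphisms are the identities $\operatorname{id}_{\tau_Y}$ and $\operatorname{id}_{\tau_X}$) then shows these compositions are $\operatorname{id}_Y$ and $\operatorname{id}_X$, so $f_\phi$ is a homeomorphism. The main obstacle is establishing $|A_x| = 1$: without $T_D$ of $X$ the cover $\beta_x \lessdot \alpha_x$ is unavailable, and without $T_0$ of $Y$ one cannot rule out multiple candidates in $A_x$, so the two hypotheses combine precisely in this step.
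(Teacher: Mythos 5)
Your proposal is correct and follows essentially the same route as the paper: at each $T_D$-point you pass to the pair $U_x \supseteq U_x\setminus\{x\}$ from \textbf{Lemma \ref{Lemma 7.2}}, transfer it across $\phi^{-1}$, use $T_0$ of $Y$ to show the difference $A_x$ is a single point, define $f_\phi(x)$ to be that point, and then get continuity from $f_\phi^{-1}(V)=\phi(V)$, injectivity and uniqueness from $T_0$, and the homeomorphism by applying the result to $\phi^{-1}$. Your execution is in places tidier than the paper's --- the lower-cover argument for $\vert A_x\vert = 1$, deducing independence of the choice of $U_x$ from the $T_0$-uniqueness of $f_\phi$ rather than from the paper's explicit $U\cup U'$ computation, and obtaining $f_\phi\circ g_\phi=\operatorname{id}_Y$ and $g_\phi\circ f_\phi=\operatorname{id}_X$ from uniqueness rather than by direct preimage computations --- but the underlying ideas coincide.
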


\begin{proof}

Fix $x\in X$. By \textbf{Lemma \ref{Lemma 7.2}} let $U\in\mathcal{U}(x)$ be such that $U\backslash \{x\}$ is open. Suppose there exists two different points $y, y'$ in $\phi^{-1}(U) \backslash \phi^{-1}\big( U\backslash \{x\} \big)$. Since $(Y, \tau_Y)$ is $T_0$, suppose there exists $V\in\mathcal{U}(y)$ such that $y'\notin V$ and $V\subseteq \phi^{-1}(U)$, $V\backslash \phi^{-1} \big( U\backslash \{x\} \big) \neq\emptyset$. Let $V'\in \mathcal{U}(y')$ such that $V'\neq V$. We then have:

$$
\begin{aligned}
& \big( U\backslash \{x\} \big) \cup \phi(V) = \phi\Big( \phi^{-1}\big( U\backslash \{x\} \big) \Big) \cup \phi(V) = \phi\Big( \phi^{-1}\big( U\backslash \{x\} \big) \cup V \Big) \subsetneq \phi\big( \phi^{-1}(U) \big) = U\\
& \big( U\backslash \{x\} \big) \cup \phi(V') = \phi\Big( \phi^{-1}\big( U\backslash \{x\} \big) \Big) \cup \phi(V') = \phi\Big( \phi^{-1}\big( U\backslash \{x\} \big) \cup V' \Big) \subseteq \phi\big( \phi^{-1}(U) \big) = U\\
\end{aligned}
$$
which implies $\phi(V) = \phi(V')$. However, $V\neq V'$ by our assumption, and hence we obtain a contradiction. We can now conclude that $\phi^{-1}(U) \backslash \phi^{-1}\big( U\backslash \{x\} \big)$ contains only one point, say $\{y\}$. Suppose for a different $U'\in \mathcal{U}(x)$, $U'\backslash \{x\}$ is open. By the same reasoning above, we can show that the set $\phi^{-1}(U') \backslash \phi\big( U'\backslash \{x\} \big)$ contains only one point $y'$, and so does the following set:

\begin{equation}\label{e17}
\phi^{-1} \big( U\cup U' \big) \backslash \phi^{-1} \big( (U\cup U') \backslash \{x\} \big) = \Big( \{y\} \backslash \phi^{-1} \big( U'\backslash \{x\} \big) \Big) \cup \Big( \{y'\} \backslash \phi^{-1} \big( U\backslash \{x\} \big) \Big)
\end{equation}
Since $x\in U\cap U'$, $\phi^{-1}(U\cap U') \neq \phi^{-1}\big( (U\cup U') \backslash \{x\} \big)$. Then observe that:

$$
\begin{aligned}
\emptyset
& \neq  \phi^{-1}(U\cap U') \backslash \phi^{-1} \big( (U\cup U') \backslash \{x\} \big) \\
& = \phi^{-1}(U\cap U') \cap \phi^{-1} \big( U\cup U' \big) \backslash \phi^{-1} \big( (U\cup U') \backslash \{x\} \big) \\
& = \Big( \phi^{-1}(U')\cap \{y\} \backslash \phi^{-1} \big( U'\backslash \{x\} \big) \Big) \cup \Big( \phi^{-1}(U) \cap \{y'\} \backslash \phi^{-1} \big( U\backslash \{x\} \big) \Big) = \{y'\} \cap \{y\}
\end{aligned}
$$
which implies $y=y'$. We can now conclude that for any $U\in \mathcal{U}(x)$ with $U\backslash \{x\}\in \tau_X$, the point in $\phi^{-1}(U) \backslash \phi^{-1}\big( U\backslash \{x\} \big)$ is independent to the choice of such $U$. We use $y_x$ to denote the point in $\phi^{-1}(U) \backslash \phi^{-1}\big( U\backslash \{x\} \big)$. Then the following mapping is well-defined:

$$
f_{\phi}:X\rightarrow Y, \hspace{0.4cm} x\mapsto y_x
$$
Clearly for any $V\in\tau_Y$, $f_{\phi}^{-1}(V) = \phi(V)$, which implies $f$ is continuous. Suppose there exists two different points $x, x'\in X$ such that $f_{\phi}(x) = f_{\phi} (x') = z$. By \textbf{Proposition \ref{Proposition 7.4}}, we can assume that for some $U\in \mathcal{U}(x)$, $U'\in \mathcal{U}(x')$ with $x\notin U'$ such that $U\backslash \{x\}\in \tau_X$ and $U'\backslash \{x'\} \in \tau_X$. Then there exists $V, V'\in \mathcal{U}(z)$ such that $V\subseteq \phi^{-1}(U)$ and $V'\subseteq \phi^{-1}(U')$. This implies:

$$
x\in f_{\phi} ^{-1}(V)\cap f_{\phi}^{-1}(V') = \phi(V\cap V') \subseteq U\cap U'
$$
which leads to a contradiction. Hence we must have $f_{\phi} (x) \neq f_{\phi} (x')$ whenever $x\neq x'$, or $f_{\phi}$ is injective. If $g:X\rightarrow Y$ is another $\tau_X$-$\tau_Y$ continuous function such that $g^{\leftarrow} = \phi$, suppose for some $x\in X$, $f(x)\neq g(x)$. Since $Y$ is $T_0$, there exists $U\in \mathcal{U}\big( f_{\phi} (x) \big)$, $U'\in \mathcal{U}\big( g(x) \big)$ such that $f_{\phi} (x)\notin U'$. We then have:

$$
x\in f_{\phi}^{-1}(U) \cap g^{-1}(U') = \phi(U\cap U') = f_{\phi}^{-1}(U\cap U')
$$
which leads to a contradiction. Hence $f_{\phi}$ is the unique $\tau_X$-$\tau_Y$ continuous injective function such that $f_{\phi}^{\leftarrow} = \phi$. When $X$ and $Y$ are $T_D$, there exists a unique injective $\tau_Y$-$\tau_X$ continuous function $f_{\phi^{-1}}: Y\rightarrow X$ such that $f_{\phi^{-1}}^{\leftarrow} = \phi^{-1}$. From the assumption that $\phi$ is a frame isomorphism we must have both $f_{\phi}$ and $f_{\phi^{-1}}$ are surjective. For any $x\in X$, let $U\in \mathcal{U}(x)$ be such that $U\backslash \{x\} \in\tau_X$. Then:

$$
\big( f_{\phi^{-1}} \circ f_{\phi} \big)^{-1} \big(\{x\} \big) = \big( f_{\phi^{-1}} \circ f_{\phi} \big)^{-1}\Big( U\backslash \big( U\backslash \{x\} \big) \Big) = \phi\circ\phi^{-1}(U) \backslash \phi\circ \phi^{-1}\big( U\backslash \{x\} \big) = \{x\}
$$
which shows that $f_{\phi^{-1}} \circ f_{\phi} =\operatorname{id}_X$. Similarly, we can show $f_{\phi} \circ f_{\phi^{-1}} = \operatorname{id}_Y$. Since $f_{\phi}$ is invertible and has continuous inverse, $f_{\phi}$ is a homeomorphism. 
    
\end{proof}

\begin{defn}

Let $L$ be a distributive lattice. A \textbf{filter} of $L$ is a sublattice $F$ of $L$ such that for any $a\in F$ and $b\in L$, $a\vee b\in F$. When $F$ is a filter of $L$, we call $F$ a \textbf{prime filter} if for any $a, b\in L$, $a\vee b\in F$ implies $a\in F$ or $b\in F$; we call $F$ a \textbf{slicing filter} if $F$ is a prime filter and there exists $a\in F$, $b\in L\backslash F$ such that $b$ immediately precedes $a$, namely $b<a$ and for any $x\in L$ such that $b\leq x\leq a$, we have either $x=b$ or $x=a$.
    
\end{defn}

\begin{prop}

Given a topological space $(X, \tau)$, let $\sim$ denote the equivalence relation associated with the specialization preorder. Note that $(\tau, \subseteq)$ is a bounded distributive lattice. Then:

\begin{enumerate}[label = (\alph*)]

    \item for any $x\in X$, if $x$ is an $R_d$-point, then $\mathcal{U}(x)$ is a slicing filter.
    \item for any $U, V\in\tau$, if $V$ immediately precedes $U$, then $V=U\backslash[x]_{\sim}$ for some $x\in X$.
    \item any slicing filter in $(\tau, \subseteq)$ is of the form $\mathcal{U}(x)$ for some $R_d$-point $x\in X$.
    \item $X$ is $R_d$ precisely when $\mathcal{U}(x)$ is a slicing filter for all $x\in X$.
    
\end{enumerate}
    
\end{prop}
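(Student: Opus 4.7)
The plan is to handle the four parts in the listed order, using \textbf{Lemma \ref{Lemma 7.9}} to translate between ``$R_d$-point at $x$'' and ``existence of $U\in\mathcal{U}(x)$ with $U\setminus[x]_{\sim}\in\tau$,'' and exploiting the fact that every $\tau$-open set is $\sim$-saturated (\textbf{Lemma \ref{Lemma 1.6}}).

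For part $(a)$, I would first check the routine facts that $\mathcal{U}(x)$ is a filter (closed under finite intersection, and if $W\in\mathcal{U}(x)$ and $W\subseteq W'\in\tau$ then $W'\in\mathcal{U}(x)$) and a prime filter (if $W_1\cup W_2\in\mathcal{U}(x)$, then $x$ lies in $W_1$ or $W_2$). For the slicing condition, invoke \textbf{Lemma \ref{Lemma 7.9}(b)} to pick $U\in\mathcal{U}(x)$ with $V:=U\setminus[x]_{\sim}\in\tau$; then $V\notin\mathcal{U}(x)$. To verify that $V$ immediately precedes $U$, take any $W\in\tau$ with $V\subseteq W\subseteq U$: since $W$ is $\sim$-saturated, either $[x]_{\sim}\cap W=\emptyset$ (forcing $W\subseteq V$, hence $W=V$) or $[x]_{\sim}\subseteq W$ (forcing $W=V\cup[x]_{\sim}=U$).

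For part $(b)$, suppose $V$ immediately precedes $U$ in $\tau$ and pick $x\in U\setminus V$. For any $W'\in\mathcal{U}(x)$ with $W'\subseteq U$, the open set $V\cup W'$ lies strictly above $V$ (it contains $x$) and is contained in $U$, so $V\cup W'=U$; hence $U\setminus V\subseteq W'$. Taking the intersection over all such $W'$ (equivalently, intersecting $U$ with $\bigcap\mathcal{N}_x=[x]_{\preccurlyeq}$ via \textbf{Proposition \ref{Proposition 1.2}}) gives $U\setminus V\subseteq[x]_{\preccurlyeq}$. Running the same argument with any $y\in U\setminus V$ in place of $x$ yields $y\preccurlyeq x$ as well, so $y\sim x$, giving $U\setminus V\subseteq[x]_{\sim}$. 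The reverse inclusion $[x]_{\sim}\subseteq U\setminus V$ follows because $[x]_{\sim}\subseteq U$ (as $U$ is $\sim$-saturated and contains $x$) and $[x]_{\sim}\cap V=\emptyset$ (otherwise $V\in\mathcal{U}(x)$). I expect this step to be the main obstacle, since it is where one has to pin down that the ``slice'' removed is exactly a $\sim$-class and not merely some down-set.

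For part $(c)$, let $F$ be a slicing filter with immediate precedence pair $(U,V)$ where $U\in F$, $V\in\tau\setminus F$. By part $(b)$, $V=U\setminus[x]_{\sim}$ for some $x\in X$. I would show $F=\mathcal{U}(x)$ by the following cover-type trick: for any $W\in\tau$, the set $(W\cap U)\cup V$ lies between $V$ and $U$, so by immediate precedence it equals either $V$ or $U$. If $W\in F$, then $W\cap U\in F$, so $(W\cap U)\cup V\in F$, forcing $(W\cap U)\cup V=U$; hence $[x]_{\sim}=U\setminus V\subseteq W$, giving $W\in\mathcal{U}(x)$. Conversely, if $W\in\mathcal{U}(x)$, then $[x]_{\sim}\subseteq W\cap U$, so $(W\cap U)\cup V=U\in F$; primality of $F$ and $V\notin F$ force $W\cap U\in F$, whence $W\in F$ by upward-closure. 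Finally, since $V=U\setminus[x]_{\sim}$ is open with $U\in\mathcal{U}(x)$, \textbf{Lemma \ref{Lemma 7.9}(b)} says $x$ is an $R_d$-point. Part $(d)$ is then immediate: one direction follows from $(a)$, and conversely if every $\mathcal{U}(x)$ is slicing, $(c)$ shows that $\mathcal{U}(x)=\mathcal{U}(y)$ for some $R_d$-point $y$, which forces $x\sim y$ and hence $[x]_{\sim}=[y]_{\sim}$, making $x$ itself an $R_d$-point by \textbf{Lemma \ref{Lemma 7.9}(a)}.
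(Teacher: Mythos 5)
Your proposal is correct and follows essentially the same route as the paper: the same use of Lemma \ref{Lemma 7.9} in part $(a)$, the same lattice manipulations with $(W\cap U)\cup V$ and primality in part $(c)$, and the same deduction of $(d)$ from $(a)$ and $(c)$. Your part $(b)$ is in fact more complete than the paper's (which merely appeals to "the same reasoning as in $(a)$", where openness of $U\backslash[x]_{\sim}$ is not available); your argument pinning down $U\backslash V=[x]_{\sim}$ via $U\backslash V\subseteq[x]_{\preccurlyeq}$ and symmetry closes that gap cleanly.
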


\begin{proof}
\begin{enumerate}[label = (\alph*)]

\item Fix $x\in X$. Note that $\mathcal{U}(x)$ is clearly a prime filter. By \textbf{Lemma \ref{Lemma 7.9}}, there exists $U\in\mathcal{U}(x)$ such that $U\backslash [x]_{\sim}$. Suppose for some $V\in\tau$, $U\backslash [x]_{\sim} \subsetneq V$. If $x\in V$, we must have $[x]_{\sim} \subseteq U$ since $V$ is $\sim$-saturated. If $x\notin V$, $V$ must be disjoint from $[x]_{\sim}$, or $V\subseteq U\backslash [x]_{\sim}$. This shows that $U\backslash[x]_{\sim}$ immediately precedes $U$, and hence $\mathcal{U}(x)$ is a slicing filter.

\item Given $U\in\tau$, fix $x\in U$ so that $x\in \mathcal{U}(x)$. In part (a) we showed that $U\backslash [x]_{\sim}$ immediately precedes $U$. Given another $V\in\tau$ such that $V$ immediately precedes $U$. Since $V\subseteq V$, it suffices to show that $V=U\backslash [x]_{\sim}$. The proof follows by the same reasoning in part (a).

\item Fix $F\subseteq \tau$ a slicing filter. By part (b) there exists $U\in F$ and $x\in X$ such that $U\backslash [x]_{\sim}$ immediately precedes $U$ and $U\in \mathcal{U}(x)$. By \textbf{Lemma \ref{Lemma 7.9}} we have $x$ is an $R_d$-point. Since for any $U'\in F$, $U\cap U'\neq\emptyset$, we must have $F\subseteq \mathcal{U}(x)$. Let $U\in F$ be given by the slicing property such that $U\backslash [x]_{\sim}$ immediately precedes $U$. Then for any $V\in \mathcal{U}(x)$, we have $U\backslash [x]_{\sim} \cup V = U\cup V\in F$. Since $U\backslash [x]_{\sim} \notin F$, we must have $V\in F$ by $F$ being prime. Hence $\mathcal{U}(x) \subseteq F$, or $F=\mathcal{U}(x)$.

\item By part (a), if $X$ is $R_d$ then $\mathcal{U}(x)$ is a slicing filter for all $x\in X$. Conversely, if every $\mathcal{U}(x)$ is a slicing filter, then by part (c), every $x\in X$ is an $R_d$-point.

\end{enumerate}
\end{proof}

\noindent
We will close this section by showing that $T_D$ is hereditary, finite productive but infinitely productive. Moreover, Robinson and Wu in \cite{33} show that an infinite product of $T_D$ but not $T_1$ spaces is never $T_D$ when equipped with the product topology. We will slightly extend this result by considering the case where not all component spaces are $T_D$. However, when equipped with the box topology, an infinite product of spaces is $T_D$ precisely when each component space is $T_D$.

\begin{prop}

Given a topological space $(X, \tau)$:

\begin{enumerate}[label = (\alph*)]

    \item if $X$ is $T_D$, then for any $A\subseteq X$, $\left(A, \tau\Big|_A \right)$ is $T_D$.
    \item if $X=\prod_{\alpha\in \mathcal{A}}^{\text{box}}X_{\alpha}$ (see \textbf{Section 0.4}) where each $X_{\alpha}$ is non-empty, then $X$ is $T_D$ if and only if $X_{\alpha}$ is $T_D$ for all $\alpha\in \mathcal{A}$.
    
\end{enumerate}
    
\end{prop}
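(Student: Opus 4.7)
The plan is to prove (a) by pulling back the defining decomposition of a singleton along the inclusion, then to exploit (a) together with the fact recalled in \textbf{Section 0.4} that each factor is homeomorphic to a subspace of the box product to get the forward direction of (b), and finally to verify the converse of (b) through the neighborhood characterization of $T_D$-points in \textbf{Lemma \ref{Lemma 7.2}(b)}.

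For (a), I would fix $A\subseteq X$ and $x\in A$, and use that $X$ being $T_D$ gives $\{x\}=U\cap F$ with $U\in\tau$ and $F$ closed in $X$. Intersecting with $A$ yields
$$
\{x\}=(U\cap A)\cap(F\cap A),
$$
displaying $\{x\}$ as the intersection of a relatively open and a relatively closed subset of $A$. Hence $\{x\}$ is locally closed in $\left(A,\tau\big|_A\right)$, so the subspace is $T_D$. The forward direction of (b) is then immediate from (a), since \textbf{Section 0.4} exhibits each $X_\alpha$ as homeomorphic to a subspace of $\prod^{\text{box}}_{\beta\in\mathcal{A}} X_\beta$ via the insertion map $\iota_\alpha$; hereditariness of $T_D$ forces every factor to be $T_D$.

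For the converse of (b), fix $\bm{x}=(x_\alpha)_{\alpha\in\mathcal{A}}\in X$. Using \textbf{Lemma \ref{Lemma 7.2}(b)} in each coordinate, I would choose $U_\alpha\in\mathcal{U}(x_\alpha)$ with $V_\alpha:=U_\alpha\setminus\{x_\alpha\}$ open in $X_\alpha$, and form the box-open neighborhood $U:=\prod_{\alpha\in\mathcal{A}} U_\alpha$ of $\bm{x}$. The crucial observation is the identity
$$
U\setminus\{\bm{x}\}=\bigcup_{\alpha\in\mathcal{A}}\bigl(U\cap\pi_\alpha^{-1}(V_\alpha)\bigr),
$$
valid because $\bm{y}\in U$ differs from $\bm{x}$ precisely when $y_\alpha\in V_\alpha$ for some $\alpha$. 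Each set in the union is obtained from $U$ by replacing the single factor $U_\alpha$ by the open set $V_\alpha$, so it lies in the base (\ref{e0}) of the box topology; hence $U\setminus\{\bm{x}\}$ is open, and \textbf{Lemma \ref{Lemma 7.2}(b)} makes $\bm{x}$ a $T_D$-point of $X$.

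The main subtlety I expect is exactly this last identity and the openness claim for its right-hand side: it is the reason the proof succeeds in the box topology but would fail in the product topology for infinite $\mathcal{A}$, where the refined sets $U\cap\pi_\alpha^{-1}(V_\alpha)$ need not sit inside any basic product-open neighborhood of $\bm{x}$. Once this decomposition is recognized, the rest of the argument is formal.
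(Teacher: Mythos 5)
Your proof is correct. Parts (a) and the forward direction of (b) coincide with the paper's argument (intersect the decomposition $\{x\}=U\cap F$ with $A$; use hereditariness plus the embedding $\iota_\alpha$ of each factor into the box product). Where you diverge is the converse of (b): the paper works directly with the definition of a locally closed singleton, writing $\{x\}=\prod_{\alpha}\bigl(U_\alpha\cap\overline{\{\pi_\alpha(x)\}}\bigr)=U\cap\overline{\{x\}}$ and invoking the closure formula $\overline{\prod_{\alpha}B_\alpha}=\prod_{\alpha}\overline{B_\alpha}$ recorded in \textbf{Section 0.4} for the box topology, whereas you use the punctured-neighborhood criterion of \textbf{Lemma \ref{Lemma 7.2}(b)} and the decomposition $U\setminus\{\bm{x}\}=\bigcup_{\alpha}\bigl(U\cap\pi_\alpha^{-1}(V_\alpha)\bigr)$, each piece being a basic box-open set obtained by replacing the $\alpha$-th factor $U_\alpha$ by $V_\alpha$. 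Both steps are sound; the paper's route is a one-line computation once the box-closure formula is granted, while yours avoids closures altogether and makes visible precisely which box-open sets witness the $T_D$ property -- which also localizes where the argument breaks for the product topology (your choice $U=\prod_\alpha U_\alpha$ is not product-open when infinitely many $U_\alpha$ are proper, which is the real obstruction there). Your side remark about the refined sets not fitting inside a basic product-open neighborhood is a bit loosely phrased, but it is not load-bearing and does not affect the correctness of the proof.
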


\begin{proof}

We will first show that $T_D$ is a hereditary property. Fix $A\subseteq X$ and $a\in A$. If $\{a\}$ is locally closed, then there exists an open set $U$ and a closed set $F$ such that $\{a\} = U\cap F$. Hence $\{a\} = (U\cap A) \cap (F\cap A)$. For the second part, note that in \textbf{Section 0.4}, we show that each $X_{\alpha}$ is homeomorphic to a subspace of $X$, and hence when $X$ is $T_D$, so is each $X_{\alpha}$. Conversely, fix $x\in X$. For each $\alpha\in \mathcal{A}$, using terminologies from \textbf{Section 0.4}, we have $\pi_{\alpha}(x)$ is a $T_D$-point for all $\alpha\in\mathcal{A}$. Then for each $\alpha\in \mathcal{A}$, there exists an open subset $U_{\alpha}\subseteq X_{\alpha}$ such that $\{\pi_{\alpha}(x)\} = U_{\alpha} \cap \overline{\{\pi_{\alpha}(x)\}}$. Now $U=\prod_{\alpha\in \mathcal{A}} U_{\alpha}$ is open in $X$. We also show that (\ref{e0}) holds when $X$ is equipped with the box topology. Therefore:

$$
\{x\} = \prod_{\alpha\in \mathcal{A}} \big\{\pi_{\alpha}(x) \big\} = \prod_{\alpha\in \mathcal{A}}\Big( U_{\alpha}\cap \overline{\{\pi_{\alpha}(x)\}} \Big) = \left( \prod_{\alpha\in \mathcal{A}}U_{\alpha} \right) \cap \prod_{\alpha\in \mathcal{A}} \overline{\{\pi_{\alpha}(x)\}} = \left( \prod_{\alpha\in \mathcal{A}} U_{\alpha} \right) \cap \overline{\prod_{\alpha\in \mathcal{A}} \big\{ \pi_{\alpha}(x) \big\}} = U\cap \overline{\{x\}}
$$
and hence $X$ is a $T_D$-space.
    
\end{proof}

\begin{prop}

Given $\big( (X_{\alpha}, \tau_{\alpha}) \big)_{\alpha\in \mathcal{A}}$ an infinite family non-$T_1$ topological spaces, the product space $X=\prod_{\alpha\in \mathcal{A}}X_{\alpha}$ (equipped with the product topology $\tau$) is not $T_D$.
    
\end{prop}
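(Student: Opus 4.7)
The plan is to exhibit a single point $\mathbf{y}\in X$ whose singleton fails to be locally closed, and to do so by exploiting the fact that in the product topology basic open neighborhoods of $\mathbf{y}$ are constrained on only finitely many coordinates, while the closure $\overline{\{\mathbf{y}\}}$ is the full coordinate-wise product of the closures.

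Concretely, since each $X_{\alpha}$ is non-$T_1$, \textbf{Proposition \ref{Proposition 3.2}} produces distinct points $x_{\alpha}\neq y_{\alpha}$ in $X_{\alpha}$ with $x_{\alpha}\preccurlyeq_{\alpha} y_{\alpha}$, i.e.\ $x_{\alpha}\in\overline{\{y_{\alpha}\}}$. Set $\mathbf{y}=(y_{\alpha})_{\alpha\in\mathcal{A}}\in X$. Since the specialization preorder on a product is coordinate-wise (as observed in the paragraph before \textbf{Proposition \ref{Proposition 4.7}}), combined with \textbf{Proposition \ref{Proposition 1.2}}, we obtain
$$
\overline{\{\mathbf{y}\}}=\{\mathbf{z}\in X:\mathbf{z}\preccurlyeq\mathbf{y}\}=\prod_{\alpha\in\mathcal{A}}\overline{\{y_{\alpha}\}}.
$$

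Suppose for contradiction that $\{\mathbf{y}\}$ is locally closed in $X$. Then there is an open $W\ni\mathbf{y}$ with $W\cap\overline{\{\mathbf{y}\}}=\{\mathbf{y}\}$, and shrinking to a basic neighborhood we may take a basic open set $U=\prod_{\alpha\in\mathcal{A}}U_{\alpha}\subseteq W$ with $\mathbf{y}\in U$, where $U_{\alpha}=X_{\alpha}$ for all $\alpha$ outside some finite $F\subseteq\mathcal{A}$ and $y_{\alpha}\in U_{\alpha}\in\tau_{\alpha}$ for $\alpha\in F$. Because $\mathcal{A}$ is infinite, pick $\alpha_0\in\mathcal{A}\setminus F$ and define $\mathbf{z}\in X$ by $z_{\alpha_0}=x_{\alpha_0}$ and $z_{\beta}=y_{\beta}$ for $\beta\neq\alpha_0$. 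Then $z_{\alpha}\in\overline{\{y_{\alpha}\}}$ for every $\alpha$, so $\mathbf{z}\in\overline{\{\mathbf{y}\}}$; also $z_{\alpha}\in U_{\alpha}$ for every $\alpha$ (for $\alpha\in F$ since $z_{\alpha}=y_{\alpha}\in U_{\alpha}$, and for $\alpha\notin F$ since $U_{\alpha}=X_{\alpha}$), so $\mathbf{z}\in U$. Yet $\mathbf{z}\neq\mathbf{y}$ because $z_{\alpha_0}=x_{\alpha_0}\neq y_{\alpha_0}$, contradicting $U\cap\overline{\{\mathbf{y}\}}=\{\mathbf{y}\}$.

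There is essentially no obstacle here; the argument is clean once one notices the asymmetry between the product topology (whose basic opens restrict only finitely many coordinates) and the product of closures (which has many more points than $\{\mathbf{y}\}$ once each coordinate is non-$T_1$). The role of each hypothesis is transparent: the infiniteness of $\mathcal{A}$ supplies the coordinate $\alpha_0\notin F$ that is unconstrained by $U$, and the failure of $T_1$ at that coordinate supplies the alternate value $x_{\alpha_0}$ lying in $\overline{\{y_{\alpha_0}\}}\setminus\{y_{\alpha_0}\}$.
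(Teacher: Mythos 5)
Your proof is correct and follows essentially the same route as the paper: both pick a point each of whose coordinates has a non-closed singleton, use that the closure of a point in the product is the coordinate-wise product of closures (via the coordinate-wise specialization preorder), and then exploit that a basic open set constrains only finitely many coordinates, so the infinitude of $\mathcal{A}$ leaves an unconstrained coordinate witnessing a second point of $U\cap\overline{\{\mathbf{y}\}}$. The only cosmetic difference is that the paper notes the residual factor $\prod_{\alpha\in\mathcal{A}\backslash F}\overline{\{\pi_{\alpha}(x)\}}$ is not a singleton, while you exhibit the extra point $\mathbf{z}$ explicitly.
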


\begin{proof}

By assumption, there exists $x\in X$ such that $\big\{ \pi_{\alpha}(x) \big\}$ is not closed in $X_{\alpha}$ for each $\alpha\in \mathcal{A}$ according to \textbf{Proposition \ref{Proposition 3.2}}. Now we will show that this $x$ is not a $T_D$-point. Assume by contradiction that $x$ is a $T_D$-point. Then there exists $U\in \mathcal{U}(x)$ such that $\{x\} = U\cap \overline{\{x\}}$. We can then find a finite subset $F\subseteq \mathcal{A}$ and open subset $U_{\beta} \subseteq X_{\beta}$ for each $\beta\in  F$ such that:

$$
U_F = \bigcap_{\beta\in F}\pi_{\beta}^{-1}\big( U_{\beta} \big) \subseteq U \hspace{0.4cm} \Longrightarrow \hspace{0.4cm} \{x\} = U_F\cap \overline{\{x\}}
$$
Observe that in general, given $y, z\in X$, $y\preccurlyeq z$ with respect to $\tau$ if and only if $\pi_{\alpha}(y) \preccurlyeq \pi_{\alpha}(z)$ with respect to $\tau_{\alpha}$ for each $\alpha\in \mathcal{A}$. Hence we have:

$$
\begin{aligned}
& \hspace{1cm} \overline{\{x\}} = \prod_{\alpha\in \mathcal{A}} \overline{\{\pi_{\alpha}(x) \}}\\
& \implies\, U_F \cap \overline{\{x\}} = \left( \prod_{\beta\in F} U_{\beta}\cap \overline{\{ \pi_{\beta}(x)\}} \right) \times \left( \prod_{\alpha\in \mathcal{A}\backslash F} \overline{\{ \pi_{\alpha}(x) \}} \right)
\end{aligned}
$$
Note that for any $\alpha\in \mathcal{A}\backslash F$, $\overline{\{\pi_{\alpha}(x) \}}$ is not a singleton, and hence $U_F\cap \overline{\{x\}}$ is not a singleton since $\mathcal{A}$ is infinite. This leads to a contradiction. 
    
\end{proof}

\section{Sober spaces}

This section also serves as a transition to the next section. Since our main focus is on separation axioms in non-Hausdorff spaces, later sections will focus on introducing properties that are strictly weaker than Hausdorff. The property of being \textbf{sober} is a key to their set-up. Similar to Skula-modification, we will introduce the \textbf{sobrification}, which will lead us to several properties that are strictly stronger than $T_D$ but strictly weaker than Hausdorff. 

\begin{defn}

A topological space $(X, \tau)$ is \textbf{irreducible} if $X$ is non-empty and it is not the union of two closed proper subsets. A subset of $X$ is \textbf{irreducible} if it is irreducible when it is considered as a subspace equipped with the relative topology.
    
\end{defn}

\noindent
Given a topological space $(X, \tau)$, note that many authors will not require $X$ to be non-empty when defining irreducibility. Also we can immediately tell that $X$ is irreducible if and only if any two non-empty open proper subsets have non-empty intersections. For this reason, in other literature, an irreducible space is also called \textbf{hyper-connected} or \textbf{anti-Hausdorff}. We point out that irreducibility is an intrinsic property. Namely, if $A\subseteq Y\subseteq X$, then $A$ is irreducible as a subspace of $Y$ if and only if it is irreducible as a subspace of $X$. This is because the relation of being subspace is transitive. For now we present some characterizations of irreducible subsets.

\begin{prop}\label{Proposition 8.2}

In a topological space $(X, \tau)$, given a non-empty $A\subseteq X$, the following are equivalent:

\begin{enumerate}[label = (\alph*)]

    \item $A$ is irreducible.
    \item if $E_1, E_2\subseteq X$ are closed such that $A\subseteq E_1\cup E_2$, then $A\subseteq E_1$ or $A\subseteq E_2$.
    \item for any open subsets $U_1, U_2\subseteq X$, if both $A\cap U_1$ and $A\cap U_2$ are non-empty, then $A\cap (U_1\cap U_2)$ is also non-empty.
    \item every non-empty subset that is relatively open in $A$ is dense in $A$.
    \item $\overline{A}$ is irreducible.
    
\end{enumerate}
    
\end{prop}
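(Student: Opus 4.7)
The plan is to prove the five conditions equivalent through the short cycle $(a)\Leftrightarrow(b)\Leftrightarrow(c)\Leftrightarrow(d)$ together with $(a)\Leftrightarrow(e)$, so that every implication is just a translation between closed, open, and complementary formulations.

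First I would handle $(a)\Leftrightarrow(b)$ by unfolding the subspace topology: the relatively closed subsets of $A$ are exactly the sets $E\cap A$ with $E\subseteq X$ closed, and $E\cap A\subsetneq A$ precisely when $A\not\subseteq E$. Under this correspondence, writing $A=(E_1\cap A)\cup(E_2\cap A)$ with both pieces proper is the same as saying $A\subseteq E_1\cup E_2$ while $A\not\subseteq E_1$ and $A\not\subseteq E_2$, so $(a)$ (meaning: $A$ is nonempty and admits no such decomposition) is the negation of the failure of $(b)$. Next, $(b)\Leftrightarrow(c)$ is pure de Morgan: setting $U_i = X\setminus E_i$, the hypothesis $A\subseteq E_1\cup E_2$ becomes $A\cap(U_1\cap U_2)=\emptyset$ and the conclusion $A\subseteq E_i$ becomes $A\cap U_i=\emptyset$, so $(b)$ is the contrapositive of $(c)$.

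For $(c)\Leftrightarrow(d)$, I would note that a non-empty relatively open subset of $A$ has the form $A\cap U$ for some $U\in\tau$ with $A\cap U\neq\emptyset$, and that $A\cap U$ is dense in $A$ exactly when it meets every non-empty relatively open subset $A\cap V$ of $A$, i.e., when $A\cap U\cap V\neq\emptyset$ for every open $V$ with $A\cap V\neq\emptyset$. This is literally $(c)$.

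Finally, for $(a)\Leftrightarrow(e)$, I would use condition $(c)$ as a bridge, relying on the elementary fact that for $U\in\tau$ one has $A\cap U\neq\emptyset$ iff $\overline{A}\cap U\neq\emptyset$. Consequently $(c)$ holds for $A$ iff it holds for $\overline{A}$, and the equivalences already established give $(a)$ for $A$ iff $(a)$ for $\overline{A}$. The only thing to double-check is the non-emptiness clause, which transfers trivially since $A\neq\emptyset$ iff $\overline{A}\neq\emptyset$. No step is a serious obstacle; the only place to stay alert is juggling the ``proper subset'' condition in $(a)$ against the set-theoretic inclusions in $(b)$, so I would write the $(a)\Leftrightarrow(b)$ equivalence with a little extra care and let the remaining equivalences follow from formal manipulations.
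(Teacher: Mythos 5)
Your proposal is correct and follows essentially the same route as the paper: all the equivalences among (a)--(d) are the same open/closed/complement translations, and your (a)$\Leftrightarrow$(e) step differs only immaterially in that you pass through condition (c) and the fact that an open set meets $A$ iff it meets $\overline{A}$, whereas the paper argues with closed sets via condition (b).
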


\begin{proof}

The implication $(a)\implies (b)$ is immediate. To show $(b)\implies (c)$, suppose that there exist open subsets $U_1, U_2\subseteq X$ such that $A\cap (U_1\cap U_2) = \emptyset$, or $A\subseteq U_1^c \cup U_2^c$. By assumption we have $A\subseteq U_1^c$ or $A\subseteq U_2^c$, which leads to a contradiction. The implication $(c)\implies (d)\implies (a)$ is immediate. It remains to show the equivalence between $(a)$ and $(e)$. If $A$ is irreducible, suppose $E_1$, $E_2\subseteq X$ satisfies $\overline{A} \subseteq E_1\cup E_2$. Then by what we just proved, we have $A\subseteq E_1$ or $A\subseteq E_2$, which implies $\overline{A} \subseteq E_1$ or $\overline{A} \subseteq E_2$. This shows that $\overline{A}$ is irreducible if $A$ is. The converse can be proven in the same fashion, together with the fact that the closure of the union of two sets is the union of their closures.
    
\end{proof}

\begin{defn}\label{Definition 8.3}

In a topological space $(X, \tau)$, if a closed set $F\subseteq X$ satisfies that $F=\overline{\{x\}}$ for some $x\in F$, we call $x$ a \textbf{generic point} of $F$. We call $X$ \textbf{quasi-sober} if every irreducible closed subset has a generic point. We call $X$ \textbf{sober} if every irreducible closed subset has a unique generic point. By \textbf{Lemma \ref{Lemma 1.6}}, $X$ is being sober is equivalent to being quasi-sober and $T_0$.
    
\end{defn}

\begin{exmp}

Recall that we introduce the space $(\mathbb{R}, \tau_{\rightarrow})$ the real line equipped with the right-order topology (see \textbf{Example \ref{Example 7.5}}). In this case a closed subset is necessarily of the form $(-\infty, a]$ for some $a\in \mathbb{R}$, and hence $(\mathbb{R}, \tau_{\rightarrow})$ cannot be the union of two closed subsets, or is irreducible. However, $\mathbb{R}$ does not have a generic point, and hence $(\mathbb{R}, \tau_{\rightarrow})$ is not quasi-sober.
    
\end{exmp}

\noindent
Note that any closed set that has a generic point is irreducible according to the fact that a singleton is always irreducible and \textbf{Proposition \ref{Proposition 8.2}}. One can immediately see that a topological space $(X, \tau)$ is sober if and only if it is $T_0$ and quasi-sober. Hence, $X$ is quasi-sober if and only if $X_0$ the $T_0$-quotient is sober. It is also clear that if $X$ is Hausdorff, then $X$ is sober since any closed subset having at least two points is not irreducible (by \textbf{Proposition \ref{Proposition 8.2}(c)}). Next we will introduce more properties of irreducibility, (quasi)-sobriety and compare it with other topological properties introduced in previous sections.

\begin{prop}

A topological space $(X,\tau)$ is irreducible if and only if it is both connected and extremally disconnected.
    
\end{prop}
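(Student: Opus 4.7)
The plan is to prove both directions by invoking two characterizations already established: \textbf{Proposition \ref{Proposition 8.2}(c)}, which says a non-empty $A\subseteq X$ is irreducible if and only if any two open subsets meeting $A$ also meet $A$ simultaneously, and \textbf{Proposition \ref{Proposition 4.17}(a)}, which says $X$ is extremally disconnected if and only if disjoint open subsets have disjoint closures. Applied to $A=X$, irreducibility is simply the statement that any two non-empty open subsets of $X$ have non-empty intersection, which makes the equivalence almost immediate.

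For the forward direction, assume $X$ is irreducible, so in particular $X$ is non-empty. If $U,V\in\tau$ are disjoint, then since they cannot both be non-empty (or else they would violate \textbf{Proposition \ref{Proposition 8.2}(c)} with $A=X$), at least one of them is empty; its closure is empty as well, so $\overline{U}\cap\overline{V}=\emptyset$, giving extremal disconnectedness by \textbf{Proposition \ref{Proposition 4.17}(a)}. For connectedness, if $X$ admitted a decomposition $X=U\sqcup V$ with $U,V$ non-empty clopen, then $U,V$ would be two non-empty open sets with empty intersection, again contradicting \textbf{Proposition \ref{Proposition 8.2}(c)}.

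For the backward direction, suppose $X$ is connected and extremally disconnected, and argue by contradiction that $X$ is irreducible. If not, then by \textbf{Proposition \ref{Proposition 8.2}(c)} (and noting that $X$ must be non-empty, since the empty space cannot be connected in the convention used here, matching \textbf{Definition \ref{Definition 4.10}}) there exist non-empty open subsets $U_1,U_2\subseteq X$ with $U_1\cap U_2=\emptyset$. By \textbf{Proposition \ref{Proposition 4.17}(a)}, $\overline{U_1}\cap\overline{U_2}=\emptyset$, and by definition of extremal disconnectedness $\overline{U_1}$ is open, hence clopen. Since $U_1\subseteq\overline{U_1}$ and $U_1\neq\emptyset$, we have $\overline{U_1}\neq\emptyset$; since $\emptyset\neq U_2\subseteq\overline{U_2}\subseteq X\setminus\overline{U_1}$, we have $\overline{U_1}\neq X$. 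Thus $\overline{U_1}$ is a proper non-empty clopen subset of $X$, contradicting connectedness.

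There is no real obstacle: the only subtlety worth noting is the treatment of non-emptiness (irreducibility includes it by convention, and connectedness is being used in the same non-empty sense), and the bookkeeping needed to ensure the clopen set $\overline{U_1}$ is both non-empty and proper in the backward direction.
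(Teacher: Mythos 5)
Your proof is correct and follows essentially the same route as the paper: both directions reduce to the characterizations in \textbf{Proposition \ref{Proposition 8.2}} and \textbf{Proposition \ref{Proposition 4.17}(a)}, and the converse produces a proper non-empty clopen set (your $\overline{U_1}$, the paper's $\overline{O}$) to contradict connectedness. The only cosmetic difference is that for extremal disconnectedness in the forward direction the paper argues directly that every non-empty open set is dense (via \textbf{Proposition \ref{Proposition 8.2}(d)}) so closures of open sets are open, while you verify the disjoint-closures criterion of \textbf{Proposition \ref{Proposition 4.17}(a)}; both are equally valid.
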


\begin{proof}

First we assume $X$ is irreducible. Then by \textbf{Proposition \ref{Proposition 8.2}(b)} $X$ is connected. By \textbf{Proposition \ref{Proposition 8.2}(d)} any open subset is dense in $X$ and hence the closure of any open subset is also open, which shows that $X$ is extremally disconnected. Conversely, suppose that $X$ is both connected and extremally disconnected. Assume by contradiction that there exists a non-empty open subset $O\subseteq X$ such that $\overline{O}\neq X$. Let $V=X\backslash \overline{O}$. Then by \textbf{Proposition \ref{Proposition 4.17}}, $\overline{O}$ and $\overline{V}$ are disjoint clopen subsets of $X$ such that $X=\overline{O}\cup \overline{V}$. This contradicts the connectedness of $X$.
    
\end{proof}

\begin{prop}

Suppose $(X, \tau)$ is the product space of $\big( X_{\alpha}, \tau_{\alpha} \big)_{\alpha\in \mathcal{A}}$ and is equipped with the product topology.

\begin{enumerate}
    \item if each $X_{\alpha}$ is irreducible, then so is $X$.
    \item if $F\subseteq X$ is a closed irreducible subset, then $F=\prod_{\alpha \in\mathcal{A}} F_{\alpha}$ where each $F_{\alpha}$ is a closed irreducible subset of $X_{\alpha}$.
    \item if each $X_{\alpha}$ is quasi-sober (sober resp.), then so is $X$.
\end{enumerate}
    
\end{prop}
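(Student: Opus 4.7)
The plan is to prove the three parts in order, using the characterization of irreducibility from \textbf{Proposition \ref{Proposition 8.2}(c)} (any two non-empty relatively open subsets of an irreducible set meet, equivalently any finite collection of non-empty relative opens has non-empty intersection) and the identity $\overline{\prod_\alpha A_\alpha} = \prod_\alpha \overline{A_\alpha}$ in the product topology, which follows by the basic-open description of neighborhoods coordinate-by-coordinate. For (1), I would verify that any two non-empty opens $U, V$ in $X$ meet: shrink each to a basic open $U \supseteq \prod_\alpha U_\alpha$ and $V \supseteq \prod_\alpha V_\alpha$ (with $U_\alpha = V_\alpha = X_\alpha$ off finitely many indices), and note that $\prod_\alpha (U_\alpha \cap V_\alpha)$ has every factor non-empty, by irreducibility of $X_\alpha$ whenever both $U_\alpha$ and $V_\alpha$ are non-empty proper opens, and trivially otherwise. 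This part is routine.

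Part (2) is the main work and the main obstacle. For $F$ closed and irreducible in $X$, I would set $F_\alpha = \overline{\pi_\alpha(F)}$. Since $\pi_\alpha|_F$ is continuous and continuous images of irreducible sets are irreducible (immediate from \textbf{Proposition \ref{Proposition 8.2}(b)}), each $\pi_\alpha(F)$ is irreducible, and so is its closure $F_\alpha$, which is closed by definition. The inclusion $F \subseteq \prod_\alpha F_\alpha$ is trivial. For the reverse inclusion, fix $x = (x_\alpha) \in \prod_\alpha F_\alpha$ and any basic open neighborhood $U = \prod_\alpha U_\alpha$ of $x$ (with $U_\alpha = X_\alpha$ outside a finite set $S$); for each $\alpha \in S$ the relation $x_\alpha \in \overline{\pi_\alpha(F)}$ ensures $U_\alpha \cap \pi_\alpha(F) \neq \emptyset$, so $V_\alpha := \pi_\alpha^{-1}(U_\alpha) \cap F$ is a non-empty relatively open subset of $F$. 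The essential use of irreducibility occurs here: by \textbf{Proposition \ref{Proposition 8.2}(c)} applied to $F$, the finite intersection $\bigcap_{\alpha \in S} V_\alpha$ is non-empty, yielding a single $y \in F \cap U$. Since $F$ is closed, $x \in F$, giving $F = \prod_\alpha F_\alpha$. The subtle point is precisely that irreducibility of $F$ is what allows the coordinate-wise approximation data to be realized simultaneously by a single point of $F$; with only connectedness one could not conclude.

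For (3), I would combine parts (1) and (2). Given a closed irreducible $F \subseteq X$ and each $X_\alpha$ quasi-sober, part (2) yields $F = \prod_\alpha F_\alpha$ with each $F_\alpha$ closed irreducible, hence $F_\alpha = \overline{\{x_\alpha\}}$ for some $x_\alpha \in X_\alpha$, and the identity $\overline{\prod_\alpha \{x_\alpha\}} = \prod_\alpha \overline{\{x_\alpha\}}$ gives $F = \overline{\{(x_\alpha)\}}$, showing $(x_\alpha)$ is a generic point of $F$. For the sober case, one additionally notes that $T_0$ is an initial (hence productive) property, so $X$ is $T_0$ whenever each $X_\alpha$ is, and uniqueness of the generic point follows by combining $T_0$ with quasi-sobriety as in \textbf{Definition \ref{Definition 8.3}}.
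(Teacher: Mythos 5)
Your argument is correct and follows essentially the same route as the paper: in part (2) you take $F_\alpha = \overline{\pi_\alpha(F)}$, note that continuous images and closures of irreducible sets are irreducible, and use irreducibility of $F$ exactly where it is needed, namely to handle the finitely many constrained coordinates of a basic open set (you argue directly that every basic neighborhood of a point of $\prod_\alpha F_\alpha$ meets the closed set $F$, while the paper argues contrapositively that a point outside $F$ already misses some single $F_\alpha$; these are the two dual readings of \textbf{Proposition \ref{Proposition 8.2}}), and parts (1) and (3) are assembled in the same way as in the paper. One small correction for part (3): $T_0$ is \emph{not} an initial property (a constant map from a two-point set to a one-point space induces the indiscrete weak topology), so the justification ``initial, hence productive'' is invalid as stated; the productivity of $T_0$ that you actually need is nevertheless true and easy to supply, e.g.\ because the specialization preorder of the product is the coordinatewise extension of the factors' preorders (the fact the paper itself invokes in this proof) combined with \textbf{Proposition \ref{Proposition 2.1}}, or simply because the projections separate distinct points of the product coordinatewise.
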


\begin{proof}

\begin{enumerate}[label=\alph*)]

    \item Fix two open proper subsets $O_1, O_2\subseteq X$. Then there exists two different finite subsets $F_1, F_2\subseteq \mathcal{A}$ such that:

    $$
    O(F_1) = \left( \prod_{\alpha\in F_1}U_{\alpha} \right) \times \left( \prod_{\alpha\in \mathcal{A} \backslash F_1} X_{\alpha} \right) \subseteq O_1, \hspace{1cm} O(F_2) = \left( \prod_{\beta\in F_2}V_{\beta} \right) \times \left( \prod_{\beta\in \mathcal{A} \backslash F_1} X_{\beta} \right) \subseteq O_2
    $$
    for some open sets $U_{\alpha}\in\tau_{\alpha}$ for each $\alpha\in F_1$ and $V_{\beta}\in \tau_{\beta}$ for each $\beta\in F_2$. By assumption, if there exists $\alpha\in F_1\cap F_2$, then $U_{\alpha}\cap V_{\alpha} \neq \emptyset$ by \textbf{Proposition \ref{Proposition 8.2}}. Hence $O(F_1)\cap O(F_2)\neq \emptyset$, or $O_1\cap O_2\neq\emptyset$. We then have $X$ is irreducible again by \textbf{Proposition \ref{Proposition 8.2}}.

    \item For each $\alpha\in\mathcal{A}$, take $F_{\alpha} = \overline{\pi_{\alpha}(F)}$. Since each projection $\pi_{\alpha}:X\rightarrow X_{\alpha}$ is continuous, clearly each $F_{\alpha}$ is irreducible together with \textbf{Proposition \ref{Proposition 8.2}}. Then we have $F\subseteq \prod_{\alpha\in \mathcal{A}} F_{\alpha}$. Next fix $x\in X\backslash F$. Then there exists $U\in \tau_{\alpha}$ such that $\pi_{\alpha}^{-1}(U) \subseteq F^c$. This implies $\pi_{\alpha}(x)\notin F_{\alpha}$, or $x\notin \prod_{\alpha\in \mathcal{A}} F_{\alpha}$.

    \item When each $X_{\alpha}$ is quasi-sober, by $(b)$, given a closed irreducible subset $F\subseteq X$, we will have $F=\prod_{\alpha\in \mathcal{A}} \overline{\{x_{\alpha}\}}$ for some $x_{\alpha}\in X_{\alpha}$. Let $x\in X$ be such that $\pi_{\alpha}(x) = x_{\alpha}$. We then have $F=\overline{\{x\}}$. Indeed, in $X$, $x\preccurlyeq y$ with respect to the product topology $\tau$ precisely when $\pi_{\alpha}(x) \preccurlyeq \pi_{\alpha}(y)$ with respect to $\tau_{\alpha}$ for all $\alpha\in \mathcal{A}$. We can now conclude that when each $X_{\alpha}$ is (quasi-)sober, so is $X$.
    
\end{enumerate}
    
\end{proof}

\begin{prop}

An $R_1$ topological space is quasi-sober.
    
\end{prop}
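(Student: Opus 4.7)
The plan is to reduce to the Hausdorff case via the $T_0$-quotient, exploiting Proposition 4.6, which tells us that an $R_1$ space has Hausdorff $T_0$-quotient, and the elementary fact (mentioned in the introduction to this section) that Hausdorff spaces are sober. Let $F\subseteq X$ be a closed irreducible subset and let $q:X\to X_0$ be the $T_0$-quotient. Since $F$ is closed, it is $\sim$-saturated (Lemma \ref{Lemma 1.6}), so $F = q^{-1}\bigl(q(F)\bigr)$.

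First I would verify that $q(F)$ is a closed irreducible subset of $X_0$. Closedness is immediate from Theorem \ref{Theorem 2.4}(a), which says $q$ is closed. For irreducibility, if $q(F)\subseteq E_1\cup E_2$ with $E_1,E_2$ closed in $X_0$, then $F = q^{-1}\bigl(q(F)\bigr)\subseteq q^{-1}(E_1)\cup q^{-1}(E_2)$, and the $\tau$-closed subsets $q^{-1}(E_i)$ decompose the irreducible set $F$, forcing $F\subseteq q^{-1}(E_i)$ for some $i$, i.e.\ $q(F)\subseteq E_i$.

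Now $X_0$ is Hausdorff by Proposition \ref{Proposition 4.6} (since $X$ is $R_1$), and a Hausdorff space is sober: any closed irreducible subset of a Hausdorff space is a singleton, because two distinct points could be separated by disjoint open sets, writing the subset as a union of two proper relatively closed pieces, in violation of Proposition \ref{Proposition 8.2}(a). Hence $q(F) = \{q(x)\}$ for some $x\in F$, and therefore
\[
F \;=\; q^{-1}\bigl(\{q(x)\}\bigr) \;=\; [x]_{\sim}.
\]
Finally, $X$ is $R_0$ by Proposition \ref{Proposition 4.7}(a), so by Proposition \ref{Proposition 4.2}(b) (or the equality $[x]_\sim = \overline{\{x\}}$ characteristic of $R_0$ spaces) we conclude $F = \overline{\{x\}}$, exhibiting a generic point.

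No step is a serious obstacle: the only point that requires genuine attention is the preservation of irreducibility under the quotient map, which hinges on the $\sim$-saturation of closed subsets. Note that in general one does not expect uniqueness of the generic point (the proof yields only quasi-sobriety, not sobriety), which is consistent with the statement, since an $R_1$ space need not be $T_0$.
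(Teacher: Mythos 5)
Your proof is correct, but it takes a different route from the paper. You reduce to the $T_0$-quotient: you check that a closed irreducible $F\subseteq X$ is $\sim$-saturated, that $q(F)$ is closed and irreducible in $X_0$, invoke Proposition \ref{Proposition 4.6} to get that $X_0$ is Hausdorff (hence sober, since a closed set with two points in a Hausdorff space violates Proposition \ref{Proposition 8.2}), and pull the generic point back via $F=q^{-1}\big(\{q(x)\}\big)=[x]_{\sim}=\overline{\{x\}}$, the last equality using $R_0$. The paper instead argues directly inside $F$: pick any $x\in F$; if some $y\in F\backslash\overline{\{x\}}$ existed, then $y\nsim x$, the $R_1$ axiom gives disjoint $U\in\mathcal{U}(x)$, $V\in\mathcal{U}(y)$, and $F\cap U$, $F\cap V$ are disjoint non-empty relatively open sets, contradicting Proposition \ref{Proposition 8.2}; so $F=\overline{\{x\}}$ outright. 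The paper's argument is shorter, self-contained, and consistent with its stated policy of avoiding the $T_0$-quotient unless there is a clear benefit. Your argument is a clean instance of the $r(\mathcal{P})$ philosophy of Section 2, and it has the side benefit of actually verifying (one direction of) the assertion, stated without proof before Proposition \ref{Proposition 8.2}'s surroundings, that $X$ is quasi-sober when $X_0$ is sober; the only cosmetic quibble is that the fact $[x]_{\sim}=\overline{\{x\}}$ in an $R_0$ space is most directly extracted from Proposition \ref{Proposition 4.2}(b) together with Lemma \ref{Lemma 1.6}, as in the proof of Proposition \ref{Proposition 5.1}(a), rather than from Proposition \ref{Proposition 4.2}(b) alone.
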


\begin{proof}

Fix an $R_1$ topological space $(X, \tau)$ and suppose $F\subseteq X$ is an irreducible closed subset. Pick $x\in F$. If $F\neq\overline{\{x\}}$, then any $y\in F\backslash \overline{\{x\}}$ satisfies $y\nsim x$. Since $X$ is $R_1$, there exists disjoint open subsets $U\in\mathcal{U}(x)$ and $V\in \mathcal{U}(y)$. Then $F\cap U$ and $F\cap V$ are disjoint subsets relatively open in $F$, contradicting irreducibility according to \textbf{Proposition \ref{Proposition 8.2}}. 
    
\end{proof}

\noindent
We will end this section by studying \textbf{hereditary sobriety}, and show that this is a property strictly stronger than $T_D$ but also weaker than $T_1$. The proof that shows hereditary sobriety is equivalent to being $T_D$ and sober is originally from \cite{24}, which also reveals the surprising connection between quasi-sober subspaces and Skula topology. 

\begin{lem}\label{Lemma 8.8}

Given a topological space $(X, \tau)$ and $A\subseteq X$, recall that we use $\operatorname{cl}_{\operatorname{Sk}(\tau)} (A)$ to denote the closure of $A$ with respect to $\operatorname{Sk}(\tau)$ (see \textbf{Definition \ref{Definition 6.4}}). We use $\sim_{\tau}$ ($\sim_{\operatorname{Sk}(\tau)}$ resp.) to denote the equivalence relation associated with the specialization preorder with respect to $\tau$ ($\operatorname{Sk}(\tau)$ resp.). Then:

\begin{enumerate}[label = (\alph*)]

    \item $\operatorname{cl}_{\operatorname{Sk}(\tau)}(A)$ is $\sim_{\tau}$-saturated. Hence $[A]_{\sim_{\tau}} \subseteq \operatorname{cl}_{\operatorname{Sk}(\tau)}(A)$.

    \item if $A$ is quasi-sober, then $\operatorname{cl}_{\operatorname{Sk}(\tau)}(A) = [A]_{\sim}$. The converse holds if $X$ is quasi-sober.
    
\end{enumerate}
    
\end{lem}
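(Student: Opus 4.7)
For part (a), the plan is to invoke \textbf{Proposition \ref{Proposition 6.5}(c)}, which identifies $\preccurlyeq_{\operatorname{Sk}(\tau)}$ with $\sim_{\tau}$ and in particular shows that the specialization preorder on $\operatorname{Sk}(X)$ is symmetric. Any closed set in any topological space is a down-set under its own specialization preorder (if $y \preccurlyeq x \in C$ and $C$ is closed, then $y \in \overline{\{x\}} \subseteq C$). Applied to $\operatorname{cl}_{\operatorname{Sk}(\tau)}(A)$, this shows it is $\preccurlyeq_{\operatorname{Sk}(\tau)}$-saturated, hence $\sim_{\tau}$-saturated by symmetry. Combined with $A \subseteq \operatorname{cl}_{\operatorname{Sk}(\tau)}(A)$, the inclusion $[A]_{\sim_{\tau}} \subseteq \operatorname{cl}_{\operatorname{Sk}(\tau)}(A)$ follows.

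For the forward direction of part (b), I would start with $x \in \operatorname{cl}_{\operatorname{Sk}(\tau)}(A)$ and apply \textbf{Proposition \ref{Proposition 6.5}(b)} to obtain $x \in \overline{A \cap \overline{\{x\}}}$. The two inclusions $A \cap \overline{\{x\}} \subseteq \overline{\{x\}}$ and $x \in \overline{A \cap \overline{\{x\}}}$ together force the identity $\overline{A \cap \overline{\{x\}}} = \overline{\{x\}}$. Consequently $A \cap \overline{\{x\}}$ is a non-empty subset of $A$, closed in $A$, whose $X$-closure is the irreducible set $\overline{\{x\}}$; by \textbf{Proposition \ref{Proposition 8.2}(e)} and the intrinsic character of irreducibility noted just before that proposition, $A \cap \overline{\{x\}}$ is irreducible as a subspace of $A$. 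Quasi-sobriety of $A$ then supplies $a \in A \cap \overline{\{x\}}$ with $A \cap \overline{\{a\}} = \operatorname{cl}_A(\{a\}) = A \cap \overline{\{x\}}$. Taking $X$-closures yields $\overline{\{x\}} = \overline{A \cap \overline{\{x\}}} \subseteq \overline{\{a\}}$, while $a \in \overline{\{x\}}$ supplies the reverse inclusion, so $\overline{\{a\}} = \overline{\{x\}}$ and hence $a \sim_{\tau} x$, placing $x$ in $[A]_{\sim_{\tau}}$.

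For the converse in (b), I would assume $X$ is quasi-sober and that the stated equality holds, and take any non-empty closed irreducible subset $F$ of $A$. By intrinsic irreducibility and \textbf{Proposition \ref{Proposition 8.2}(e)}, the $X$-closure $\overline{F}$ is irreducible, so quasi-sobriety of $X$ yields $x \in X$ with $\overline{F} = \overline{\{x\}}$; closedness of $F$ in $A$ gives $F = A \cap \overline{F} = A \cap \overline{\{x\}}$. Since $x \in \overline{F} = \overline{A \cap \overline{\{x\}}}$, \textbf{Proposition \ref{Proposition 6.5}(b)} places $x$ in $\operatorname{cl}_{\operatorname{Sk}(\tau)}(A) = [A]_{\sim_{\tau}}$, so there exists $a \in A$ with $a \sim_{\tau} x$. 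This $a$ lies in $A \cap \overline{\{x\}} = F$ and satisfies $\operatorname{cl}_A(\{a\}) = A \cap \overline{\{a\}} = A \cap \overline{\{x\}} = F$, exhibiting a generic point of $F$ in $A$.

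The main point that requires care is the forward direction of (b): one must recognize that the trace $A \cap \overline{\{x\}}$ is not merely a subset whose $X$-closure is irreducible but is itself an irreducible subspace, which is where the intrinsic nature of irreducibility enters, and then match up the generic point supplied by quasi-sobriety of $A$ with the specialization class of $x$ by carefully taking $X$-closures of the identity $A \cap \overline{\{a\}} = A \cap \overline{\{x\}}$. Everything else is routine manipulation of closures together with \textbf{Proposition \ref{Proposition 6.5}}.
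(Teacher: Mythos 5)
Your proof is correct and follows essentially the same route as the paper: in both directions of (b) you pass through \textbf{Proposition \ref{Proposition 6.5}(b)}, the irreducibility of $A\cap\overline{\{x\}}$, and the matching of generic points up to $\sim_{\tau}$, exactly as the paper does. The only cosmetic difference is in (a), where you deduce saturation from \textbf{Proposition \ref{Proposition 6.5}(c)} together with the down-closedness of Skula-closed sets, while the paper instead notes that the Skula-open complement is a union of locally closed (hence $\sim_{\tau}$-saturated) sets.
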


\begin{proof}

\begin{enumerate}[label = (\alph*)]

    \item Note that a locally closed set is $\sim_{\tau}$-saturated since an open or closed set always is. Then $X\backslash \operatorname{cl}_{\operatorname{Sk}(\tau)}(A)$, as the interior of $A^c$ with respect to $\operatorname{Sk}(\tau)$, is a union of locally closed subsets and hence is $\sim_{\tau}$-saturated. This implies $\operatorname{cl}_{\operatorname{Sk}(\tau)}(A)$ is also $\sim_{\tau}$-saturated. Since $A\subseteq \operatorname{cl}_{\operatorname{Sk}(\tau)}(A)$, $[A]_{\sim_{\tau}} \subseteq \operatorname{cl}_{\operatorname{Sk}(\tau)} (A)$.

    \item First suppose $A$ is quasi-sober. Fix $x\in \operatorname{cl}_{\operatorname{Sk}(\tau)}$. By \textbf{Proposition \ref{Proposition 6.5}(b)}, $x\in \overline{A\cap \overline{\{x\}}}$. We claim that $A\cap \overline{\{x\}}$ is irreducible. Let $E_1, E_2\subseteq X$ be two closed subsets such that $A\cap \overline{\{x\}} \subseteq E_1\cup E_2$. Assume $x\in E_1$. Then $A\cap \overline{\{x\}} \subseteq \overline{\{x\}} \subseteq E_1$, and hence $A\cap \overline{\{x\}}$ is irreducible by \textbf{Proposition \ref{Proposition 8.2}}. Since $A$ is quasi-sober, there exists $y\in A$ such that $A\cap \overline{\{x\}} = A\cap \overline{\{y\}}$. This shows:

    $$
    y\in \overline{A\cap \overline{\{y\}}} = \overline{A\cap \overline{\{x\}}} \subseteq \overline{\{x\}} \hspace{1cm} x\in \overline{A\cap \overline{\{x\}}} = \overline{A\cap \overline{\{y\}}} \subseteq \overline{\{y\}}
    $$
    which implies $x\sim_{\tau} y$. This shows $\operatorname{cl}_{\operatorname{Sk}(\tau)}(A) \subseteq [A]_{\sim_{\tau}}$. The equality holds by (a). Conversely, assume $X$ is quasi-sober and $\operatorname{cl}_{\operatorname{Sk(\tau)}}(A) = [A]_{\sim_{\tau}}$. Suppose $E\subseteq A$ is relatively closed and irreducible. By \textbf{Proposition \ref{Proposition 8.2}}, $\overline{E}$ the closure with respect to $\tau$ is also irreducible. By assumption, there exists $e\in X$ such that $\overline{E} = \overline{\{e\}}$. Then $E=A\cap \overline{E} = A\cap \overline{\{e\}}$. Since $e\in \overline{A\cap \overline{\{e\}}}$, $e\in \operatorname{cl}_{\operatorname{Sk}(\tau)} (A)$. By assumption there exists $a\in A$ such that $e\sim_{\tau}a$, and hence $E=A\cap\overline{\{a\}}$, which is the closure of $\{y\}$ with respect to $\tau\Big|_A$ the relative topology. We can now conclude that $A$ is quasi-sober.
    
\end{enumerate}
    
\end{proof}

\begin{theorem}

In the set-up of \textbf{Lemma \ref{Lemma 8.8}}, $(X, \tau)$ is hereditarily sober if and only if it is sober and $T_D$.
    
\end{theorem}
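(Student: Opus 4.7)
The plan is to split into the two implications. The backward direction is short and falls out of \textbf{Lemma \ref{Lemma 8.8}} once we observe how the Skula topology collapses under $T_D$; the forward direction requires excising the bad point from its closure and applying hereditary sobriety to the resulting subspace.

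For $(\Leftarrow)$, I would fix $A\subseteq X$ and show $A$ is quasi-sober (it is automatically $T_0$ since $X$ is). \textbf{Lemma \ref{Lemma 8.8}(b)} says this is equivalent to $\operatorname{cl}_{\operatorname{Sk}(\tau)}(A) = [A]_{\sim_\tau}$. But $X$ being $T_0$ gives $[A]_{\sim_\tau} = A$, and $X$ being $T_D$ gives $\operatorname{Sk}(X)$ discrete by \textbf{Proposition \ref{Proposition 7.3}}, so $\operatorname{cl}_{\operatorname{Sk}(\tau)}(A) = A$ as well. Hence the equality holds and $A$ is sober.

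For $(\Rightarrow)$, taking $A = X$ gives sobriety of $X$, so only the $T_D$ axiom remains. I would argue by contradiction: pick $x\in X$ that is not a $T_D$-point. By \textbf{Lemma \ref{Lemma 7.2}}, $\{x\}'$ is not closed in $X$. Since $X$ is sober (hence $T_0$), $[x]_\sim = \{x\}$ and so $\{x\}' = \overline{\{x\}}\setminus\{x\}$; because $\overline{\{x\}'}\subseteq\overline{\{x\}}$, the only point that could be in $\overline{\{x\}'}\setminus\{x\}'$ is $x$ itself, so the failure of closedness forces $x\in\overline{\{x\}'}$ and $\overline{\{x\}'} = \overline{\{x\}}$. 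Now set $A := \{x\}' = \overline{\{x\}}\setminus\{x\}$ and view $A$ as a subspace. Its closure in $X$ is $\overline{\{x\}}$, which is irreducible (having the generic point $x$), so \textbf{Proposition \ref{Proposition 8.2}(e)} shows $A$ itself is irreducible. Since $A$ is trivially closed in itself, hereditary sobriety forces a generic point $y\in A$ with $A = \overline{\{y\}}\cap A$, in particular $A\subseteq\overline{\{y\}}$. From $y\in A\subseteq\overline{\{x\}}$ we get $\overline{\{y\}}\subseteq\overline{\{x\}}$, and from $x\in\overline{\{x\}'} = \overline{A}\subseteq\overline{\{y\}}$ we get $x\in\overline{\{y\}}$, so $x\sim y$. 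The $T_0$ axiom forces $x=y$, contradicting $y\in A = \overline{\{x\}}\setminus\{x\}$.

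The main obstacle lies in the forward direction: identifying the right subspace to test hereditary sobriety against. The natural guess $\overline{\{x\}}$ is already sober (with generic point $x$) and yields nothing. The insight is that \textbf{Lemma \ref{Lemma 7.2}} characterizes $T_D$-failure at $x$ precisely as $\{x\}'$ not being closed, so $\{x\}'$ is the witnessing subspace; its irreducibility is inherited from $\overline{\{x\}}$ via \textbf{Proposition \ref{Proposition 8.2}(e)}, and any internal generic point is then forced to coincide with $x$ under the $T_0$ axiom, producing the required contradiction.
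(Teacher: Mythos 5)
Your proof is correct, but it follows a genuinely different route from the paper's, at least in the forward direction. The paper never leaves the Skula framework: it reduces the statement to ``hereditarily quasi-sober $\Longleftrightarrow$ quasi-sober $+$ $R_d$'' and proves both implications by combining \textbf{Lemma \ref{Lemma 8.8}} with the characterization $\operatorname{Sk}(\tau)=\Sigma(\sim_\tau)$ of $R_d$ spaces (\textbf{Proposition \ref{Proposition 7.10}}), only imposing $T_0$ at the end; this buys the more general non-$T_0$ statement for free. Your backward direction is essentially the paper's argument specialized to the $T_0$ case (discreteness of $\operatorname{Sk}(X)$ via \textbf{Proposition \ref{Proposition 7.3}} replacing $\operatorname{Sk}(\tau)=\Sigma(\sim_\tau)$), and you correctly invoke the converse half of \textbf{Lemma \ref{Lemma 8.8}(b)}, whose hypothesis that $X$ be quasi-sober is supplied by sobriety of $X$. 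Your forward direction, however, bypasses the Skula machinery entirely: you exhibit the concrete witness subspace $A=\{x\}'$ at a non-$T_D$-point, deduce its irreducibility from $\overline{A}=\overline{\{x\}}$ via \textbf{Proposition \ref{Proposition 8.2}(e)}, and force its internal generic point to equal $x$ by $T_0$, a more elementary and self-contained argument. Two small points worth stating explicitly: $\{x\}'$ is non-empty (a non-closed set cannot be empty), which is needed since \textbf{Proposition \ref{Proposition 8.2}} is stated for non-empty sets; and the identity $\{x\}'=\overline{\{x\}}\setminus\{x\}$ holds in any space, so $T_0$ is really only needed at the final step $x\sim y\Rightarrow x=y$.
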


\begin{proof}

According to \textbf{Definition \ref{Definition 7.7}} and \textbf{Definition \ref{Definition 8.3}}, $X$ being $T_D$ is equivalent to being $T_0$ and $R_d$, and $X$ being sober is equivalent to being quasi-sober and $T_0$. It suffices to show that $X$ is hereditarily quasi-sober if and only if it is quasi-sober and $R_d$. Then the desired equivalence follows by imposing $T_0$ on both sides.\\

\noindent
By \textbf{Proposition \ref{Proposition 7.11}}, we have $\operatorname{Sk}(\tau) \subseteq \Sigma(\sim_{\tau})$ the family of all $\sim_{\tau}$-saturated sets. If $X$ is hereditarily quasi-sober, by \textbf{Lemma \ref{Lemma 8.8}}, all $\sim_{\tau}$-saturated sets are closed with respect to $\operatorname{Sk}(\tau)$. Since $\Sigma(\sim_{\tau})$ is closed under complement, we then have $\Sigma(\sim_{\tau}) \subseteq \operatorname{Sk}(\tau)$. We then have $X$ is $R_d$ according to \textbf{Proposition \ref{Proposition 7.10}}. Conversely, again by \textbf{Proposition \ref{Proposition 7.10}}, for any $A\subseteq X$, we have $\operatorname{cl}_{\operatorname{Sk}(\tau)}(A) = [A]_{\sim_{\tau}}$. Then $A$ is quasi-sober by \textbf{Lemma \ref{Lemma 8.8}}.
    
\end{proof}

\section{Sobrification}

Given $(X, \tau)$ a topological space, let $X^s$ be the set of irreducible closed subsets of $X$. We define a topology on $X^s$ as the following: for each $U\in\tau$, define:

$$
U_s = \big\{C\in X^s: C\cap U\neq\emptyset \big\}
$$
Note that for any $(U_i)_{i\in I} \subseteq \tau$ and $U, V\in \tau$, we have:

\begin{equation}\label{e12}
\begin{aligned}
& X_s = X^s, \hspace{0.5cm} \emptyset_s = \emptyset\\
& \left( \bigcup_{i\in I}U_i \right)_s = \bigcup_{i\in I}(U_i)_s\\
& \big( U\cap V \big)_s = U_s\cap V_s
\end{aligned}
\end{equation}
With properties listed in (\ref{e12}), we can conclude that the family $\tau_s = \big\{ U_s: U\in\tau \big\}$ is a topology on $X^s$.

\begin{defn}

Given a topological space $(X, \tau)$, the space $(X^s, \tau_s)$ is the \textbf{sobrification} of $X$. The canonical embedding from $X$ to $X^s$ is defined below:

\begin{equation}\label{e13}
\iota_X:X\rightarrow X^s, \hspace{0.3cm} x\mapsto \overline{\{x\}}
\end{equation}
With $(X, \tau)$ given, we use $\preccurlyeq_X$ to denote the specialization preorder defined on $(X, \tau)$ and $\sim_{\tau}$ to denote its associated equivalence relation; we use $\preccurlyeq_{X^s}$ to denote the specialization preorder defined in $(X^s, \tau_s)$ and $\sim_{\tau_s}$ to denote its associated equivalence relation.
    
\end{defn}

\begin{prop}\label{Proposition 9.2}

Given a topological space $(X, \tau)$, $(X^s, \tau_s)$ is sober and, for any $C_1, C_2\in X^s$, $C_1\preccurlyeq_{X^s} C_2$ if and only if $C_1\subseteq C_2$.
    
\end{prop}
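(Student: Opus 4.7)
The plan is to prove the two claims in order, starting with the easier characterization of $\preccurlyeq_{X^s}$ and then using it to establish sobriety.

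First I would unpack the definition of $\tau_s$ to see what open neighborhoods of a point $C\in X^s$ look like. A basic open set $U_s$ contains $C$ precisely when $C\cap U\neq\emptyset$; equivalently, $C\notin U_s$ iff $C\subseteq U^c$. So by \textbf{Proposition \ref{Proposition 1.2}}, $C_1\preccurlyeq_{X^s}C_2$ iff every $U\in\tau$ meeting $C_1$ also meets $C_2$, which dualizes to: every closed $F\subseteq X$ containing $C_2$ also contains $C_1$. Taking $F=C_2$ gives $C_1\subseteq C_2$; the converse is immediate. This step also lets me identify all closed sets of $(X^s,\tau_s)$: they are exactly the sets
$$
\mathcal{F}(F) \;=\; \{C\in X^s: C\subseteq F\}
$$
as $F$ ranges over $\tau$-closed subsets of $X$, since $(U_s)^c=\mathcal{F}(U^c)$. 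In particular $\overline{\{C\}}=\mathcal{F}(C)=\{C'\in X^s:C'\subseteq C\}$, which will give uniqueness of generic points for free by antisymmetry of $\subseteq$.

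For sobriety, the heart of the argument is to show that a closed set $\mathcal{F}(F)\subseteq X^s$ is irreducible if and only if $F$ itself is $\tau$-irreducible. Given this, any irreducible closed $\mathcal{F}\subseteq X^s$ has the form $\mathcal{F}(F)$ with $F\in X^s$, and $\mathcal{F}=\overline{\{F\}}$ by the first paragraph, so $F$ is a (unique) generic point. The nontrivial direction is: if $F$ is reducible then so is $\mathcal{F}(F)$. Here I would write $F=F_1\cup F_2$ with $F_1,F_2\subsetneq F$ closed in $X$, and show
$$
\mathcal{F}(F)\;=\;\mathcal{F}(F_1)\cup\mathcal{F}(F_2),
$$
which uses exactly the defining property of irreducibility (\textbf{Proposition \ref{Proposition 8.2}}): any $C\in X^s$ with $C\subseteq F_1\cup F_2$ lies in $F_1$ or in $F_2$. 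Both $\mathcal{F}(F_i)$ are closed in $X^s$ by my earlier observation, and they are proper in $\mathcal{F}(F)$ because for any $x\in F_2\setminus F_1$ the element $\overline{\{x\}}\in X^s$ lies in $\mathcal{F}(F_2)$ but not in $\mathcal{F}(F_1)$ (and symmetrically). Hence $\mathcal{F}(F)$ is not irreducible.

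The converse (if $F$ is irreducible then $\mathcal{F}(F)$ is irreducible) follows directly from the decomposition formula: if $\mathcal{F}(F)=\mathcal{F}(G_1)\cup\mathcal{F}(G_2)$ with $G_i\subseteq F$ closed, then for every $x\in F$ the element $\overline{\{x\}}\in\mathcal{F}(F)$ belongs to some $\mathcal{F}(G_i)$, so $x\in G_i$; this partitions $F$ into $G_1\cup G_2$, and $\tau$-irreducibility of $F$ forces $F=G_1$ or $F=G_2$, so one of the $\mathcal{F}(G_i)$ equals $\mathcal{F}(F)$.

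The main obstacle I anticipate is the bookkeeping around the correspondence $F\leftrightarrow\mathcal{F}(F)$: one must verify that this correspondence is actually bijective (i.e.\ $\mathcal{F}(F)=\mathcal{F}(G)$ forces $F=G$), which again comes from the trick of using $\overline{\{x\}}\in\mathcal{F}(F)$ for each $x\in F$ to recover points of $F$ from the collection of irreducible closed subsets it dominates. Once this dictionary is in place, both claims in \textbf{Proposition \ref{Proposition 9.2}} reduce cleanly to the corresponding facts about closed and irreducible subsets of $X$.
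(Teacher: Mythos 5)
Your proposal is correct and follows essentially the same route as the paper: first identify $\preccurlyeq_{X^s}$ with inclusion (hence $T_0$ and $\overline{\{C\}}=\{C'\in X^s:C'\subseteq C\}$), then realize an irreducible closed subset of $X^s$ as $\{C\in X^s:C\subseteq F\}$ for a closed $F\subseteq X$ (the complement of the co-open set $U$) and show this $F$ is irreducible and is the generic point. The only difference is in packaging: you transfer irreducibility contrapositively via the decomposition $\mathcal{F}(F_1\cup F_2)=\mathcal{F}(F_1)\cup\mathcal{F}(F_2)$, using irreducibility of the member sets $C$ and the witnesses $\overline{\{x\}}$, whereas the paper argues directly with the open-set criterion of Proposition 8.2(c) and the embedding $\iota_X$; both arguments are sound.
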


\begin{proof}

We first prove $\preccurlyeq_{X^s}$ coincides with the inclusion. From this it follows that $(X^s, \tau_s)$ is $T_0$. Let $C_1, C_2\in X^s$. If $C_1\subseteq C_2$, then for any $U\in\tau$ such that $C_1\in U_s$, we have $\emptyset \neq C_1\cap U \subseteq C_2\cap U$, or $C_2\in U_s$. This implies $C_1\preccurlyeq_{X^s} C_2$. Conversely, if $C_1\preccurlyeq_{X^s} C_2$, let $V=X\backslash C_2$. We then have $C_2\notin V_s$ and hence $C_1\notin V_s$. Hence $C_1\cap V=\emptyset$, or $C_1\subseteq C_2$.\\

\noindent
It remains to show $(X^s, \tau_s)$ is quasi-sober. Let $E\subseteq X^s$ be an irreducible closed subset. Then $E^c = U_s$ for some open proper subset $U\subseteq X$. We claim $F=U^c$ is irreducible. Suppose $V_1, V_2\in \tau$ satisfy $V_1\cap V_2\subseteq F^c$. Then by (\ref{e12}), we have $(V_1)_s \cap (V_2)_s\subseteq U_s = E^c$. Since $E$ is irreducible, by \textbf{Proposition \ref{Proposition 8.2}} we can assume $(V_1)_s\subseteq E^c$. Let $\iota_X$ be defined in (\ref{e13}). Pick $x\in V_1$. Then $\iota_X(x)\in (V_1)_s$, or $\iota_X(x)\cap U\neq\emptyset$. This implies $V_1\subseteq U = F^c$, proving the irreducibility of $F$. Now $F\in X^s$ and $F\cap U=\emptyset$. Then $F\in E$, or $\overline{\{F\}}$, the closure with respect to $\tau_s$ of singleton $\{F\}$ in $X^s$, is contained in $E$. For any $C\in E$, we have $C\cap U = \emptyset$ and hence $C\subseteq F$. By what we just showed, $C\preccurlyeq_{X^s} F$ for all $C\in E$, which implies $E\subseteq \overline{\{F\}}$. We can now conclude $E=\overline{\{F\}}$.
    
\end{proof}

\begin{prop}

Given a topological space $(X, \tau)$, let $\iota_X$ be defined in (\ref{e13}). Then for any $A\subseteq X$ and $E\subseteq X^s$:

$$
\begin{aligned}
& \overline{E} = X^s\backslash \left( X\backslash \overline{\bigcup E} \right)_s = \left\{C\in X^s:C\subseteq \overline{\bigcup E} \right\}\\
& \overline{\iota_X(A)} = \big\{C\in X^s: C\subseteq \overline{A} \big\}
\end{aligned}
$$
    
\end{prop}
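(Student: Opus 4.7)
The plan is to reduce both formulas to a single description of closed sets in $X^s$. By definition $\tau_s = \{U_s : U \in \tau\}$, so the closed subsets of $X^s$ are precisely the complements $X^s \setminus U_s$ for $U \in \tau$, which by definition of $U_s$ equal $\{C \in X^s : C \cap U = \emptyset\} = \{C \in X^s : C \subseteq U^c\}$. Hence every closed subset of $X^s$ has the form $\mathcal{F}(F) := \{C \in X^s : C \subseteq F\}$ for some $\tau$-closed $F \subseteq X$, and conversely every such set is closed (take $U = X \setminus F$). This immediately gives the identity $X^s \setminus (X \setminus \overline{\bigcup E})_s = \mathcal{F}(\overline{\bigcup E})$, so only the equality $\overline{E} = \mathcal{F}(\overline{\bigcup E})$ requires argument.

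For the first main equality, I would verify two things. First, $\mathcal{F}(\overline{\bigcup E})$ is closed (by the observation above) and contains $E$: indeed any $C \in E$ satisfies $C \subseteq \bigcup E \subseteq \overline{\bigcup E}$. Second, it is the smallest closed set containing $E$: if $E \subseteq \mathcal{F}(F)$ for some $\tau$-closed $F$, then for every $C \in E$ one has $C \subseteq F$, so $\bigcup E \subseteq F$, and since $F$ is closed, $\overline{\bigcup E} \subseteq F$, yielding $\mathcal{F}(\overline{\bigcup E}) \subseteq \mathcal{F}(F)$. Combining these, $\overline{E} = \mathcal{F}(\overline{\bigcup E})$, which is exactly the claimed chain of equalities.

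For the second formula, apply the first with $E = \iota_X(A)$. Then $\bigcup E = \bigcup_{a \in A} \overline{\{a\}}$, so it suffices to show $\overline{\bigcup_{a \in A} \overline{\{a\}}} = \overline{A}$. The inclusion $\subseteq$ is clear because each $\overline{\{a\}} \subseteq \overline{A}$ and $\overline{A}$ is closed. Conversely $A \subseteq \bigcup_{a \in A}\overline{\{a\}}$ implies $\overline{A} \subseteq \overline{\bigcup_{a \in A}\overline{\{a\}}}$. Plugging back, $\overline{\iota_X(A)} = \mathcal{F}(\overline{A}) = \{C \in X^s : C \subseteq \overline{A}\}$, as desired.

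There is no serious obstacle; the proof is essentially bookkeeping with the definitions, and the only thing to be careful about is that in the minimality step one must genuinely use that $F$ is closed (so that $\overline{\bigcup E} \subseteq F$). The key conceptual point, already implicit in Proposition 9.2, is that closed sets of $X^s$ are in order-preserving bijection with $\tau$-closed subsets of $X$ via $F \mapsto \mathcal{F}(F)$, and once this is made explicit both formulas follow by one-line arguments.
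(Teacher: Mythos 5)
Your proof is correct, but it is organized differently from the paper's. The paper establishes the first identity by a two-sided neighborhood argument with the basic open sets $U_s$: if $C\backslash \overline{\bigcup E}\neq\emptyset$ it picks $x\in C\backslash \overline{\bigcup E}$ and an open $U\ni x$ disjoint from $\overline{\bigcup E}$, so that $U_s$ is an open neighborhood of $C$ missing $E$ and hence missing $\overline{E}$; conversely, for $C'\notin\overline{E}$ it takes $V\in\tau$ with $C'\in V_s$ and $V_s\cap E=\emptyset$ and deduces $C'\not\subseteq\overline{\bigcup E}$. You instead make explicit what the paper leaves implicit in the remark that ``the second equality is immediate'': the closed subsets of $X^s$ are exactly the sets $\{C\in X^s: C\subseteq F\}$ with $F$ closed in $X$, in order-preserving bijection with the closed sets of $X$, and then you identify $\overline{E}$ as the smallest such set containing $E$ via the minimality characterization of closure. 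Your route avoids the point-chasing entirely and yields, as a by-product, a reusable description of the closed sets (equivalently, the frame of open sets) of the sobrification; the paper's route is more hands-on but needs no such classification. Your treatment of the second formula (apply the first to $E=\iota_X(A)$ and check $\overline{\bigcup_{a\in A}\overline{\{a\}}}=\overline{A}$) is the same as the paper's.
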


\begin{proof}

We will start from proving the first equation. Note that the second equality in the first equation is immediate. It remains to show that $\overline{E}$ is equal to the third set. Fix $C\in X^s$ such that $C\backslash \overline{\bigcup E} \neq \emptyset$. Let $x\in C\backslash \overline{\bigcup E}$ and $U\in\mathcal{U}(x)$ such that $U\cap \overline{\bigcup E} = \emptyset$. We then have $C\in U_s$ and for each $F\in E$, $F\cap U=\emptyset$. If there exists $F'\in \overline{E}\cap U_s$, then $U_s\in \mathcal{U}(F')$, and by definition $U_s\in E\neq\emptyset$, which contradicts what we just showed. Hence $U_s\cap \overline{E}=\emptyset$. Conversely, let $C'\in X^s \backslash \overline{E}$. Then let $V\in\tau$ be such that $C'\in V_s$ and $V_s\cap \overline{E} = \emptyset$. This shows $V_s\cap F=\emptyset$ for all $F\in E$ and hence $F\cap \overline{\bigcup E}=\emptyset$. Since $C'\in V_s$, $C'\backslash \overline{\bigcup E} \neq \emptyset$.\\

\noindent
For the second equation, by part (a), we have:

$$
\overline{\iota_X(A)} = \left\{ C\in X^s: C\subseteq \overline{\bigcup \iota_X(A)} \right\} = \left\{C\in X^s: C\subseteq \overline{\bigcup_{a\in A} \overline{\{a\}}} \right\} = \left\{C\in X^s: C\subseteq \overline{A} \right\}
$$
    
\end{proof}

\begin{rem}\label{Remark 9.4}

Let $\iota_X$ be given in (\ref{e13}). Observe that for any $U\in \tau$ and $x\in X$:

$$
\iota_X(x)\in U_s \hspace{0.5cm} \Longleftrightarrow \hspace{0.5cm} \overline{\{x\}}\cap U\neq\emptyset \hspace{0.5cm} \Longleftrightarrow \hspace{0.5cm} x\in U
$$
Therefore for any $U\in\tau$, $\iota_X^{-1}(U_s)=U$ and $\iota_X(U) = U_s\cap \iota_X(X)$. This tells $\iota_X$ is a continuous and open mapping onto its range. Also, the assignment $U\mapsto U_s$ is an injective mapping from $\tau$ to $\tau_s$. There is an obvious relation between $\iota_X$ and $q:X\rightarrow X_0$ the $T_0$-quotient mapping. Note for any $x, y\in X$, by \textbf{Proposition \ref{Proposition 1.2}}, $x\preccurlyeq_X y$ if and only if $\iota_X(x) \subseteq \iota_X(y)$. It follows that $q(x) = q(y)$ if and only if $\iota_X(x) = \iota_X(y)$. Hence the unique injective mapping $j:X_0\rightarrow X^s$ defined by $j\circ q = \iota_X$ is a homeomorphism. As a result, we may naturally identify $\iota_X(X)$ with $X_0$ and the range restriction $X\rightarrow \iota_X(X)$ of $\iota_X$ with $q$.

\end{rem}

\begin{prop}\label{Proposition 9.5}

Given a topological space $(X, \tau)$, let $\iota_X:X\rightarrow X^s$ be defined as in (\ref{e13}). Then:

\begin{enumerate}[label = (\alph*)]

    \item $\iota_X$ is injective if and only if $X$ is $T_0$.
    \item $\iota_X$ is surjective if and only if $X$ is quasi-sober.
    \item $\iota_X$ is bijective if and only if $X$ is sober. In this case $\iota_X$ is a homeomorphism.
    
\end{enumerate}
    
\end{prop}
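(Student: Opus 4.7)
The plan is to deduce all three parts directly from results already established, with no new heavy lifting required; the proof is essentially a bookkeeping exercise that combines \textbf{Lemma \ref{Lemma 1.6}}, \textbf{Proposition \ref{Proposition 2.1}}, \textbf{Definition \ref{Definition 8.3}}, and \textbf{Remark \ref{Remark 9.4}}.

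For part $(a)$, I would observe that $\iota_X(x) = \iota_X(y)$ is, by definition of $\iota_X$, the equality $\overline{\{x\}} = \overline{\{y\}}$, which by \textbf{Lemma \ref{Lemma 1.6}} is equivalent to $x \sim y$. Therefore $\iota_X$ is injective precisely when the equivalence relation $\sim$ is the identity on $X$, which by \textbf{Proposition \ref{Proposition 2.1}} is the $T_0$ property.

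For part $(b)$, the range of $\iota_X$ is, by construction, the collection $\{\overline{\{x\}} : x \in X\}$ of closures of singletons in $X$. Surjectivity of $\iota_X$ onto $X^s$ thus means that every irreducible closed subset $C \subseteq X$ has the form $C = \overline{\{x\}}$ for some $x \in C$, i.e. every irreducible closed subset admits a generic point. This is precisely the definition of quasi-sober given in \textbf{Definition \ref{Definition 8.3}}.

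For part $(c)$, bijectivity of $\iota_X$ combines $(a)$ and $(b)$ to give $T_0$ and quasi-sober, which is the definition of sober (again \textbf{Definition \ref{Definition 8.3}}). To upgrade bijectivity to a homeomorphism, I would invoke \textbf{Remark \ref{Remark 9.4}}: there it is shown that $\iota_X$ is continuous, that $\iota_X^{-1}(U_s) = U$ for every $U \in \tau$, and that $\iota_X(U) = U_s \cap \iota_X(X)$, so $\iota_X$ is a continuous open map onto its image. When $\iota_X(X) = X^s$, this says $\iota_X$ is a continuous open bijection, hence a homeomorphism (explicitly, $\iota_X(U) = U_s$ for all $U \in \tau$, and every open set of $X^s$ has this form, so the inverse sends $U_s \mapsto U$ and is continuous). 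No step presents a genuine obstacle; the only thing to be careful about is not to conflate $\iota_X$ being \emph{injective onto its range as an embedding} with being a homeomorphism onto $X^s$, which is why part $(c)$ really does need both surjectivity and the openness observation from \textbf{Remark \ref{Remark 9.4}}.
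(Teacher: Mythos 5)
Your proposal is correct and follows essentially the same route as the paper's own proof: part $(a)$ via \textbf{Lemma \ref{Lemma 1.6}} (with \textbf{Proposition \ref{Proposition 2.1}}), part $(b)$ directly from the definition of quasi-sobriety, and part $(c)$ by combining the two with ``sober $=$ $T_0$ $+$ quasi-sober.'' Your explicit use of \textbf{Remark \ref{Remark 9.4}} to upgrade the bijection to a homeomorphism is a detail the paper leaves implicit, but it is the intended argument, not a different one.
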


\begin{proof}

The first statement follows by \textbf{Lemma \ref{Lemma 1.6}}. The second statement follows by the definition of quasi-sobriety. The third definition follows by the second statement and the fact that sobriety is equivalent to quasi-sobriety and $T_0$ (which is, again, thanks to \textbf{Lemma \ref{Lemma 1.6}}).
    
\end{proof}

\noindent
As we point out the relation between $\iota_X$ and $q$, together with the universal property of $q$ (proven in \textbf{Theorem \ref{Theorem 2.4}}, we could expect and next prove the universal property of $\iota_X$ in the next result.

\begin{theorem}\label{Theorem 9.6}

Given a continuous mapping $f:(X, \tau_X) \rightarrow (Y, \tau_Y)$ between topological spaces, there exists a unique continuous mapping $f^s: (X^s, (\tau_X)_s) \rightarrow (Y^s, (\tau_Y)_s )$ such that the following diagram commutes.

$$
\begin{tikzcd}
X^s \arrow{r}{f^s} & Y^s \\
X \arrow{u}{\iota_X}  \arrow{r}{f} & Y \arrow{u}{\iota_Y}
\end{tikzcd}
$$
    
\end{theorem}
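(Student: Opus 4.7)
The plan is to define $f^s : X^s \to Y^s$ by the natural formula $f^s(C) = \overline{f(C)}$ (closure taken in $Y$), and then verify the required properties in turn: that $f^s$ lands in $Y^s$, that the diagram commutes, that $f^s$ is continuous, and finally that this choice is forced.

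For well-definedness, I need $\overline{f(C)}$ to be an irreducible closed subset of $Y$ whenever $C \in X^s$. It is nonempty and closed by construction, so the only content is irreducibility. The key step is to observe that the continuous image of an irreducible set is irreducible: if $f(C) \subseteq E_1 \cup E_2$ for closed $E_i \subseteq Y$, then $C \subseteq f^{-1}(E_1) \cup f^{-1}(E_2)$ (two closed sets in $X$), so \textbf{Proposition \ref{Proposition 8.2}(b)} forces $C$ into one of them, whence $f(C) \subseteq E_i$. Irreducibility then transfers to the closure by \textbf{Proposition \ref{Proposition 8.2}(e)}.

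For commutativity $f^s \circ \iota_X = \iota_Y \circ f$, I would sandwich $\overline{f(\overline{\{x\}})}$ between $\overline{\{f(x)\}}$ (from $f(x) \in f(\overline{\{x\}})$) and $\overline{\{f(x)\}}$ (from continuity giving $f(\overline{\{x\}}) \subseteq \overline{\{f(x)\}}$). For continuity of $f^s$, it is enough to check basic opens: the chain
$$f^s(C) \in V_s \iff \overline{f(C)} \cap V \neq \emptyset \iff f(C) \cap V \neq \emptyset \iff C \cap f^{-1}(V) \neq \emptyset \iff C \in (f^{-1}(V))_s$$
(the middle step uses openness of $V$) gives $(f^s)^{-1}(V_s) = (f^{-1}(V))_s \in (\tau_X)_s$.

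The slightly delicate part will be uniqueness. Given any continuous $g : X^s \to Y^s$ making the diagram commute, for each $V \in \tau_Y$ the set $g^{-1}(V_s)$ is open in $X^s$, hence equals $U_s$ for a \emph{unique} $U \in \tau_X$ (the injectivity of $U \mapsto U_s$ observed in \textbf{Remark \ref{Remark 9.4}}). Applying $\iota_X^{-1}$ and using $\iota_X^{-1}(U_s) = U$, $\iota_Y^{-1}(V_s) = V$, together with the commutative diagram, pins down $U = f^{-1}(V)$. So $g$ and $f^s$ have identical preimages on every basic (hence every) open set of $Y^s$, which means $g(C)$ and $f^s(C)$ share the same open-neighborhood filter for every $C$. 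Since $Y^s$ is $T_0$ by \textbf{Proposition \ref{Proposition 9.2}}, this forces $g = f^s$. I do not anticipate any other obstacles.
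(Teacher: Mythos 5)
Your proposal is correct, and the existence half (define $f^s(C)=\overline{f(C)}$, check irreducibility of images via \textbf{Proposition \ref{Proposition 8.2}}, the closure sandwich for commutativity, and the chain $(f^s)^{-1}(V_s)=\big(f^{-1}(V)\big)_s$ for continuity) coincides with the paper's argument step for step; if anything, you justify well-definedness more explicitly than the paper's one-line appeal to preservation of irreducibility. Where you genuinely diverge is uniqueness. The paper argues pointwise: given two candidates $g_1,g_2$ agreeing on $\iota_X(X)$, it fixes $C\in X^s$ with $g_1(C)\in V_s$, writes $g_1^{-1}(V_s)=U_s$, picks a point $x\in C\cap U$, and then transports membership from $\iota_X(x)$ up to $C$ using the facts that $\iota_X(x)\preccurlyeq_{X^s}C$, that continuous maps preserve the specialization preorder, and that open sets are up-sets, before invoking $T_0$ of $Y^s$. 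You instead determine $g$ globally on opens: from $g^{-1}(V_s)=U_s$ and $\iota_X^{-1}(U_s)=U$, $\iota_Y^{-1}(V_s)=V$ (\textbf{Remark \ref{Remark 9.4}}), the commuting square forces $U=f^{-1}(V)$, so $g$ and $f^s$ have identical preimages of all opens of $Y^s$ (every open being of the form $V_s$), and $T_0$ of $Y^s$ (\textbf{Proposition \ref{Proposition 9.2}}) finishes. Both routes lean on the same two ingredients ($Y^s$ is $T_0$; opens of the sobrification are exactly the sets $U_s$), but yours avoids the specialization-order step entirely and is arguably cleaner, while the paper's version makes visible the order-theoretic mechanism ($\iota_X(x)\preccurlyeq_{X^s}C$) that underlies why agreement on the dense copy of $X$ propagates to all of $X^s$.
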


\begin{proof}

There is a natural candidate of $f^s$. Clearly a continuous mapping preserve irreducibility by \textbf{Proposition \ref{Proposition 8.2}}. Hence for any $C\in X^s$, define $f^s(C) = \overline{f(C)}$. To check that $f^s$ is well-defined, note that for any $A\subseteq B\subseteq \overline{A}$, by \textbf{Proposition \ref{Proposition 8.2}}, $A$ is irreducible if and only if $B$ is. Hence $f^s$ is well-defined. To show that $f^s$ is continuous, fix $V\in \tau_Y$ and $C\in X^s$. Then observe that:

$$
\begin{aligned}
C\in (f^s)^{-1}(V_s)
& \hspace{0.5cm} \Longleftrightarrow \hspace{0.5cm}
f^s(C) = \overline{f(C)} \in V_s
\hspace{0.5cm} \Longleftrightarrow \hspace{0.5cm}
\overline{f(C)}\cap V\neq\emptyset\\
& \hspace{0.5cm} \Longleftrightarrow \hspace{0.5cm} f(C)\cap V\neq \emptyset
\hspace{0.5cm} \Longleftrightarrow \hspace{0.5cm} C\cap f^{-1}(V) \neq \emptyset\\
& \hspace{0.5cm} \Longleftrightarrow \hspace{0.5cm} C\in \left[ f^{-1}(V) \right]_s
\end{aligned}
$$
Hence $(f_s)^{-1}(V_s) = \left[ f^{-1}(V) \right]_s$. Since $f$ is continuous, for any $A\subseteq X$, we have $f\big( \overline{A} \big) \subseteq \overline{f(A)}$. Then for any $x\in X$:

$$
f^s\circ\iota_X(x) = \overline{f\big( \overline{\{x\}} \big)} = \overline{\{f(x)\}} = \iota_Y\circ f(x)
$$
As for the uniqueness of $f^s$, suppose $g_1, g_2: X^s\rightarrow Y^s$ are continuous mappings such that $g_k\circ \iota_X = \iota_Y\circ f$ for all $k=1, 2$. We will show that for any $C\in X^s$, $g_1(C) = g_2(C)$. By \textbf{Proposition \ref{Proposition 9.2}}, $Y^s$ is $T_0$. It suffices to show that for any $V\in \tau_Y$, $g_1(C) \in V_s$ if and only if $g_2(C)\in V_s$. If $g_1(C)\in V_s$, by continuous of $g_1$, there exists $U\in \tau_X$ such that $(g_1)^{-1}(V_s) = U_s$. Take $x\in C\cap U$. Then $\iota_X(x)\in U_s$, which yields $g_2\circ \iota_X(x) = \iota_Y\circ f(x) = g_1\circ\iota_X(x)\in V_s$. Since $\iota_X(x) \preccurlyeq_{X^s} C$ (by \textbf{Proposition \ref{Proposition 9.2}}, continuity of $g_2$ implies $g_2\circ \iota_X(x) \preccurlyeq_{Y^s} g_2(C)$. Hence $g_2(C)\in V_s$.
    
\end{proof}

\noindent
The next result \textbf{Theorem \ref{Theorem 9.7}} is an equivalent formulation of \textbf{Theorem \ref{Theorem 9.6}}. For a given topological space $(X, \tau)$, since $(X^s,\tau_s)$ is always sober by \textbf{Proposition \ref{Proposition 9.2}}, it is clear that one of \textbf{Theorem \ref{Theorem 9.6}} and \textbf{Theorem \ref{Theorem 9.7}} can deduce the other.

\begin{theorem}\label{Theorem 9.7}

Let $(X, \tau_X)$ be a topological space and $(Y, \tau_Y)$ be a sober topological space. Then for any continuous mapping $f:X\rightarrow Y$, there exists a unique $\overline{f}:X^s\rightarrow Y$ such that $\overline{f} \circ \iota_X = f$.
    
\end{theorem}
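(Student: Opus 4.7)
The plan is to derive Theorem 9.7 as a direct corollary of Theorem 9.6 together with Proposition 9.5(c), exploiting the fact that sobriety of $Y$ makes $\iota_Y: Y \to Y^s$ a homeomorphism. No new construction is required beyond composing with $\iota_Y^{-1}$.

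Concretely, first I would apply \textbf{Theorem \ref{Theorem 9.6}} to the given continuous mapping $f:X\to Y$ to obtain a (unique) continuous $f^s:X^s\to Y^s$ satisfying $f^s\circ\iota_X = \iota_Y\circ f$. Second, because $(Y,\tau_Y)$ is sober, \textbf{Proposition \ref{Proposition 9.5}(c)} tells us that $\iota_Y:Y\to Y^s$ is a homeomorphism, so $\iota_Y^{-1}:Y^s\to Y$ is a well-defined continuous map. Third, I would simply set
$$
\overline{f} \;=\; \iota_Y^{-1}\circ f^s : X^s \longrightarrow Y.
$$
This is continuous as a composite of continuous maps, and the commutativity relation is immediate:
$$
\overline{f}\circ \iota_X \;=\; \iota_Y^{-1}\circ f^s\circ \iota_X \;=\; \iota_Y^{-1}\circ \iota_Y\circ f \;=\; f.
$$

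For uniqueness, I would take any continuous $g:X^s\to Y$ with $g\circ\iota_X = f$ and post-compose with $\iota_Y$ to produce a continuous map $\iota_Y\circ g:X^s\to Y^s$ satisfying $(\iota_Y\circ g)\circ \iota_X = \iota_Y\circ f$. By the uniqueness clause of \textbf{Theorem \ref{Theorem 9.6}}, this forces $\iota_Y\circ g = f^s$, and since $\iota_Y$ is a homeomorphism (in particular injective), we conclude $g = \iota_Y^{-1}\circ f^s = \overline{f}$.

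There is essentially no hard step: the only subtlety worth flagging is that the uniqueness argument requires the candidate $g$ to be continuous, since otherwise $\iota_Y\circ g$ need not be a morphism to which the uniqueness in \textbf{Theorem \ref{Theorem 9.6}} applies. Thus the statement of \textbf{Theorem \ref{Theorem 9.7}} should be read with the implicit understanding that $\overline{f}$ is continuous, matching the convention already in force in \textbf{Theorem \ref{Theorem 9.6}}. With that reading, the proof reduces to the two-line composition and the one-line uniqueness argument above, confirming the remark that \textbf{Theorem \ref{Theorem 9.6}} and \textbf{Theorem \ref{Theorem 9.7}} are indeed interdeducible.
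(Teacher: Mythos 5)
Your proposal is correct and matches the paper's own argument essentially verbatim: both define $\overline{f}=\iota_Y^{-1}\circ f^s$ using sobriety of $Y$ (via \textbf{Proposition \ref{Proposition 9.5}}) to make $\iota_Y$ a homeomorphism, and both obtain uniqueness by post-composing a candidate with $\iota_Y$ and invoking the uniqueness clause of \textbf{Theorem \ref{Theorem 9.6}}. Your remark that continuity of the candidate $\overline{f}$ is implicitly assumed is a fair reading of the paper's convention.
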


\begin{proof}

Since $Y$ is sober, $\iota_Y$ is homeomorphism according to \textbf{Proposition \ref{Proposition 9.5}}. Define $\overline{f} = \iota_Y^{-1} \circ f^s$ where $f^s$ is given by \textbf{Theorem \ref{Theorem 9.6}}. Then $\overline{f}$ is continuous and $\overline{f} \circ\iota_X = f$. If $h:X^s\rightarrow Y$ is another continuous mapping that satisfies $h\circ \iota_X = f$, we will have $(\iota_Y\circ h) \circ \iota_X = \iota_Y \circ f$. Again by \textbf{Theorem \ref{Theorem 9.6}}, $\iota_Y\circ h = f^s$ and hence $h=\overline{f}$.
    
\end{proof}

\begin{rem}

Let \textbf{Sob} be the full category of \textbf{Top} consisting of sober spaces. \textbf{Theorem \ref{Theorem 9.6}} shows that there is a functor \textbf{S}: \textbf{Top} $\rightarrow$ \textbf{Sob} which sends a topological space $X$ to its sobrification $X^s$, and a continuous mapping $f:X\rightarrow Y$ to $f^s: X^s \rightarrow Y^s$. If we let \textbf{U}: \textbf{Sob} $\rightarrow$ \textbf{Top} denote the inclusion functor, the above shows the following bijection:

$$
\operatorname{Hom}_{\textbf{Sob}} \big( \textbf{S}(X), Y \big) \cong \operatorname{Hom}_{\textbf{Top}} \big( X, \textbf{U}(Y) \big) 
$$
Using notations in \textbf{Theorem \ref{Theorem 9.6}}, more explicitly, for any $g\in \textbf{S}(X) = X^s\rightarrow Y$, define $g^{\flat}: X\rightarrow \textbf{U}(Y)$ by $g^{\flat} = g\circ \iota_X$. Given $f: X\rightarrow \textbf{U}(Y)$, define $f^{\sharp}:\textbf{S}(X) \rightarrow Y$ by $f^{\sharp} = \iota_Y^{-1} \circ f^s$. Then $g\mapsto g^{\flat}$ and $f\mapsto f^{\sharp}$ are mutual inverse. Therefore \textbf{S} is left adjoint to \textbf{U}, and hence \textbf{Sob} is a reflective subcategory of \textbf{Top}.
    
\end{rem}

\section{Locally Hausdorff spaces}

This is section is the last one for the set-up we need to introduce more of separation axioms between $T_1$ and Hausdorff. In \textbf{Proposition \ref{Proposition 4.5}} or \textbf{Proposition \ref{Proposition 4.6}}, we describe Hausdorff as a $T_0$-version of $R_1$, and has not discovered properties between $T_1$ and Hausdorff. We will start by the property \textbf{locally Hausdorff}, which is strictly stronger than $T_1$ but weaker than Hausdorff, and compare it with sobriety.

\begin{defn}

A topological space $(X, \tau)$ is \textbf{locally Hausdorff} if it has an open cover that consists of Hausdorff subspaces (equipped with relative topology). Equivalently, $(X, \tau)$ is locally Hausdorff if and only if each point has an open Hausdorff neighborhood (equipped with relative topology).
    
\end{defn}

\noindent
Clearly a Hausdorff space is locally Hausdorff. As a result, a locally Hausdorff space is $T_1$. We point out it is superfluous to define ``locally $T_0$" (resp. $T_D$, $T_1$) in a similar fashion, as such a space is automatically $T_0$ (resp. $T_D$, $T_1$). In addition, we will later show that if every point in a topological space has a closed Hausdorff neighborhood, then that space is Hausdorff. It is also easy to see that being locally Hausdorff is hereditary. Also, if $X$ has open cover that consists of locally Hausdorff subspaces, then $X$ is also locally Hausdorff. We shall apply these facts in results below.

\begin{prop}\label{Proposition 10.2}

A locally Hausdorff topological space $(X, \tau)$ is sober.
    
\end{prop}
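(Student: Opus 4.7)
The plan is to reduce sobriety to the statement that every non-empty irreducible closed subset of $X$ is a singleton, and then to localize the irreducibility to a Hausdorff patch where irreducibility collapses. Since locally Hausdorff implies $T_1$ (every point has a $T_1$ open neighborhood), we have $\overline{\{x\}} = \{x\}$ for every $x \in X$, so a singleton-valued generic point is automatically unique. Hence sobriety becomes: every irreducible closed $F \subseteq X$ is a singleton.

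First I would fix such an $F$, pick any $x \in F$, and invoke the locally Hausdorff hypothesis to obtain an open $U \in \mathcal{U}(x)$ whose subspace topology is Hausdorff. Then I would show $F \cap U$ is irreducible: it is relatively open in the irreducible $F$, hence non-empty open subsets of $F \cap U$ are non-empty open subsets of $F$, and \textbf{Proposition \ref{Proposition 8.2}(c)} gives that any two such meet. Independently, I would observe that in any Hausdorff space $H$, an irreducible subset cannot contain two distinct points: given $a \neq b$, disjoint open neighborhoods in $H$ yield disjoint non-empty relatively open subsets, contradicting \textbf{Proposition \ref{Proposition 8.2}(c)}. Applied to the Hausdorff subspace $U$, this forces $F \cap U = \{x\}$.

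To conclude, I would use that $F \cap U$, being non-empty and relatively open in the irreducible $F$, is dense in $F$ by \textbf{Proposition \ref{Proposition 8.2}(d)}. Thus $F = \operatorname{cl}_F(F \cap U) = \overline{\{x\}} \cap F = \{x\}$, the last equality using the $T_1$ property. Therefore $F$ is a singleton with unique generic point $x$, and $X$ is sober.

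The argument is essentially routine; the only step that requires a moment's care is verifying that $F \cap U$ is itself irreducible rather than merely a dense open subset of $F$, and that irreducibility is an intrinsic property so that it may equivalently be tested inside $U$ (as noted in the paragraph after \textbf{Definition 8.1}). This is where the equivalent reformulations in \textbf{Proposition \ref{Proposition 8.2}} do all the work, so I expect no real obstacle.
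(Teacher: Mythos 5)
Your proposal is correct and follows essentially the same route as the paper: localize at a point $x$ of the irreducible set via a Hausdorff open neighborhood and use \textbf{Proposition \ref{Proposition 8.2}} to force the intersection with that neighborhood to be $\{x\}$. The only cosmetic difference is the wrap-up — the paper derives a contradiction from a second relatively open singleton, while you use density of the relatively open set $F\cap U$ in $F$ together with $T_1$ — and both steps are immediate.
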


\begin{proof}

Let $A\subseteq X$ be irreducible. We will show that $A$ is a singleton. Fix $x\in A$. Let $H_x\in \mathcal{U}(x)$ be an open Hausdorff neighborhood. Suppose there exists $y\in (A\cap H_x)\backslash \{x\}$. Then there exists $U\in \mathcal{U}(x)$, $V\in\mathcal{U}(y)$ such that $(V\cap U)\cap H_x=\emptyset$. We then have $V\cap H_x\cap A$ and $U\cap H_x\cap A$ are disjoint subsets that are open with respect to $\tau\Big|_A$, which contradicts the irreducibility of $A$ by \textbf{Proposition \ref{Proposition 8.2}}. Hence $A\cap H_x = \{x\}$, which means $\{x\}\in \tau\Big|_A$. If there exists $y\in A\backslash \{x\}$, then by the same token, we can show that $\{y\}\in \tau\Big|_A$, which again contradicts the irreducibility of $A$. Hence $A=\{x\}$.
    
\end{proof}

\noindent
Since a space $(X, \tau)$ is Hausdorff if and only if the diagonal line is closed with respect to the product topology, we can describe locally Hausdorff similarly, not using the diagonal line but $\sim_{\tau}$ as a subset of $X\times X$. The analogous result also holds for locally $R_1$ space since, according to \textbf{Proposition \ref{Proposition 4.5}}, Hausdorff can be viewed as a $T_0$-version of $R_1$.

\begin{prop}

Given a topological space $(X, \tau)$, without confusion we can view $\sim_{\tau}$ as a subset of $X\times X$. Then the following are equivalent:

\begin{enumerate}[label = (\alph*)]

    \item $X$ is locally $R_1$.
    \item $\sim_{\tau}$ is locally closed in $X\times X$.
    \item given another topological space $(Z, \tau_Z)$ and a continuous mapping $f:(Z, \tau_Z) \rightarrow (X, \tau)$, the following set:

    $$
    G = \big\{ (z, x)\in Z\times X: f(z)\sim_{\tau}x \big\}
    $$
    is locally closed (with respect to product topology) in $Z\times X$.
    
\end{enumerate}

\noindent
As a result, the analogous result holds for locally Hausdorff spaces if we, in $(b)$, replace $\sim_{\tau}$ by $\Delta_X$ the diagonal line and, in $(c)$, replace $G$ by the graph of $f$.
    
\end{prop}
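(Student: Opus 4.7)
The plan is to run three implications $(a)\Rightarrow(b)\Rightarrow(a)$ and then handle $(b)\Leftrightarrow(c)$ by a routine pull-back argument. The whole proof rests on one basic observation: for any open $U\subseteq X$, the equivalence relation on the subspace agrees with the restriction of $\sim_\tau$, that is,
$$
\sim_{\tau\vert_U}\;=\;\sim_\tau\,\cap\,(U\times U).
$$
The inclusion $\supseteq$ is immediate since a $\tau$-neighbourhood $N$ of $x\in U$ gives the $\tau\vert_U$-neighbourhood $N\cap U$; the $\subseteq$ direction follows because $U$ is open, so every $\tau\vert_U$-neighbourhood of $x$ extends to a $\tau$-neighbourhood. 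In tandem, I will use the elementary fact that if $x\sim_\tau y$ and $x\in U\in\tau$, then $y\in U$ (because $U\in\mathcal U(x)=\mathcal U(y)$ by \textbf{Lemma \ref{Lemma 1.6}}).

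For $(a)\Rightarrow(b)$, I cover $X$ by open $R_1$ subspaces $\{U_i\}_{i\in I}$ and set $W=\bigcup_{i\in I}U_i\times U_i$, which is open in $X\times X$. The observation above shows $\sim_\tau\subseteq W$: if $x\sim_\tau y$ and $x\in U_i$, then $y\in U_i$, hence $(x,y)\in U_i\times U_i$. On each piece, $\sim_\tau\cap(U_i\times U_i)=\sim_{\tau\vert_{U_i}}$ is closed in $U_i\times U_i$ by \textbf{Definition \ref{Definition 4.4}} applied to the $R_1$ space $U_i$. Consequently $\sim_\tau\cap W$ is closed in $W$ (closed in each member of an open cover of $W$), so there is a closed $F\subseteq X\times X$ with $\sim_\tau=\sim_\tau\cap W=W\cap F$; this exhibits $\sim_\tau$ as locally closed. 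For $(b)\Rightarrow(a)$, fix $x\in X$; then $(x,x)\in\sim_\tau$ has an open neighbourhood $W$ on which $\sim_\tau$ is relatively closed. Shrinking $W$ inside the product base, I may assume $W=U\times U$ with $x\in U$ open. Then $\sim_{\tau\vert_U}=\sim_\tau\cap(U\times U)$ is closed in $U\times U$, so $U$ is an open $R_1$ neighbourhood of $x$.

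For $(b)\Leftrightarrow(c)$, I note $G=(f\times\operatorname{id}_X)^{-1}(\sim_\tau)$, so the forward direction follows from the fact that preimages of locally closed sets under continuous maps remain locally closed (since preimages preserve both opens and closeds, and a locally closed set is the intersection of the two). The converse is obtained by specialising to $Z=X$ and $f=\operatorname{id}_X$, whence $G=\sim_\tau$. The Hausdorff analogue is then cosmetic: the identity $\Delta_U=\Delta_X\cap(U\times U)$ is trivial and \textbf{Proposition \ref{Proposition 4.5}} tells us $X$ is Hausdorff iff $\Delta_X$ is closed, so the same three-step argument applies verbatim after substituting $\Delta_X$ for $\sim_\tau$, the graph of $f$ for $G$, and ``Hausdorff'' for ``$R_1$''.

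The main obstacle I anticipate is the $(a)\Rightarrow(b)$ direction: one must be careful not to merely show $\sim_\tau$ is locally closed at each of its own points, but to verify that the open set $W=\bigcup U_i\times U_i$ indeed \emph{contains} all of $\sim_\tau$, because only then does the relative closedness of $\sim_\tau$ in $W$ combine with the covering to produce a genuine locally closed description $\sim_\tau=W\cap F$. That containment is precisely where the saturation property ($x\sim_\tau y$ with $x\in U_i$ forces $y\in U_i$) is used, and it is the nonobvious step tying the separation axiom to the product-space geometry.
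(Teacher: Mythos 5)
Your proof is correct, and on the forward half it takes a genuinely different route from the paper's. The implication $(b)\Rightarrow(a)$ coincides with the paper's argument (shrink an open neighbourhood of $(x,x)$ to a box $U\times U$ and read off, via \textbf{Definition \ref{Definition 4.4}}, that $U$ is $R_1$), and $(c)\Rightarrow(b)$ by specializing to $Z=X$, $f=\operatorname{id}_X$ is identical. The difference lies in how the circle is closed: the paper proves $(a)\Rightarrow(c)$ pointwise, exhibiting for each $(z,x)\in G$ the neighbourhood $f^{-1}(V)\times V$ on which $G$ is relatively closed and then invoking the pointwise criterion of \textbf{Proposition \ref{Proposition 6.3}}, whereas you prove $(a)\Rightarrow(b)$ globally --- using saturation of open sets under $\sim_{\tau}$ (\textbf{Lemma \ref{Lemma 1.6}}) to get $\sim_{\tau}\subseteq W=\bigcup_i U_i\times U_i$ and the identity $\sim_{\tau\vert_{U_i}}=\sim_{\tau}\cap(U_i\times U_i)$ to get closedness of $\sim_{\tau}$ in $W$ --- and then deduce $(b)\Rightarrow(c)$ formally from $G=(f\times\operatorname{id}_X)^{-1}(\sim_{\tau})$ together with the stability of locally closed sets under continuous preimages. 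Your route buys an explicit presentation $\sim_{\tau}=W\cap F$, makes the quantification over all $(Z,f)$ in $(c)$ costless, and avoids any appeal to \textbf{Proposition \ref{Proposition 6.3}} (whose statement, read literally, asks for local closedness at every point of the ambient space rather than only at points of the set in question, so the paper's pointwise step requires the standard ``at points of $G$ suffices'' reading); it also handles the locally Hausdorff analogue by direct substitution rather than the paper's reduction to the $T_0$ case. The containment $\sim_{\tau}\subseteq W$ that you single out as the crux is exactly the saturation fact the paper uses in the guise of ``$f(z)\in V$ since $f(z)\sim_{\tau}x$''. One trivial slip: the equivalence ``Hausdorff iff $\Delta_X$ is closed'' is the classical fact recalled just before \textbf{Proposition \ref{Proposition 3.5}}, not \textbf{Proposition \ref{Proposition 4.5}} (which states Hausdorff $=T_0+R_1$); nothing in your argument depends on the mislabel.
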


\begin{proof}

We will first show $(b)\implies (a)$. Suppose $\sim_{\tau}$ is locally closed. Fix $x\in X$. By \textbf{Proposition \ref{Proposition 6.3}}, since $\sim_{\tau}$ is locally closed at $(x, x)$, there exists $O\in \mathcal{U}\big( (x, x) \big)$ such that $O\backslash \sim_{\tau}$ is relatively open in $O$. Note that for all small enough $U\in \mathcal{U}(x)$, we have $U\times U\subseteq O$. Since $U\times U$ is open in $X$, we have:

$$
(U\times U)\cap \big( O\backslash \sim_{\tau} \big) = (U\times U)\backslash \sim_{\tau}
$$
is relatively open in $O$. This shows that $U$ is an $R_1$ open neighborhood of $x$. We can immediate see $(c)\implies (b)$ when we set $(Z, \tau_Z) = (X, \tau)$ and $f=\operatorname{id}_X$. It remains to show $(a)\implies (c)$. When $(a)$ is true, fix $(z, x)\in G$. Let $V\in \mathcal{U}(x)$ be a open neighborhood that is $R_1$. Then one can observe that $f^{-1}(V)\times V$ is an open neighborhood of $(z, x)$ as $f(z)\in V$. Then $\big( f^{-1}(V) \times V \big) \backslash G$ is relatively open in $f^{-1}(V) \times V$ since $V$ is $R_1$. This shows that $G$ is locally closed at $(z, x)$. As $(z, x)\in G$ is arbitrarily picked, we can conclude that $G$ is locally closed by \textbf{Proposition \ref{Proposition 6.3}}.\\

\noindent
To show the analogous characterizations for locally Hausdorff space, according to \textbf{Proposition \ref{Proposition 4.5}}, it suffices to further assume $X$ is $T_0$ (also this is because we pointed out that it is superfluous to define ``locally $T_0$"). After assuming $X$ is $T_0$, then $\sim_{\tau}$ coincides with the identity relation. In this case, $\sim_{\tau}=\Delta_X$ and $G$ is the graph of $f$.
    
\end{proof}

\noindent
Based on the principle advocated in \textbf{Section 2}, we call a topological space \textbf{locally $R_1$} if its $T_0$-quotient is locally Hausdorff. It is routine to check a topological space is locally $R_1$ precisely when it has an open cover that consists of $R_1$ subspaces. It follows that a locally $R_1$ space is quasi-sober and $R_0$.\\

\noindent
We now present a general method of constructing locally Hausdorff spaces in a similar vein as the formulation of manifolds. This so-called \textbf{gluing construction}, which can be applied in other studies, may indeed have its origins in algebraic geometry (see {\cite[p.33]{1}}). The underlying idea is that is $X$ is a union of open subspaces $(X_{\alpha})_{\alpha\in \mathcal{A}}$. Then for any $\alpha, \beta\in \mathcal{A}$, define:

$$
U_{\alpha, \beta} = X_{\alpha}\cap X_{\beta}
$$
Then for any $\alpha, \beta, \gamma\in\mathcal{A}$, observe that:

$$
U_{\alpha, \alpha} = X_{\alpha} \hspace{1cm} U_{\alpha, \beta} = U_{\beta, \alpha} \hspace{1cm} U_{\alpha, \beta}\cap U_{\alpha, \gamma} = U_{\beta, \alpha} \cap U_{\beta, \gamma}
$$
It turns out that this observation can be reversed. A set of \textbf{gluing data} is a triple:

\begin{equation}\label{e14}
\Big( \big\{ (Y_{\alpha}, \tau_{\alpha}) \big\}_{\alpha\in \mathcal{A}} \,,\, \big\{ U_{\alpha, \beta} \big\}_{\alpha, \beta \in \mathcal{A}} \,,\, \big\{ h_{\alpha, \beta} \big\}_{\alpha, \beta \in \mathcal{A}} \Big)
\end{equation}
where for each $\alpha, \beta\in \mathcal{A}$, $(Y_{\alpha}, \tau_{\alpha})$ is a topological space, $U_{\alpha, \beta}$ is a (possibly empty) open set in $Y_{\alpha}$, and $h_{\alpha, \beta}: U_{\alpha, \beta} \rightarrow U_{\beta, \alpha}$ is a homeomorphism. In addition, the triple in (\ref{e14}) satisfies that for any $\alpha, \beta, \gamma\in \mathcal{A}$:

\begin{equation}\label{e15}
\begin{aligned}
& U_{\alpha, \alpha} = Y_{\alpha}, \hspace{1cm} h_{\alpha, \alpha} = \operatorname{id}_{Y_{\alpha}}\\
& h_{\alpha, \beta}\left( U_{\alpha, \beta}\cap U_{\alpha, \gamma} \right) = U_{\beta, \alpha} \cap U_{\beta, \gamma}\\
& \left( h_{\beta, \gamma}\big|_{U_{\beta, \alpha} \cap U_{\beta, \gamma}} \right) \circ \left( h_{\alpha, \beta}\big|_{U_{\alpha, \beta}\cap U_{\alpha, \gamma}} \right) = h_{\alpha, \gamma}\big|_{U_{\gamma, \alpha} \cap U_{\gamma, \beta}}
\end{aligned}
\end{equation}
As an immediate consequence of (\ref{e15}), we have for any $\alpha, \beta \in\mathcal{A}$:

\begin{equation}\label{e16}
h_{\beta, \alpha} = h_{\alpha, \beta}^{-1}
\end{equation}
Our intention is to form a space $X$ by patching up all $\big( Y_{\alpha}:\alpha \in \mathcal{A} \big)$ such that $U_{\alpha, \beta} \subseteq Y_{\alpha}$ is glued to $U_{\beta, \alpha} \subseteq Y_{\beta}$ according to $h_{\alpha, \beta}$. Let $Y$ be the sum of $(Y_{\alpha})_{\alpha\in \mathcal{A}}$ (see \textbf{Example \ref{Example 0.5}}). Recall that each inclusion mapping $j_{\alpha}: Y_{\alpha} \rightarrow Y$ is open and closed. We may identify each $Y_{\alpha}$ as $j_{\alpha}(Y_{\alpha})$, so that $(Y_{\alpha})_{\alpha\in \mathcal{A}}$ is an open cover of $Y$. Define the relation $R$ on $Y$ by the following: for any $x, y\in Y$:

$$
(x, y)\in R \hspace{0.5cm} \Longleftrightarrow \hspace{0.5cm} \exists\,\alpha, \beta\in\mathcal{A} \,\text{  such that  }\, x\in U_{\alpha, \beta},\, y\in U_{\beta, \alpha}\,\text{  and  }\,y=h_{\alpha, \beta}(x)
$$
By (\ref{e15}), we can see $R$ is an equivalence relation. Let $X=Y\slash R$ and let $p:Y\rightarrow X$ be the quotient mapping. By (\ref{e15}), for any $x, y\in Y_{\alpha}$, if $(x, y)\in R$, $x=y$. Hence $p_{\alpha} = p\circ j_{\alpha}$ is injective. It is worth noting that the quotient topology on $X$ coincides with the strong topology induced by $(p_{\alpha})_{\alpha\in \mathcal{A}}$.

\begin{lem}\label{Lemma 10.3}

In the set-up of the discussion above, the quotient mapping $p:Y\rightarrow X$ is open. Hence each $p_{\alpha}: Y_{\alpha} \rightarrow X$ is an open embedding.
    
\end{lem}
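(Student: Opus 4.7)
The plan is to verify openness of $p$ directly from the quotient-topology criterion: for any open $W \subseteq Y$, I need $p^{-1}(p(W)) = [W]_R$, the $R$-saturation of $W$, to be open in $Y$. Because $Y$ carries the sum topology and each $j_\alpha(Y_\alpha)$ is clopen in $Y$, it suffices to show that $j_\beta^{-1}([W]_R)$ is open in $Y_\beta$ for every $\beta \in \mathcal{A}$. Identifying each $Y_\alpha$ with $j_\alpha(Y_\alpha)$ so that I can suppress the inclusion maps, this reduces to controlling $[W]_R \cap Y_\beta$.

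Next I would unpack the definition of $R$: a point $y \in Y_\beta$ lies in $[W]_R$ precisely when there exists $\alpha \in \mathcal{A}$ and $x \in W \cap Y_\alpha$ with $x \in U_{\alpha,\beta}$, $y \in U_{\beta,\alpha}$, and $y = h_{\alpha,\beta}(x)$. Using (\ref{e16}) to rewrite $x = h_{\beta,\alpha}(y)$, this becomes $y \in U_{\beta,\alpha}$ and $h_{\beta,\alpha}(y) \in W \cap U_{\alpha,\beta}$. Consequently
$$[W]_R \cap Y_\beta = \bigcup_{\alpha \in \mathcal{A}} h_{\beta,\alpha}^{-1}\bigl( W \cap U_{\alpha,\beta} \bigr).$$
Since $U_{\alpha,\beta}$ is open in $Y_\alpha$, the set $W \cap U_{\alpha,\beta}$ is open, and because $h_{\beta,\alpha}\colon U_{\beta,\alpha} \to U_{\alpha,\beta}$ is a homeomorphism, each term in the union is open in $U_{\beta,\alpha}$, hence open in $Y_\beta$ (as $U_{\beta,\alpha}$ is open in $Y_\beta$). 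A union of open sets is open, so $[W]_R \cap Y_\beta$ is open, and running $\beta$ through $\mathcal{A}$ gives that $[W]_R$ is open in $Y$. This proves $p$ is open.

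For the second assertion, $p_\alpha = p \circ j_\alpha$ is continuous as a composition, injective by the observation made just before the lemma, and open as a composition of open maps (the inclusion $j_\alpha$ is open by \textbf{Example \ref{Example 0.5}} and $p$ is open by what was just shown). Its image $p_\alpha(Y_\alpha) = p\bigl( j_\alpha(Y_\alpha) \bigr)$ is open in $X$. A continuous, injective, open map is a homeomorphism onto its (open) image, so $p_\alpha$ is an open embedding.

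The only step that requires care, rather than posing a genuine obstacle, is the bookkeeping in unpacking $[W]_R \cap Y_\beta$: one must invoke the cocycle identities of (\ref{e15}) (in particular (\ref{e16})) to swap the role of $\alpha$ and $\beta$ and thereby express membership in the saturation as a condition in the fixed chart $Y_\beta$, rather than as an existential statement involving a different chart.
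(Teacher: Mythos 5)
Your proof is correct and uses essentially the same argument as the paper: both reduce openness of $p$ to openness of the $R$-saturation $p^{-1}(p(W))$ and establish it by transporting the open pieces of $W$ across the gluing homeomorphisms $h_{\beta,\alpha}$ (you via the explicit formula $[W]_R\cap Y_\beta=\bigcup_{\alpha}h_{\beta,\alpha}^{-1}(W\cap U_{\alpha,\beta})$, the paper via the pointwise neighborhood $h_{\beta,\alpha}(U_{\beta,\alpha}\cap W_\beta)$), and both then deduce the open-embedding claim from injectivity of $p_\alpha$ and openness of $j_\alpha$.
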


\begin{proof}

The second claim follows from the first since each $j_{\alpha}$ is open (see \textbf{Example \ref{Example 0.5}}). Fix an open set $W\subseteq Y$. As we have shown in \textbf{Example \ref{Example 0.5}}, $W=\bigcup_{\alpha\in \mathcal{A}} j_{\alpha}(W_{\alpha})$ where $W_{\alpha}\in \tau_{\alpha}$ for each $\alpha\in \mathcal{A}$. Fix $x\in p^{-1}\big( p(W) \big)$. Then there exists $y\in W$ such that $(x, y)\in W$. We then can find $\alpha, \beta\in \mathcal{A}$ such that $x\in Y_{\alpha}$ and $y\in W_{\beta}$. Since $y\in U_{\beta, \alpha}\cap W_{\beta}$, $x=h_{\beta, \alpha}(y)$. Since $h_{\beta, \alpha}$ is a homeomorphism, $V=h_{\beta, \alpha} \big( U_{\beta, \alpha}\cap W_{\beta} \big)$ is an open neighborhood of $x$ in $Y_{\alpha}$. By (\ref{e16}), a consequence of (\ref{e15}), we have $h_{\alpha, \beta}(V) \subseteq W_{\beta} \subseteq W$. It follows that $V\subseteq p^{-1}\big( p(W) \big)$. Hence $p^{-1}\big( p(W) \big)$ is an open neighborhood of $x$ in $Y$, which completes the proof.
    
\end{proof}

\noindent
As a result, $(X_{\alpha})_{\alpha\in \mathcal{A}}$ is an open cover of $X$ where for each $\alpha\in \mathcal{A}$, $X_{\alpha} = p(Y_{\alpha})$. For any $\alpha,\beta\in \mathcal{A}$, $X_{\alpha} \cap X_{\beta} = p(U_{\alpha, \beta})$. In addition, a base for the topology on $X$ can be obtained by combining each $\tau_{\alpha}$'s base. Namely, if, for each $\alpha\in \mathcal{A}$, $\mathcal{B}_{\alpha}$ is a base for $\tau_{\alpha}$, then:

$$
\mathcal{B} = \big\{ p_{\alpha}(B): \alpha\in\mathcal{A},\, B\in \mathcal{B}_{\alpha} \big\}
$$
is a base for $X$. Now the main result is within our reach.

\begin{prop}\label{Proposition 10.4}

In the set-up of the discussion above, $X$ is locally Hausdorff if and only if each $Y_{\alpha}$ is locally Hausdorff.
    
\end{prop}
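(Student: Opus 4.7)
The plan is to deduce both directions of the equivalence almost immediately from \textbf{Lemma \ref{Lemma 10.3}} together with two basic stability properties of the locally Hausdorff class that were recorded right after the definition: (i) locally Hausdorff is hereditary (in particular passes to open subspaces), and (ii) if a space admits an open cover by locally Hausdorff subspaces, then the space itself is locally Hausdorff. Since by \textbf{Lemma \ref{Lemma 10.3}} each $p_\alpha: Y_\alpha \to X_\alpha$ is an open embedding and $(X_\alpha)_{\alpha\in \mathcal{A}}$ is an open cover of $X$, the equivalence should follow without further work.

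For the forward implication, I would argue that if $X$ is locally Hausdorff, then for each $\alpha\in \mathcal{A}$ the open subspace $X_\alpha \subseteq X$ is locally Hausdorff by heredity, and since $p_\alpha$ is a homeomorphism of $Y_\alpha$ onto $X_\alpha$, the space $Y_\alpha$ is locally Hausdorff as well. For the reverse implication, I would fix $x\in X$ and choose $\alpha\in \mathcal{A}$ with $x\in X_\alpha$; writing $x = p_\alpha(y)$ for a unique $y\in Y_\alpha$, local Hausdorffness of $Y_\alpha$ yields an open Hausdorff neighborhood $H$ of $y$ in $Y_\alpha$. Then $p_\alpha(H)$ is an open neighborhood of $x$ in $X_\alpha$ by \textbf{Lemma \ref{Lemma 10.3}}, hence open in $X$ (as $X_\alpha$ is open in $X$), and it is homeomorphic to $H$, hence Hausdorff. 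This exhibits an open Hausdorff neighborhood of $x$ in $X$.

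I do not expect any serious obstacle: the only point worth double-checking is that $p_\alpha(H)$ is genuinely open in $X$ and not merely in $X_\alpha$, which is immediate because $X_\alpha$ is open in $X$ (again by \textbf{Lemma \ref{Lemma 10.3}}) and openness is transitive for open subspaces. Everything else is bookkeeping via the homeomorphism $p_\alpha : Y_\alpha \to X_\alpha$.
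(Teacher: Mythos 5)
Your proposal is correct and follows essentially the same route as the paper: both directions rest on \textbf{Lemma \ref{Lemma 10.3}} (each $p_\alpha$ is an open embedding), the hereditary nature of local Hausdorffness for the forward direction, and the fact that a space covered by open locally Hausdorff subspaces is locally Hausdorff for the converse. Your spelling out of the open Hausdorff neighborhood $p_\alpha(H)$ is just a more explicit rendering of the paper's one-line appeal to that covering fact.
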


\begin{proof}

Since being locally Hausdorff is hereditary, if $X$ is locally Hausdorff, then so is each $X_{\alpha}$. In this case, by \textbf{Lemma \ref{Lemma 10.3}}, each $Y_{\alpha}$ is locally Hausdorff. Conversely, again by \textbf{Lemma \ref{Lemma 10.3}}, each $X_{\alpha}$ is locally Hausdorff and hence $X$ as a union of open locally Hausdorff subspaces is locally Hausdorff.
    
\end{proof}

\noindent
Under this apparatus, we now obtain a special case where copies of a topological space $(Z, \tau_Z)$ are glued together along a fixed open subset $W\subseteq Z$. With the given index $\mathcal{A}$, set $Z_{\alpha} = Z$, $\tau_{\alpha} = \tau_Z$ for each $\alpha\in \mathcal{A}$. For any $\alpha, \beta\in \mathcal{A}$, define $U_{\alpha, \beta} = W$ whenever $\alpha \neq\beta$, and $U_{\alpha, \beta}=Y$ otherwise. For any $\alpha, \beta\in \mathcal{A}$, let $h_{\alpha, \beta}$ be the identity mapping of $W$ or $Z$. In this case, the sum of $(Z_{\alpha}, \tau_{\alpha})_{\alpha\in \mathcal{A}}$ is $\mathcal{A} \times Z$ and the associated equivalence relation $R_W$ is defined as follows: for any $(\alpha, z)$, $(\beta, z')\in \mathcal{A} \times Z$:

$$
\big( (\alpha, z), (\beta, z') \big)\in R_W \hspace{0.5cm} \Longleftrightarrow \hspace{0.5cm}
z=z', \,\text{  and either  }\, \alpha=\beta\,\text{  or  }\, z\in W
$$
In this case we use $X_Z$ to denote the quotient space $(\mathcal{A} \times Z)\slash R_W$, and $X_Z$, as a set, can be identified with:

$$
X_Z = W\cup \left( \bigcup_{\alpha\in \mathcal{A}} \{\alpha\} \times (Z\backslash W) \right)
$$
If, for each $\alpha\in \mathcal{A}$, we set $(X_Z)_{\alpha} = W\cup \big( \{\alpha\} \times (Z\backslash W) \big)$, then each $(X_Z)_{\alpha}$ is homeomorphic to $Z$ and hence $\big( (X_Z)_{\alpha} \big)_{\alpha\in \mathcal{A}}$ forms an open cover of $X$.

\begin{exmp}\label{Example 10.5}

With a fixed index set $\mathcal{A}$, we may duplicate the end point $0$ of the unit interval $Z=[0, 1]$ with the usual topology by gluing $\vert\, \mathcal{A} \,\vert$-many $Z$ together along with $W=(0, 1]$. The resulting $X_Z$ is then:

$$
X_Z = (0, 1]\cup \left( \bigcup_{\alpha\in \mathcal{A}} \{\alpha\} \times \{0\} \right)
$$
For each $\alpha\in\mathcal{A}$, we put $0_{\alpha} = \{\alpha\} \times \{0\}$. We call this $X_Z$ \textbf{the unit interval with} $\vert\,\mathcal{A} \,\vert$ \textbf{origins}. For any $\alpha\in \mathcal{A}$, $\delta\in (0, 1)$, use $[0, \delta)_{\alpha}$ to denote $\{0_{\alpha}\} \cup (0, \delta)$. For any $\alpha\in \mathcal{A}$, the family $\big( [0, \delta)_{\alpha}: \delta\in (0,1) \big)$ forms a neighborhood base of $0_{\alpha}$. If $\vert\, \mathcal{A} \,\vert > 1$, then for any two different $\alpha, \beta\in \mathcal{A}$ and $\delta, \epsilon\in (0, 1)$, we have $[0, \delta)_{\alpha} \cap [0, \epsilon)_{\beta} \neq\emptyset$. Thus $X_Z$ is not Hausdorff when $\vert\, \mathcal{A} \,\vert > 1$, but is locally Hausdorff by \textbf{Proposition \ref{Proposition 10.4}}. Also, one can check that $X_Z$ retain some properties of $[0, 1]$, such as first countability and locally compactness. Note that $X_Z$ is compact precisely when $\mathcal{A}$ is finite, and is second countable precisely when $\mathcal{A}$ is countable.
    
\end{exmp}

\begin{exmp}

This example is constructed the same way as \textbf{Example \ref{Example 10.5}} but obtained from the ordinal space $Z=\Omega = [0, \omega_1]$. Let $W=(0, \omega_1]$ and $\mathcal{A} = \{0, 1\}$. The resulting $X_Z$ can be written as $\Omega\cup \{\omega_1'\}$ where $\omega_1'$ is the duplicated $\omega_1$. Let $\Omega' = [0, \omega_1)\cup \{\omega_1'\}$. Then both $\Omega$ and $\Omega'$ are compact Hausdorff subspaces of $X_Z$. However, $\Omega\cap \Omega'$ is not compact (in fact, not even Lindelöf).
    
\end{exmp}

\noindent
We will close this section by showing that locally Hausdorff is not a productive property, while it is obvious that Hausdorff is.

\begin{prop}

Suppose $\big( X_{\alpha}, \tau_{\alpha} \big)_{\alpha\in \mathcal{A}}$ is an infinite family of non-Hausdorff spaces. Then $X=\prod_{\alpha\in \mathcal{A}} X_{\alpha}$, when equipped with the product topology $\tau$, is not locally Hausdorff.
    
\end{prop}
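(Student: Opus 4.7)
The plan is to exploit the fact that every open neighborhood in the product topology contains, for all but finitely many coordinates, an entire ``slice'' homeomorphic to the corresponding factor. Since Hausdorffness is hereditary, finding a single non-Hausdorff slice inside each neighborhood will defeat local Hausdorffness.

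First I would fix an arbitrary $x \in X$ and an arbitrary open neighborhood $U$ of $x$, and shrink it down to a basic open set
$$V = \bigcap_{\beta \in F} \pi_\beta^{-1}(U_\beta) \subseteq U$$
with $F \subseteq \mathcal{A}$ finite and $x_\beta \in U_\beta \in \tau_\beta$ for each $\beta \in F$. Because $\mathcal{A}$ is infinite, I can pick some $\alpha \in \mathcal{A} \setminus F$; this coordinate is where the ``bad'' factor will live.

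Next I would introduce the slice map $\iota_\alpha : X_\alpha \to X$ defined (as in \textbf{Section 0.4}) by $(\iota_\alpha(z))_\alpha = z$ and $(\iota_\alpha(z))_\beta = x_\beta$ for $\beta \neq \alpha$. The argument given there for the box topology adapts verbatim — each projection $\pi_\beta \circ \iota_\alpha$ is either the identity or a constant, so $\iota_\alpha$ is continuous, and a basic open $\bigcap_{\beta \in G} \pi_\beta^{-1}(W_\beta)$ of $X$ meets $\iota_\alpha(X_\alpha)$ exactly in either $\emptyset$, $\iota_\alpha(W_\alpha)$, or $\iota_\alpha(X_\alpha)$, so $\iota_\alpha$ is a homeomorphism onto its image $S_\alpha$. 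Crucially, since $\alpha \notin F$, for every $z \in X_\alpha$ and every $\beta \in F$ we have $(\iota_\alpha(z))_\beta = x_\beta \in U_\beta$, so $S_\alpha \subseteq V \subseteq U$.

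Finally, since $X_\alpha$ is not Hausdorff, $S_\alpha \cong X_\alpha$ is not Hausdorff either, and as a subspace of $U$ it precludes $U$ from being Hausdorff (Hausdorffness being hereditary). Thus $x$ admits no open Hausdorff neighborhood, so $X$ is not locally Hausdorff. The only mildly technical step is confirming that $\iota_\alpha$ is an embedding in the product topology, but this is essentially the same verification already carried out in \textbf{Section 0.4} for the box topology — no genuine obstacle arises.
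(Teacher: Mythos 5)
Your proof is correct. It shares the paper's skeleton -- shrink an arbitrary neighborhood to a basic open set $V=\bigcap_{\beta\in F}\pi_\beta^{-1}(U_\beta)$ and exploit a coordinate $\alpha\in\mathcal{A}\setminus F$ -- but the mechanism differs. The paper first fixes, in every factor, a pair $x_\alpha, y_\alpha$ of points with no disjoint neighborhoods, builds the specific point $x$ from the $x_\alpha$'s, and then, inside each basic neighborhood $B$ of $x$, produces the second point $y$ (differing from $x$ only at a coordinate $\beta\notin F$) and checks by hand that any two basic open sets around $x$ and $y$ intersect; so it exhibits an explicit non-separable pair in $B$. You instead embed an entire copy of the non-Hausdorff factor $X_\alpha$ into $V$ via the slice map $\iota_\alpha$ and invoke heredity of Hausdorffness. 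Your key lemma is that $\iota_\alpha$ is an embedding for the \emph{product} topology, which the paper only recorded for the box topology, and your sketch of the verification (projections are identity or constant; basic open sets trace on the slice as $\emptyset$, $\iota_\alpha(W_\alpha)$, or all of $S_\alpha$) is accurate, so no gap arises. What your route buys: it needs no choice of distinguished non-separable pairs, it avoids the explicit intersection check, and it yields the stronger conclusion that \emph{no} point of $X$ has a Hausdorff open neighborhood (the paper only shows this for the specially constructed $x$). What the paper's route buys: it is self-contained at the level of points and basic open sets, with no appeal to the slice-embedding fact. Both arguments use only that infinitely many factors are non-Hausdorff (and, implicitly, choice to know $X\neq\emptyset$), so either proof actually covers that mild generalization.
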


\begin{proof}

By assumption, for each $\alpha\in \mathcal{A}$, let $x_{\alpha}, y_{\alpha}\in X_{\alpha}$ be two different points that have no disjoint open neighborhoods. Define $x\in X$ by $\pi_{\alpha}(x) = x_{\alpha}$ for each $\alpha\in \mathcal{A}$. We claim that $x$ has no Hausdorff open neighborhood. Fix a finite subset $F\subseteq \mathcal{A}$ and define:

$$
B = \bigcap_{\alpha\in F}\pi_{\alpha}^{-1}(B_{\alpha})
$$
where $B_{\alpha}\in \tau_{\alpha}$ for all $\alpha\in F$. Since Hausdorff is a hereditary property, it suffices to show that all neighborhoods of $x$, which have the same form as $B$, are not Hausdorff. Fix $\beta\in \mathcal{A} \backslash F$. Define $y\in X$ by $\pi_{\alpha}(y) = x_{\alpha}$ whenever $\alpha\neq \beta$ and $\pi_{\beta}(y) = y_{\beta}$. Then $y\in B$. Let $C\in \mathcal{U}(x)$, $D\in \mathcal{U}(y)$ be such that $x\in C$ and $y\in D$. We can assume for some finite set $E\subseteq \mathcal{A}$ with $F\cup\{\beta\} \subseteq E$ such that:

$$
C = \bigcap_{\alpha\in E} \pi_{\alpha}^{-1}(U_{\alpha}) \hspace{1.5cm} D = \bigcap_{\alpha\in E} \pi_{\alpha}^{-1}(V_{\alpha})
$$
where $U_{\alpha}\in \mathcal{U} \big( \pi_{\alpha}(x) \big)$ and $V\in \mathcal{U}\big( \pi_{\alpha}(y) \big)$ for all $\alpha\in E$. For each $\alpha\in E\backslash \{\beta\}$, $x_{\alpha} \in U_{\alpha} \cap V_{\alpha}$. Also, $U_{\beta}$ and $V_{\beta}$ cannot separate $x_{\beta}$ and $y_{\beta}$. Hence $C\cap D\neq\emptyset$. This implies $B$ is not Hausdorff, and hence $x$ has no Hausdorff open neighborhoods.
    
\end{proof}

\section{Separation axioms between \texorpdfstring{$T_1$}{T1} and Hausdorff}

In this very last section, we will focus on introducing a list of properties that are strictly stronger than $T_1$ but strictly weaker than Hausdorff. We chose the following list of properties because each of them retain some properties of a Hausdorff space but, meanwhile, is more flexible in applications. We will also compare their strength. All implications are not reversible, and we will see some examples later. At the end, we will include diagrams that summarize the relation of all topological properties in this note.

\begin{defn}\label{Definition 11.1}

A topological space $(X, \tau)$ is called:

\begin{enumerate}[label = (\alph*)]

    \item \textbf{KC} if every compact subset is closed.
    \item $\textbf{KC}_{\omega}$ if every countable compact subset if closed.
    \item \textbf{SC} if, whenever $(s_n)_{n\in \mathbb{N}} \subseteq X$ is a sequence converging to $x\in X$, the set $\{s_n: n\in \mathbb{N}\} \cup \{x\}$ is closed.
    \item \textbf{SH} or \textbf{sequentially Hausdorff} if every sequence in $X$ has at most one limit.
    \item \textbf{WH} or \textbf{weakly Hausdorff} if the continuous image of any compact Hausdorff space into $X$ is closed.
    
\end{enumerate}
    
\end{defn}

\begin{prop}

In the set-up of \textbf{Definition \ref{Definition 11.1}}, the following implications hold:

\begin{enumerate}[label = (\alph*)]

    \item Hausdorff $\,\implies\,$ \textbf{KC} $\,\implies\,$ $\textbf{KC}_{\omega}$ $\,\implies\,$ \textbf{SC} $\,\implies\,$ \textbf{SH} $\,\implies\,T_1$. 

    \item \textbf{KC} $\,\implies\,$ \textbf{WH} $\,\implies\,$ \textbf{SC}.

    \item A first countable \textbf{SH} space is Hausdorff.
    
\end{enumerate}
    
\end{prop}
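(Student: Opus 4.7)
\medskip
\noindent\textbf{Proof proposal.} The plan is to establish each implication in (a) and (b) one at a time, since most are short, and then handle (c) by a direct first-countable diagonal construction. I will freely use the standard fact that $s_n\to x$ implies $x$ lies in the closure of $\{s_n:n\in\mathbb{N}\}$ (either $x$ occurs infinitely often as a term, or $x$ is a genuine accumulation point).

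\medskip
\noindent For part (a) I would proceed as follows. The implication Hausdorff $\Rightarrow$ \textbf{KC} is the classical fact that compact subsets of a Hausdorff space are closed. \textbf{KC} $\Rightarrow$ $\textbf{KC}_\omega$ is trivial. For $\textbf{KC}_\omega\Rightarrow$ \textbf{SC}, given $s_n\to x$, the set $K=\{s_n:n\in\mathbb{N}\}\cup\{x\}$ is countable, and it is compact because any open cover contains a member hitting $x$ and hence containing all but finitely many $s_n$; then \textbf{KC}$_\omega$ closes $K$. For \textbf{SH} $\Rightarrow T_1$, if $X$ is not $T_1$ pick $x\neq y$ with $x\in\overline{\{y\}}$; the constant sequence $s_n=y$ then converges to both $y$ and $x$, violating \textbf{SH}. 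The one nontrivial link is \textbf{SC} $\Rightarrow$ \textbf{SH}. First note \textbf{SC} gives $T_1$ directly, by applying the definition to constant sequences. So assume $s_n\to x$ and $s_n\to y$ with $x\neq y$. Using $T_1$, pick an open $U$ with $x\in U$, $y\notin U$; since $s_n\to x$ there is $N$ with $s_n\in U$ for all $n\ge N$, so the tail $T=\{s_n:n\ge N\}$ avoids $y$. The tail still converges to both $x$ and $y$, so by \textbf{SC} the set $T\cup\{x\}$ is closed. But $y$ is a limit of the tail, hence $y\in\overline{T}\subseteq T\cup\{x\}$, contradicting $y\neq x$ and $y\notin T$. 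This tail-trimming step is the main subtlety, since without it one cannot rule out the possibility that $y$ happens to coincide with some $s_k$.

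\medskip
\noindent For part (b), \textbf{KC}$\Rightarrow$\textbf{WH} is immediate: the continuous image of a compact space is compact, and \textbf{KC} makes it closed. For \textbf{WH} $\Rightarrow$ \textbf{SC}, given $s_n\to x$ I would take the one-point compactification $\mathbb{N}^*=\mathbb{N}\cup\{\infty\}$ of discrete $\mathbb{N}$, which is compact Hausdorff, and define $f:\mathbb{N}^*\to X$ by $f(n)=s_n$ and $f(\infty)=x$. Continuity of $f$ at $\infty$ is precisely the convergence $s_n\to x$, and continuity at each integer is automatic because $\mathbb{N}$ is discrete. Then \textbf{WH} forces $f(\mathbb{N}^*)=\{s_n:n\in\mathbb{N}\}\cup\{x\}$ to be closed.

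\medskip
\noindent For part (c), assume $X$ is first countable and \textbf{SH}, and suppose toward contradiction it is not Hausdorff. Then there exist $x\neq y$ such that every pair $U\in\mathcal{U}(x)$, $V\in\mathcal{U}(y)$ meets. Choose countable decreasing neighborhood bases $(U_n)$ of $x$ and $(V_n)$ of $y$ (replacing each base by the sequence of finite intersections of its initial segments), and for each $n$ pick $s_n\in U_n\cap V_n$. Then $s_n\to x$ and $s_n\to y$ by construction, contradicting \textbf{SH}. I expect no real obstacle here; the only care needed is to make the bases nested so that convergence holds from some index onwards.
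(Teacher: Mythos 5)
Your proposal is correct and follows essentially the same route as the paper: the same chain of short implications, the same tail-trimming argument (via $T_1$) for \textbf{SC} $\implies$ \textbf{SH}, and the same diagonal sequence from nested countable bases for part (c). Your use of the one-point compactification of discrete $\mathbb{N}$ for \textbf{WH} $\implies$ \textbf{SC} is just the paper's ordinal space $[0,\omega]$ in different clothing, so there is no substantive difference.
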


\begin{proof}

\begin{enumerate}[label = (\alph*)]

\item 

The implication Hausdorff $\,\implies\,$ \textbf{KC} $\,\implies\,$ $\textbf{KC}_{\omega}$ is immediate. To show $\textbf{KC}_{\omega}$, fix a convergent sequence $(s_n)_{n\in \mathbb{N}} \subseteq X$ converging to $x\in X$. Then the set $K = \{s_n:n\in \mathbb{N}\} \cup \{x\}$ is compact and hence closed by assumption. To show \textbf{SC} $\,\implies\,$ \textbf{SH}, first note that \textbf{SC} implies $T_1$ by considering constant sequences. Fix a sequence $(x_n)_{n\in \mathbb{N}} \subseteq X$ converging to $x\in X$ and fix $y\neq x$. Suppose for all $n\in\mathbb{N}$ with $n\geq m$, $x_n\in \{y\}^c$. By \textbf{SC}, the following set:

$$
\big( \{s_n: n\geq m\} \cup \{x\} \big)^c
$$
is an open neighborhood of $y$. Hence $(s_n)_{n\in \mathbb{N}}$ does not converge to $y$. To show \textbf{US} $\,\implies\,T_1$, fix two different points $x, y\in X$. \textbf{US} implies that the sequence that is constantly equal to $y$ does not converge to $x$, and hence $x$ has an open neighborhood that does not contain $y$. This shows that $X$ is $T_1$.

\item A \textbf{KC} space is \textbf{WH} since the continuous image of a compact set is always compact. If $(s_n)_{n\in \mathbb{N}} \subseteq X$ converges to $x\in X$, then the set $\{s_n:n\in \mathbb{N}\} \cup \{x\}$ is the continuous image of the ordinal space $[0, \omega]$, which is compact and Hausdorff. Hence a \textbf{WH} space is \textbf{SC}.

\item Suppose $X$ is first countable but non-Hausdorff. Let $x, y\in X$ be two different points with $U\cap V\neq\emptyset$ for all $U\in \mathcal{N}(x)$ and for all $V\in \mathcal{N}(y)$. Let $(B_n)_{n\in \mathbb{N}}$ ($(C_n)_{n\in \mathbb{N}}$ resp.) be a neighborhood base of $x$ ($y$ resp.), which is decreasing in inclusion. For each $n\in\mathbb{N}$, pick $s_n\in B_n\cap C_n$. Then $s_n\rightarrow x$ and $s_n\rightarrow y$.

\end{enumerate}
    
\end{proof}

\begin{prop}

Given a topological space $(X, \tau)$, the following are equivalent:

\begin{enumerate}[label = (\alph*)]

    \item $X$ is \textbf{SH}.
    \item the diagonal line $\Delta_X$ is sequentially closed. Namely, the set of limits of every convergent sequence in $\Delta_X$ is contained in $\Delta_X$.
    \item for any continuous mapping $f:(Y, \tau_Y) \rightarrow (X, \tau)$, the graph of $f$ is sequentially closed with respect to the product topology.
    
\end{enumerate}
    
\end{prop}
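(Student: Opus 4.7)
The plan is to establish the cycle $(a) \implies (b) \implies (c) \implies (a)$.

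First I would show $(a) \implies (b)$. Take a sequence $\big( (x_n, x_n) \big)_{n \in \mathbb{N}}$ in $\Delta_X$ converging to some $(x, y) \in X \times X$ with respect to the product topology. By the characterization of convergence in a product, $x_n \to x$ and $x_n \to y$ simultaneously in $X$. By \textbf{SH}, $x = y$, so $(x, y) \in \Delta_X$.

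Next, $(b) \implies (c)$ follows from a general fact: if $\phi : (Z, \tau_Z) \to (W, \tau_W)$ is continuous and $C \subseteq W$ is sequentially closed, then $\phi^{-1}(C)$ is sequentially closed in $Z$ (because $z_n \to z$ implies $\phi(z_n) \to \phi(z)$, and $\phi(z_n) \in C$ forces $\phi(z) \in C$). Given $f:(Y, \tau_Y) \to (X, \tau)$ continuous, the map
$$
\phi : Y \times X \rightarrow X \times X, \hspace{0.3cm} (y, x) \mapsto \big( f(y), x \big)
$$
is continuous (both components are), and $\phi^{-1}(\Delta_X) = \{(y, x) : f(y) = x\}$ is precisely the graph of $f$. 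So $(b)$ passes to $(c)$.

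The core of the argument is $(c) \implies (a)$, and this is where the construction must be chosen carefully. Given a sequence $(x_n)_{n \in \mathbb{N}} \subseteq X$ converging to both $x$ and $y$, I would form $Y = \mathbb{N} \cup \{\infty\}$ equipped with the topology in which each $n \in \mathbb{N}$ is isolated and neighborhoods of $\infty$ are exactly the cofinite subsets containing $\infty$ (the one-point compactification of discrete $\mathbb{N}$). Define $f : Y \to X$ by $f(n) = x_n$ and $f(\infty) = x$. Continuity at each $n$ is automatic by isolation; continuity at $\infty$ is precisely the statement that $x_n \to x$, which holds by assumption. Now the sequence $\big( n, x_n \big)_{n \in \mathbb{N}}$ lies in the graph of $f$, and since $n \to \infty$ in $Y$ while $x_n \to y$ in $X$ by hypothesis, we have $(n, x_n) \to (\infty, y)$ in $Y \times X$. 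By $(c)$, the graph of $f$ is sequentially closed, so $(\infty, y)$ lies on it, which means $y = f(\infty) = x$. This closes the cycle.

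The main obstacle is $(c) \implies (a)$: the construction of the auxiliary space $Y$ and the verification that $f$ is continuous and that $(n, x_n) \to (\infty, y)$ — the other two implications are essentially formal, while this step is where the sequential convergence hypothesis is actually exploited. No additional assumption (such as first countability) on $X$ is needed because $Y$ is built by hand, and the product topology on $Y \times X$ makes the required convergence automatic.
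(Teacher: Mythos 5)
Your proof is correct, but it runs the cycle in the opposite direction from the paper and distributes the work differently. The paper proves $(a)\implies(c)$ directly: if $(y_n, f(y_n))\to(y,y')$, then $y_n\to y$ gives $f(y_n)\to f(y)$ by continuity, while the product convergence gives $f(y_n)\to y'$, and \textbf{SH} forces $y'=f(y)$; then $(c)\implies(b)$ is immediate by taking $f=\operatorname{id}_X$, and $(b)\implies(a)$ is the same diagonal argument you use for $(a)\implies(b)$. You instead prove $(a)\implies(b)\implies(c)\implies(a)$: your $(b)\implies(c)$ isolates a clean general lemma (continuous preimages of sequentially closed sets are sequentially closed, applied to $(y,x)\mapsto(f(y),x)$, whose preimage of $\Delta_X$ is the graph), and your $(c)\implies(a)$ constructs the auxiliary domain $Y=\mathbb{N}\cup\{\infty\}$ (the one-point compactification of discrete $\mathbb{N}$, i.e.\ essentially the space $[0,\omega]$ the paper uses elsewhere for \textbf{WH} $\implies$ \textbf{SC}), with $f(n)=x_n$, $f(\infty)=x$, so that sequential closedness of this one graph already pins down the limit. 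The paper's route is shorter, since it never needs an auxiliary space; yours buys a reusable preimage lemma and the slightly sharper observation that condition $(c)$ tested on the single convergent-sequence space already implies \textbf{SH}. All verifications in your argument (continuity of $f$ at $\infty$ from $x_n\to x$, and $(n,x_n)\to(\infty,y)$ in $Y\times X$) are correct as stated.
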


\begin{proof}

We will prove the implication $(a) \implies (c) \implies (b) \implies (a)$. When $(c)$ is true, to show $(b)$, note that $\Delta_X$ is the graph of the identity mapping. To show $(b) \implies (a)$, it suffices to show that every convergent sequence in $X$ has a unique limit. Suppose $(x_n)_{n\in \mathbb{N}} \subseteq X$ is a convergent sequence. If there exists two different $x, y\in X$ such that $x_n \rightarrow x$ and $x_n\rightarrow y$, we will have $(x_n, x_n)\rightarrow (x, y)\notin \Delta_X$ by definition of the product topology. When $(a)$ is true, suppose $\big( (y_n, f(y_n)) \big)_{n\in \mathbb{N}}$ is a convergent sequence in the graph of $f$. If $(y_n, f(y_n)) \rightarrow (y, y')\in Y\times X$, then we must have $f(y_n) \rightarrow f(y)$ by continuity, and $f(y)=y'$ by \textbf{SH}. Hence the graph of $f$ is sequentially closed.
    
\end{proof}

\begin{exmp}

Let $S$ be an infinite set equipped with the cofinite topology $\tau$. For any $x\in S$, $\{x\}^c$ is open, and hence this space is $T_1$. Since $S$ is infinite, there exists an infinite sequence of distinct elements $(s_n)_{n\in \mathbb{N}} \subseteq S$. Fix $x\in X$ and $U\in \mathcal{U}(x)$. Since $U^c$ is finite, $(s_n)_{n\in \mathbb{N}}$ will eventually be in $U$. This shows that $(s_n)_{n\in \mathbb{N}}$ converges to all points in $S$, and hence this space is not \textbf{SH}. Clearly $(S, \tau)$ is irreducible. Since $(S, \tau)$ is also $T_1$, $(S, \tau)$ is not quasi-sober and hence not locally Hausdorff by \textbf{Proposition \ref{Proposition 10.2}}.
     
\end{exmp}

\begin{exmp}

Consider $(X, \tau)$ an uncountable set equipped with the co-countable topology. Again for each $x\in X$, $\{x\}^c$ is open and hence $(X, \tau)$ is $T_1$. Given an infinite subset $S\subseteq X$, consider the open cover $\big( \{s\}^c:s\in S \big)$. Clearly this open cover has no subcover, and hence an infinite set cannot be compact with respect to $\tau$. This implies that $(X, \tau)$ is \textbf{KC}. Also, $(X, \tau)$ is irreducible, $T_1$, and hence not locally Hausdorff by \textbf{Proposition \ref{Proposition 10.2}} and not quasi-sober.
    
\end{exmp}

\begin{exmp}

Let $X=[0, 1]\cup\{\ast\}$ where $\ast\notin [0, 1]$, and let $\mathcal{C}$ denote the family of all closed countable subsets of $[0, 1]$. Let $\mathcal{F} = \big\{ X\backslash E:E\in \mathcal{C} \big\}$ and $\tau$ be the usual topology on $[0, 1]$. Define $\tau^* = \tau\cup \mathcal{F}$. For any $x\in [0, 1]$ there exists $U\in \mathcal{U}(x)$ such that $\ast\notin U$. This implies $\{\ast\}$ is closed. Also, it is clear that $\{x\}$ is closed for each $x\in X$. Hence $(X, \tau^*)$ is $T_1$. It is also clear that both $[0, 1]$ and $(X, \tau^*)$ are compact with respect to $\tau^*$. However, $[0,1]$ is not closed with respect to $\tau^*$ and hence $(X, \tau^*)$ is not \textbf{KC} but $\textbf{KC}_{\omega}$ by definition of $\mathcal{F}$.
    
\end{exmp}

\noindent
One of the main focus in general topology is the compact Hausdorff space. We will show next that the reason is not only because of compactness, but also because there is no strictly finer topology that makes the given space compact. Such property for a topology is called \textbf{maximal compact}. We will see that a topology that makes a given set compact and Hausdorff is maximal compact. However, the converse is not true. We will prove the relation between maximal compactness and \textbf{KC}.

\begin{defn}

Given a set $X$, a topology $\tau$ defined on $X$ is a \textbf{compact topology of} $X$ if $X$ is compact with respect to $\tau$, and is called \textbf{maximal compact} if $\tau$ is a compact topology of $X$ and $X$ has no other compact topologies that are strictly finer than $\tau$.
    
\end{defn}

\begin{prop}

Given a topological space $(X, \tau)$:

\begin{enumerate}[label = (\alph*)]

    \item if both $\tau$, $\tau'$ are compact topologies of $X$ such that $\tau \subseteq \tau'$ and $(X, \tau)$ is \textbf{KC}, then $\tau=\tau'$.

    \item for any $A\subseteq X$, define:

    $$
    \tau^A = \big\{ U\cup (V\cap A): U, V\in \tau \big\}
    $$
    Show that $\tau^A$ is the smallest topology that is finer than $\tau$ and contains $A$. We call $\tau^A$ the \textbf{simple extension of} $\tau$ by $A$.

    \item if $(X, \tau)$ is compact but not \textbf{KC}, then there exists $\tau'$ a compact topology of $X$ such that $\tau\subsetneq \tau'$.
    
\end{enumerate}

\noindent
As a result, we can conclude that $\tau$ is maximally compact if and only if it is a compact topology of $X$ and $(X, \tau)$ is \textbf{KC}.
    
\end{prop}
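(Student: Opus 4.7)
The plan has three parts, mirroring the structure of the statement. For (a), I would transfer compactness back to $\tau$: given any $\tau'$-closed set $F$, it is $\tau'$-compact (a closed subset of the $\tau'$-compact space $X$), and since $\tau \subseteq \tau'$ makes every $\tau$-open cover of $F$ also a $\tau'$-open cover, $F$ is $\tau$-compact. The KC hypothesis for $(X,\tau)$ then forces $F$ to be $\tau$-closed. Hence every $\tau'$-closed set is $\tau$-closed, so $\tau' \subseteq \tau$, and equality follows.

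For (b), the plan is routine verification of topology axioms. Unions are handled by the distributive identity
$$
\bigcup_{i \in I}\bigl(U_i \cup (V_i \cap A)\bigr) = \Bigl(\bigcup_{i \in I} U_i\Bigr) \cup \Bigl(\bigl(\textstyle\bigcup_{i \in I} V_i\bigr) \cap A\Bigr).
$$
For binary intersection, expanding $(U_1 \cup (V_1 \cap A)) \cap (U_2 \cup (V_2 \cap A))$ and collecting terms rewrites it as $(U_1 \cap U_2) \cup \bigl(((U_1 \cap V_2) \cup (V_1 \cap U_2) \cup (V_1 \cap V_2)) \cap A\bigr)$, which is again in $\tau^A$. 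Taking $V = \emptyset$ (resp.\ $U = \emptyset$, $V = X$) gives $\tau \subseteq \tau^A$ and $A \in \tau^A$. For minimality, any topology containing $\tau \cup \{A\}$ must contain $U$ and $V \cap A$ for all $U, V \in \tau$, hence their unions.

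For (c), which I expect to be the main work, let $K$ be a $\tau$-compact but not $\tau$-closed subset, and set $\tau' = \tau^{X \setminus K}$. By (b), $\tau \subsetneq \tau'$, since $X \setminus K$ lies in $\tau'$ but not in $\tau$. To verify compactness of $\tau'$, let $(W_\alpha)_{\alpha \in A}$ be a $\tau'$-open cover with $W_\alpha = U_\alpha \cup (V_\alpha \cap (X \setminus K))$. Intersecting with $K$ gives $W_\alpha \cap K = U_\alpha \cap K$, so $(U_\alpha)_{\alpha \in A}$ is a $\tau$-open cover of $K$; extract a finite subcover $U_{\alpha_1}, \ldots, U_{\alpha_n}$. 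The residual $C = X \setminus \bigcup_{i=1}^n U_{\alpha_i}$ is $\tau$-closed, hence $\tau$-compact, and is contained in $X \setminus K$. On $X \setminus K$ the sets $W_\alpha$ coincide with $U_\alpha \cup V_\alpha \in \tau$, so $(U_\alpha \cup V_\alpha)_{\alpha \in A}$ is a $\tau$-open cover of $C$ and admits a finite subcover indexed by $\alpha_{n+1}, \ldots, \alpha_m$; because $C$ avoids $K$, the corresponding $W_{\alpha_j}$ recover the same points. Together $(W_{\alpha_i})_{i \leq m}$ covers $X$, proving $\tau'$ is compact.

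The final equivalence then falls out: if $\tau$ is maximally compact, then the contrapositive of (c) forces $(X,\tau)$ to be KC, since otherwise a strictly finer compact topology would exist; conversely, if $\tau$ is compact and KC, then (a) rules out any strictly finer compact refinement, so $\tau$ is maximally compact.
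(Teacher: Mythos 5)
Your proposal is correct and follows essentially the same route as the paper: part (a) by transferring compactness of $\tau'$-closed sets down to $\tau$ and invoking \textbf{KC}, part (c) by passing to the simple extension of $\tau$ by the complement of a compact non-closed set and extracting a finite subcover in two stages (your use of the $\tau$-compact residual $C$ is just a mild repackaging of the paper's appeal to compactness of $X$). You are in fact slightly more complete in (b), where you verify the topology axioms for $\tau^A$ that the paper leaves implicit.
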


\begin{proof}

\begin{enumerate}[label = (\alph*)]

    \item Suppose $O\in \tau'\backslash \tau$. Since $O^c$ is closed in $(X, \tau')$, it is compact with respect to $\tau'$, and hence also compact with respect to $\tau$. However, $O^c$ is not closed in $\tau$, which leads to a contradiction.

    \item Clearly $A\in \tau^A$ and $\tau\subseteq \tau^A$. Suppose $\tau'$ is another topology that is finer than $\tau$ and contains $A$. Then for any $U, V\in\tau$, we have $U\cup (V\cap A) \in \tau'$, which shows that $\tau^A \subseteq \tau'$.

    \item Suppose $K\subseteq X$ is a compact set that is not closed with respect to $\tau$. Consider $\tau'$ the simple extension of $\tau$ by $K^c$. Since $K^c\notin \tau$ we have $\tau\subsetneq \tau'$. Fix $\big( O_i\cup (V_i\cap K^c) \big)_{i\in I}$ an $\tau'$-open cover of $X$. Then we can find $I_1, I_2\subseteq I$ such that:

    $$
    K \subseteq \bigcup_{i\in I_1}O_i, \hspace{1cm} K^c\subseteq \bigcup_{i\in I_2} \big( O_i\cup V_i \big)
    $$
    Since both $X$ and $K$ are compact with respect to $\tau$, there exists two finite sets $F_1\subseteq I_1$, $F_2\subseteq I_2$ such that:

    $$
    K_1\subseteq \bigcup_{i\in F_1}O_i, \hspace{1cm} K^c\subseteq \bigcup_{i\in F_2} (O_i\cup V_i)
    $$
    For any $x\in K^c$, if $x\in O_i$ for some $i\in F_2$, then we are good; otherwise, we will have $x\in V_i\cap K^c$ for some $i\in F_2$. We can now conclude that $\big( O_i\cup (V_i\cap K^c): i\in F_1\cup F_2 \big)$ is a finite subcover of $\big( O_i\cup (V_i\cap K^c) \big)_{i\in I}$. Hence $X$ is compact with respect to $\tau'$. With $(a)$ and $(c)$, we can now conclude $\tau$ is maximally compact if and only if $\tau$ is a compact topology of $X$ and $(X, \tau)$ is \textbf{KC}.
    
\end{enumerate}
    
\end{proof}

\section{Summary}

\begin{center}
\begin{tikzpicture}
    \node (A) at (1,0) {$T_0$};
    \node (B) at (0,1) {sober};
    \node (C) at (2,1) {$T_D$};
    \node (D) at (1,2) {$T_D$ $+$ sober};
    \node (E) at (-3, 2){hereditarily sober};
    \node (F) at (3,2) {$T_1$};
    \node (G) at (2,3) {$T_1$ $+$ sober};
    \node (H) at (4,3) {\textbf{SH}};
    \node (I) at (5,4) {\textbf{SC}};
    \node (J) at (6,5) {\textbf{WH}};
    \node (K) at (4,5) {$\textbf{KC}_{\omega}$};
    \node (L) at (5,6) {\textbf{KC}};
    \node (M) at (4,7) {Hausdorff};
    \node (N) at (2,5) {locally Hausdorff};

    \draw[->] (M) -- (L);
    \draw[->] (M) -- (N);
    \draw[->] (L) -- (K);
    \draw[->] (L) -- (J);
    \draw[->] (K) -- (I);
    \draw[->] (J) -- (I);
    \draw[->] (N) -- (G);
    \draw[->] (I) -- (H);
    \draw[->] (G) -- (D);
    \draw[->] (H) -- (F);
    \draw[->] (G) -- (F);
	\draw[->] (D) -- (C);
	\draw[->] (F) -- (C);
	\draw[->] (D) -- (B);
	\draw[->] (B) -- (A);
	\draw[->] (C) -- (A);
	\draw[<->](D) -- (E);
    
\end{tikzpicture}
\end{center}

\begin{center}
\begin{tikzpicture}
    \node (A) at (1,0) {$R_d$};
    \node (B) at (0,1) {$R_0$};
    \node (C) at (2,1) {$R_d$ $+$ quai-sober};
    \node (D) at (3,0) {quasi-sober};
    \node (E) at (7,1) {hereditarily quasi-sober};
    \node (F) at (1,2) {$R_0$ $+$ quasi-sober};
    \node (G) at (1,3) {locally $R_1$};
    \node (H) at (1,4) {$R_1$};
    \node (I) at (1,5) {weakly Urysohn};
    \node (J) at (1,6) {regular};
    \node (K) at (1,7) {completely regular};
    \node (L) at (-1,8) {$R_0$ $+$ normal};
    \node (M) at (3,8) {zero-dimensional};
    \node (N) at (5,9) {almost discrete};
	\node (O) at (5,10) {Alexandroff};
	\node (Q) at (-4,8) {$R_1$ $+$ locally compact};
	\node (R) at (-3,10) {$R_1$ $+$ compact};

    \draw[->] (R) -- (Q);
    \draw[->] (R) -- (L);
    \draw[->] (Q) -- (K);
    \draw[->] (L) -- (K);
    \draw[->] (N) -- (M);
    \draw[->] (O) -- (N);
    \draw[->] (M) -- (K);
    \draw[->] (K) -- (J);
    \draw[->] (J) -- (I);
    \draw[->] (I) -- (H);
    \draw[->] (H) -- (G);
	\draw[->] (G) -- (F);
	\draw[->] (F) -- (B);
	\draw[->] (F) -- (C);
	\draw[<->] (C) -- (E);
	\draw[->] (B) -- (A);
	\draw[->](C) -- (A);
	\draw[->] (C) -- (D);
    
\end{tikzpicture}
\end{center}

\bibliographystyle{amsalpha}
\bibliography{Reference_List}

\end{document}